\documentclass[11pt,reqno]{amsart}

\usepackage[margin=1in]{geometry}

\usepackage{amssymb, mathrsfs, amscd, bbm}
\usepackage{extpfeil}
\usepackage{bm}

\usepackage{graphicx}
\usepackage{colortbl}
\usepackage{float}
\usepackage{caption}
\usepackage{subcaption} 
\graphicspath{{Figure/}}
\usepackage[dvipsnames]{xcolor}

\usepackage{enumerate}
\usepackage{comment}
\usepackage{xcolor}
\usepackage{hyperref}

\newtheorem{theorem}{Theorem}[section]
\newtheorem{lemma}{Lemma}[section]

\newtheorem{proposition}{Proposition}[section]
\newtheorem{remark}{Remark}[section]

\numberwithin{equation}{section}

\newcommand{\R}{\mathbb{R}}
\newcommand{\e}{\varepsilon}
\newcommand{\pa}{\partial}

\newcommand{\us}{u^S}
\newcommand{\ur}{u^R}
\newcommand{\ds}{\delta_S}

\newcommand{\step}[1]{\vskip0.2cm \noindent{\it Step #1:} }

\newcommand{\abs}[1]{\left|#1\right|}
\newcommand{\RR}{{\mathbb{R}}}
\renewcommand{\a}{\alpha}

\renewcommand{\d}{\delta}

\newcommand{\et}{\eta}
\renewcommand{\th}{\theta}
\renewcommand{\k}{\kappa}

\newcommand{\m}{\mu}

\newcommand{\x}{\xi}

\newcommand{\s}{\sigma}

\newcommand{\util}{\tilde{u}}

\newcommand{\wtil}{\tilde{w}}

\title[Stability of Composite Waves for Modified KdVB Equation]{Large-Time Behavior towards Composite Waves of Degenerate Shock and Rarefaction Wave\\for Modified KdV--Burgers Equation}

\author[Eun]{Namhyun Eun}
\address[Namhyun Eun]{School of Mathematics, \newline Korea Institute for Advanced Study (KIAS), Seoul 02455, Republic of Korea}
\email{namhyuneun@kias.re.kr}

\author[Han]{Sungho Han}
\address[Sungho Han]{Department of Mathematical Sciences, \newline Korea Advanced Institute of Science and Technology (KAIST), Daejeon 34141, Republic of Korea}
\email{sungho\_han@kaist.ac.kr}

\author[Kim]{Jeongho Kim}
\address[Jeongho Kim]{Department of Applied Mathematics, \newline Kyung Hee University, Gyeonggi-do 17104, Republic of Korea}
\email{jeonghokim@khu.ac.kr}

\begin{document}

\begin{abstract}
In this paper, we study the modified Korteweg--de Vries--Burgers (mKdVB) equation 
\[
u_t + (u^3)_x = \m u_{xx} - \k u_{xxx}
\]
with \(\m>0\) and \(\k>0\).
Since the flux is non-convex, the (inviscid) Riemann problem may be solved by a composite wave of a degenerate Oleinik shock and a rarefaction wave.
We prove the global existence of solutions and the time-asymptotic stability of the corresponding viscous-dispersive composite wave, up to a time-dependent shift, under small \(H^1\) perturbations.
The shock strength \(\d_S\) and the rarefaction strength \(\d_R\) need not be small and are restricted only by the explicit ratio $\delta_R\le\frac{5}{18}\delta_S$ and the quantity $\kappa\delta_S^2/\mu^2$ governing the monotonicity of the shock.
The stability estimate is uniform with respect to the dispersion strength \(\k\).

As a key ingredient of the stability analysis, we establish structural properties of the degenerate viscous-dispersive shock profile in the monotone regime, including two-sided pointwise bounds on its derivative and the rates at which it converges to its two end states.
These decay rates coincide with those of the corresponding purely viscous profile, with constants independent of the dispersion coefficient.
In the absence of dispersion, namely, when \(\k=0\), the time-asymptotic stability of the corresponding viscous composite wave was established by Huang, Wang and Zhang \cite{HWZ-MA}.
The present work extends their result to the viscous-dispersive setting and relaxes the restriction on the rarefaction strength.
The proof relies on the method of \(a\)-contraction with shifts (for viscous conservation laws) developed by Kang and Vasseur \cite{KV-17,KV-JEMS21,KVW-ADV23}.
\end{abstract}

\subjclass{35B35, 35B40, 35Q53, 35C07}

\keywords{modified Korteweg--de Vries--Burgers equation, degenerate shock wave, rarefaction wave, time-asymptotic stability, traveling wave solution, \(a\)-contraction method}

\thanks{\textbf{Acknowledgment.} N. Eun was supported by a KIAS Individual Grant (MG109001) at Korea Institute for Advanced Study and by Samsung Science and Technology Foundation under Project Number SSTF-BA2102-01.
S. Han was supported by the National Research Foundation of Korea (RS-2024-00361663 and NRF-2019R1A5A1028324) and by the Doomyung Fellowship.
J. Kim was supported by the National Research Foundation of Korea(NRF) grant funded by the Korea government(MSIT) (RS-2026-25590471).
The authors thank Professor Moon-Jin Kang for his valuable comments.}

\maketitle

\tableofcontents
%====================================================
\section{Introduction}
%====================================================
We consider the modified Korteweg--de Vries--Burgers (mKdVB) equation 
\begin{equation}\label{eq:gKdVB}
u_t + f(u)_x = \mu u_{xx} - \kappa u_{xxx}, \qquad (t,x) \in \mathbb{R}_+ \times \mathbb{R},
\end{equation}
with the cubic flux \(f(u)=u^3\).
We study the associated Cauchy problem with initial data 
\[
u(0,x) = u_0(x), \qquad x\in\RR,
\]
whose far-field states are given by 
\[
u_0(x) \to u_\pm \quad \text{as }x\to \pm\infty.
\]
Here, \(u=u(t,x)\in\RR\) is the unknown, \(\m>0\) denotes the viscosity coefficient, \(\k\neq 0\) the dispersion coefficient, and \(u_\pm\in\RR\) are the prescribed far-field states.
\begin{comment}
We consider the modified Korteweg--de Vries--Burgers (mKdVB) equation with a cubic flux:
\begin{equation} \label{eq:gKdVB}
\begin{aligned}
    &u_t + f(u)_x = \mu u_{xx} - \kappa u_{xxx}, & &f(u)=u^3, \qquad (t,x) \in \mathbb{R}_+ \times \mathbb{R}, \\
    &u(0,x) = u_0(x) \to u_\pm, & &\text{as} \quad x \to \pm \infty,
\end{aligned}
\end{equation}
where $u=u(t,x) \in \mathbb{R}$ is the unknown function, $\mu > 0$ is the viscosity coefficient, $\kappa \neq 0$ is the dispersion coefficient, $u_0(x)$ is the given initial data, and $u_\pm \in \mathbb{R}$ are the prescribed far-field states.
\end{comment}

\vspace{2mm}
The mKdVB equation is of considerable interest from both mathematical and physical perspectives and has been studied extensively over several decades.
Early developments can be traced back to the work of Su and Gardner \cite{SG-1969}, who derived the KdV and Burgers equations as long-wave models for weakly nonlinear dispersive and dissipative systems, respectively.
Johnson \cite{Johnson-1970} subsequently investigated the KdV--Burgers equation, which incorporates the interplay between dissipation and dispersion.
On the other hand, Nariboli--Lin derived a modified Burgers equation with cubic flux \cite{NL-73}, which was further studied in various works (for instance, \cite{LC-1987}). Later, Pakzad--Javidan \cite{PJ-09} derived the mKdVB equation with cubic flux in the context of dusty plasma.
The mKdVB equation has also attracted substantial mathematical interest as a canonical model for nonlinear waves involving the simultaneous effects of nonconvex convection, dissipation, and dispersion (see \cite{JMS-JDE95}).

\vspace{2mm}
One of the fundamental questions concerning \eqref{eq:gKdVB} is to understand the asymptotic behavior of its solutions as \(t\to\infty\).
The long-time behavior of the solution to \eqref{eq:gKdVB} with distinct far-field states \(u_\pm\) is governed by the corresponding inviscid Riemann problem
\begin{equation} \label{Riemann}
u_t + f(u)_x = 0, \qquad
u(0,x) = 
\begin{cases}
    u_-, & x<0,\\
    u_+, & x>0.
\end{cases}
\end{equation}
When the flux \(f\) is convex, the solution to \eqref{Riemann} is a shock wave for \(u_->u_+\) and a rarefaction wave for \(u_-<u_+\), and the asymptotically stable profiles of \eqref{eq:gKdVB} are the corresponding viscous (when \(\k=0\)) or viscous-dispersive waves.
In the purely viscous case (\(\k=0\)), the time-asymptotic stability of viscous shock and rarefaction waves is classical and goes back to the work of Il'in and Oleinik \cite{IlinOleinik}.
More recently, it was shown by Kang--Vasseur \cite{KV-17} that viscous shocks of the viscous Burgers equation satisfy an \(L^2\)-contraction property under arbitrarily large perturbations, where they made clever use of time-dependent shifts (temporal modulations) to compensate for the translation invariance of shock profiles (see also \cite{KO-AA25} for the multi-dimensional case).
In the viscous-dispersive case (\(\k\neq0\)), Pego \cite{Pego-1985} established the stability of viscous-dispersive shocks to the KdV--Burgers equation (with quadratic flux) using the anti-derivative method introduced in \cite{Goodman-1986}.
This was later extended to arbitrarily large perturbations in \cite{BBHY-JDE25,CEKS2,CEKS1} and the multi-D case in \cite{CRS-26}.

\vspace{2mm}
When the flux \(f\) is non-convex, the Riemann solution has a richer structure, in particular, the two states \(u_-\) and \(u_+\) may be connected by a composite of an Oleinik shock and a rarefaction wave.
It is well known that a shock connecting \(u_l\) and \(u_r\) propagates with the Rankine--Hugoniot speed 
\begin{equation} \label{RH}
\sigma = \frac{f(u_r)-f(u_l)}{u_r-u_l}.
\end{equation}
Such a shock is entropy admissible, or an Oleinik shock, if
\[
\frac{f(u)-f(u_l)}{u-u_l} \ge \sigma \ge \frac{f(u)-f(u_r)}{u-u_r}
\qquad \text{for all } u \text{ between } u_l \text{ and } u_r.
\]
In fact, the odd symmetry of the cubic flux allows us to reverse the orientation of the shock through the transformation \(u\mapsto -u\).
Thus, without loss of generality, we restrict our attention to increasing shocks with \(u_l < u_r\).
In addition, the Oleinik shock is degenerate (also known as a one-sided contact discontinuity) when the shock speed agrees with the characteristic speed at its right end state 
\begin{equation} \label{deg}
\sigma = f'(u_r)
\end{equation}
in contrast to a non-degenerate shock for which the corresponding characteristic inequality is strict.
Since the degenerate shock is characteristic at its right end state, it can be attached to a rarefaction wave, with the shock speed matching the characteristic speed at the left edge of the rarefaction.
This gives rise to a shock--rarefaction composite wave, and in this paper, we focus on the regime in which the corresponding Riemann solution consists of a degenerate Oleinik shock followed by a rarefaction wave.
This regime arises when $u_-<-u_-/2<u_+$, equivalently, when $u_-<0$ and $u_+>-u_-/2$, with the intermediate state $u_m=-u_-/2>0$.
The leading shock therefore crosses the inflection point \(u=0\) of the flux \(f\).
The stability of individual shock profiles for scalar viscous conservation laws with non-convex flux was studied in \cite{JGK-CPAM93,MN-CMP94}, with the degenerate case treated in the latter work; see also \cite{KM-CPAM94} for a related result for a viscoelasticity system with a non-convex constitutive relation.
In the dissipative-dispersive setting, Howard--Zumbrun \cite{HZ-ARMA00} proved the nonlinear stability of Lax and undercompressive shock profiles, while Dodd \cite{Dodd-07} established the spectral stability of certain undercompressive shocks for the cubic mKdVB equation.
Evans-function stability criteria for degenerate viscous shocks were studied in \cite{HZ-DCDS04}.
These results, however, concern single shock profiles rather than shock--rarefaction composite waves.
In the absence of dispersion, i.e., when \(\k=0\), Huang--Wang--Zhang \cite{HWZ-MA} showed the time-asymptotic stability of the corresponding viscous composite wave without any restriction on the shock strength.
Their proof is based on the method of \(a\)-contraction with shifts, together with a carefully chosen weight function.
However, to the best of our knowledge, no stability result for such a composite wave is available for the mKdVB equation in the presence of dispersion.
The main objective of this paper is to fill this gap by establishing the stability of the corresponding shock--rarefaction composite wave.
%We carry out the general classification of viscous-dispersive shocks in the next subsection, and from then on we restrict to this regime.

\vspace{2mm}
Our analysis relies on the method of \(a\)-contraction with shifts which was developed in \cite{KV-JEMS21}, building on the relative-entropy framework of \cite{LV-ARMA11}.
This carefully designed highly nonlinear energy method provides a powerful and robust tool for the stability analysis of viscous shocks.
A classical and widely used approach to the nonlinear stability of viscous shock waves is the anti-derivative method \cite{Goodman-1986}.
The integrated formulation underlying this approach, however, is not compatible with the direct energy estimates used for rarefaction waves.
This incompatibility constituted a major obstacle to the analysis of composite waves containing both a shock and a rarefaction.
By contrast, the \(a\)-contraction framework remains within a relative entropy energy formulation and has proved particularly effective in treating superpositions of elementary waves of different types.
Using this framework, Kang--Vasseur--Wang \cite{KVW-ADV23} established the time-asymptotic stability of a composite wave consisting of a viscous shock and a rarefaction wave for the barotropic compressible Navier--Stokes equations; see also \cite{HKK-SIMA23} for a composite wave formed by two viscous shocks.
The same authors later showed the stability of a shock--contact--rarefaction composite wave for the compressible Navier--Stokes--Fourier system \cite{KVW-ARMA25}.
The method has also been extended to composite waves involving a boundary layer in initial-boundary value problems on the half-line \cite{HKKKO-CMP26}, and even to the Boltzmann equation \cite{CKK-26,WY-25}.
Another strength of the \(a\)-contraction method is its ability to handle large perturbations (e.g. \cite{CKKV-M320,EEK-24,HWWW-SIMA24,KV-Inven21}).
The absence of a smallness assumption on the perturbation is useful in obtaining uniform estimates and hence in studying the vanishing viscosity limits.
We also refer to \cite{CFK-25,CKV-ARMA22,GKV-JHDE23} for further motivation for using the \(a\)-contraction method to study the stability of discontinuous solution patterns for the inviscid equation, namely, the compressible Euler equation.

The compressible Navier--Stokes--Korteweg (NSK) system is a representative viscous-dispersive model in fluid dynamics and shares with the mKdVB equation \eqref{eq:gKdVB} the simultaneous presence of dissipative and dispersive mechanisms: viscosity produces dissipation, while capillarity gives rise to dispersion.
Related stability results have been obtained for the NSK system using the method of \(a\)-contraction with shifts.
The stability of a viscous-dispersive shock was established in \cite{HKKL-JDE25}, and the stability of a shock--rarefaction composite wave was subsequently studied in \cite{HK-SIMA26}.
The stability of a single shock for the outflow problem on the half-line was investigated in \cite{HKO-26}.
Furthermore, the stability of a shock under arbitrarily large perturbations was proved in \cite{EKK-26}.

\section{Main Results}\label{sec:pre}
\setcounter{equation}{0}
The two main results of this paper are presented in this section.
The first concerns quantitative properties of the degenerate Oleinik shock profile, including its exponential and algebraic decays around the far-field states, while the second establishes the stability of a composite wave consisting of the degenerate shock and a rarefaction wave.
To formulate these results, we first introduce the degenerate shock, the rarefaction wave (and its smooth approximation) and their composite wave.

\subsection{Degenerate Oleinik Shock Wave} \label{subsec:degen-shock}
The structure of shock waves depends on the sign of the dispersion coefficient and on the relative strengths of the shock amplitude, viscosity and dispersion.
Indeed, the mKdVB equation \eqref{eq:gKdVB} with cubic flux admits various types of shock waves, including classical Oleinik shocks and nonclassical undercompressive shocks, whose profiles may be monotone or oscillatory; see \cite{EHS-SIMA17,JMS-JDE95}.
In the present work, we restrict our attention to the case of negative dispersion, i.e., \(\k>0\), and consider the following degenerate Oleinik shock.
Let \(u_m = -\frac{u_-}{2}>0\) and assume that \(u_m< u_+\).
Then, \(u_-\) and \(u_m\) are connected by a degenerate Oleinik shock, while \(u_m\) and \(u_+\) are connected by a rarefaction wave.
The corresponding viscous-dispersive shock profile is denoted by 
\[
\us = \us(\x), \qquad \x = x-\s t,
\]
where \(\s\) is given by \eqref{RH} with \(u_l = u_-\) and \(u_r = u_m\) (and so \(\s=f'(u_m)=3u_m^2\) follows from \eqref{deg}), and it satisfies 
\[
-\s \us_\x + (f(\us))_\x = \m \us_{\x\x} - \k \us_{\x\x\x}
\]
together with the far-field conditions 
\[
\us(-\infty) = u_-, \qquad
\us(+\infty) = u_m.
\]
Throughout the paper, we also impose the following monotonicity criterion (see \cite{EHS-SIMA17}) for the shock:
\begin{equation}\label{con:mon}
(2u_- + u_m)(u_- - u_m)\, \kappa \le \frac{\mu^2}{4}.
\end{equation}
Under this criterion, the shock profile \(\us\) is monotonically increasing.

\vspace{2mm}
We henceforth adopt the following normalization and notation:

\noindent\(\bullet\) \textbf{Normalization:} Without loss of generality, we assume throughout the remainder of the paper that \(\m=1\), so that the role of the dispersion coefficient can be seen more clearly.
The case of general viscosity coefficient \(\m>0\) is recovered by scaling \(u^S((x-\s t)/\m)\).

\noindent\(\bullet\) \textbf{Notation:} Since \(u_m=-u_-/2\), we write, for notational simplicity, 
\[
u_m = s >0, \qquad u_- = -2s <0.
\]
Then, the shock speed is \(\s=3s^2\) and the monotonicity criterion becomes
\begin{equation}\label{threshold}
36 \k s^2 \le 1.
\end{equation}

We now state the first main theorem concerning the properties of the degenerate Oleinik shock.
Its existence, uniqueness and monotonicity under \eqref{threshold} are proved below by a phase-plane argument.

\begin{theorem} \label{thm:shock_properties}
Assume that \(\m=1\) and that \(\k>0\) satisfies \eqref{threshold}.
Then there exists a smooth monotone traveling wave solution \(\us=\us(\x)\), with \(\x=x-\s t\), connecting \(u_-=-2s\) to \(u_m=s\).
The profile is unique up to translation, normalized by \(\us(0)=0\), and satisfies the following estimates:
\begin{equation} \label{eq:key-ineq}
(4\sqrt{3}-6)(\us(\x) - s)^2(\us(\x) + 2s) \le \us_{\xi}(\x) \le 2(\us(\x) - s)^2(\us(\x) + 2s), \qquad \forall \x\in\RR,
\end{equation}
which implies the following decay properties:
\begin{align}
&2s \cdot e^{-18s^2|\xi|} \le \us(\xi) + 2s \le 2s \cdot e^{-(4\sqrt{3}-6)s^2|\xi|},
&&\forall \x<0, \label{eq:left_decay_u}\\
&\frac{s}{1 + 6s^2|\xi|} \le s - \us(\xi) \le \frac{s}{1 + 2(4\sqrt{3}-6) s^2|\xi|},
&&\forall \x>0, \label{eq:right_decay_u}\\
&2(4\sqrt{3}-6)s^3 \cdot e^{-18s^2|\xi|} \le \us_{\xi}(\xi) \le 36s^3 \cdot e^{-(4\sqrt{3}-6)s^2|\xi|},
&&\forall \x<0, \label{eq:left_decay_up}\\
&\frac{2(4\sqrt{3}-6) s^3}{(1 + 6s^2|\xi|)^2} \le \us_{\xi}(\xi) \le \frac{6s^3}{(1 + 2(4\sqrt{3}-6) s^2|\xi|)^2},
&&\forall \x>0. \label{eq:right_decay_up}
\end{align}
Moreover, the higher-order derivatives satisfy
\begin{align}
-3s^2\us_{\xi} &\le \us_{\xi \xi} \le 18s^2 \us_{\xi}, &&\forall \x\in\RR,
\label{eq:second_deriv_bound}\\
-60 s^4 \us_{\xi}
&\le \us_{\xi \xi \xi}
\le 324 s^4 \us_{\xi}, &&\forall \x\in\RR.
\label{eq:third_deriv_bound}
\end{align}
This implies the following absolute bounds 
\begin{equation} \label{eq:absbdd}
|\us_{\xi \xi}| \le 18s^2 \us_{\xi} \quad \text{and} \quad
|\us_{\xi \xi \xi}| \le 324s^4 \us_{\xi}.
\end{equation}
\end{theorem}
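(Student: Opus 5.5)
Integrating the profile equation once from $-\infty$ (with $\mu=1$), and using $f(u)-\sigma u - (f(u_-)-\sigma u_-) = u^3-3s^2u-2s^3=(u-s)^2(u+2s)$, I reduce the problem to the planar system
\[
\kappa\, u_{\xi\xi} = u_\xi - (u-s)^2(u+2s) =: u_\xi - g(u),
\]
with $u(-\infty)=-2s$, $u(+\infty)=s$. Writing $v=u_\xi$, I will look for the heteroclinic orbit as a graph $v=V(u)$ for $u\in(-2s,s)$. Then $\kappa V V' = V - g$, with $V(-2s)=0=V(s)$, and $V$ leaves the saddle $(-2s,0)$ along its unstable manifold. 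At $u=-2s$ the linearization has $g'(-2s)=9s^2$, so the eigenvalue $\lambda$ solves $\kappa\lambda^2-\lambda+9s^2=0$. Under \eqref{threshold} ($36\kappa s^2\le1$) this root is real, and for the unstable branch I take $\lambda_-=\frac{1-\sqrt{1-36\kappa s^2}}{2\kappa}\in[9s^2,18s^2]$. The point $(s,0)$ is a saddle-node, because $g$ has a double root there. Existence, uniqueness up to translation and monotonicity will come from a trapping-region argument. I will show that the unstable-manifold branch stays in the wedge
\[
c\,g(u)\le V\le 2g(u), \qquad c=4\sqrt3-6.
\]
Once the orbit is trapped in this wedge, it cannot cross $v=0$ before $u=s$, so it must reach $(s,0)$; this gives $u_\xi>0$. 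Uniqueness up to translation follows because the unstable manifold is one-dimensional, and $u^S(0)=0$ fixes the shift.

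\emph{Wedge bounds (main step).} For the barrier $V=\alpha g$, I compute the vector field's transversality: the orbit stays on the correct side provided $\kappa\alpha^2 g' \le \alpha-1$ (upper barrier, for $\alpha=2$) and the reverse inequality holds for the lower barrier, wherever $g'$ has the appropriate sign. The difficulty is that $g'=3(u-s)(u+s)$ changes sign at $u=-s$. Its maximum magnitude on $[-2s,s]$ is $9s^2$ at $u=-2s$, and its minimum is $-3s^2$ at $u=0$. For the upper barrier $\alpha=2$ I need $4\kappa g'\le1$, i.e. $36\kappa s^2\le1$, which is exactly \eqref{threshold}; where $g'<0$ the condition is automatic. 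For the lower barrier, the condition $\kappa\alpha^2 g'\ge\alpha-1$ fails where $g'>0$, so I will instead use a comparison through the first integral. I expect this to be the real obstacle, and to be the source of the constant $4\sqrt3-6$. I would try a barrier of the form $\alpha(u)g$, or compare $V/g$ along the ODE $\kappa (V/g)'$. Writing $w=V/g$ gives
\[
\kappa g w w' = w - 1 - \kappa w^2 g',
\]
and I will bound $w$ from below by analysing where $w$ could attain a minimum. Using $\kappa g'\le 9\kappa s^2\le 1/4$ together with the endpoint behaviour ($w\to\lambda_-/(9s^2)\in[1,2]$ at $u=-2s$), the bound should reduce to a quadratic inequality. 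The constant $4\sqrt3-6$ presumably arises from optimizing this inequality over the range of $g'/g$ combined with $\kappa s^2\le 1/36$. This is the step I expect to need the most care.

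\emph{Decay estimates.} With $c\,g(u)\le u_\xi\le 2g(u)$ in hand, I integrate the separable bounds. Near $-2s$ I use $(u-s)^2\in[0,9s^2]$. Taking $u^S(0)=0$ as the reference point, $u+2s$ satisfies $c s^2(u+2s)\le (u+2s)'\le 18s^2(u+2s)$ for $\xi<0$, since $(u-s)^2\ge s^2$ on $[-2s,0]$; integrating from $0$ gives \eqref{eq:left_decay_u}. Near $s$, the quantity $y=s-u$ satisfies $-2\cdot 3s\,y^2\le y'\le -c\cdot 2s\,y^2$ on $[0,s]$, and integrating $1/y$ from $y(0)=s$ gives \eqref{eq:right_decay_u}. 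The derivative bounds \eqref{eq:left_decay_up}--\eqref{eq:right_decay_up} follow by inserting these into \eqref{eq:key-ineq}.

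\emph{Higher derivatives.} The second-derivative bound uses $\kappa u_{\xi\xi}=u_\xi(1-w^{-1})$, which gives $u_{\xi\xi}=\frac{u_\xi}{\kappa}(1-1/w)$. This form is inconvenient for small $\kappa$, so I will instead use $u_{\xi\xi}=V'(u)u_\xi$ and bound $V'$ directly. From $\kappa VV'=V-g$ and the wedge, together with a maximum-principle argument for $V'$ (differentiating the ODE), I expect $-3s^2\le V'\le 18s^2$; here $18s^2$ matches $\lambda_-$ at the saddle and $-3s^2$ matches $\min g'$. For the third derivative I differentiate again: $u_{\xi\xi\xi}=(V''V+V'^2)u_\xi$. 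I will bound $V''V$ using the differentiated ODE $\kappa(VV''+V'^2)=V'-g'$, which yields the constants $324=18^2$ and $60$. Combining the two-sided bounds then gives \eqref{eq:absbdd}.
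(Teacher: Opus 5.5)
There is a genuine gap: the phase-plane mechanism in your existence and uniqueness step runs in the wrong direction.

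First, $(-2s,0)$ is not a saddle. The roots of $\kappa\lambda^2-\lambda+9s^2=0$ have sum $1/\kappa$ and product $9s^2/\kappa$. Under $36\kappa s^2\le 1$ both are therefore positive, so $(-2s,0)$ is an unstable node and a one-parameter family of orbits emanates from it. Uniqueness up to translation cannot come from a one-dimensional unstable manifold there.

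Second, the wedge is not forward invariant, and your own transversality conditions show it. On $V=\alpha g$, the equation $\kappa VV'=V-g$ forces $V'=(\alpha-1)/(\kappa\alpha)$. For $\alpha=2$, the condition $4\kappa g'\le 1$ says exactly $V'=1/(2\kappa)\ge 2g'$: the orbit crosses $V=2g$ from below to above as $u$ increases. Likewise, for $\alpha<1$ the condition $\kappa\alpha^2 g'\ge\alpha-1$ says the orbit crosses $V=\alpha g$ from above to below. The wedge therefore repels in forward time, as it must, since $\kappa v_\xi=v-g(u)$ is unstable in $v$. A trapping argument from the left cannot work: apart from the heteroclinic orbit itself, every branch leaving $(-2s,0)$ fails to stay in the wedge up to $u=s$.

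The rigid end state is the saddle-node $(s,0)$. Its eigenvalues are $0$ and $1/\kappa$, and there is no stable direction. Hence any orbit tending to it as $\xi\to+\infty$ lies on the one-dimensional center manifold $V=3s(u-s)^2+O((u-s)^3)$, which gives uniqueness and $V/g\to 1$ as $u\to s$. The missing idea is to start on this branch, which lies inside $\{\tfrac12 g\le V\le 2g\}$ near $u=s$, and follow it backward in $\xi$. The crossing directions above make this wedge backward invariant, $V>0$ gives monotonicity, and the backward limit must be $(-2s,0)$.

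The same reversal affects your remaining steps, all of which must be closed at the right end state. For the lower barrier, $\kappa\alpha^2 g'\ge\alpha-1$ is automatic where $g'\ge 0$ (since $\alpha<1$) and can only fail where $g'<0$. Using $\kappa g'\ge -3\kappa s^2\ge-\tfrac{1}{12}$, it holds on $(-2s,s)$ whenever $\alpha^2+12\alpha-12\le 0$, i.e.\ $\alpha\le 4\sqrt3-6$. This is sharp at $u=0$ when $36\kappa s^2=1$, and it is the source of the constant; the bound $\kappa g'\le\tfrac14$ you planned to use plays no role. In your variable $w=V/g$: at $w=m<4\sqrt3-6$ one has $\kappa g m w'=m-1-\kappa m^2 g'<0$, so $w$ can only cross $m$ downward. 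The left-end value $\lambda_-/(9s^2)$ cannot prevent this; the contradiction comes from $w\to 1$ as $u\to s$.

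The derivative bounds work the same way. With $F=u_{\xi\xi}/u_\xi$ one has $\kappa F_\xi=F-g'(u)-\kappa F^2$, which is positive at $F=18s^2$ and negative at $F=-3s^2$. The limit $F\to 0$ as $\xi\to+\infty$ then closes the argument. An interior-extremum analysis alone does not, since $\kappa F^2-F+g'=0$ has a root $\ge 1/(2\kappa)$.

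For the third derivative, the identity $\kappa(VV''+V'^2)=V'-g'$ only gives $G=(F-g')/\kappa$, which is not uniform as $\kappa\to 0$. You need the next equation, $\kappa G_\xi=G-F^2-6uu_\xi$ for $G=u_{\xi\xi\xi}/u_\xi$, and a crossing argument at two levels, again closed by $G\to 0$ at the right end:
at $324s^4$, use $F^2\le 324s^4$, and for $u\ge 0$ the bound $F=\kappa G+g'(u)\le\kappa G\le 9s^2$;
at $-60s^4$, use $6|u|u_\xi\le 12|u|g(u)\le(27+18\sqrt3)s^4$ for $u<0$.
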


\begin{remark}
Although Theorem \ref{thm:main} requires a smallness condition on the critical parameter \(\k s^2\), Theorem \ref{thm:shock_properties} applies to the whole monotone regime with \eqref{con:mon} (or \eqref{threshold}).
In addition, the decay properties obtained in Theorem \ref{thm:shock_properties} coincide with those in \cite[Lemma 2.1]{HWZ-MA} for the degenerate Oleinik shock associated with the following viscous scalar conservation law without dispersion: 
\[
u_t + (u^3)_x = u_{xx}.
\]
Thus, in the monotone regime, the shock profiles decay on the same scales as the corresponding shock profiles for the equation without dispersion.
\end{remark}

The idea of the proof is as follows.
The cornerstone of the argument is to establish \eqref{eq:key-ineq}, which is non-trivial due to the presence of the dispersion.
Integrating the traveling wave equation yields 
\begin{equation} \label{eq:shock-eq}
(\us - s)^2(\us + 2s) = \us_{\xi} - \kappa \us_{\xi \xi}.
\end{equation}
In the purely viscous case without dispersion, one immediately obtains 
\[
(\us - s)^2(\us + 2s) = \us_{\xi},
\]
and the decay properties then follow from a standard Gr\"onwall argument.
However, in the present dispersive case, the dispersion term prevents such a direct argument.
Nevertheless, if \eqref{eq:key-ineq} can be proved, then the same decay properties can be obtained.
The proof is provided in Section \ref{sec:first-pf}.

\subsection{Rarefaction Wave and Its Smooth Approximation}\label{subsec:inviscid-rare}

The second wave in the composite wave is the inviscid rarefaction wave $u^r(x/t;u_m,u_+)$ connecting the intermediate state $u_m$ to the right far-field state $u_+$. It is a self-similar entropy solution of the conservation law \eqref{Riemann}, given by
\begin{equation}\label{eq:rarefaction}
    u^r\Big(\tfrac{x}{t};u_m,u_+\Big)
    =
    \begin{cases}
        u_m, & x \le 3u_m^2\, t,\\[1mm]
        \sqrt{x/(3t)}, & 3u_m^2\, t \le x \le 3u_+^2\, t,\\[1mm]
        u_+, & x \ge 3u_+^2\, t,
    \end{cases}
\end{equation}
where $3u_m^2=f'(u_m)$ and $3u_+^2=f'(u_+)$ are the characteristic speeds at the two end states. Since $0<u_m<u_+$, the rarefaction wave \eqref{eq:rarefaction} is Lipschitz continuous and non-decreasing in $x$. 

To study the asymptotic stability of a composite wave involving a rarefaction wave, we introduce a smooth approximation of the (inviscid) rarefaction profile \(u^r(x/t)\) connecting the intermediate state \(u_m\) and the right far-field state \(u_+\).
Following the standard construction in \cite{MN92}, we consider the following Cauchy problem for the (inviscid) Burgers equation with smooth initial data:
\begin{equation}\label{eq: inviscid Burgers equation}
\begin{cases}
w_t+w w_x=0,\\
w(0,x)= w_0(x)= \dfrac{w_++w_m}{2}+\dfrac{w_+-w_m}{2}K_q\displaystyle\int_0^{\varepsilon_R x}\frac{1}{(1+y^2)^q}\, dy,
\end{cases}
\end{equation}
where \(w_m \coloneqq f'(u_m) = 3u_m^2\) and \(w_+\coloneqq f'(u_+)=3u_+^2\) denote the characteristic speeds at \(u_m\) and \(u_+\) respectively (so that \(w_m < w_+\)), \(q\ge10\) is a fixed constant, and \(K_q\) is the normalization constant satisfying $K_q\int_0^\infty (1+y^2)^{-q}\, dy=1$.
Moreover, \(\e_R>0\) is a small smoothing parameter, fixed at the end of the proof of Theorem~\ref{thm:main}.

Since we consider the case $u_+ > u_m > 0$, both characteristic speeds $w_m = 3u_m^2$ and $w_+ = 3u_+^2$ are positive. Moreover, the map $\Lambda(w):=(f')^{-1}(w)=\sqrt{w/3}$ is a smooth diffeomorphism from $[w_m, w_+]$ onto $[u_m, u_+]$. We thus define the smooth approximate rarefaction wave by
\begin{equation}\label{eq:uR-def}
	u^R(t,x) := \Lambda(w(t,x)) = \sqrt{\dfrac{w(t,x)}{3}}.
\end{equation}
Then, by construction, $u^R(t,x)$ satisfies the inviscid scalar conservation law
\begin{equation}\label{eq:uR-eq}
u^R_t + f(u^R)_x = 0,
\end{equation}
which follows from $w_t + ww_x = 0$ and $w = f'(u^R)$.
The quantitative properties of \(u^R\) used in the stability analysis are collected in the following lemma, whose proof is given in Section \ref{sec:first-pf}.

\begin{lemma}\label{lem:Burgers}
Let \(\wtil\coloneqq w_+-w_m>0\).
The unique global smooth solution $w(t,x)$ to \eqref{eq: inviscid Burgers equation} satisfies the following, where the constants $C_{p,q}$, $C_q$, and $C_{q,\theta}$ depend only on the indicated parameters:
\begin{itemize}
\item[(1)] $w_m< w(t,x)<w_+$ and $w_x(t,x)>0$ for all $t>0$ and $x\in\R$.
\item[(2)] For any $1\le p\le \infty$, there exists a constant $C_{p,q}>0$ such that, for all $t >0$,
\begin{align*}
\|w_x(t,\cdot)\|_{L^p(\RR)}
&\le C_{p,q}\min\bigg\{\tilde{w}\varepsilon_R^{1-1/p},\ \frac{\tilde{w}^{1/p}}{t^{1-1/p}}\bigg\}, \\
\|\pa_x^j w(t,\cdot)\|_{L^p(\RR)}
&\le C_{p,q}\min\Bigg\{\tilde{w}\varepsilon_R^{j-1/p},\ \frac{\tilde{w}^{(j-1)/q}  \varepsilon_R^{(j-1)-1/p+1/q}}{t^{1-(j-1)/q}}\Bigg\}, \qquad j=2,3,4.
\end{align*}
\item[(3)] There exists a constant \(C_q>0\) such that for any \(t\ge0\) and any \(x\le w_m t\),
\begin{align*}
|w(t,x)-w_m|
&\le \frac{C_q \tilde w}{\bigl(1+(\varepsilon_R|x-w_m t|)^2\bigr)^{q/2}}, \\
|w_x(t,x)|
&\le \frac{C_q \tilde w \varepsilon_R}{\bigl(1+(\varepsilon_R|x-w_m t|)^2\bigr)^{q}}.
\end{align*}
\item[(4)] There exists a constant \(C_q>0\) such that for any \(t\ge0\) and any \(x\ge w_+ t\),
\begin{align*}
|w(t,x)-w_+|
&\le \frac{C_q  \tilde w}{\bigl(1+(\varepsilon_R|x-w_+ t|)^2\bigr)^{q/2}}, \\
|w_x(t,x)|
&\le \frac{C_q  \tilde w \varepsilon_R}{\bigl(1+(\varepsilon_R|x-w_+ t|)^2\bigr)^{q}}.
\end{align*}
\item[(5)] Let \(w^r\) be the rarefaction wave of the Burgers equation connecting $w_m$ and $w_+$, that is, $w^r(\eta)=w_m$ for $\eta\le w_m$, $w^r(\eta)=\eta$ for $w_m\le\eta\le w_+$, and $w^r(\eta)=w_+$ for $\eta\ge w_+$.
Then, it holds that
\[
\lim_{t\to\infty}\sup_{x \in \mathbb{R}}|w(t,x)-w^r(x/t)|=0.
\]
\item[(6)] Let \(\a>1\) be a fixed constant and $T_0 \coloneqq\varepsilon_R^{-\alpha}$.
Then there exist constants $\delta_1 = \delta_1(q,\alpha,\tilde w) > 0$ and $C_{q,\theta}>0$ such that for all \(t\ge T_0\), \(0<\e_R<\d_1\), and \(\th \in (0,1)\),
\begin{align*}
|w(t,x)-w_+|
&\le C_{q,\th}   \tilde{w}   (\tilde{w}  \varepsilon_R  t)^{-\theta(1-\frac{1}{2q})} \bigl(1+\varepsilon_R|x-w_+ t|\bigr)^{-q(1-\theta)}, \quad x \ge w_+ t, \\
|w(t,x)-w_m|
&\le C_{q,\th}   \tilde{w}   (\tilde{w}  \varepsilon_R  t)^{-\theta(1-\frac{1}{2q})} \bigl(1+\varepsilon_R|x-w_m t|\bigr)^{-q(1-\theta)}, \quad x \le w_m t.
\end{align*}
\item[(7)] Under the assumptions of (6), there exists a constant \(C_q>0\) such that for all \(t\ge T_0\) and \(0<\e_R<\d_1\), 
\[
|w(t,x)-w^r(x/t)|
\le C_q   \tilde{w}^{\frac{1}{2q}}   \varepsilon_R^{-1+\frac{1}{2q}} t^{-1+\frac{1}{2q}},
\quad w_m t \le x \le w_+ t.
\]
\end{itemize}
\end{lemma}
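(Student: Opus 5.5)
The plan is to work throughout with the characteristic representation of the inviscid Burgers equation. Since \(w_0\) is smooth and strictly increasing (its derivative is \(\frac{\tilde w}{2}K_q\varepsilon_R(1+\varepsilon_R^2x^2)^{-q}>0\)), characteristics \(x=x_0+w_0(x_0)t\) never cross. The map \(x_0\mapsto x\) is therefore a diffeomorphism of \(\RR\) for every \(t\ge0\), and \(w(t,x)=w_0(x_0(t,x))\).

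\textbf{Properties (1), (3), (4), (5).} Item (1) follows at once from monotonicity and the range of \(w_0\), together with the formula
\[
w_x=\frac{w_0'(x_0)}{1+t\,w_0'(x_0)}>0 .
\]
For (3), let \(x\le w_m t\). Then \(x-w_mt=x_0+(w_0(x_0)-w_m)t\ge x_0\), so \(x_0\le x-w_mt\le 0\) and \(|x_0|\ge|x-w_mt|\). Hence
\[
|w-w_m|=w_0(x_0)-w_m\lesssim \tilde w\int_{-\infty}^{\varepsilon_R x_0}(1+y^2)^{-q}\,dy\lesssim \tilde w\,(1+\varepsilon_R^2x_0^2)^{-(q-1/2)},
\]
which is stronger than the stated \(q/2\) power. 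The bound \(w_x\le w_0'(x_0)\) gives the derivative estimate; (4) is symmetric. For (5), compare \(w(t,x)\) with \(x/t\) in the fan region. One has \(x/t-w=x_0/t\), and \(x_0\) stays \(O(\varepsilon_R^{-1})\)-controlled except on the tails, which are handled by (3)–(4). This gives uniform convergence.

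\textbf{Property (2).} This is the standard Matsumura–Nishihara computation. For \(j=1\), the bound \(w_x\le \min\{w_0'(x_0),1/t\}\) combined with \(\int w_x\,dx=\tilde w\) gives the \(L^p\) estimate by interpolation between \(L^1\) and \(L^\infty\). For higher derivatives, differentiate the formula in \(x\) using \(\partial_x x_0=(1+tw_0')^{-1}\). This yields, for example,
\[
w_{xx}=\frac{w_0''}{(1+tw_0')^3}.
\]
The key step is to exploit \(|w_0''|\lesssim \varepsilon_R (w_0')^{1-1/q}\tilde w^{1/q}\), which holds because of the \((1+y^2)^{-q}\) profile. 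Splitting according to whether \(tw_0'\gtrless1\) then produces the factors \(t^{-(1-(j-1)/q)}\). Finally, change variables \(dx=(1+tw_0')\,dx_0\) to compute the \(L^p\) norms. The cases \(j=3,4\) follow by iterating the same bound, with \(|w_0^{(k)}|\lesssim\varepsilon_R^{k-1}\tilde w^{(k-1)/q}(w_0')^{1-(k-1)/q}\).

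\textbf{Properties (6) and (7).} These are where I expect the real work. For (6), take \(x\ge w_+t\), so that \(x_0\ge x-w_+t\ge0\). From the characteristic relation,
\[
x_0=x-w_+t+(w_+-w_0(x_0))t\ge (w_+-w_0(x_0))t .
\]
Combined with \(w_+-w_0(x_0)\approx \tilde w(\varepsilon_Rx_0)^{-(2q-1)}\), this gives \(x_0\gtrsim(\tilde w\varepsilon_R t)^{1/(2q)}\varepsilon_R^{-1}\) as a lower bound in time. Interpolating the two lower bounds for \(x_0\) through the \(\theta\)-weighted geometric mean yields
\[
(\tilde w\varepsilon_R t)^{-\theta(1-1/(2q))}\,(1+\varepsilon_R|x-w_+t|)^{-q(1-\theta)} ,
\]
where \(t\ge T_0=\varepsilon_R^{-\alpha}\) with \(\alpha>1\) and \(\varepsilon_R<\delta_1\) ensure \(\tilde w\varepsilon_R t\gg1\). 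For (7), in the fan the quantity \(|w-x/t|=|x_0|/t\) must be bounded. Using the same relation, \(|x_0|\) satisfies \(|x_0|\lesssim\varepsilon_R^{-1}(\tilde w\varepsilon_R t)^{1/(2q)}\): otherwise \(w_0(x_0)\) would be within \(\tilde w(\varepsilon_R|x_0|)^{1-2q}\) of the endpoint while \(x\) lies in the fan, which is a contradiction. This gives exactly \(\tilde w^{1/(2q)}\varepsilon_R^{-1+1/(2q)}t^{-1+1/(2q)}\).

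The main obstacle is the careful bookkeeping of exponents in (2) for \(j=3,4\) and in (6). In those places I would first derive clean pointwise bounds in the Lagrangian variable \(x_0\), and only afterwards change variables.
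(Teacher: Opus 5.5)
Your overall route is the paper's: characteristics, pointwise bounds in the Lagrangian variable $x_0$, the change of variables $dx=(1+tw_0')\,dx_0$, and, for (6)--(7), the characteristic relation combined with $w_+-w_0(x_0)\simeq\tilde w(\varepsilon_Rx_0)^{1-2q}$ for $\varepsilon_Rx_0\ge1$. You bound $x_0(t,x)$ pointwise in $x$, whereas the paper bounds it only on the edge characteristics $x=w_mt$, $x=w_+t$ and then uses monotonicity of $x\mapsto x_0(t,x)$; the two are equivalent.

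The step that fails as written is the key inequality in (2). The explicit formulas give
\[
\frac{|w_0''(x_0)|}{w_0'(x_0)^{1-1/q}}=C_q\,\varepsilon_R(\varepsilon_R\tilde w)^{1/q}\,\frac{|\varepsilon_Rx_0|}{\bigl(1+(\varepsilon_Rx_0)^2\bigr)^{2}} .
\]
Your bound $|w_0''|\lesssim\varepsilon_R\tilde w^{1/q}(w_0')^{1-1/q}$ therefore loses a factor $\varepsilon_R^{1/q}$ and, more seriously, the residual spatial decay.

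Consider $\int|w_{xx}|^p\,dx=\int|w_0''|^p(1+tw_0')^{1-3p}\,dx_0$. To get the stated rate you must spend all of $(w_0')^{p(1-1/q)}$ through $(w_0')^{p(1-1/q)}(1+tw_0')^{1-3p}\le t^{-p(1-1/q)}$. With your inequality, what is left is $\int_{\mathbb{R}}dx_0=\infty$. Holding back a power of $w_0'$ for integrability, or splitting at $tw_0'=1$, costs a positive power of $t$. The reason is that your bound overestimates $|w_0''|$ by a factor of order $|\varepsilon_Rx_0|^3$, exactly in the region $|\varepsilon_Rx_0|\approx(\tilde w\varepsilon_Rt)^{1/(2q)}$ where $tw_0'\approx1$ and the integral concentrates. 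Without the $\varepsilon_R^{1/q}$ you would also only reach $\varepsilon_R^{(j-1)-1/p}$, not the stated $\varepsilon_R^{(j-1)-1/p+1/q}$. Your bounds for $w_0^{(k)}$, $k=3,4$, have the same defect.

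The repair, as in the paper, is to keep the decay:
\[
|w_0^{(j+1)}|\le C_q\varepsilon_R^{j}(\varepsilon_R\tilde w)^{j/q}(w_0')^{1-j/q}\bigl(1+(\varepsilon_Rx_0)^2\bigr)^{-1},\qquad j=1,2,3.
\]
For the leading term $w_0^{(j+1)}(1+tw_0')^{-(j+2)}$ of $\partial_x^{j+1}w$, extract $\bigl(w_0'/(1+tw_0')\bigr)^{p(1-j/q)}\le t^{-p(1-j/q)}$. For the terms $t(w_0'')^2$, $tw_0''w_0'''$, $t^2(w_0'')^3$, use in addition $tw_0'/(1+tw_0')\le1$ to absorb the explicit powers of $t$. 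The leftover $\int(1+(\varepsilon_Rx_0)^2)^{-p}\,dx_0=C\varepsilon_R^{-1}$ then produces the factor $\varepsilon_R^{-1/p}$. Finally, $\varepsilon_R^{2/q},\varepsilon_R^{3/q}\le\varepsilon_R^{1/q}$ gives the stated exponents.

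A smaller point concerns your sketch of (5). On the fan, $x_0$ is not $O(\varepsilon_R^{-1})$ uniformly in $t$; your own (6) argument gives $x_0(t,w_+t)\gtrsim\varepsilon_R^{-1}(\tilde w\varepsilon_Rt)^{1/(2q)}$. Also, (3)--(4) do not give uniform smallness near $x=w_mt$ or $x=w_+t$. Nevertheless, (5) follows at once from your (6)--(7) arguments. These only need $\tilde w\varepsilon_Rt\gtrsim1$, and they give bounds uniform in $x$ that tend to $0$ as $t\to\infty$.
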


Moreover, the approximate rarefaction wave $u^R$ defined in \eqref{eq:uR-def} satisfies the following:

\begin{lemma}\label{lem:approx_rarefaction}
Let $\delta_R \coloneqq u_+ - u_m > 0$ be the rarefaction wave strength.
Then, the smooth approximate rarefaction wave $u^R(t,x)$ defined in \eqref{eq:uR-def} satisfies the following properties, where $C$, $C_q$, $C_{p,q}$, and $C_{q,\theta}$ denote positive constants that may depend on the indicated parameters and on the end states $u_m$, $u_+$:
\begin{itemize}
\item[(1)] $u_m < u^R(t,x) < u_+$ and $\ur_x(t,x) > 0$ for all $t>0$ and $x\in\R$.
\item[(2)] For any $1\le p\le \infty$, there exists a constant $C_{p,q}>0$ such that, for all $t >0$,
\begin{align*}
\|\ur_x(t, \cdot)\|_{L^p(\RR)}
&\le C_{p,q} \min \bigg\{ \delta_R \varepsilon_R^{1-1/p},\ \frac{\delta_R^{1/p}}{t^{1-1/p}} \bigg\},\\
\|\partial_x^j u^R(t, \cdot)\|_{L^p(\RR)}
&\le C_{p,q} \min \Bigg\{ \delta_R \varepsilon_R^{j-1/p},\ \frac{\delta_R^{1/p} + \delta_R^{(j-1)/q}}{t^{1-(j-1)/q}} \Bigg\}, \qquad j = 2,3,4.
\end{align*}
Moreover, the following pointwise bound holds:
\[
|u^R_{xx}(t,x)| \le C\bigl(|\ur_x(t,x)|^2 + |\ur_x(t,x)|\bigr), \quad \forall\, t>0, x \in \RR.
\]
\item[(3)] There exists a constant \(C_q>0\) such that for any \(t\ge0\) and any \(x\le w_m t\),
\begin{align*}
|u^R(t,x) - u_m|
&\le \frac{C_q \, \delta_R}{\bigl(1 + (\varepsilon_R|x - w_m t|)^2\bigr)^{q/2}},\\
|\ur_x(t,x)|
&\le \frac{C_q \, \delta_R \, \varepsilon_R}{\bigl(1 + (\varepsilon_R|x - w_m t|)^2\bigr)^q}.
\end{align*}
\item[(4)] There exists a constant \(C_q>0\) such that for any \(t\ge0\) and any \(x\ge w_+ t\),
\begin{align*}
|u^R(t,x) - u_+|
&\le \frac{C_q \, \delta_R}{\bigl(1 + (\varepsilon_R|x - w_+ t|)^2\bigr)^{q/2}},\\
|\ur_x(t,x)|
&\le \frac{C_q \, \delta_R \, \varepsilon_R}{\bigl(1 + (\varepsilon_R|x - w_+ t|)^2\bigr)^q}.
\end{align*}
\item[(5)] Let \(u^r\) be the inviscid rarefaction wave introduced in \eqref{eq:rarefaction}.
Then, it holds that
\[
\lim_{t \to \infty} \sup_{x \in \mathbb{R}} |u^R(t,x) - u^r(x/t)| = 0.
\]
\item[(6)] Let \(\a>1\) be a fixed constant and $T_0 \coloneqq\varepsilon_R^{-\alpha}$.
Then there exists a constant $\delta_1>0$, depending only on $q$, $\alpha$, $u_m$, and $\delta_R$, such that, for all $t\ge T_0$, $0<\varepsilon_R<\delta_1$, and $\theta\in(0,1)$,
\begin{align*}
|u^R(t,x) - u_+|
&\le C_{q,\theta} \delta_R (\delta_R \, \varepsilon_R \, t)^{-\theta(1-\frac{1}{2q})}
\bigl(1+\varepsilon_R|x-w_+ t|\bigr)^{-q(1-\theta)}, \quad x \ge w_+ t, \\
|u^R(t,x) - u_m|
&\le C_{q,\theta} \delta_R (\delta_R \, \varepsilon_R \, t)^{-\theta(1-\frac{1}{2q})}
\bigl(1+\varepsilon_R|x-w_m t|\bigr)^{-q(1-\theta)}, \quad x \le w_m t.
\end{align*}
\item[(7)] Under the assumptions of (6), there exists a constant \(C_q>0\) such that for all \(t\ge T_0\) and \(0<\varepsilon_R<\delta_1\), 
\[
|u^R(t,x) - u^r(x/t)|
\le C_q \delta_R^{\frac{1}{2q}} \varepsilon_R^{-1+\frac{1}{2q}} t^{-1+\frac{1}{2q}},
\quad w_m t \le x \le w_+ t.
\]
\item[(8)] Under the assumptions of (6) with $\alpha=8$, that is, $T_0=\varepsilon_R^{-8}$, there exists a constant \(C_q>0\) such that, for \(0<\varepsilon_R<\delta_1\),
\begin{equation}\label{eq:inviscid-comparison}
    \bigl\|u^R(T_0,\cdot)-u^r(\cdot/T_0)\bigr\|_{H^1(\mathbb{R})}\le C_q\bigl(\delta_R^{\frac12}+\delta_R\bigr)\varepsilon_R^{\frac12}.
\end{equation}
\end{itemize}
\end{lemma}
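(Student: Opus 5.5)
The plan is to transfer every item of Lemma~\ref{lem:Burgers} to $u^R=\Lambda(w)$ through the chain rule, with $\Lambda(w)=\sqrt{w/3}$ smooth on $[w_m,w_+]$. Since $w_m=3u_m^2>0$, all derivatives of $\Lambda$ on this interval are bounded by constants depending only on $u_m$; for instance $\Lambda'(w)=\frac{1}{2\sqrt{3w}}\le \frac{1}{6u_m}$. Also $\tilde w=3(u_+^2-u_m^2)=3\delta_R(u_++u_m)$, so $\tilde w\simeq\delta_R$ with constants depending on $u_m,u_+$.

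Item (1) follows from $u^R_x=\Lambda'(w)w_x>0$ and the monotonicity of $\Lambda$. Items (3), (4) and (6) use the mean value theorem: $|u^R-u_m|=|\Lambda(w)-\Lambda(w_m)|\le \|\Lambda'\|_\infty|w-w_m|$, and the same at $u_+$. One then substitutes the corresponding bounds of Lemma~\ref{lem:Burgers} and converts $\tilde w$ into $\delta_R$. For (5) and (7), note that $u^r(x/t)=\Lambda(w^r(x/t))$ by \eqref{eq:rarefaction}, so both reduce to the Lipschitz bound on $\Lambda$.

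For item (2), expand
\[
u^R_{xx}=\Lambda'' w_x^2+\Lambda' w_{xx},
\]
and similarly for the third and fourth derivatives, which are sums of products $\Lambda^{(k)}\prod \partial_x^{j_i}w$ with $\sum j_i=j$. Each product term is estimated by Hölder's inequality together with $\|w_x\|_{L^\infty}\le C\min\{\tilde w\e_R,1/t\}$. This explains the appearance of $\delta_R^{1/p}$: it comes from the products of first derivatives, for example $\|w_x^2\|_{L^p}\le\|w_x\|_\infty\|w_x\|_{L^p}\le C\delta_R^{1/p}t^{-2+1/p}\le C\delta_R^{1/p}/t^{1-(j-1)/q}$ for $t\ge1$. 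For $t\le1$ one uses the $\e_R$ branch instead.

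The pointwise bound
\[
|u^R_{xx}|\le C\bigl(|u^R_x|^2+|u^R_x|\bigr)
\]
needs a pointwise estimate $|w_{xx}|\le Cw_x$. I would obtain it from characteristics. Writing $x=x_0+w_0(x_0)t$, one has
\[
w_x=\frac{w_0'}{1+tw_0'}, \qquad w_{xx}=\frac{w_0''}{(1+tw_0')^3}.
\]
Since $|w_0''|\le C\e_R\,w_0'$ (because $w_0'\propto \e_R(1+\e_R^2x_0^2)^{-q}$), this gives $|w_{xx}|\le C\e_R w_x\le Cw_x$. Combined with $u^R_x\simeq w_x$, the bound follows.

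Item (8) is the step I expect to be the main obstacle. The $L^2$ part is handled by splitting $\R$ into three regions. On $[w_mT_0,w_+T_0]$, item (7) is squared and integrated over an interval of length $\sim\delta_R T_0$. On the two outer regions, the pointwise bounds (3), (4) or (6) are integrated; with $\theta$ chosen suitably, the resulting powers of $T_0=\e_R^{-8}$ beat $\e_R^{-1}$ from integrating the $\e_R$-weighted tails.

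The derivative part is subtler because $u^r$ is only Lipschitz, with $u^r_x=\Lambda'(x/t)/t$ on the fan. Here I would bound $\|u^R_x\|_{L^2}$ and $\|u^r_x\|_{L^2}$ separately, each by $C\delta_R^{1/2}T_0^{-1/2}$ using (2) with $p=2$, which is far smaller than $\e_R^{1/2}$. One checks exponents: $(7)$ squared gives $\delta_R^{1/q}\e_R^{-2+1/q}T_0^{-2+1/q}\cdot\delta_R T_0\lesssim \e_R^{-2}\e_R^{8}$, comfortably $\le\e_R$. Tracking the $\delta_R$ dependence as $\delta_R^{1/2}+\delta_R$ is routine bookkeeping.
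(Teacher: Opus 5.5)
Your proposal is correct. For items (1)--(7) it follows the paper's proof: monotonicity and the mean value theorem for $\Lambda$, the comparison $6u_m\delta_R\le\tilde w\le C\delta_R$ (the lower bound is needed in (6) because the exponent there is negative), and the chain rule for (2). In (2) there is only a cosmetic difference. The paper reduces every lower-order product to $C_q\varepsilon_R^{j-r}(w_x)^r$ via the pointwise bound $|\partial_x^j w|\le C_q\varepsilon_R^{j-1}w_x$ and then applies Lemma~\ref{lem:Burgers}(2) with exponent $rp$. You use H\"older with sup norms instead, and the two are equivalent.

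The genuine difference is the derivative part of (8).
\begin{itemize}
\item The paper splits it over $I_1,I_2,I_3$. On the outer intervals it uses the tail bounds (3)--(4), which give only $\|u^R_x\|_{L^2(I_1\cup I_3)}\le C_q\delta_R\varepsilon_R^{1/2}$. This is exactly where the rate $\varepsilon_R^{1/2}$ and the term $\delta_R$ in \eqref{eq:inviscid-comparison} come from.
\item On the fan the paper uses the characteristic identity $w_x-\frac{1}{T_0}=-\frac{1}{T_0(1+w_0'(x_0)T_0)}$ together with $u^R_x-u^r_x=\frac{w_x-1/T_0}{6u^R}+\frac{1}{6T_0}\bigl(\frac{1}{u^R}-\frac{1}{u^r}\bigr)$.
\item Your global triangle inequality combines the time-decay branch $\|w_x(T_0)\|_{L^2}\le\tilde w^{1/2}T_0^{-1/2}$ with the explicit bound $\|u^r_x\|_{L^2}^2\le C\tilde w\,T_0^{-1}$. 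The latter is a direct computation from $u^r_x=\Lambda'(x/T_0)/T_0$ on the fan, not an application of item (2). Together they give $C\delta_R^{1/2}\varepsilon_R^{4}$ on all of $\mathbb{R}$ at once.
\end{itemize}
Your route is shorter and sharper: with it, the whole $H^1$ distance is $O(\varepsilon_R^{3-7/(2q)})$ rather than $O(\varepsilon_R^{1/2})$.

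Two small corrections.
\begin{itemize}
\item For the $L^2$ part on the outer intervals, it must be (6), not (3)--(4), that you integrate. Integrating (3) alone yields only $\delta_R\varepsilon_R^{-1/2}$. The paper takes $\theta=\frac{q}{2q-1}$; any $\theta$ with $\theta(1-\frac{1}{2q})\ge\frac17$ and $2q(1-\theta)>1$ works.
\item Your exponent check on $I_2$ drops the $1/q$ corrections. Item (7) squared, times $|I_2|=\tilde w T_0$, gives $C\delta_R^{1+1/q}\varepsilon_R^{-2+1/q}T_0^{-1+1/q}=C\delta_R^{1+1/q}\varepsilon_R^{6-7/q}$, not $\varepsilon_R^{6}$. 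This is still far below $\varepsilon_R$, so the conclusion is unaffected.
\end{itemize}
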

The proof is provided in Section \ref{sec:first-pf}.

\subsection{Composite Wave of Shock and Rarefaction}\label{subsec:composite}
We now turn to the composite wave. Under the standing assumption
\begin{equation}\label{eq:far-field}
    u_- < -\frac{u_-}{2} < u_+,
\end{equation}
or equivalently $u_-<0$ and $u_+>-u_-/2$, the Riemann problem \eqref{Riemann} is solved by the degenerate Oleinik shock connecting $u_-$ to the intermediate state $u_m:=-u_-/2>0$, followed by the rarefaction wave connecting $u_m$ to $u_+$. Accordingly, the long-time behavior of the Cauchy problem \eqref{eq:gKdVB} is governed by the corresponding composite of a degenerate viscous-dispersive Oleinik shock and a rarefaction wave, described by the superposition
\begin{equation}\label{eq:composite}
    u^S\big(x-\sigma t - X(t);u_-,u_m\big) + u^r\big(\tfrac{x}{t};u_m,u_+\big) - u_m,
\end{equation}
where $u^S$ is the degenerate Oleinik shock of Theorem \ref{thm:shock_properties}, $u^r$ is the rarefaction wave of Section \ref{subsec:inviscid-rare}, and the time-dependent shift $X(t)$ on the Oleinik shock is constructed in Section \ref{sec:weight-shift}. We refer to Figure \ref{fig:composite} for the profiles of degenerate shock, rarefaction wave and their composition.

\begin{figure}[h!]
	\includegraphics[width=0.9\textwidth]{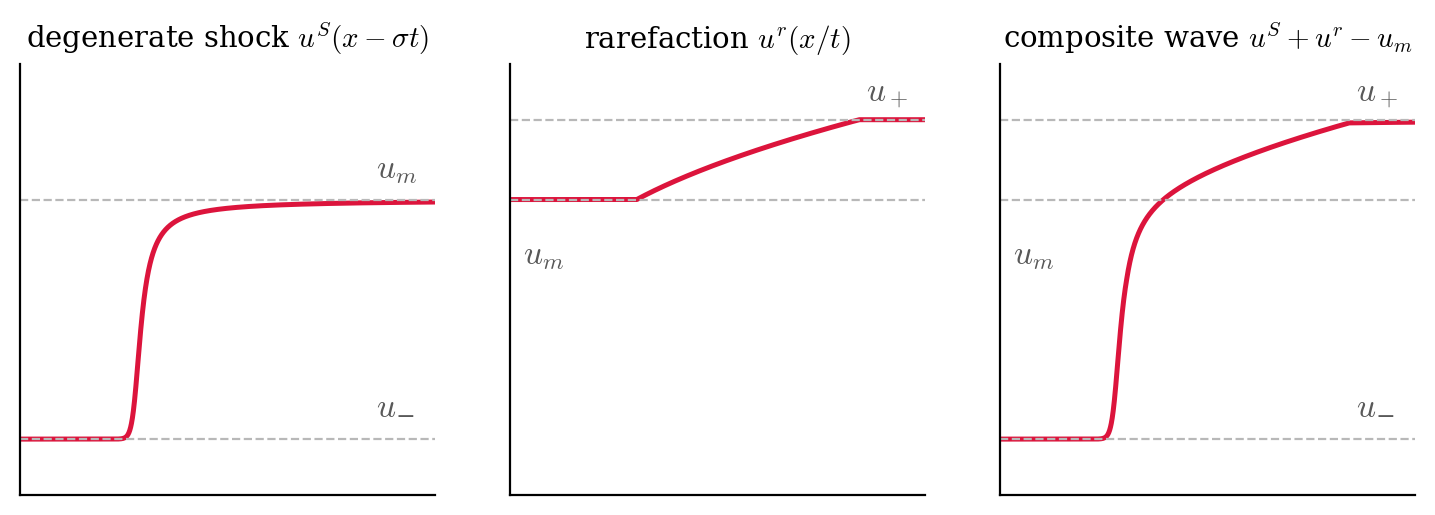}
	\caption{Profiles of the degenerate shock $\us$ (left), rarefaction wave $u^r$ (middle), and the composite wave $u^S+u^r-u_m$ (right). Observe that the degenerate shock has different decay rates at its two end, and the shock fronts of the degenerate shock and the rarefaction wave are attached.}
	\label{fig:composite}
\end{figure}

\subsection{Main Result} \label{subsec:main-stability}
With the composite wave \eqref{eq:composite} and the smooth approximate rarefaction \eqref{eq:uR-def} in hand, we state our main result on stability.

\begin{theorem} \label{thm:main}
Suppose that the far-field states $u_\pm$ satisfy \eqref{eq:far-field}, and set $u_m:=-u_-/2$ with the wave strengths $\delta_S:=u_m-u_-=3u_m$ and $\delta_R:=u_+-u_m$. There exist positive constants $\delta_0,\varepsilon_0$ such that the following holds. If the dispersion coefficient $\kappa>0$ and the wave strengths satisfy
\begin{equation} \label{eq:small-delta-constraint-main}
    \kappa u_m^2 \le \delta_0 \qquad\text{and}\qquad \delta_R \le \frac{5}{18} \delta_S \; \Bigl(= \frac{5}{6}u_m\Bigr),
\end{equation}
then there exists a constant $0<\varepsilon_R<1$ (depending only on the wave strengths $\delta_S$ and $\delta_R$) such that the following holds.

Let $u^R$ be the approximate rarefaction wave \eqref{eq:uR-def} with parameter $\varepsilon_R$, and set $T_0:=\varepsilon_R^{-8}$. If the initial data $u_0$ satisfies
\begin{equation}\label{eq:initial-smallness}
\begin{aligned}
    &\bigl\| u_0 - \bigl( u^S(\cdot\,;u_-,u_m) + {u^R(T_0,\,\cdot+\sigma T_0)} - u_m \bigr) \bigr\|_{L^2(\mathbb{R})} \\
    &\qquad + \bigl\| u_{0x} - \bigl( u^S_x(\cdot\,;u_-,u_m) + {u^R_x(T_0,\,\cdot+\sigma T_0)} \bigr) \bigr\|_{L^2(\mathbb{R})} < \varepsilon_0,
\end{aligned}
\end{equation}
then the Cauchy problem \eqref{eq:gKdVB} admits a unique global-in-time solution $u(t,x)$, and there exists a Lipschitz continuous shift $X(t)$ such that 
\begin{align*}
u(t,x) - \left( \us\bigl(x-\sigma t-X(t);u_-,u_m\bigr) + u^R\bigl(t+T_0,\,x-X(t)+\sigma T_0\bigr) - u_m \right) 
&\in C\big([0,\infty);H^1(\mathbb{R})\big),\\
\partial_x\left[\, u(t,x) - \left( \us\bigl(x-\sigma t-X(t);u_-,u_m\bigr) + u^R\bigl(t+T_0,\,x-X(t)+\sigma T_0\bigr) - u_m \right) \right] &\in L^2\big(0,\infty;H^1(\mathbb{R})\big).\\
%u_{xx}(t,x) - \us_{xx}\big(x-\sigma t - X(t);u_-,u_m\big) &\in L^2\big(0,\infty;L^2(\mathbb{R})\big).
\end{align*}
Moreover, the composite wave is time-asymptotically stable, i.e.,
\[
    \lim_{t\to\infty}\,\sup_{x\in\mathbb{R}}\,\Bigl| u(t,x) - \left( \us\big(x-\sigma t - X(t);u_-,u_m\big) + u^r\big(\tfrac{x}{t};u_m,u_+\big) - u_m \right) \Bigr| = 0,
\qquad
    \lim_{t\to\infty}|\dot{X}(t)| = 0.
\]
\end{theorem}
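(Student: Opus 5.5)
The plan follows the $a$-contraction with shifts framework of \cite{KVW-ADV23,HWZ-MA}, adapted to the dispersive term. We set $\tilde u(t,x):=\us(x-\s t-X(t))+u^R(t+T_0,x-X(t)+\s T_0)-u_m$ and the perturbation $\phi:=u-\tilde u$. Local existence in $H^1$ is standard for the parabolic–dispersive equation, so global existence and the stated regularity follow by continuation once we have an a priori estimate of the form
\[
\sup_{t\in[0,T]}\|\phi(t)\|_{H^1}^2+\int_0^T\bigl(\delta_S|\dot X|^2+G^S+G^R+\|\phi_x\|_{H^1}^2\bigr)\,dt\le C\bigl(\|\phi_0\|_{H^1}^2+\varepsilon_R^{1/4}\bigr),
\]
assuming $\|\phi\|_{L^\infty(0,T;H^1)}\le\varepsilon_1$ small. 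The shift is defined by the ODE
\[
\dot X(t)=-\frac{M}{\delta_S}\int a(\x)\,\us_\x(\x)\,\phi\,dx,\qquad X(0)=0,
\]
with weight $a=1+\lambda\,g(\us)$. Here $g$ is increasing and $\lambda\sim\delta_S$ is chosen as in \cite{HWZ-MA}, with $g$ tailored to the degenerate end: it is built from $(\us-s)^2(\us+2s)$ rather than from a linear function of $\us$, because the shock is characteristic at $u_m$.

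First we derive the error equation. Since $\us$ solves the profile ODE and $u^R$ solves \eqref{eq:uR-eq}, the equation for $\phi$ contains the interaction term $(f(\tilde u)-f(\us)-f(u^R)+f(u_m))_x$, the viscous and dispersive error terms $-u^R_{xx}+\k u^R_{xxx}$, and the shift term $\dot X\,\us_\x$.

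Second, we compute $\frac{d}{dt}\int a\,\phi^2/2$. This produces the good terms: $\dot X$-controlled terms, a shock-localized term $G^S\sim\int a_\x\,\phi^2\cdots$ coming from the cubic flux, the rarefaction term $G^R=\int u^R_x\,\phi^2\cdot 3(\ldots)$, and the diffusion $\int a\phi_x^2$. The dispersive contribution is $\k\int a\,\phi\,\phi_{xxx}=-\tfrac{3\k}{2}\int a_x\phi_x^2+\tfrac{\k}{2}\int a_{xxx}\phi^2$. Using \eqref{eq:absbdd}, we have $|a_{xxx}|\le C\lambda s^4\us_\x$ and $|a_x|\le C\lambda\us_\x$. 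Under $\k s^2\le\delta_0$, both dispersive pieces are therefore absorbed by $G^S$ and the diffusion, uniformly in $\k$.

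Third, the key step, which we expect to be the main obstacle, is the sharp weighted Poincaré-type inequality. Following \cite{KV-JEMS21}, we change variables $y=(\us-u_-)/\delta_S$ and control the bad term $\int a_\x|\phi|^2\cdots$ through $\int a\,\phi_x^2$ and $\delta_S|\dot X|^2$. The degeneracy at $u_m$ makes $\us_\x/(\us-u_-)(s-\us)$ nonuniform. Here \eqref{eq:key-ineq} is essential: it shows $\us_\x\simeq(\us-s)^2(\us+2s)$ with explicit constants $4\sqrt3-6$ and $2$, exactly as in the viscous case. We then rerun the nonlinear Poincaré computation of \cite{HWZ-MA}, replacing their identity by these two-sided bounds. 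The constant loss from the ratio $2/(4\sqrt3-6)$ is absorbed only when the rarefaction contributions near $u_m$ are compatible. This is where we expect the explicit restriction $\delta_R\le\frac{5}{18}\delta_S$ to appear: when tracking the sign of $f''(\tilde u)$ where the two waves overlap, $\tilde u$ must stay in a range where the cubic flux gives a favourable sign, roughly $\tilde u\le s+\delta_R\le\tfrac{11}{6}s$ against $u_-=-2s$.

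Fourth, the interaction terms are handled by the separated decay rates. The shock decays algebraically to the right as in \eqref{eq:right_decay_u}, while the rarefaction is localized to the right of $w_mt$ by Lemma~\ref{lem:approx_rarefaction}(3),(6). Together these give
\[
\int_0^\infty\|\us_\x(u^R-u_m)\|_{L^2}+\|(\us-s)u^R_x\|_{L^2}\,dt\le C\varepsilon_R^{1/4},
\]
where the time shift $T_0=\varepsilon_R^{-8}$ makes the $(\varepsilon_Rt)^{-\theta}$ decay integrable. The errors $u^R_{xx}$ and $\k u^R_{xxx}$ are bounded by Lemma~\ref{lem:approx_rarefaction}(2).

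Finally, the $\phi_x$ estimate is obtained by testing with $-\phi_{xx}$. The dispersive term $\k\int\phi_{xxx}\phi_{xx}$ vanishes, and the rest is controlled by the $L^2$ estimate. With this, Gronwall-free closure yields the a priori bound. The asymptotics then follow in the standard way: $\|\phi_x\|^2_{L^2}$ is integrable with integrable derivative, so $\|\phi\|_{L^\infty}\le\|\phi\|^{1/2}\|\phi_x\|^{1/2}\to0$. Moreover $|\dot X|\le C\|\phi\|_{L^\infty}\to0$, and Lemma~\ref{lem:approx_rarefaction}(5) replaces $u^R$ by $u^r$, noting that $X(t)/t\to0$.
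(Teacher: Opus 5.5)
Your overall architecture (shifted composite wave, weighted $L^2$ estimate, $-\phi_{\x\x}$ test, continuation, asymptotics) matches the paper. However, the core steps have genuine gaps.

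\textbf{The Poincar\'e step (the one you flag) fails as set up.} With $y=(\us-u_-)/\delta_S$ on the whole shock, Lemma~\ref{lem:poincare} puts the factor $\frac{(\us+2s)(s-\us)}{\us_\x}$ in front of $[(a\phi)_\x]^2$. By \eqref{eq:key-ineq} this factor is comparable to $(s-\us)^{-1}$, so it blows up as $\us\to u_m$, while the dissipation $\int a\phi_\x^2$ has a bounded weight. This is not a constant loss of size $2/(4\sqrt3-6)$, and nothing on the rarefaction side can absorb it.

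Nor can the profile identity be replaced by the two-sided bounds. The computation integrates the cross term $2aa_\x\phi\phi_\x$ by parts and eliminates $\us_{\x\x}$ through the profile equation, which requires the exact ODE.

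The paper instead does the following:
\begin{itemize}
\item It applies Lemma~\ref{lem:poincare} only on $[u_-,s/2]$, with $\psi=\phi w$.
\item It uses the exact relation $\frac{\us+2s}{\us_\x}=\frac{1}{(s-\us)^2}\bigl(1-\k\frac{\us_{\x\x}}{\us_\x}\bigr)$. The leading part is then the viscous quadratic form, whose positivity is the explicit statement $H_1+H_2>4s^4$ and $1-w\frac{s-2\us}{5s(s-\us)^2}\ge\frac16$ of Lemma~\ref{lem:weight-shock}.
\item It bounds the $\k$-remainders by $M\k s^2$ times the good terms, using \eqref{eq:absbdd} and $\us_\x\le Cs^3$. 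So dispersion enters the Poincar\'e computation itself, not only through the two terms you list.
\item On $[s/2,s)$ the weight is constant, and that region is controlled by the favourable term $3\int\phi^2\us w\us_\x$ together with the shift.
\end{itemize}

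Your weight also has the wrong monotonicity for this mechanism. At $\us=-2s$ one gets $H_1+H_2=2w(2w-15s^2)+5sw'(w-9s^2)$, and the $\phi_\x^2$ coefficient there forces $w<9s^2$. Hence $w'\ge0$ would require $w(-2s)>\frac{15}{2}s^2$. But the $\phi_\x^2$ coefficient at $\us=0$ forces $w(0)<5s^2$. So $w$ must decrease in $\us$, by an order-one factor, unlike your increasing $g$.

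\textbf{Missing shift term.} Because you shift $u^R$ too, the equation for $\phi$ contains $\dot X(\us_\x+\ur_\x)$, not just $\dot X\us_\x$. The extra term $\dot X\int w\phi\ur_\x$, after Young's inequality and the definition of the shift, costs $\frac{16}{5}\delta_R\int\phi^2w\ur_\x$. This must be absorbed by $G^R\ge3u_m\int\phi^2w\ur_\x$. That is exactly where $\delta_R\le\frac{5}{18}\delta_S$ is used (Lemma~\ref{lem:GS-N}), not in a sign condition on $f''(\tilde u)$.

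\textbf{Interaction bound is false.} Your claim $\int_0^\infty\bigl(\|\us_\x(\ur-u_m)\|_{L^2}+\|(\us-s)\ur_\x\|_{L^2}\bigr)dt\le C\e_R^{1/4}$ does not hold. Inside the fan the degenerate shock still has its algebraic tail $\us_\x\simeq s^3(1+s^2\x)^{-2}$. There $\ur-u_m\simeq\x/(6st)$ and $\ur_\x\simeq(6st)^{-1}$, so both $L^2$ norms are of order $(t+T_0)^{-1}$, and the time integral diverges logarithmically. Consequently the error cannot be closed through $\|\phi\|_{L^2}\|E\|_{L^2}$.

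The paper instead bounds $|\int\phi Ew|\le C\|\phi\|_{L^\infty}\|E\|_{L^1}$ and uses $\|\phi\|_{L^\infty}^2\le2\|\phi\|_{L^2}\|\phi_\x\|_{L^2}$ with Young's inequality. Then only $\int_0^\infty\|E\|_{L^1}^{4/3}\,dt$ is needed, and it is finite because the $t^{-1}\ln t$ interaction becomes integrable after the $4/3$ power. The term $J$ is handled the same way, through $\bigl(\int|\us-u_m|\ur_\x\bigr)^2$.
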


\begin{remark}\label{rem:hypotheses}
\textup{(i)} The hypotheses \eqref{eq:small-delta-constraint-main} do not require $\delta_S$ or $\delta_R$ to be small. They restrict only the ratio $\delta_R/\delta_S$ and the quantity $\kappa u_m^{2}$, which also governs the monotonicity of the shock in Theorem~\ref{thm:shock_properties}, so that for fixed $\kappa$ the strengths may be as large as $u_m\le\sqrt{\delta_0/\kappa}$. The constants $\varepsilon_R$ and $\varepsilon_0$ depend on $\delta_S$ and $\delta_R$ but not on $\kappa$.
The ratio condition $\delta_R\le\frac{5}{18}\delta_S$ is used in the proof of Lemma~\ref{lem:GS-N} to ensure that the coefficient in \eqref{eq:GSN-rare} is positive.
    
\noindent\textup{(ii)} In the viscous case \cite{HWZ-MA}, where $\delta_R$ is assumed small, the initial perturbation is measured against $u^S(x)$ for $x<0$ and $u^S(x)+u_+-u_m$ for $x>0$. Since $\delta_R$ is not assumed small here, the approximate rarefaction $u^R$ enters \eqref{eq:initial-smallness}. By Lemma~\ref{lem:approx_rarefaction}(8) and the choice \eqref{eq:eR-choice} of $\varepsilon_R$, Theorem~\ref{thm:main} also holds with $\varepsilon_0/2$ and $u^r(\cdot/T_0+\sigma)$ in place of $\varepsilon_0$ and $\ur(T_0,\cdot+\sigma T_0)$ in \eqref{eq:initial-smallness}.
\end{remark}

\begin{remark}\label{rem:extensions}
For a general viscosity $\mu>0$, the scaling $v(t,x):=u(\mu t,\mu x)$ normalizes the viscosity to $1$ and replaces $\kappa$ by $\kappa/\mu^{2}$, so that Theorem~\ref{thm:main} holds for any $\mu>0$ under the condition $\kappa u_m^{2}/\mu^{2}\le\delta_0$, with the initial condition imposed on the rescaled datum. Since the flux is odd, the reflection $u\mapsto-u$ leaves \eqref{eq:gKdVB} invariant and maps $u_\pm$ to $-u_\pm$. Applied to $-u$, Theorem~\ref{thm:main} covers the decreasing configuration $u_+<-u_-/2<u_-$, for which the composite wave consists of a decreasing degenerate Oleinik shock followed by a decreasing rarefaction.
\end{remark}

\begin{remark}\label{rem:shift}
Since $\dot{X}(t)\to0$, the shift grows at most sub-linearly, i.e., $X(t)/t\to0$, so that the shifted shock $\us(x-\sigma t-X(t))$ keeps its profile time-asymptotically.
\end{remark}

The proof, given in Section \ref{sec:pfmain}, is based on the method of \(a\)-contraction with shifts \cite{KV-JEMS21}.
The key ingredient of the \(a\)-contraction method is the following Poincar\'e-type inequality, which is used to handle the localization effect induced by the shock profile:
\begin{lemma}\label{lem:poincare}\cite[Lemma 2.9]{KV-JEMS21} 
For any $\psi:[0,1] \to \mathbb{R}$ satisfying $\int_0^1 y(1-y)|\psi'(y)|^2 \, dy <+ \infty$,
\begin{equation*}
2 \int_0^1 \psi^2 (y)\, dy - 2 \left( \int_0^1 \psi (y)\, dy \right)^2 \le  \int_0^1 y(1-y)|\psi'(y)|^2 \, dy.
\end{equation*}
\end{lemma}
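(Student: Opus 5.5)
The plan is to prove Lemma~\ref{lem:poincare} through the spectral theory of the Legendre operator. Set $L\psi := -\bigl(y(1-y)\psi'\bigr)'$ on $(0,1)$, and change variables by $x = 2y-1 \in [-1,1]$. Then $y(1-y) = (1-x^2)/4$ and $d/dy = 2\,d/dx$, so
\[
\int_0^1 y(1-y)|\psi'(y)|^2\,dy = \frac12\int_{-1}^1 (1-x^2)|\phi'(x)|^2\,dx,
\qquad \phi(x) := \psi\bigl(\tfrac{1+x}{2}\bigr).
\]
In the same variables, $\int_0^1\psi^2\,dy = \tfrac12\int_{-1}^1\phi^2\,dx$ and $\int_0^1\psi\,dy = \tfrac12\int_{-1}^1 \phi\,dx$. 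The left-hand side of the claim is therefore $\int_{-1}^1\phi^2\,dx - \tfrac12\bigl(\int_{-1}^1\phi\,dx\bigr)^2$, and the inequality becomes
\[
\int_{-1}^1 \phi^2\,dx - \frac12\Bigl(\int_{-1}^1\phi\,dx\Bigr)^2 \le \frac12 \int_{-1}^1(1-x^2)|\phi'|^2\,dx .
\]

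Next I expand $\phi$ in normalized Legendre polynomials $\{\hat P_n\}$, which form an orthonormal basis of $L^2(-1,1)$ with $\hat P_0 = 1/\sqrt2$. They diagonalize the weighted Dirichlet form:
\[
\int_{-1}^1 (1-x^2)\hat P_n'\hat P_m'\,dx = n(n+1)\,\delta_{nm}.
\]
Writing $\phi = \sum c_n \hat P_n$, the left-hand side equals $\sum_{n\ge1} c_n^2$, because the mean term removes exactly $c_0^2$. The right-hand side equals $\tfrac12\sum_{n\ge1} n(n+1)c_n^2 \ge \sum_{n\ge1} c_n^2$, since $n(n+1)/2 \ge 1$ for all $n \ge 1$. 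This proves the inequality, with equality precisely when $\phi$ is affine, i.e.\ $\psi$ is linear.

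The main obstacle is justifying the expansion of the Dirichlet form for every $\psi$ with only a finite weighted energy. Such $\psi$ need not be bounded near the endpoints: it can grow like $\log$, because the weight degenerates there. I would handle this in three steps. First, note that finite weighted energy already implies $\psi \in L^2$ via a Hardy-type bound, or alternatively via truncation to $[\varepsilon, 1-\varepsilon]$. Second, prove the inequality for polynomials, or for smooth $\psi$ on $[0,1]$, where the integration by parts
\[
\int (1-x^2)\phi'\hat P_n'\,dx = n(n+1)\int \phi\,\hat P_n\,dx
\]
is valid because the boundary terms vanish thanks to the factor $1-x^2$. Third, use that smooth functions are dense in the weighted space $H := \{\psi \in L^2 : \int y(1-y)|\psi'|^2 < \infty\}$ with its natural norm. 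Both sides are continuous in this norm, so the inequality passes to the limit. If density is inconvenient to verify, I would instead apply the inequality on $[\varepsilon, 1-\varepsilon]$ after a linear rescaling, which costs only a factor tending to $1$, and let $\varepsilon \to 0$ using monotone convergence.
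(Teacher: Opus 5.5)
Your argument is correct. The paper does not prove this lemma; it quotes it from Kang--Vasseur \cite{KV-JEMS21}. Your route is the standard spectral-gap proof: the first nonzero eigenvalue of $-\bigl(y(1-y)\psi'\bigr)'$ on $(0,1)$ is $2$. To my knowledge, this is also how the cited source argues, so there is nothing in the paper to compare it with beyond the citation. Your change of variables and constants check out. In the Legendre coefficients the claim is exactly $\sum_{n\ge1}c_n^2\le\frac12\sum_{n\ge1}n(n+1)c_n^2$.

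There are two small remarks on the regularity part.

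First, a finite-energy $\psi$ cannot grow like $\log(1/y)$ at an endpoint, since $\log(1/y)$ already has infinite weighted energy. Cauchy--Schwarz gives
$|\psi(y)-\psi(\tfrac12)|\le\bigl(\int_y^{1/2}t|\psi'|^2\,dt\bigr)^{1/2}\bigl(\log\tfrac{1}{2y}\bigr)^{1/2}$,
so the growth is at most $\sqrt{\log(1/y)}$. Integrating this bound also yields your Hardy-type estimate $\int_0^{1/2}|\psi-\psi(\tfrac12)|^2\,dy\le\frac12\int_0^{1/2}y|\psi'|^2\,dy$.

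Second, this growth bound makes the density step unnecessary.
\begin{itemize}
\item Because $(1-x^2)\phi(x)\to0$ as $x\to\pm1$, you can integrate by parts on $[-1+\delta,1-\delta]$ and let $\delta\to0$. This gives $\int_{-1}^1(1-x^2)\phi'\hat P_n'\,dx=n(n+1)c_n$ directly for every admissible $\phi$.
\item Parseval in $L^2(-1,1)$ then handles the left side.
\item For the right side, Bessel's inequality for the orthonormal family $\hat P_n'/\sqrt{n(n+1)}$, $n\ge1$, in $L^2\bigl((1-x^2)\,dx\bigr)$ suffices. You do not need completeness of that family.
\end{itemize}

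Your $[\varepsilon,1-\varepsilon]$ rescaling fallback is also fine. With $\psi_\varepsilon(y)=\psi(\varepsilon+(1-2\varepsilon)y)$ one has
$\int_0^1 y(1-y)|\psi_\varepsilon'|^2\,dy=\frac{1}{1-2\varepsilon}\int_\varepsilon^{1-\varepsilon}\bigl(z(1-z)-\varepsilon(1-\varepsilon)\bigr)|\psi'(z)|^2\,dz$,
so the cost is at most the factor $(1-2\varepsilon)^{-1}$.
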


\section{Proofs of Theorem \ref{thm:shock_properties} and Lemmas \ref{lem:Burgers}--\ref{lem:approx_rarefaction}} \label{sec:first-pf}
\setcounter{equation}{0}
In this section, we prove Theorem \ref{thm:shock_properties} and Lemmas \ref{lem:Burgers}--\ref{lem:approx_rarefaction}.
The former establishes the properties of the degenerate shock, while the latter concerns those of the approximate rarefaction wave.

\subsection{Proof of Theorem \ref{thm:shock_properties}}
$\bullet$ Proof of \eqref{eq:key-ineq}: The proof is inspired by the argument in \cite{CEKS1}, but we refine it to suit the present setting. For the case of the standard KdV--Burgers equation, see \cite{CEKS2}.

Throughout the proof, we use the scale-invariant variables
\begin{equation}\label{eq:scale-inv-var}
    z := \frac{\us}{s}\in(-2,1), \qquad \zeta := s^2 \x, \qquad \nu := \k s^2,
\end{equation}
so that $z(\zeta) = \us(\x)/s$. Letting $\,'=d/d\zeta$, we obtain
\begin{equation*}
    \us_\x = s^3 z', \qquad \us_{\x\x} = s^5 z'', \qquad \us_{\x\x\x} = s^7 z'''.
\end{equation*}
In terms of the variables \eqref{eq:scale-inv-var}, the monotonicity condition \eqref{threshold} is equivalent to
\begin{equation}\label{eq:nu-bound}
    \nu \le \frac{1}{36}.
\end{equation}
Furthermore, \eqref{eq:shock-eq} simplifies to
\begin{equation}\label{eq:scaled-shock-eq}
   p(z) = z' - \nu z'', \qquad \text{with} \quad p(z) := (z-1)^2(z+2),
\end{equation}
while the inequality \eqref{eq:key-ineq} becomes
\begin{equation}\label{eq:key-ineq-2}
    (4\sqrt{3}-6)p(z) \le z' \le 2p(z).
\end{equation}

We analyze \eqref{eq:scaled-shock-eq} and prove \eqref{eq:key-ineq-2} by reducing the second order ODE to a first-order ODE system. Letting $v:=z'$, we obtain
\begin{equation}\label{eq:planar}
\left\{
\begin{aligned}
    z' &= v, \\ 
    v' &= \frac{v-p(z)}{\nu},
\end{aligned}
\right.
\end{equation}
whose equilibria are $(z^-,0)=(-2,0)$ and $(z^+,0)=(1,0)$ that correspond to the two ends of the profile. The Jacobian at an equilibrium $(z^\pm,0)$ is
\begin{equation*}
    J = \begin{pmatrix} 0 & 1 \\ -\dfrac{p'(z^\pm)}{\nu} & \dfrac1\nu \end{pmatrix}, \qquad p'(z)=3(z^2-1),
\end{equation*}
with
\begin{equation*}
 \operatorname{tr}J=1/\nu \qquad \text{and} \quad   \det J=p'(z^\pm)/\nu.
\end{equation*}
The stability type of each equilibrium is determined by the signs of the trace and determinant, which in this case depend solely on the sign of $p'(z)=3(z^2-1)$.

At the right endpoint $z^+=1$, we have $p'(1)=0$, so $\det J=0$ and the eigenvalues are
\begin{equation*}
    \lambda_1^+ = 0, \qquad \lambda_2^+ = \frac1\nu > 0.
\end{equation*}
The center subspace $E^c$ is spanned by $(1,0)$, and the unstable subspace $E^u$ is spanned by $(\nu,1)$. Since there is no stable subspace, any orbit converging to this equilibrium as $\zeta\to+\infty$ must lie on the center manifold. By introducing $w:=z-1$ to shift the equilibrium to the origin, we can rewrite the planar system \eqref{eq:planar} as
\begin{equation*}
\left\{
\begin{aligned}
    w' &= v, \\ 
    v' &= \frac{v-p(1+w)}{\nu},
\end{aligned}
\right.
\qquad p(1+w)=w^2(w+3).
\end{equation*}
Then, by the center manifold theorem \cite{Carr}, there exists a smooth function $g$ with $g(0)=g'(0)=0$, whose graph
\begin{equation*}
    W^c = \big\{(w,v)\,:\, v=g(w)\big\}
\end{equation*}
is locally invariant and tangent to $E^c$ at the origin. Since the relation $v=g(w)$ holds along $W^c$, we differentiate this relation with respect to $\zeta$ and apply \eqref{eq:planar} to yield 
\begin{equation}\label{g-relation}
	\frac{v-p(1+w)}{\nu}=g'(w)v\qquad\Leftrightarrow\qquad \nu g'(w)g(w)=g(w)-p(1+w).
\end{equation}
Since $g(0)=g'(0)=0$, we substitute the expansion $g(w)=\sum_{k\ge2}a_k w^k$ into \eqref{g-relation} and match the coefficients of $w$ to get $a_2=3$ and $a_3=1+18\nu$. Thus, we have for $|w|\ll1$,
\begin{equation}\label{eq:cm-expansion}
    g(w) = 3w^2 + (1+18\nu)\,w^3 + O(w^4).
\end{equation}

We now construct the profile $z(\zeta)$ and simultaneously prove the upper bound of \eqref{eq:key-ineq-2} by continuing the center manifold curve $v=g(w)=g(z-1)=:h(z)$ backward and trapping it between two barriers. Consider the region
\begin{equation}\label{eq:trap-region}
    \mathcal{S} := \Big\{(z,v)\,:\, -2\le z\le1,\ \tfrac12\,p(z)\le v\le 2\,p(z)\Big\},
\end{equation}
and recall $p(z)>0$ for all $z\in(-2,1)$. On the upper boundary $v=2p(z)$, the system \eqref{eq:planar} gives
\begin{equation*}
    \big(v-2p(z)\big)'\big|_{v=2p(z)}
    = \frac{v-p(z)}{\nu}-2p'(z)\,v\,\bigg|_{v=2p(z)}
    = p(z)\Big(\frac{1}{\nu}-12(z^2-1)\Big) > p(z)\Big(\frac{1}{\nu}-36\Big)\ge0,
\end{equation*}
where we used the bound $z^2-1<3$ on $(-2,1)$ and the condition \eqref{eq:nu-bound}.\\
Similarly, on the lower boundary $v=\frac{1}{2}p(z)$, we have
\begin{equation*}
    \Big(v-\frac{1}{2}p(z)\Big)'\Big|_{v=\frac{1}{2}p(z)}
    = p(z)\Big(-\frac{1}{2\nu}-\frac34(z^2-1)\Big)
    \le p(z)\Big(-\frac{1}{2\nu}+\frac34\Big)<0,
\end{equation*}
where we used the bound $z^2-1\ge-1$ and the condition \eqref{eq:nu-bound}.
Thus, the vector field points strictly outward along both the upper boundary ($v-2p$ is increasing) and the lower boundary ($v-\frac{1}{2}p$ is decreasing). Equivalently, the region $\mathcal{S}$ is strictly invariant under the backward flow. We refer to Figure \ref{fig:phase_plane} for the schematic illustration.

\begin{figure}
	\includegraphics[width=0.9\textwidth]{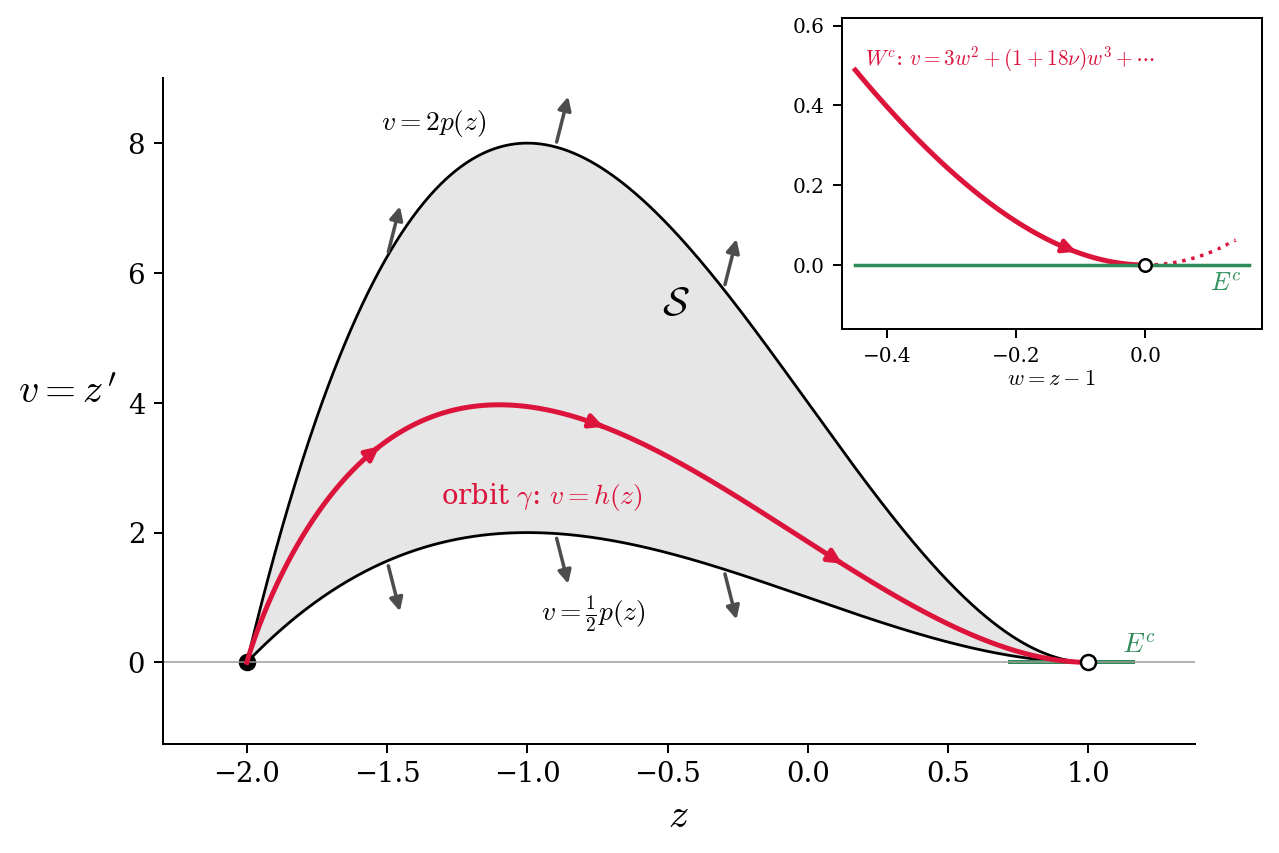}
	\caption{The heteroclinic orbit $\gamma$ (red) trapped in 
		the backward-invariant region $\mathcal{S}$ between $v=\tfrac12 p(z)$ and $v=2p(z)$. Arrows show the outward crossing of the forward flow on the boundary of $\mathcal{S}$. Inset: tangential approach to $E^c$ along the center manifold $W^c$ near the degenerate end $(1,0)$.}
	\label{fig:phase_plane}
\end{figure}

From $h(z)=g(z-1)$, we get
\[h'(z) = g'(w) = 6w+O(w^2)=6(z-1)+O((z-1)^2),\]
and therefore
\begin{equation}\label{eq:R-value}
\lim_{z\to 1}h'(z) = 0, \qquad
\lim_{z\to1}\frac{p(z)}{h(z)}=\lim_{z\to1}\frac{(z-1)^2(z+2)}{3(z-1)^2+O(z-1)^3}=1.
\end{equation}
Hence, along the center manifold curve $v=h(z)$, we have
\begin{equation*}
\lim_{z\to 1}\frac{v}{p(z)} = 1 .
\end{equation*}
Since $1\in(\tfrac12,2)$, this local center manifold lies inside $\mathcal{S}$ for $z\in(z_1,1)$ with some $z_1<1$.\\
Let $\gamma(\zeta)=(z(\zeta),v(\zeta))$ be the orbit on $W^c$ with
\begin{equation*}
\lim_{\zeta\to+\infty}\gamma(\zeta)=(1,0), \qquad z(\zeta)<1,
\end{equation*}
and fix $\zeta_0$ with $z(\zeta_0)\in(z_1,1)$. On $(z_1,1)$ the expansion \eqref{eq:cm-expansion} gives $z'=v=h(z)=3(z-1)^2+O((z-1)^3)>0$, so $z(\zeta)$ increases to $1$ for $\zeta\ge\zeta_0$ and $\gamma(\zeta)\in\mathcal{S}$ for all $\zeta\ge\zeta_0$.
Since $\mathcal{S}$ is compact and backward invariant, $\gamma(\zeta)\in\mathcal{S}$ for all $\zeta\le\zeta_0$ as well, and therefore $\gamma\subset\mathcal{S}$.

As $z'=v\ge\tfrac12 p(z)>0$ for $z\in(-2,1)$, the orbit $\gamma$ is strictly monotone in $z$ and bounded. Consequently, it must converge to an equilibrium as $\zeta\to-\infty$. Since $(-2,0)$ is the unique equilibrium in the region $z<1$, the backward limit should be $(-2,0)$. Combining this with the forward convergence to $(1,0)$ as $\zeta\to+\infty$, we have
\begin{equation*}
\lim_{\zeta\to-\infty}\gamma(\zeta) = (-2,0), \qquad \lim_{\zeta\to+\infty}\gamma(\zeta) = (1,0).
\end{equation*}
Thus, $\gamma$ represents the orbit of a smooth monotone profile $\us$ connecting the two endpoints.
Since $z$ is strictly increasing along $\gamma$, the orbit can be parameterized as the graph $v=h(z)$ of the function $h$, which is an extension of $h$ introduced earlier.
Moreover, $\us$ is unique up to translation, since any orbit converging to $(1,0)$ lies on the one-dimensional center manifold, on which $z'=h(z)>0$ near $z=1$.
The inclusion $\gamma\subset\mathcal{S}$ then yields
\begin{equation}\label{eq:h-upper}
z'=v=h(z) \le 2p(z), \qquad \text{for } z\in(-2,1).
\end{equation}
This proves the upper bound in \eqref{eq:key-ineq-2}. To attain the lower bound, we define $L_m(z):=h(z)-mp(z)$ for a fixed $m\in(0,4\sqrt3-6)$. We note that $z''=h'(z)z' = h'(z)h(z)$, and it follows from \eqref{eq:scaled-shock-eq} that 
\begin{equation}\label{eq:Hp-formula}
	\nu h(z)h'(z) = h(z)-p(z), \qquad\text{or equivalently,}\qquad
	h'(z) = \frac{h(z)-p(z)}{\nu h(z)}.
\end{equation}
We first claim that $L'_m<0$ at every zero of $L_m$. If $L_m(z_0)=0$, then \eqref{eq:Hp-formula} yields $h'(z_0)=\frac{m-1}{\nu m}<0$, and we have
\begin{equation}\label{est:Lmprime}
	\begin{aligned}
		L_m'(z_0) = h'(z_0)-mp'(z_0) &= \frac{m-1}{\nu m}-3m(z_0^2-1)
		\le \frac{m-1}{\nu m}+3m \\
		&= \frac{1}{\nu}\Big(\frac{m-1}{m}+3m\nu\Big)
		\le \frac{12(m-1)+m^2}{12\,\nu m}<0.
	\end{aligned}
\end{equation}
Here, we used the condition \eqref{eq:nu-bound} and the fact that $12(m-1)+m^2<0$ for $0<m<4\sqrt3-6<1$.
Now, we assume that there exists $z^*\in(-2,1)$ such that $L_m(z^*)<0$. If $L_m(z)\ge 0$ at some point $z\in (z^*,1)$, there exists a first zero 
\[z_0:=\inf \left\{z>z^*~:~L_m(z)=0\right\}.\]
Since $L_m(z)<0$ for $z\in(z^*,z_0)$, it must hold that $L_m'(z_0)\ge 0$, which contradicts to \eqref{est:Lmprime}. Thus, we conclude that $L_m(z)<0$ for all $z\in(z^*,1)$. However, since \eqref{eq:R-value} should hold and $m<1$, we must have 
\[\lim_{z\to1}\frac{L_m(z)}{p(z)}=1-m>0.\] 
This again contradicts to $L_m(z)<0$ for all $z\in (z^*,1)$. Therefore $h(z)\ge mp(z)$ for all $z\in(-2,1)$. Since this inequality should hold for all $m<4\sqrt{3}-6$, this implies
\begin{equation}\label{eq:h-lower}
    z'=v=h(z)\ge(4\sqrt3-6)p(z) \qquad\text{for}\quad z\in (-2,1).
\end{equation}
Combining \eqref{eq:h-upper} and \eqref{eq:h-lower} yields \eqref{eq:key-ineq-2}, and consequently \eqref{eq:key-ineq}.

\vspace{2mm}
\noindent $\bullet$ Proof of \eqref{eq:left_decay_u}--\eqref{eq:right_decay_up}: Returning to the original variables via \eqref{eq:scale-inv-var}, we now use a standard Gr\"onwall argument (see for instance, \cite{EEKO-JDE,KV-JEMS21}) so as to derive the decay estimates from \eqref{eq:key-ineq}. Since $\us(0)=0$, we have $-3s<\us(\x)-s<-s$ for $\x<0$; thus, \eqref{eq:key-ineq} implies
\begin{equation}\label{left}
    (4\sqrt{3}-6) s^2 (\us+2s)
    \le (\us+2s)_\x
    \le 18s^2 (\us+2s), \qquad \forall \x<0.
\end{equation}
Thus, the Gr\"onwall inequality yields
\begin{equation*}
    2s\,e^{-18s^2\abs{\x}} \le \us(\x)+2s \le 2s\,e^{-(4\sqrt{3}-6)s^2\abs{\x}}, \qquad \forall \x<0,
\end{equation*}
and substituting back into \eqref{left} gives
\begin{equation*}
    2(4\sqrt{3}-6)s^3\,e^{-18s^2\abs{\x}} \le \us_\x(\x) \le 36s^3\,e^{-(4\sqrt{3}-6)s^2\abs{\x}}, \qquad \forall \x<0.
\end{equation*}
These bounds establish \eqref{eq:left_decay_u} and \eqref{eq:left_decay_up}. For $\x>0$, we have $2s<\us+2s<3s$, and so \eqref{eq:key-ineq} gives
\begin{equation}\label{right}
    2(4\sqrt{3}-6)s(\us-s)^2
    \le (\us-s)_\x
    \le 6s(\us-s)^2, \qquad \forall \x>0,
\end{equation}
and applying the same Gr\"onwall argument yields
\begin{equation*}
    \frac{s}{1+6s^2\abs{\x}} \le s-\us(\x) \le \frac{s}{1+2(4\sqrt{3}-6)s^2\abs{\x}}, \qquad \forall \x>0,
\end{equation*}
which, together with \eqref{right}, gives
\begin{equation*}
    \frac{2(4\sqrt{3}-6)s^3}{(1+6s^2\abs{\x})^2} \le \us_\x(\x) \le \frac{6s^3}{(1+2(4\sqrt{3}-6)s^2\abs{\x})^2}, \qquad \forall \x>0.
\end{equation*}
These estimates prove \eqref{eq:right_decay_u} and \eqref{eq:right_decay_up}.

\vspace{2mm}
\noindent $\bullet$ Proof of \eqref{eq:second_deriv_bound}: Returning to the scale-invariant variables, we define
\begin{equation} \label{eq:F-def}
    F := h'(z) = \frac{z''}{z'}  =\frac{\us_{\x\x}}{s^2\,\us_\x},
\end{equation}
which is well defined since $z'>0$. We claim that
\begin{equation}\label{key:ineq-der}
    -3 \le F \le 18.
\end{equation}
We divide \eqref{eq:scaled-shock-eq} by $z'$ to get $\nu F = 1 - p(z)/z'$, and differentiate it with respect to $\zeta$ to obtain
\begin{equation*}
\nu F' =-\frac{d}{d\zeta}\left(\frac{p(z)}{z'}\right)
=-\frac{p'(z)(z')^2-p(z)z''}{(z')^2}=-p'(z)+\frac{p(z)}{z'}F
=- 3(z^2-1)+  F - \nu F^2.
\end{equation*}
Note that $-3\le 3(z^2-1)<9$ for all $z\in(-2,1)$. If $F(\zeta)=18$ at some $\zeta$, then
\begin{equation*}
\nu F' = - 3(z^2-1) + 18 - 18^2\nu  > 0,
\end{equation*}
where we used \eqref{eq:nu-bound} and $3(z^2-1)<9$.
Similarly, if $F(\zeta)=-3$ at some $\zeta$, then
\begin{equation*}
    \nu F' = - 3(z^2-1) -3 - 9\nu  \le -9\nu < 0.
\end{equation*}
Thus, if $F(\zeta_0)>18$ at some $\zeta_0$, then $F(\zeta)>18$ for all $\zeta>\zeta_0$. Likewise, if $F(\zeta_0)<-3$, then $F(\zeta)<-3$ for all $\zeta>\zeta_0$. In either case, this contradicts the limit $F\to 0$ as $z\to1$ established in \eqref{eq:R-value}. Hence, $F$ remains in $[-3,18]$ for all $\zeta$, which proves \eqref{key:ineq-der} and \eqref{eq:second_deriv_bound}.

\vspace{2mm}
\noindent $\bullet$ Proof of \eqref{eq:third_deriv_bound}--\eqref{eq:absbdd}: Define
\begin{equation} \label{eq:G-def}
    G := h'(z)^2 + h(z)h''(z) = \frac{z'''}{z'}=\frac{\us_{\x\x\x}}{s^4\,\us_\x}.
\end{equation}
We claim that
\begin{equation}\label{key:ineq-der2}
    -60 \le G \le 324.
\end{equation}
Differentiating \eqref{eq:scaled-shock-eq} with respect to $\zeta$ twice and substituting \eqref{eq:F-def} and \eqref{eq:G-def} yield
\begin{equation*}
    \nu G' = G - F^2 - 6z z'.
\end{equation*}
Suppose that $G(\zeta)=324$ at some $\zeta$. For $\zeta<0$, since $z<0$ and $z'>0$, we have $-6zz'>0$.\\
Then, applying \eqref{key:ineq-der} gives
\begin{equation*}
    \nu G' = 324 - F^2 - 6zz' > 324 - F^2 \ge 0.
\end{equation*}
For $\zeta\ge0$, we have $z\ge0$. Differentiating \eqref{eq:scaled-shock-eq} with respect to $\zeta$ and dividing by $z'$ gives
\[\nu G=F-3(z^2-1),\quad\mbox{that is}, \quad F=\nu G+3(z^2-1).\] 
As $z^2-1 < 0$ on $z \in [0,1)$, we obtain $F \le \nu G =324\nu \le 9$. Meanwhile, $6zz'\le36$ by \eqref{eq:right_decay_up}. Hence,
\begin{equation*}
    \nu G' = 324 - F^2 - 6zz' \ge 324 - 81 - 36 > 0.
\end{equation*}
On the other hand, suppose that $G(\zeta)=-60$ at some $\zeta$. For $\zeta\ge0$, we simply have
\begin{equation*}
    \nu G' = -60 - F^2 - 6zz' \le -60 < 0.
\end{equation*}
For $\zeta<0$, we have $z\in(-2,0)$ and the inequality \eqref{eq:key-ineq-2} implies 
\begin{equation*}
    |6zz'| \le -12z(z-1)^2(z+2) \le 27+18\sqrt{3} < 60, \quad\mbox{for}\quad z\in(-2,0).
\end{equation*}
Hence,
\begin{equation*}
    \nu G' = -60 - F^2 - 6zz' < -60 + 60 = 0.
\end{equation*}
Therefore, if $G(\zeta_0)>324$ at some $\zeta_0$, then $G(\zeta)>324$ for all $\zeta>\zeta_0$. Likewise, if $G(\zeta_0)<-60$, then $G(\zeta)<-60$ for all $\zeta>\zeta_0$. Since the orbit lies on $W^c$ as $z\to1$, we notice that $G=(g')^2+gg''\to0$. Therefore, either case yields contradiction and $G$ remains in $[-60,324]$, proving \eqref{key:ineq-der2} and \eqref{eq:third_deriv_bound}.

Finally, \eqref{eq:absbdd} follows immediately. This completes the proof of Theorem \ref{thm:shock_properties}. \qed

\subsection{Proof of Lemma \ref{lem:Burgers}}
%The global smooth solution $w(t,x)$ to \eqref{eq: inviscid Burgers equation} satisfies the following quantitative properties.
Although the proof follows the same argument of \cite{MN92}, we provide a detailed proof of the quantitative estimates (2), (3), (4), (6), and (7) for the reader's convenience; in particular the estimates in (2) with $j=3,4$ and the refined bounds (6)--(7), which are not needed in \cite{MN92}. Indeed, since $w_0$ is smooth and strictly increasing, the method of characteristics yields the unique global smooth solution
\[
w(t,x) = w_0(x_0(t,x)),\qquad x = x_0(t,x)+w_0(x_0(t,x))  t,
\]
from which (1) is immediate, while the convergence (5) follows from the same classical argument. Differentiating the representation of $w(t,x)$ yields
\begin{align*}
	&w_x(t,x) = \frac{w'_0(x_0)}{1+w'_0(x_0)t},\qquad\qquad\quad\quad\quad
    w_{xx}(t,x) = \frac{w''_0(x_0)}{(1+w'_0(x_0)t)^3},\\
	&w_{xxx}(t,x) = \frac{w'''_0(x_0)}{(1+w'_0(x_0)t)^4}-\frac{3t(w''_0(x_0))^2}{(1+w'_0(x_0)t)^5},\\
	&w_{xxxx}(t,x) = \frac{w''''_0(x_0)}{(1+w'_0(x_0)t)^5}-\frac{10t w''_0(x_0)w'''_0(x_0)}{(1+w'_0(x_0)t)^6}+\frac{15t^2(w''_0(x_0))^3}{(1+w'_0(x_0)t)^7}.
\end{align*}
Furthermore, it is straightforward to get
\begin{align*}
	&w'_0(x_0) = \frac{ K_q \tilde{w}   \e_R }{2(1+(\e_R x_0)^2)^q},\quad\quad
    w''_0(x_0) = \frac{-q K_q\tilde{w} \e_R^3 x_0 }{(1+(\e_R x_0)^2)^{q+1}},\\
	&w'''_0(x_0) = \frac{-q K_q \tilde{w}   \e_R^3}{(1+(\e_R x_0)^2)^{q+1}}+\frac{2q(q+1) K_q\tilde{w}   \e_R^5 x_0^2}{(1+(\e_R x_0)^2)^{q+2}},\\
	&w''''_0(x_0) = \frac{6q(q+1) K_q\tilde{w}  \e_R^5 x_0}{(1+(\e_R x_0)^2)^{q+2}}-\frac{4q(q+1)(q+2) K_q\tilde{w}   \e_R^7 x_0^3}{(1+(\e_R x_0)^2)^{q+3}}.
\end{align*}	
Comparing these formulas with that of $w'_0$, and using $|z|^k\le(1+z^2)^{k/2}$ for $k\in\{1,2,3\}$, we obtain
\begin{equation}\label{eq:w0-ratio}
	|w_0^{(j+1)}(x_0)| \le C_q \e_R^{ j} w'_0(x_0) \bigl(1+(\e_R x_0)^2\bigr)^{-1/2},\qquad j=1,2,3.
\end{equation}
Substituting \eqref{eq:w0-ratio} into the representations above, we obtain
\[
	|w_{xx}| = \frac{|w''_0|}{(1+w'_0t)^3} \le C_q \e_R \frac{w'_0}{1+w'_0t} = C_q \e_R w_x,
\]
and in the same way, using additionally $\frac{w'_0(x_0)t}{1+w'_0(x_0)t}\le1$ for the terms carrying powers of $t$, we get 
\begin{equation}\label{eq:wxx-pointwise}
	|\partial_x^j w(t,x)| \le C_q \e_R^{ j-1} w_x(t,x),\qquad j=2,3,4,\quad \forall  t\ge0,\ x\in\R.
\end{equation}

\noindent\textbf{Proof of} (2)\textbf{:} For the first-order derivative, we use $dx = (1+w'_0(x_0)t)\, dx_0$ to have
\begin{align*}
	\int_{\R}|w_x|^p\,dx = \int_{\R}\frac{|w'_0(x_0)|^p}{(1+w'_0(x_0)t)^{p-1}}\,dx_0
    &\le 2\int_{0}^\infty|w'_0(x_0)|^p\,dx_0\\
	&\le 2\Bigl(\frac{K_q\tilde{w}\,\e_R}{2} \Bigr)^p\int_0^\infty \frac{1}{(1+(\e_R x_0)^2)^{pq}}\,dx_0\le C \tilde{w}^p\e_R^{p-1}.
\end{align*}
On the other hand, we also estimate the $L^p$-norm of $w_x$ as
\begin{align*}
	\int_{\R}|w_x|^p\,dx &= \int_{\R}\frac{|w'_0(x_0)|^p}{(1+w'_0(x_0)t)^{p-1}}\,dx_0=2\int_{0}^\infty t^{-p+1}|w'_0(x_0)|\frac{|w'_0(x_0)t|^{p-1}}{(1+w'_0(x_0)t)^{p-1}}\,dx_0\\
	&\le 2\,t^{-p+1}\int_0^\infty |w'_0(x_0)|\,dx_0 \,=\,\tilde{w}\,t^{-p+1}.
\end{align*}
Thus, we get
\[\|w_x\|_{L^p}\lesssim \min\big\{\tilde{w} \e_R^{1-1/p},\tilde{w}^{1/p}t^{-1+1/p}\big\}.\]
For the second-order derivative, we derive
\begin{align*}
	\int_{\R} |w_{xx}|^p\,dx
    =\,2\int_0^\infty\frac{|w''_0(x_0)|^p}{(1+w'_0(x_0)t)^{3p-1}}\,dx_0
    &\le 2\int_0^\infty |w''_0(x_0)|^p\,dx_0\\
	&\le C\e_R^{2p}\tilde{w}^p\int_0^\infty \frac{(\e_R x_0)^p}{(1+(\e_R x_0)^2)^{p(q+1)}}\,dx_0\le C\e_R^{2p-1} \tilde{w}^p.
\end{align*}
Furthermore, we use
\[|w''_0(x_0)|\le C|w'_0(x_0)|^{1-\frac{1}{q}}\frac{\e_R(\e_R\tilde{w})^{1/q}(\e_R x_0)}{(1+(\e_R x_0)^2)^2}\]
to get
\begin{align*}
	\int_{\R} |w_{xx}|^p\,dx &=2\int_0^\infty\frac{|w''_0(x_0)|^p}{(1+w'_0(x_0)t)^{3p-1}}\,dx_0\le C\int_0^\infty \frac{|w'_0(x_0)|^{p-p/q}\e_R^p(\e_R\tilde{w})^{p/q}}{(1+w'_0(x_0)t)^{3p-1}}\frac{(\e_R x_0)^p}{(1+(\e_R x_0)^2)^{2p}}\,dx_0\\
	&\le C t^{-p+p/q}\e_R^p(\e_R\tilde{w})^{p/q}\int_0^\infty \frac{(\e_R x_0)^p}{(1+(\e_R x_0)^2)^{2p}}\,dx_0\le Ct^{-p+p/q}\e_R^{p-1}(\e_R \tilde{w})^{p/q}. 
\end{align*}
Therefore, we estimate the $L^p$-norm of the second-order derivative as
\[\|w_{xx}\|_{L^p} \lesssim \min\big\{\tilde{w}\e_R^{2-1/p},\tilde{w}^{1/q}\e_R^{1-1/p+1/q}t^{-1+1/q}\big\}.\]
For the third and fourth order derivatives, the only new ingredient is the interpolated analogue of the bound on $w''_0$ established above: from the formulas for $w'''_0$ and $w''''_0$, together with $|z|^k\le(1+z^2)^{k/2}$, we obtain
\begin{equation}\label{eq:w0-interp}
	|w_0^{(j+1)}(x_0)| \le C_q |w'_0(x_0)|^{1-\frac{j}{q}} \e_R^{ j} (\e_R\tilde{w})^{\frac{j}{q}} \bigl(1+(\e_R x_0)^2\bigr)^{-1},\qquad j=2,3.
\end{equation}
We detail the case $j=3$.
For the first term of $w_{xxx}$, taking $j=2$ in \eqref{eq:w0-interp} and $\frac{w'_0}{1+w'_0t}\le\frac1t$ yield
\[
	\int_0^\infty\frac{|w'''_0|^p}{(1+w'_0t)^{4p-1}}\,dx_0
	\le C\e_R^{2p}(\e_R\tilde{w})^{\frac{2p}{q}}\!\int_0^\infty\!\Bigl(\frac{w'_0}{1+w'_0t}\Bigr)^{p-\frac{2p}{q}}\frac{dx_0}{(1+(\e_R x_0)^2)^{p}}
	\le C t^{-p+\frac{2p}{q}}\e_R^{2p-1}(\e_R\tilde{w})^{\frac{2p}{q}},
\]
where we used $(1+w'_0t)^{-(3p-1+\frac{2p}{q})} \le 1$.
Moreover, the bound on $w''_0$ and $\bigl(\frac{w'_0t}{1+w'_0t}\bigr)^{p}\le1$ yield
\[
	\int_0^\infty\frac{(3t)^p(w''_0)^{2p}}{(1+w'_0t)^{5p-1}}\,dx_0
	\le C\e_R^{2p}(\e_R\tilde{w})^{\frac{2p}{q}}\!\int_0^\infty\!\Bigl(\frac{w'_0}{1+w'_0t}\Bigr)^{p-\frac{2p}{q}}\frac{dx_0}{(1+(\e_R x_0)^2)^{2p}}
	\le C t^{-p+\frac{2p}{q}}\e_R^{2p-1}(\e_R\tilde{w})^{\frac{2p}{q}},
\]
so that 
\[
\|w_{xxx}\|_{L^p}\le C_{p,q} \tilde{w}^{2/q} \e_R^{ 2-\frac1p+\frac2q} t^{-1+\frac2q}.
\]
The fourth order derivative is estimated in the same manner, applying \eqref{eq:w0-interp} with $j=3$ to $w''''_0$ and treating $t w''_0w'''_0$ and $t^2(w''_0)^3$ similarly to the second term above. Since $\e_R<1$, this yields
\[
	\|\partial_x^j w\|_{L^p} \le C_{p,q} \tilde{w}^{\frac{j-1}{q}} \e_R^{ (j-1)-\frac1p+\frac1q} t^{-1+\frac{j-1}{q}},\qquad j=3,4.
\]
The bound $\|\partial_x^j w\|_{L^p}\le C_{p,q} \tilde{w} \e_R^{ j-1/p}$ follows directly from \eqref{eq:wxx-pointwise}, which yields $\|\partial_x^j w\|_{L^p}\le C_q \e_R^{ j-1}\|w_x\|_{L^p}$.
This establishes (2).

\medskip

\noindent\textbf{Proof of} (3)\textbf{:} Using $w(t,x) = w_0(x_0)$, we get
\[|w(t,x)-w_m| = \frac{\tilde{w}}{2}\left(1+K_q\int_{0}^{\e_R x_0}\frac{1}{(1+y^2)^q}\,dy\right)=\frac{K_q\tilde{w}}{2}\int_{-\infty}^{\e_R x_0}\frac{1}{(1+y^2)^q}\,dy.\]
On the other hand, for $x\le w_m t$, we obtain 
\[x_0 = x-w_0(x_0)t< x- w_m t\le 0,\]
which implies $\e_R|x_0|>\e_R|x- w_m t|$.
Thus, for all $t >0$ and $x \le w_m t$,
\[|w(t,x)-w_m|\le C_q \tilde{w}\int_{\e_R|x_0|}^{\infty}\frac{1}{(1+y^2)^q}\,dy\le C_q \tilde{w}\int_{\e_R|x-w_m t|}^\infty \frac{1}{(1+y^2)^q}\,dy\le \frac{C_q\tilde{w}}{(1+(\e_R|x-w_m t|)^2)^{q/2}},\]
where we used $q>1$. Moreover, we have
\[
|w_x(t,x)|
\le w_0'(x_0)
=\frac{\e_R K_q\tilde w}{2(1+(\e_R x_0)^2)^q}
\le \frac{C_q\,\tilde w\,\e_R}{\bigl(1+(\e_R|x-w_m t|)^2\bigr)^q},
\qquad \forall\,t\ge0,\ x\le w_m t .
\]

\noindent\textbf{Proof of} (4)\textbf{:} Similarly, we get
\[|w(t,x)- w_+| = \frac{\tilde{w}}{2}\left(1-K_q\int_0^{\e_R x_0}\frac{1}{(1+y^2)^q}\,dy\right)=\frac{K_q\tilde{w}}{2}\int_{\e_R x_0}^{\infty}\frac{1}{(1+y^2)^q}\,dy.\]
For $x\ge w_+ t$, we obtain
\[
x_0 = x-w_0(x_0)t>x- w_+ t\ge0,
\]
which implies $\e_R|x_0|>\e_R|x- w_+ t|$. Thus, the desired estimate holds by the same argument in (3).\\

\noindent\textbf{Proof of} (6)\textbf{:}
It suffices to prove the bound for $x \le w_m t$, since the case $x \ge w_+ t$ is analogous.

The method of characteristics yields that
\begin{equation} \label{eq: characteristic}
    w(t,x) = w_0(x_0(t,x)),\qquad x = x_0(t,x)+w_0(x_0(t,x)) t.
\end{equation}
Evaluating \eqref{eq: characteristic} at $x = w_m t$, we obtain
\begin{equation} \label{eq:characteristic-relation}
|x_0(t, w_m t)| = \bigl(w_0(x_0(t, w_m t)) - w_m\bigr)\, t > 0, \qquad \forall\, t > 0.
\end{equation}
To make use of \eqref{eq:characteristic-relation}, we analyze the behavior of $w_0(x_0) - w_m$ for $x_0 < 0$.

For \(x_0 < 0\), the change of variables $y = -u$ gives
\begin{equation*}
    w_0(x_0) - w_m
    = \frac{\tilde{w}}{2} K_q \int_{-\infty}^{\varepsilon_R x_0} \frac{dy}{(1+y^2)^q}
    = \frac{\tilde{w}}{2} K_q \int_{|\varepsilon_R x_0|}^{\infty} \frac{du}{(1+u^2)^q}.
\end{equation*}
Note that for any $u \ge |\varepsilon_R x_0|$, we have $u^2 \le 1+u^2 \le u^2\bigl(1+|\varepsilon_R x_0|^{-2}\bigr)$, from which it follows that
\[
\frac{\bigl(1+|\varepsilon_R x_0|^{-2}\bigr)^{-q}}{2q-1}\, |\varepsilon_R x_0|^{-2q+1}
\le \int_{|\varepsilon_R x_0|}^{\infty} \frac{du}{(1+u^2)^q}
\le \frac{|\varepsilon_R x_0|^{-2q+1}}{2q-1}.
\]
Further, since $\bigl(1+|\varepsilon_R x_0|^{-2}\bigr)^{-q} \ge 2^{-q}$ for $x_0 \le -1/\varepsilon_R$, we find that
\begin{equation} \label{eq:rarefaction-(6)-3}
    C_* \tilde{w}\, (\varepsilon_R |x_0|)^{-2q+1}
    \le w_0(x_0) - w_m \le
    C^* \tilde{w}\, (\varepsilon_R |x_0|)^{-2q+1},
    \qquad \forall\, x_0 \le -1/\varepsilon_R,
\end{equation}
where $C_* := K_q 2^{-q}/(2(2q-1))>0$ and $C^* := K_q/(2(2q-1))>0$.

To use \eqref{eq:rarefaction-(6)-3}, we need to show that $|x_0(t, w_m t)| \ge 1/\varepsilon_R$ for all $t \ge T_0$.
Thanks to the monotonicity of \(w_0\), we evaluate \eqref{eq:rarefaction-(6)-3} at the endpoint $x_0 = -1/\varepsilon_R$ to find that
\[
w_0(x_0) - w_m
\ge w_0(-1/\varepsilon_R) - w_m
\ge C_* \tilde{w}, 
\qquad \forall\, x_0 \ge -1/\varepsilon_R.
\]
This combined with \eqref{eq:characteristic-relation} implies that $|x_0(t, w_m t)| \ge C_* \tilde{w} t$  whenever $|x_0(t, w_m t)| \le 1/\varepsilon_R$.
Then, since \(\a>1\), we choose $\varepsilon_R>0$ small enough that $T_0 = \varepsilon_R^{-\alpha} \ge 1/(C_* \tilde{w}\, \varepsilon_R)$, and thus we conclude that $|x_0(t, w_m t)| \ge 1/\varepsilon_R$ for all $t \ge T_0$.

We are now ready to apply \eqref{eq:rarefaction-(6)-3}.
Substituting \eqref{eq:rarefaction-(6)-3} into \eqref{eq:characteristic-relation}, we obtain
\begin{equation} \label{x0nh}
|x_0(t, w_m t)|^{2q} \le C \tilde{w} \varepsilon_R^{-2q+1} t, \qquad \forall t \ge T_0.
\end{equation}
On the other hand, by differentiating \eqref{eq: characteristic} in $x$, we observe 
\begin{equation} \label{eq:dx0dx}
\frac{\partial x_0}{\partial x} = \frac{1}{1+w_0'(x_0)\, t} > 0.
\end{equation}
Thus, $x_0(t, \cdot)$ is increasing and $x_0(t, x) \le x_0(t, w_m t) < 0$ for $x \le w_m t$.
Then, for any \(t\ge T_0\), we get
\begin{equation} \label{eq: w_decay_time}
    |w(t,x) - w_m|
    \le w_0(x_0(t, w_m t)) - w_m
    \;=\; |x_0(t, w_m t)|/t
    \le C \tilde{w}^{\frac{1}{2q}}\, \varepsilon_R^{-1+\frac{1}{2q}}\, t^{-1+\frac{1}{2q}}.
\end{equation}
Here, \eqref{x0nh} together with the monotonicity of \(w_0\) was used.
We now use the decay property in (3) to interpolate \eqref{eq: w_decay_time} as follows: for any $\theta \in (0,1)$,
\begin{align*}
|w(t,x) - w_m|
&= |w(t,x) - w_m|^\theta\, |w(t,x) - w_m|^{1-\theta} \\
&\le C_{q,\theta}\, \tilde{w}\, (\tilde{w}\varepsilon_R t)^{-\theta(1-\frac{1}{2q})}
\bigl(1+\varepsilon_R|x-w_m t|\bigr)^{-q(1-\theta)}
\end{align*}
which establishes the desired bound.

The estimate for $x \ge w_+ t$ follows by the same argument; the analogue of \eqref{eq: w_decay_time} reads
\begin{equation}\label{eq: w_decay_time_plus}
    |w(t,x) - w_+| \le |x_0(t, w_+ t)|/t
    \le C \tilde{w}^{\frac{1}{2q}}\, \varepsilon_R^{-1+\frac{1}{2q}}\, t^{-1+\frac{1}{2q}}, 
    \qquad \forall\, x \ge w_+ t.
\end{equation}

\noindent\textbf{Proof of} (7)\textbf{:}
On the rarefaction fan region, i.e., $w_m t \le x \le w_+ t$, the inviscid self-similar profile is $w^r(x/t) = x/t$.
Then, from \eqref{eq: characteristic}, we have 
\[
|x/t - w(t,x) | = |x_0(t,x)|/t.
\]
Moreover, from the monotonicity of \(x_0\) in \eqref{eq:dx0dx}, we also have 
\[
x_0(t, w_m t) \le x_0(t,x) \le x_0(t, w_+ t).
\]
Now, using \eqref{eq: w_decay_time} and \eqref{eq: w_decay_time_plus}, we obtain 
\[
\max(|x_0(t, w_m t)|, |x_0(t, w_+ t)|)
\le C \tilde{w}^{\frac{1}{2q}} \varepsilon_R^{-1+\frac{1}{2q}} t^{\frac{1}{2q}}.
\]
Thus, the following holds: 
\[
|w(t,x) - w^r(x/t)|
\le \max\left\{ \frac{|x_0(t, w_m t)|}{t}, \frac{|x_0(t, w_+ t)|}{t} \right\}
\le C_q \tilde{w}^{\frac{1}{2q}} \varepsilon_R^{-1+\frac{1}{2q}} t^{-1+\frac{1}{2q}},
\]
which completes the proof. \qed

\subsection{Proof of Lemma \ref{lem:approx_rarefaction}}
Recall from \eqref{eq:uR-def} that $u^R = \Lambda(w)$ and that $u^r(x/t) = \Lambda(w^r(x/t))$. By Lemma \ref{lem:Burgers}(1), $w$ takes values in $[w_m, w_+] \subset (0,\infty)$, on which $\Lambda$ is smooth and strictly increasing with $\Lambda(w_m) = u_m$ and $\Lambda(w_+) = u_+$. Moreover,
\[
6 u_m \delta_R \le \tilde{w} = 3(u_+^2 - u_m^2) = 3(u_+ + u_m)(u_+ - u_m) \le C\delta_R.
\]

\noindent\textbf{Proof of} (1)\textbf{:} By Lemma \ref{lem:Burgers}(1), we have $w_m < w(t,x) < w_+$ for all $t > 0$ and $x \in \mathbb{R}$. Since $\Lambda$ is strictly increasing with $\Lambda(w_m) = u_m$ and $\Lambda(w_+) = u_+$, we obtain $u_m < u^R(t,x) < u_+$. Differentiating with respect to $x$ yields
\[ \ur_x = \Lambda'(w) w_x = \frac{w_x}{6u^R} > 0, \]
since $w_x > 0$ and $u^R > 0$.

\medskip
\noindent\textbf{Proof of} (2)\textbf{:} Applying the chain rule to $u^R = \Lambda(w)$ yields
\begin{align*}
	%\ur_x  &= \Lambda'(w)w_x, \\
	u^R_{xx} &= \Lambda''(w)(w_x)^2 + \Lambda'(w)w_{xx}, \\
	\partial_x^3 u^R &= \Lambda'''(w)(w_x)^3 + 3\Lambda''(w)w_x w_{xx} + \Lambda'(w)w_{xxx}, \\
  \partial_x^4 u^R &= \Lambda''''(w)(w_x)^4 + 6\Lambda'''(w)(w_x)^2 w_{xx} + 3\Lambda''(w)(w_{xx})^2 + 4\Lambda''(w)w_x w_{xxx} + \Lambda'(w)w_{xxxx}.
\end{align*}
Since $\Lambda'$ is uniformly bounded on $[w_m, w_+]$, we have
\[ \|\ur_x\|_{L^p} \le C \|w_x\|_{L^p} \le C_{p,q} \min\left\{ \delta_R \varepsilon_R^{1-1/p},\ \frac{\delta_R^{1/p}}{t^{1-1/p}} \right\}, \]
where we used $\tilde{w} \le C\delta_R$. For $j = 2,3,4$, the highest-order term $\Lambda'(w)\partial_x^j w$ is estimated in the same way as Lemma \ref{lem:Burgers}(2), using in addition $\varepsilon_R < 1$ to estimate $\varepsilon_R^{(j-1)-1/p+1/q} \le 1$. For the remaining terms, the pointwise bound \eqref{eq:wxx-pointwise} gives
\[
	|w_x w_{xx}| \le C_q \varepsilon_R (w_x)^2, \qquad |(w_{xx})^2| + |w_x w_{xxx}| \le C_q \varepsilon_R^2 (w_x)^2, \qquad |(w_x)^2 w_{xx}| \le C_q \varepsilon_R (w_x)^3,
\]
so that each of them, as well as the purely nonlinear term $(w_x)^j$, is bounded by $C_q \varepsilon_R^{ j-r}(w_x)^r$ for some integer $2 \le r \le j$. Applying Lemma \ref{lem:Burgers}(2) with exponent $rp$, we obtain
\[
	\bigl\|\varepsilon_R^{ j-r}(w_x)^r\bigr\|_{L^p} = \varepsilon_R^{ j-r} \|w_x\|_{L^{rp}}^{r} \le C_{p,q}\min\left\{ \delta_R^{ r} \varepsilon_R^{ j-1/p},\ \frac{\delta_R^{1/p}}{t^{r-1/p}}\right\}.
\]
Since $p \ge 1$, we have $r - 1/p \ge 2 - 1/p \ge 1 > 1 - (j-1)/q$ for $j = 2,3,4$, so the time-decay rate of these terms is strictly faster than that of $\partial_x^j w$ for $t \ge 1$, while for $0 < t \le 1$ they are dominated by the time-independent bound in the minimum. Thus, all nonlinear terms are dominated by the highest-order term $\partial_x^j w$, yielding
\begin{equation*}
	\|\partial_x^j u^R\|_{L^p} \le C_{p,q}\min\left\{\delta_R \varepsilon_R^{j-1/p},\ \frac{\delta_R^{1/p} + \delta_R^{(j-1)/q}}{t^{1-(j-1)/q}}\right\}, \quad j = 2,3,4.
\end{equation*}
For the pointwise bound on $u^R_{xx}$, using $|w_{xx}| \le C|w_x|$, which follows from \eqref{eq:wxx-pointwise} and $\varepsilon_R<1$, we get
\[ |u^R_{xx}| \le |\Lambda''(w)| \cdot |w_x|^2 + |\Lambda'(w)| \cdot |w_{xx}| \le C(|w_x|^2 + |w_x|) \le C(|\ur_x|^2 + |\ur_x|). \]

\noindent\textbf{Proof of} (3)--(4)\textbf{:} %These follow from the Mean Value Theorem applied to $\Lambda$.
For the left tail ($x \le w_m t$), we apply the Mean Value Theorem to find that
\begin{align*}
	|u^R(t,x) - u_m| = |\Lambda(w(t,x)) - \Lambda(w_m)| 
	&\le \sup_{\xi \in [w_m, w_+]}|\Lambda'(\xi)| \cdot |w(t,x) - w_m| \\
	&\le \frac{C\delta_R}{\bigl(1+(\varepsilon_R|x - w_m t|)^2\bigr)^{q/2}},
\end{align*}
where we used Lemma \ref{lem:Burgers}(3) and $\tilde{w} \le C\delta_R$.
It is also obtained from Lemma \ref{lem:Burgers}(3) that
%The derivative bound is obtained {\color{blue}from the second estimate of Lemma \ref{lem:Burgers}(3)} by
\[ |\ur_x(t,x)| = |\Lambda'(w)| \cdot |w_x(t,x)| \le \frac{C\delta_R\varepsilon_R}{\bigl(1+(\varepsilon_R|x-w_m t|)^2\bigr)^q}. \]
The right tail (4) is proved in the same manner using Lemma \ref{lem:Burgers}(4) and $w_+$ in place of $w_m$.

\vspace{2mm}
\noindent\textbf{Proof of} (5)\textbf{:} The asymptotic uniform convergence is an immediate consequence of Lemma \ref{lem:Burgers}(5) and the Lipschitz continuity of $\Lambda$ on $[w_m, w_+]$:
\[ \lim_{t\to\infty}\sup_{x\in\mathbb{R}}|u^R(t,x) - u^r(x/t)| \le C \lim_{t\to\infty}\sup_{x\in\mathbb{R}}|w(t,x) - w^r(x/t)| = 0. \]

\noindent\textbf{Proof of} (6)--(7)\textbf{:} By the Mean Value Theorem and 
$\|\Lambda'\|_{L^\infty([w_m, w_+])} \le C$,
\begin{align*}
  |u^R(t,x) - u_m| &\le C |w(t,x) - w_m|, \\
  |u^R(t,x) - u_+| &\le C |w(t,x) - w_+|, \\
  |u^R(t,x) - u^r(x/t)| &\le C |w(t,x) - w^r(x/t)|.
\end{align*}
The estimates then follow from Lemma \ref{lem:Burgers}(6)--(7).
Here we used $\tilde{w}\le C\delta_R$ for the prefactors and $6u_m\delta_R\le\tilde{w}$ for the factor $(\tilde{w}\varepsilon_R t)^{-\theta(1-\frac{1}{2q})}$ in (6), whose exponent is negative.
Notice that (6) holds with the same constant $\delta_1$ as in Lemma \ref{lem:Burgers}(6).

\vspace{2mm}
\noindent\textbf{Proof of} (8)\textbf{:} Let $u^R=u^R(T_0,\cdot)$ and $u^r=u^r(\cdot/T_0)$, and set $I_1:=(-\infty,w_mT_0]$, $I_2:=[w_mT_0,w_+T_0]$, and $I_3:=[w_+T_0,\infty)$. Since $u^r$ is Lipschitz continuous, $u^R-u^r$ is absolutely continuous and
\[
    \|u^R-u^r\|_{H^1(\mathbb{R})}^2=\sum_{i=1}^{3}\Bigl(\|u^R-u^r\|_{L^2(I_i)}^2+\|u^R_x-u^r_x\|_{L^2(I_i)}^2\Bigr).
\]
On $I_1$, $u^r=u_m$ holds.
Applying (6) with $t=T_0$ and $\theta=\frac{q}{2q-1}$ (i.e., $\theta(1-\frac{1}{2q})=\frac12$) and (3), we have
\[
|u^R-u_m|\le C_q\delta_R(\delta_R\varepsilon_RT_0)^{-\frac12}\bigl(1+\varepsilon_R|x-w_mT_0|\bigr)^{-\frac{q(q-1)}{2q-1}},\qquad
|u^R_x|\le C_q\delta_R\varepsilon_R\bigl(1+(\varepsilon_R|x-w_mT_0|)^2\bigr)^{-q}.
\]
Since $\frac{2q(q-1)}{2q-1}>1$, integrating over $I_1$ and using $T_0=\varepsilon_R^{-8}$, we have
\begin{equation}\label{eq:cmp-I1}
    \|u^R-u_m\|_{L^2(I_1)}\le C_q\delta_R^{\frac12}\varepsilon_R^{-1}T_0^{-\frac12}=C_q\delta_R^{\frac12}\varepsilon_R^{3},\qquad
    \|u^R_x\|_{L^2(I_1)}\le C_q\delta_R\varepsilon_R^{\frac12}.
\end{equation}
On $I_3$ we have $u^r=u_+$, and (6) and (4) give in the same way that
\begin{equation}\label{eq:cmp-I3}
\|u^R-u_+\|_{L^2(I_3)}\le C_q\delta_R^{\frac12}\varepsilon_R^{3},\qquad
\|u^R_x\|_{L^2(I_3)}\le C_q\delta_R\varepsilon_R^{\frac12}.
\end{equation}
On $I_2$, (7) gives
\[
\sup_{I_2}|u^R-u^r|\le C_q\delta_R^{\frac{1}{2q}}(\varepsilon_RT_0)^{-1+\frac{1}{2q}}=C_q\delta_R^{\frac{1}{2q}}\varepsilon_R^{7(1-\frac{1}{2q})},
\]
and $|I_2|=\tilde{w}T_0\le C\delta_RT_0$.
Thus, we obtain
\begin{equation}\label{eq:cmp-I2}
    \|u^R-u^r\|_{L^2(I_2)}\le C_q\delta_R^{\frac12+\frac{1}{2q}}\,\varepsilon_R^{7(1-\frac{1}{2q})}\,T_0^{\frac12}=C_q\delta_R^{\frac12+\frac{1}{2q}}\,\varepsilon_R^{3-\frac{7}{2q}}.
\end{equation}
For the derivative, $u^r=\Lambda(w^r)$ with $w^r(x/T_0)=x/T_0$ and $\Lambda'=\frac{1}{6\Lambda}$, so that
\[
    u^R_x-u^r_x=\frac{w_x-\frac{1}{T_0}}{6u^R}+\frac{1}{6T_0}\Bigl(\frac{1}{u^R}-\frac{1}{u^r}\Bigr).
\]
By the representation $w_x=\frac{w_0'(x_0)}{1+w_0'(x_0)T_0}$ in the proof of Lemma~\ref{lem:Burgers}, it holds that
\[
    w_x-\frac{1}{T_0}=-\frac{1}{T_0\,(1+w_0'(x_0)T_0)},\qquad \Bigl|w_x-\frac{1}{T_0}\Bigr|\le\frac{1}{T_0},
\]
and since $u^R,u^r\ge u_m$ on $I_2$, we have
\begin{equation}\label{eq:cmp-I2x}
    \|u^R_x-u^r_x\|_{L^2(I_2)}\le \frac{C}{u_mT_0}\Bigl(1+\frac{\sup_{I_2}|u^R-u^r|}{u_m}\Bigr)|I_2|^{\frac12}\le C_q\delta_R^{\frac12}T_0^{-\frac12}=C_q\delta_R^{\frac12}\varepsilon_R^{4}.
\end{equation}
Since $\varepsilon_R<1$ and $\delta_R^{\frac12+\frac{1}{2q}}\le\delta_R^{\frac12}+\delta_R$, \eqref{eq:cmp-I1}--\eqref{eq:cmp-I2x} imply \eqref{eq:inviscid-comparison}.
This completes the proof. \qed

\section{Proof of Theorem \ref{thm:main}}\label{sec:pfmain}
\setcounter{equation}{0}

\subsection{Composite Wave and Perturbation Equation}\label{subsec:superposition}
Here, we define the composite wave that will eventually be shown to be asymptotically stable.
In the moving frame \(\x\coloneqq x-\s t\) with \(\s=3u_m^2\), the mKdVB equation \eqref{eq:gKdVB} is given as follows:
\begin{equation}\label{eq:gKdVB-xi}
    u_t - \sigma u_\xi + f(u)_\xi = u_{\xi\xi} - \kappa u_{\xi\xi\xi}.
\end{equation}
Let \(X(t)\) be a time-dependent shift to be determined in \eqref{eq:shift} and let \(T_0 = \varepsilon_R^{-\alpha}\) be a time-translation parameter with \(\a>1\).
Then, we define the shifted degenerate shock and rarefaction profiles:
\begin{equation}\label{eq:shifted-profiles}
(u^S)^X(t,\xi) \coloneqq u^S(\xi - X(t)), \qquad
(u^R)^{X,T_0}(t,\xi) \coloneqq u^R(t+T_0, \xi - X(t) + \sigma (t+T_0)).
\end{equation}
Note from \eqref{eq:shock-eq} and \eqref{eq:uR-eq} that the shifted shock and rarefaction satisfy the following:
\begin{align}
-\sigma (u^S)^X_\xi + f((u^S)^X)_\xi
&= (u^S)^X_{\xi\xi} - \kappa (u^S)^X_{\xi\xi\xi}, \label{eq:shock-shifted} \\
\partial_t (u^R)^{X,T_0}
&= -f((u^R)^{X,T_0})_\xi + (\sigma - \dot{X}(t))(u^R)^{X,T_0}_\xi. \label{eq:rarefaction-shifted}
\end{align}

We now define the composite wave as a superposition of the shifted shock and rarefaction:
\begin{equation} \label{eq:compositewave-def}
\tilde{u}^X(t,\xi) := (u^S)^X(t,\xi) + (u^R)^{X,T_0}(t,\xi) - u_m.
\end{equation}
We also write
\begin{equation}\label{eq:unshifted-composite}
    \tilde{u}^{0}(t,\xi):=u^S(\xi)+u^R\bigl(t+T_0,\,\xi+\sigma(t+T_0)\bigr)-u_m
\end{equation}
for the composite wave with $X\equiv0$, so that $\tilde{u}^{X}(t,\xi)=\tilde{u}^{0}(t,\xi-X(t))$.

Using \eqref{eq:compositewave-def} with \eqref{eq:shock-shifted} and \eqref{eq:rarefaction-shifted}, a direct computation shows that the composite wave $\tilde{u}^X$ satisfies
\begin{equation} \label{eq:compositewave-xi}
\tilde{u}^X_t
- \sigma \tilde{u}^X_\xi
+ f(\tilde{u}^X)_\xi
- \tilde{u}^X_{\xi\xi}
+ \kappa \tilde{u}^X_{\xi\xi\xi}
= E^X - \dot{X}(t)\bigl( (u^S)^X_\xi + (u^R)^{X,T_0}_\xi \bigr),
\end{equation}
where the error term $E^X$ is given by
\begin{align*}
E^X
&= \big[ f(\tilde{u}^X) - f((u^R)^{X,T_0}) - f((u^S)^X) \big]_\xi - (u^R)^{X,T_0}_{\xi\xi} + \kappa (u^R)^{X,T_0}_{\xi\xi\xi} \\
&=
\underbrace{\big[(f'(\tilde{u}^X) - f'((u^S)^X))(u^S)^X_\xi + (f'(\tilde{u}^X) - f'((u^R)^{X,T_0}))(u^R)^{X,T_0}_\xi\big]}_{\eqqcolon E_I^X}
\underbrace{- (u^R)^{X,T_0}_{\xi\xi} + \kappa (u^R)^{X,T_0}_{\xi\xi\xi}}_{\eqqcolon E_R^X}.
\end{align*}
Here, $E_I^X$ represents the wave interaction error between the shifted shock and rarefaction profiles due to the nonlinearity of the flux \(f\), and $E_R^X$ denotes the error arising from the shifted rarefaction.

\vspace{2mm}
We introduce the perturbation $\phi(t,\xi) \coloneqq u(t,\xi) - \tilde{u}^X(t,\xi)$ around the composite wave \(\util^X(t,\x)\).
Then, \eqref{eq:gKdVB-xi} and \eqref{eq:compositewave-xi} yield the following perturbation equation:
\begin{equation} \label{eq:perturbed_general}
\begin{cases}
\phi_t - \dot{X}(t)\bigl( (u^S)^X_\xi + (u^R)^{X,T_0}_\xi \bigr) - \sigma \phi_\xi + \bigl( f(\phi + \tilde{u}^X) - f(\tilde{u}^X) \bigr)_\xi - \phi_{\xi\xi} + \kappa \phi_{\xi\xi\xi}
= -E^X, \\
\phi(0,\xi) = \phi_0(\xi) := u_0(\xi) - \tilde{u}^{0}(0,\xi).
\end{cases}
\end{equation}

For notational simplicity, we henceforth suppress the superscripts $X$ and $T_0$, writing $\tilde{u}$, $u^S$, and $u^R$; these profiles are always evaluated at $ \xi - X(t)$ for the shock and $\xi - X(t) + \sigma(t+T_0)$ for the rarefaction, unless stated otherwise.
\begin{comment}
{\color{blue}We also abbreviate the initial profile of the composite wave by
\begin{equation}\label{eq:tilde-u-0}
    \tilde{u}_0 := \tilde{u}(0,\cdot) = u^S(\cdot\,;u_-,u_m) + u^R\bigl(T_0,\, \cdot + \sigma T_0\bigr) - u_m,
\end{equation}
which is independent of the shift because $X(0)=0$ in \eqref{eq:shift}, so that $\phi_0 = u_0 - \tilde{u}_0$ in \eqref{eq:perturbed_general}.
}
\end{comment}

\subsection{Weight and Shift Functions}\label{sec:weight-shift}
In the following subsections, we introduce our main proposition, which provides the \textit{a priori} estimate.
We then use this proposition to prove the main stability theorem. To state the proposition, we first need to define the weight and the shift functions.

\vspace{2mm}
We recall the degenerate shock setting with end states $u_m = s$ and $u_- = -2s$ ($s > 0$), for which the shock strength \(\ds=u_m-u_-=3s\).
Following \cite{HWZ-MA}, we introduce a piecewise weight function $w(\us)$ depending only on the shock.
Since integration by parts on the dispersive term $\phi_{\xi\xi\xi}$ gives rise to $w'''(\us)$, we require $C^3$-regularity at the gluing points $\us = 0$ and $\us = s/2$ in order to avoid (internal) boundary terms.
The weight function $w(\us)$ is defined as follows:
\begin{equation} \label{eq:weight}
w(\us) =
\begin{cases}
\frac{5}{2}s(s - \us), & \us \in [-2s, 0), \\[6pt]
P_6(\us), & \us \in [0, s/2), \\
\frac{15}{8}s^2, & \us \in [s/2, s),
\end{cases}
\end{equation}
where $P_6(\us)$ is the unique 6th-order interpolating polynomial
\begin{equation*}
P_6(\us) = \tfrac{5}{2}s(s - \us) + (\us)^4 P_2(\us), \qquad
P_2(\us) = \tfrac{50}{s^2} - \tfrac{120}{s^3}\us + \tfrac{80}{s^4}(\us)^2.
\end{equation*}

The following lemma, whose proof is given in Appendix~\ref{app:weight}, collects the properties of the weight function used below.
\begin{lemma}\label{lem:weight_properties}
Let $\us$ be the degenerate shock given by \eqref{eq:shock-eq}, which connects $u_- = -2s$ and $u_m = s$.
Then, the weight $w(\us)$ in \eqref{eq:weight} satisfies the following:
\begin{itemize}
\item[(1)] $w \in C^3([-2s, s))$, and $w^{(k)} \equiv 0$ for $k \in \{1,2,3\}$ on $[s/2, s)$.
\item[(2)] $w' \le 0$ and $w'' \ge 0$ on $[-2s,s)$; in particular, $\tfrac{15}{8}s^2 \le w \le \tfrac{15}{2}s^2$.
\item[(3)] The \(k\)-th order derivative for each \(k \in \{0,1,2,3\}\) satisfies the following uniform bound:
\[
|w^{(k)}(\us)| \le C_k s^{2-k}, \quad \forall\, \us \in [-2s, s),
\]
where the constants \(C_k\) are given by
\[
C_0=\tfrac{15}{2}, \qquad
C_1=\tfrac{5}{2}, \qquad
C_2=\tfrac{75}{8}, \qquad
C_3=\tfrac{100\sqrt{3}}{3}.
\]
\end{itemize}
\end{lemma}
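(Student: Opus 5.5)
The plan is to rescale and verify everything through explicit polynomial identities. Write $y:=\us/s$. Then $w(\us)=s^2\,W(y)$, where
\[
W(y)=\tfrac52(1-y)\ \text{on } [-2,0),\qquad
W(y)=\tfrac52(1-y)+y^4(50-120y+80y^2)\ \text{on } [0,\tfrac12),\qquad
W(y)=\tfrac{15}{8}\ \text{on } [\tfrac12,1).
\]
Hence $w^{(k)}(\us)=s^{2-k}W^{(k)}(y)$, and it suffices to prove (1)--(3) for $W$ with $s=1$. The scaling $s^{2-k}$ in (3) is then automatic.

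\textbf{Step 1: $C^3$ regularity, i.e.\ item (1).} On the middle piece, $W$ differs from the linear piece by the term $y^4P_2$. This term vanishes to order $3$ at $y=0$, so $W,W',W'',W'''$ match at $y=0$.

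At $y=\tfrac12$, I compute directly on the middle piece:
\[
W'(y)=-\tfrac52+200y^3-600y^4+480y^5,\qquad
W''(y)=600y^2-2400y^3+2400y^4=600\,y^2(1-2y)^2 .
\]
Evaluating gives $W(\tfrac12)=\tfrac54+\tfrac1{16}(50-60+20)=\tfrac{15}{8}$ and $W'(\tfrac12)=-\tfrac52+25-\tfrac{75}{2}+15=0$. Next, $W''$ has a double root at $y=\tfrac12$, so $W''(\tfrac12)=W'''(\tfrac12)=0$. These values agree with those of the constant piece. Therefore $W\in C^3$, and $W^{(k)}\equiv0$ for $k=1,2,3$ on $[\tfrac12,1)$.

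\textbf{Step 2: signs, i.e.\ item (2).} The factorization $W''=600y^2(1-2y)^2\ge0$ on $[0,\tfrac12]$ gives convexity of the middle piece. The other two pieces are affine, so $W''\ge0$ everywhere.

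Since $W'$ is nondecreasing and $W'(\tfrac12)=0$, we get $W'\le0$ on $[0,\tfrac12]$. On $[-2,0)$ we have $W'=-\tfrac52<0$, and on $[\tfrac12,1)$ we have $W'=0$. So $W$ is nonincreasing on the whole interval. Consequently $\tfrac{15}{8}=W(\tfrac12)\le W\le W(-2)=\tfrac{15}{2}$.

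\textbf{Step 3: derivative bounds, i.e.\ item (3).}
\begin{itemize}
\item $k=0$: the bound $C_0=\tfrac{15}{2}$ is just Step 2.
\item $k=1$: since $W'$ is monotone, running from $-\tfrac52$ at $y=0$ to $0$ at $y=\tfrac12$, we get $|W'|\le\tfrac52=C_1$.
\item $k=2$: we need to maximize $y^2(1-2y)^2=\bigl(y(1-2y)\bigr)^2$ on $[0,\tfrac12]$. The maximum is at $y=\tfrac14$, with value $\tfrac1{64}$. This gives $|W''|\le\tfrac{600}{64}=\tfrac{75}{8}=C_2$.
\item $k=3$: differentiating the factorized $W''$ gives $W'''=1200\,y(1-2y)(1-4y)$. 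The critical points of $g(y)=8y^3-6y^2+y$ solve $24y^2-12y+1=0$, that is, $y=\tfrac{3\pm\sqrt3}{12}$. At these points $|g|=\tfrac{\sqrt3}{36}$. Comparing with the endpoint values $g(0)=g(\tfrac12)=0$, we obtain $|W'''|\le\tfrac{1200\sqrt3}{36}=\tfrac{100\sqrt3}{3}=C_3$.
\end{itemize}
The pieces outside $[0,\tfrac12]$ contribute nothing to $k=2,3$.

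\textbf{Expected obstacle.} I expect the only delicate point to be spotting the factorization $W''=600y^2(1-2y)^2$. Once it is in hand, convexity, the $C^3$ gluing at $y=\tfrac12$, and the sharp constants all follow from one-variable calculus. The remaining work is just careful arithmetic at the critical points.
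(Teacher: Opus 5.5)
Your proposal is correct and follows essentially the same route as the paper: rescale to $W(z)$, check the $C^3$ gluing at $0$ and $\tfrac12$, use $W''=600z^2(1-2z)^2\ge0$ to get monotonicity and the bounds $\tfrac{15}{8}\le W\le\tfrac{15}{2}$, and find the extrema of $W''$ at $z=\tfrac14$ and of $W'''=1200z(1-2z)(1-4z)$ at $z=\tfrac{3\pm\sqrt3}{12}$. The only difference is cosmetic: you read off $W''(\tfrac12)=W'''(\tfrac12)=0$ from the double root in the factorization instead of evaluating $P_6''$ and $P_6'''$ at $s/2$ directly, which is slightly cleaner.
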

\noindent In what follows we write $w$, $w'$, $w''$, and $w'''$ for $w(\us)$, $w'(\us)$, $w''(\us)$, and $w'''(\us)$, respectively.

\vspace{2mm}
We define a time-dependent shift function $X(t)$ as a solution to the following ODE:
\begin{equation} \label{eq:shift}
\dot{X}(t) = -\frac{32}{25 s^2} \int_{\mathbb{R}} \phi(t,\xi)\, w(\us)\, \us_\xi \, d\xi, \qquad X(0) = 0.
\end{equation}
%Since $\phi=u-\tilde{u}^{X}$ and $\tilde{u}^{X}(t,\xi)=\tilde{u}^{0}(t,\xi-X(t))$, the right-hand side of \eqref{eq:shift} depends on $X$.
The Cauchy--Lipschitz theorem (see \cite[Lemma A.1]{CKKV-M320}) implies that there exists a unique absolutely continuous solution $X$ on $[0,T]$ such that for all \(t\in[0,T]\)
\begin{equation}\label{eq:shift-crude}
    |\dot{X}(t)|\le C(1+\|u-\tilde{u}^{0}\|_{L^\infty(0,T;H^1(\mathbb{R}))}), \qquad |X(t)|\le C(1+\|u-\tilde{u}^{0}\|_{L^\infty(0,T;H^1(\mathbb{R}))})\,t.
\end{equation}

\subsection{Main Proposition}
We now state two propositions, on local well-posedness and on \textit{a priori} estimates, which together yield Theorem~\ref{thm:main}.
The local existence result is formulated around the composite wave \eqref{eq:unshifted-composite} at an arbitrary initial time, as required for the continuation argument below.

\begin{proposition}[Local Existence] \label{prop:local_existence}
For any \(M>0\), there exists \(T^*=T^*(M)>0\) such that the following holds for every \(t_0 \ge 0\):
If $u_{t_0}-\tilde{u}^{0}(t_0,\cdot)\in H^1(\mathbb{R})$ satisfies $\|u_{t_0}-\tilde{u}^{0}(t_0,\cdot)\|_{H^1(\mathbb{R})}\le M$, then \eqref{eq:gKdVB-xi} with initial value $u(t_0,\cdot)=u_{t_0}$ admits a unique solution $u$ on $[t_0,t_0+T^*]$ satisfying
\[
u-\tilde{u}^{0} \in C([t_0, t_0+T^*]; H^1(\mathbb{R})) \cap L^2(t_0, t_0+T^*; H^2(\mathbb{R})),
\quad
\sup_{t \in [t_0, t_0+T^*]} \|u(t)-\tilde{u}^{0}(t)\|_{H^1(\mathbb{R})} \le 2M.
\]
\end{proposition}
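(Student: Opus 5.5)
We prove Proposition~\ref{prop:local_existence} by a fixed-point argument for the perturbation around the \emph{unshifted} composite wave. Set $v(t,\x):=u(t,\x)-\tilde u^{0}(t,\x)$. By \eqref{eq:compositewave-xi} with $X\equiv0$, $v$ solves
\[
v_t-\s v_\x+\kappa v_{\x\x\x}-v_{\x\x}=-\bigl(f(v+\tilde u^0)-f(\tilde u^0)\bigr)_\x-E^0 ,
\]
where $E^0=E^0_I+E^0_R$ is the interaction and rarefaction error of the unshifted profiles.

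The first step is to check that $E^0$ is a harmless source. By Theorem~\ref{thm:shock_properties}, $\us_\x\in L^2\cap L^\infty$. By Lemma~\ref{lem:approx_rarefaction}(2), $\ur_\x,\ur_{\x\x},\ur_{\x\x\x}\in L^2$, uniformly in $t\ge t_0$, with bounds independent of $t_0$ since they only use the $\min$ with the $\e_R$-bound. The interaction term $E_I^0$ is bounded by $C(|\ur-u_m|\us_\x+|\us-u_m|\ur_\x)$, hence lies in $L^\infty(t_0,\infty;L^2)$. Therefore $\|E^0(t)\|_{L^2}\le C_0$ uniformly in $t$ and $t_0$. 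All coefficients built from $\tilde u^0$, namely $f'(\tilde u^0)$ and its $\x$-derivative, are likewise bounded uniformly in $t$ and $t_0$. This uniformity is what makes $T^*$ depend on $M$ only.

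Next we solve the linear problem $v_t-v_{\x\x}+\kappa v_{\x\x\x}-\s v_\x=g$ with $g\in L^2(t_0,t_0+T;L^2)$ and $v(t_0)\in H^1$. The operator $\pa_\x^2-\kappa\pa_\x^3+\s\pa_\x$ has Fourier symbol $-k^2+i\kappa k^3+i\s k$, whose real part is $-k^2$. Hence it generates a contraction semigroup on every $H^m$, and we have parabolic regularity in the form
\[
\|v\|_{L^\infty H^1}^2+\|v_\x\|_{L^2H^1}^2\le \|v(t_0)\|_{H^1}^2+C\|g\|_{L^2(t_0,t_0+T;L^2)}^2,
\]
with $C$ independent of $\kappa$. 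Equivalently, the energy identity at the $H^1$ level holds because the dispersive term $\kappa\int v_{\x\x\x}v_{\x\x}$ integrates to zero.

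We then define the map $\Phi$ sending $\bar v$ to the solution with right-hand side $g=-(f(\bar v+\tilde u^0)-f(\tilde u^0))_\x-E^0$. We work on the ball $\{\|v\|_{L^\infty H^1}\le 2M,\ \|v_\x\|_{L^2H^1}\le K\}$ with $K=K(M)$. The nonlinearity is cubic. Using $\|\bar v\|_{L^\infty}\le \|\bar v\|_{H^1}\le 2M$,
\[
\|(f(\bar v+\tilde u^0)-f(\tilde u^0))_\x\|_{L^2}\le C(M)\bigl(\|\bar v_\x\|_{L^2}+\|\bar v\|_{L^2}\,\|\tilde u^0_\x\|_{L^\infty}\bigr).
\]
Its $L^2_t$ norm over an interval of length $T$ is therefore $\le C(M)\sqrt T$. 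Choosing $T^*$ small makes $\Phi$ map the ball to itself. A Lipschitz estimate of the same type makes $\Phi$ a contraction in $L^\infty L^2\cap L^2H^1$, and closedness of the ball gives a unique fixed point.

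Continuity in time in $H^1$ follows from the linear theory. The solution lies in $L^2H^2$ because $v_\x\in L^2H^1$. Uniqueness in the stated class follows from the same difference estimate via Gronwall.

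The main obstacle is the uniformity of all constants in $t_0$. The rarefaction error $E_R^0$ contains $\kappa\ur_{\x\x\x}$ and the interaction term contains products of profile derivatives, so one must check that the $\e_R$-branch of Lemma~\ref{lem:approx_rarefaction}(2) gives bounds valid for all $t$. The second delicate point is that the energy estimate must avoid losing derivatives through $\kappa v_{\x\x\x}$. We handle this by testing against $v$ and $-v_{\x\x}$, so that the dispersive term is skew and drops out. The nonlinear term is then controlled by $\|v_{\x\x}\|_{L^2}$ absorbed into dissipation, rather than $\|v_{\x\x\x}\|_{L^2}$.
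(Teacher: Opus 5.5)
Your proposal is correct and is essentially the standard argument the paper invokes while omitting the details: the symbol of $\partial_\xi^2-\kappa\partial_\xi^3+\sigma\partial_\xi$ has real part $-k^2$, so the dispersive term is skew in the $L^2$ and $\dot H^1$ energy identities. The rest also matches: the cubic flux is locally Lipschitz on $H^1$, and the time-uniform bounds on $\tilde u^0$ and $E^0$ make $T^*$ independent of $t_0$.

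One harmless slip: your linear bound, with coefficient exactly $1$ in front of $\|v(t_0)\|_{H^1}^2$ and a $T$-independent $C$, fails at the $L^2$ level. Low frequencies are essentially undamped, so a source aligned with $v$ produces a cross term linear in $g$. The correct bound $\sup_t\|v(t)\|_{H^1}\le\|v(t_0)\|_{H^1}+(1+\sqrt{T})\,\|g\|_{L^2(t_0,t_0+T;L^2)}$ closes the ball argument in the same way, because $\|g\|_{L^2(t_0,t_0+T;L^2)}\le C(M)\sqrt{T}$.
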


\begin{proof}
This follows from a standard argument; see, for instance, \cite{CEKS1,EEK-BE}. So we omit the details.
\end{proof}
\begin{comment}
\begin{proof}
This follows from a standard argument; see, for instance, \cite{CEKS1,EEK-BE}.
{Indeed, the Fourier multiplier of the semigroup generated by $\partial_\xi^2-\kappa\partial_\xi^3+\sigma\partial_\xi$ is $e^{t(-\eta^{2}+i(\kappa\eta^{3}+\sigma\eta))}$, whose modulus $e^{-t\eta^{2}}$ carries no dispersion, so that the smoothing estimates of the heat semigroup apply verbatim; the cubic nonlinearity is locally Lipschitz on $H^1(\mathbb{R})$ since $H^1(\mathbb{R})$ is an algebra, and the coefficients and the error term $E$ of \eqref{eq:perturbed_general} are bounded uniformly in time by Theorem~\ref{thm:shock_properties} and Lemma~\ref{lem:approx_rarefaction}, whence $T^*$ may be chosen independently of $t_0$. The membership in $L^2(t_0,t_0+T^*;H^2(\mathbb{R}))$ follows not from the smoothing estimate but from the energy identity obtained by multiplying \eqref{eq:perturbed_general} by $-\phi_{\xi\xi}$, in which the dispersive and transport terms integrate to zero as in \eqref{eq:H1-energy-1}. Finally, the shift enters \eqref{eq:perturbed_general} only through $\dot X$, which is linear in $\phi$ by \eqref{eq:shift}, and through translates of the profiles, which depend on $X$ in a Lipschitz manner; the fixed-point argument therefore applies to the pair $(\phi,X)$, and $T^*$ is independent of $t_0$ and $X_0$ because the bounds on the profiles are uniform in time and invariant under translation.}
We thus omit the {details}.
\end{proof}
\end{comment}

\begin{proposition}[\textit{A Priori} Estimate] \label{prop:a-priori_total}
Let the far-field states \(u_\pm\) satisfy \eqref{eq:far-field}, i.e., \(u_-<0\) and \(u_+>-u_-/2\), and set 
\[
u_m \coloneqq -\frac{u_-}{2}, \qquad
\d_S \coloneqq u_m-u_- = 3u_m, \qquad
\d_R \coloneqq u_+-u_m.
\]
Then, there exist positive constants \(\d_0, \e_1, \d_2\) and \(C_0\) such that the following holds.
Let \(0<\varepsilon_R<\delta_2\) and set $T_0:=\varepsilon_R^{-8}$ in the definition \eqref{eq:compositewave-def} of the composite wave.
Suppose that 
\[
\k>0, \qquad \kappa u_m^2 \le \delta_0, \qquad \delta_R \le \frac{5}{18}\delta_S.
\]
If the Cauchy problem \eqref{eq:perturbed_general} admits a solution $\phi \in C([0,T]; H^1(\mathbb{R})) \cap L^2(0,T; H^2(\mathbb{R}))$ for some time $T>0$ satisfying the a priori assumption
\begin{equation*}
\sup_{0 \le t \le T} \|\phi(t,\cdot)\|_{H^1(\mathbb{R})} \le \varepsilon_1,
\end{equation*}
then, for every \(t\in[0,T]\), 
\begin{align*}
&\|\phi(t)\|_{H^1(\RR)}^2
+ \int_0^t \|\phi_\xi(\tau)\|_{H^1(\RR)}^2 \,d\tau
+ \int_0^t\int_\mathbb{R} \phi^2 \bigl( \us_\xi + \ur_\xi\bigr) \, d\xi \, d\tau\\
&\qquad
+ \int_0^t |\dot{X}(\tau)|^2 \,d\tau 
\le C_0 \bigl( \|\phi(0)\|_{H^1(\RR)}^2 + \varepsilon_R^{1/6} \bigr),
\end{align*}
where the constant \(C_0\) is independent of \(\k\), \(\e_R\), and \(T\).
\end{proposition}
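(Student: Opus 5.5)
The plan is to run the $a$-contraction method with shifts in two stages. First comes a weighted $L^2$ estimate built on the relative entropy $\frac{1}{2}\phi^2$ with the weight $w(\us)$ from \eqref{eq:weight}. Second comes an unweighted $H^1$ estimate for $\phi_\xi$ obtained by testing \eqref{eq:perturbed_general} with $-\phi_{\xi\xi}$.

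For the zeroth-order step we multiply \eqref{eq:perturbed_general} by $w\phi$ and integrate. This gives
\[
\frac{d}{dt}\int_{\mathbb{R}} \frac{w}{2}\phi^2\,d\xi = \dot X\,\mathbf{Y} + \mathbf{B} - \mathbf{G},
\]
whose terms are the following.
\begin{itemize}
\item $\mathbf{Y}=\int w\phi(\us_\xi+\ur_\xi) - \frac12\int w'\us_\xi\phi^2$ is the shift-dependent term.
\item $\mathbf{B}$ collects the bad terms. These are the hyperbolic cubic terms $\int w'\us_\xi\,(\text{flux relative entropy})$ and $\int w\,\tilde u_\xi(\cdots)\phi^2$. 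They also include the dispersive contributions $\kappa\int (w''(\us_\xi)^2+w'\us_{\xi\xi})\phi\phi_\xi$ and $\kappa\int w'''(\us_\xi)^3\phi^2$, which are produced by integrating $\kappa\phi_{\xi\xi\xi}$ by parts against $w\phi$. This is where the $C^3$ regularity of Lemma~\ref{lem:weight_properties}(1) is needed.
\item $\mathbf{G}$ contains the good terms: the diffusion $\int w\phi_\xi^2$, the rarefaction term $\int w\ur_\xi\,3\tilde u\phi^2$, and the shock term coming from $-\int w'\us_\xi(\cdots)$.
\end{itemize}
The shift \eqref{eq:shift} is chosen so that $\dot X\int w\phi\us_\xi = -\frac{25s^2}{32}|\dot X|^2$ up to normalization. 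Hence $\dot X\mathbf{Y}$ produces the good term $-c|\dot X|^2$ plus a remainder, and Young's inequality lets $|\dot X|^2$ absorb $\bigl(\int \phi w\us_\xi\bigr)^2$.

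The core of the argument is the localized estimate on the shock layer. We change variables to $y=(\us+2s)/(3s)\in(0,1)$, so that $dy=\us_\xi\,d\xi/(3s)$. Using the key two-sided bound \eqref{eq:key-ineq}, we control $\int w\phi_\xi^2\,d\xi$ from below by $c\,s^2\int_0^1 y(1-y)|\partial_y\phi|^2\,dy$. On the other side, the quadratic shock terms $\int (-w')\us_\xi f''\cdots\phi^2$ become $\int_0^1 \mathrm{(explicit\ polynomial\ in\ }y)\phi^2\,dy$. Lemma~\ref{lem:poincare} together with the mean term controlled by $|\dot X|^2$ should then close the shock-part inequality, exactly as in \cite{HWZ-MA}, with the piecewise weight handling the degenerate region near $\us=s$ where $w'=0$. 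I expect this to be the main obstacle. Two things must be checked. First, the constants in \eqref{eq:key-ineq} (namely $4\sqrt3-6$ versus $1$ in the viscous case) still leave a strictly positive margin. Second, the interaction with the rarefaction must not destroy positivity: the term $3\tilde u\,\ur_\xi w\phi^2$ has to be positive where $\tilde u$ may be negative. The ratio condition $\delta_R\le\frac{5}{18}\delta_S$ enters precisely here, through the coefficient analogous to \eqref{eq:GSN-rare}. The dispersive bad terms carry a factor $\kappa s^2$ by Lemma~\ref{lem:weight_properties}(3) and \eqref{eq:absbdd}. They are therefore bounded by $C\kappa u_m^2$ times the good terms and absorbed once $\delta_0$ is small, which is why $C_0$ is independent of $\kappa$. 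Cubic terms are absorbed using $\|\phi\|_{L^\infty}\le C\varepsilon_1$.

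The error terms are handled next. $E_I$ is localized where both $\us_\xi$ and $\ur_\xi$ are non-negligible. Using the algebraic right decay \eqref{eq:right_decay_up}, the left decay \eqref{eq:left_decay_up}, and Lemma~\ref{lem:approx_rarefaction}(3),(6), we bound $\int_0^t\|E_I\|_{L^2}\,d\tau$ by a small power of $\varepsilon_R$. The time translation $T_0=\varepsilon_R^{-8}$ makes the tails integrable. $E_R$ is controlled via Lemma~\ref{lem:approx_rarefaction}(2) with $j=2,3$, using $\|\ur_{\xi\xi}\|_{L^1}$-type decay in $t+T_0$ and Gronwall-friendly estimates $\|\phi\|_{L^\infty}\|E\|_{L^1}$. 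This yields the $\varepsilon_R^{1/6}$ loss. Finally, for the $H^1$ part we multiply by $-\phi_{\xi\xi}$. The transport and dispersive terms vanish, and the rest gives $\frac{d}{dt}\|\phi_\xi\|^2+\|\phi_{\xi\xi}\|^2\lesssim$ (already controlled weighted quantities) $+\,|\dot X|^2\|\us_{\xi\xi}\|^2+\|E_\xi\|^2$. We add this estimate multiplied by a small constant to the weighted $L^2$ estimate and integrate in time; since $w\sim s^2$ is bounded above and below, this gives the stated inequality.
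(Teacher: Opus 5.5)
Your overall architecture is the paper's: a weighted $L^2$ estimate with $w(u^S)$, the shift \eqref{eq:shift}, a Poincar\'e argument on the shock layer, interaction and rarefaction errors, then the $-\phi_{\xi\xi}$ estimate. But the central shock-layer step fails as you set it up. With $y=(u^S+2s)/(3s)$ on the whole line,
$\int_0^1 y(1-y)|\partial_y\phi|^2\,dy=3s\int_{\mathbb{R}}\frac{y(1-y)}{u^S_\xi}\,\phi_\xi^2\,d\xi$,
and the upper bound in \eqref{eq:key-ineq} gives $\frac{y(1-y)}{u^S_\xi}\ge\frac{1}{18s^2(s-u^S)}$, which blows up as $\xi\to+\infty$. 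So your inequality $\int w\phi_\xi^2\,d\xi\ge c\,s^2\int_0^1 y(1-y)|\partial_y\phi|^2\,dy$ is false. This is exactly the degeneracy $u^S_\xi\sim(s-u^S)^2$ at $u_m$, and the piecewise weight does not cure it if Lemma~\ref{lem:poincare} is applied on the whole layer.

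The paper instead does the following.
\begin{enumerate}
\item It applies Lemma~\ref{lem:poincare} only on $(-\infty,\xi_*]$, where $u^S(\xi_*)=s/2$, with $y=\frac{2}{5s}(u^S+2s)$. There $y(1-y)/u^S_\xi$ is comparable to $(s-2u^S)/(s-u^S)^2$, hence bounded.
\item It applies the lemma to $\psi=\phi w$ rather than $\phi$. Then the mean $\int_0^1\psi\,dy=\frac{2}{5s}\int_{-\infty}^{\xi_*}\phi w u^S_\xi\,d\xi$ is the weighted quantity that actually enters $\dot X$.
\item On $(\xi_*,\infty)$, where $w'\equiv0$, there is no Poincar\'e step. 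The leftover piece $\int_{\xi_*}^\infty\phi w u^S_\xi\,d\xi$ of the shift term is absorbed by Cauchy--Schwarz into the hyperbolic good term $3\int_{\xi_*}^\infty\phi^2u^S w u^S_\xi\,d\xi$. That term is positive only because $u^S>s/2>0$ there, and this step produces the constant $\frac{45}{16}-\frac98\ln2$ in $G_2^S$.
\end{enumerate}

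Second, you plan to absorb the dispersive change of the profile through the two-sided bound \eqref{eq:key-ineq}, and you leave the resulting margin unchecked. The balance is too tight for $O(1)$ losses. At $u^S=s/4$ one has $H_1+H_2\approx 6.4\,s^4$, while the single term $-\frac{5s}{2}w''(u^S+2s)(s-u^S)^2$ of $H_1$ (coming from $-\frac12 w''(u^S_\xi)^2$) is about $-29.7\,s^4$. Bounding the extra factor $u^S_\xi$ by $2(u^S-s)^2(u^S+2s)$ instead of using the profile equation therefore destroys positivity, unless the compensating Poincar\'e term is carried in exactly matching form. It is not clear that any bookkeeping based only on \eqref{eq:key-ineq} closes. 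The paper keeps the exact relation $u^S_\xi=(u^S-s)^2(u^S+2s)+\kappa u^S_{\xi\xi}$, i.e.\ $\frac{u^S+2s}{u^S_\xi}=\frac{1}{(s-u^S)^2}\bigl(1-\kappa\frac{u^S_{\xi\xi}}{u^S_\xi}\bigr)$, together with its derivative. Every deviation from the viscous computation is then $O(\kappa s^2)$ by \eqref{eq:absbdd}. The problem reduces to the viscous inequalities $1-wQ\ge\frac16$ and $H_1+H_2>4s^4$ of Lemma~\ref{lem:weight-shock}, plus errors $M_{ij}\kappa s^2$. This is where $\kappa u_m^2\le\delta_0$ is really needed.

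Two smaller corrections.
\begin{itemize}
\item The ratio condition is not about positivity of $3\tilde u\,u^R_\xi w\phi^2$. Its part $3(u^S-u_m)u^R_\xi w\phi^2$ is the interaction term $J$, handled by Lemma~\ref{lem:wave_interaction}. The condition is needed because the remainder $\dot X\int\phi w u^R_\xi$ in $\dot X\,\mathbf{Y}$ is not small. By Young's inequality and $\int w u^R_\xi\,d\xi\le\frac52 s^2\delta_R$ (plus a term vanishing as $\varepsilon_R\to0$), it costs $\frac{16}{5}\delta_R\int\phi^2 w u^R_\xi$. This must be absorbed by $3u_m\int\phi^2 w u^R_\xi$, as in \eqref{eq:GSN-rare}.
\item $\int_0^t\|E_I\|_{L^2}\,d\tau$ is not bounded uniformly in $t$. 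On the right part of the shock layer, $E_I$ is dominated by $3(u^R-u_m)(2u^S+u^R-u_m)u^S_\xi$, and $u^R-u_m$ there decays only like $(t+T_0)^{-1+\frac1{2q}}$. Use the route you already mention for $E_R$ for all of $E$: pair $\|\phi\|_{L^\infty}\le\sqrt2\|\phi\|_{L^2}^{1/2}\|\phi_\xi\|_{L^2}^{1/2}$ with $\|E\|_{L^1}$ and apply Young. This reduces everything to the time-integrable quantity $\int\|E\|_{L^1}^{4/3}\,d\tau$.
\end{itemize}
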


\subsection{Proof of Theorem \ref{thm:main}}
In what follows, we combine the two propositions above to establish Theorem \ref{thm:main}.
Let $\delta_0$, $\varepsilon_1$, $\delta_2$, and $C_0$ be the constants of Proposition~\ref{prop:a-priori_total}, where we may assume $C_0\ge1$ and \(\d_2\le1\).
Choose
\begin{equation}\label{eq:eR-choice}
\varepsilon_0 := \frac{\varepsilon_1}{4\sqrt{C_0}}, \qquad
\varepsilon_R := \min\Bigl\{ \frac{\delta_2}{2},\; \Bigl(\frac{\varepsilon_1^2}{4C_0}\Bigr)^{6},\; \Bigl(\frac{\varepsilon_0}{2C_q(\delta_R^{\frac12}+\delta_R)}\Bigr)^{2} \Bigr\}, \qquad
T_0:=\varepsilon_R^{-8},
\end{equation}
where $C_q$ is the constant of Lemma~\ref{lem:approx_rarefaction}(8).
Then 
\begin{equation}\label{eq:smallness-budget}
    0<\varepsilon_R<\delta_2\le1, \qquad C_0\,\varepsilon_R^{1/6}\le\frac{\varepsilon_1^2}{4}, \qquad C_0\,\varepsilon_0^{2}=\frac{\varepsilon_1^2}{16}.
\end{equation}
Since $C_0$, $\varepsilon_1$, and $\delta_2$ depend only on $q$, $\delta_S$, and $\delta_R$, the same is true of $\varepsilon_0$ and $\varepsilon_R$.

\vspace{2mm}
\noindent\textbf{Global Existence:}
Let \(u_0\) satisfy \eqref{eq:initial-smallness}, and define $\phi_0 \coloneqq u_0-\tilde{u}^{0}(0,\cdot)$.
Then, the initial smallness assumption gives 
\begin{equation}\label{eq:phi0-small}
\|\phi_0\|_{H^1(\mathbb{R})}
\le \|u_0-\tilde{u}^{0}(0,\cdot)\|_{L^2(\mathbb{R})}+\|u_{0x}-\partial_\xi\tilde{u}^{0}(0,\cdot)\|_{L^2(\mathbb{R})}
< \varepsilon_0.
\end{equation}
By Proposition~\ref{prop:local_existence} with $t_0=0$ and $M=\varepsilon_0$, there exists a unique solution $u$ of \eqref{eq:gKdVB-xi} on $[0,T^*]$, where $T^*=T^*(\varepsilon_0)$, such that 
\[
u-\tilde{u}^{0}\in C([0,T^*];H^1(\mathbb{R}))\cap L^2(0,T^*;H^2(\mathbb{R})), \qquad
\sup_{[0,T^*]}\|u(t)-\tilde{u}^{0}(t)\|_{H^1(\mathbb{R})}
\le 2\varepsilon_0
\le \frac{\varepsilon_1}{2}.
\]
Let \(X\) be the solution of \eqref{eq:shift} on \([0,T^*]\) and set \(\phi\coloneqq u-\tilde{u}^{X}\). 
Since $\tilde{u}^{X}(t,\cdot)=\tilde{u}^{0}(t,\cdot-X(t))$, Theorem~\ref{thm:shock_properties} and Lemma~\ref{lem:approx_rarefaction}(2) imply
\begin{equation}\label{eq:translation}
    \|\tilde{u}^{X}(t,\cdot)-\tilde{u}^{0}(t,\cdot)\|_{H^2(\mathbb{R})}\le|X(t)|\,\bigl\|\partial_\xi\tilde{u}^{0}(t,\cdot)\bigr\|_{H^2(\mathbb{R})}\le C|X(t)|,
\end{equation}
so that $\phi\in C([0,T^*];H^1(\mathbb{R}))\cap L^2(0,T^*;H^2(\mathbb{R}))$.
Moreover, by \eqref{eq:shift-crude}, we have $\|\phi(t)\|_{H^1(\mathbb{R})}\le\frac{\varepsilon_1}{2}+Ct$ on $[0,T^*]$.
Thus, choosing $T_1\in(0,T^*]$ sufficiently small such that $CT_1\le\frac{\varepsilon_1}{4}$, we obtain 
\[
\sup_{[0,T_1]}\|\phi(t)\|_{H^1(\mathbb{R})}\le\frac34\varepsilon_1.
\]
We then consider the maximal time 
\[
\begin{aligned}
    T_M := \sup\Bigl\{ T>0 \;\Big|\; & u \text{ solves } \eqref{eq:gKdVB-xi} \text{ on }[0,T]\text{ with } u-\tilde{u}^{0}\in C([0,T];H^1)\cap L^2(0,T;H^2)\\
    &\text{and } \sup_{[0,T]}\|\phi(t)\|_{H^1}\le\varepsilon_1 \Bigr\},
\end{aligned}
\]
where \(X\) and \(\phi\) are uniquely determined by \(u\) through \eqref{eq:shift} and the Cauchy--Lipschitz theorem.
The preceding construction shows that $T_M\ge T_1>0$.
Suppose, for a contradiction, that $T_M<\infty$.
For \(0\le t<T_M\), the shift choice \eqref{eq:shift} implies \(|\dot X(t)|\le C\|\phi(t)\|_{L^\infty(\RR)}\le C\varepsilon_1\) and \(|X(t)|\le C\varepsilon_1T_M\).
Consequently, \eqref{eq:translation} yields 
\[
\|u(t)-\tilde{u}^{0}(t)\|_{H^1(\mathbb{R})}
\le \varepsilon_1(1+CT_M) \eqqcolon M_*.
\]
The local existence time \(T^*(M_*)\) is independent of the restart time.
We thus apply Proposition~\ref{prop:local_existence} at a time \(t_0<T_M\) close enough to \(T_M\) to extend \(u\) beyond \(T_M\).
Since $t\mapsto\|\phi(t)\|_{H^1(\mathbb{R})}$ is continuous, the maximality of $T_M$ implies
\begin{equation}\label{eq:saturation}
\sup_{[0,T_M]}\|\phi(t)\|_{H^1(\mathbb{R})}=\varepsilon_1.
\end{equation}
On the other hand, Proposition~\ref{prop:a-priori_total} applies to every interval \([0,T]\) with \(T<T_M\).
Letting \(T\uparrow T_M\) and using \eqref{eq:phi0-small}, we find that
\[
\sup_{[0,T_M]}\|\phi(t)\|_{H^1(\mathbb{R})}^2
\le C_0\bigl(\|\phi_0\|_{H^1(\mathbb{R})}^2+\varepsilon_R^{1/6}\bigr)
\le \frac{\varepsilon_1^2}{16}+\frac{\varepsilon_1^2}{4}
<\varepsilon_1^2,
\]
which contradicts \eqref{eq:saturation}.
Hence, $T_M=\infty$, and this together with Proposition \ref{prop:a-priori_total} implies
\begin{equation}\label{eq:global-bounds}
\sup_{t\ge0}\|\phi(t)\|_{H^1(\RR)}^2
+ \int_0^\infty\Bigl(\|\phi_\xi(\tau)\|_{H^1(\RR)}^2
+|\dot{X}(\tau)|^2
+\int_\mathbb{R}\phi^2(\us_\xi+\ur_\xi)\,d\xi
\Bigr)d\tau
\le 2C_0\bigl(\varepsilon_0^2+\varepsilon_R^{1/6}\bigr)
<\infty.
\end{equation}
The definition of the shift \eqref{eq:shift}, with $\|w\|_{L^\infty(\RR)}=\tfrac{15}{2}s^2$ and $\|\us_\xi\|_{L^1(\RR)}=3s$, yields that
\begin{equation} \label{eq:Xdot-Linfty}
|\dot{X}(t)|
\le \frac{32}{25s^2}\,\|w\|_{L^\infty(\RR)}\|\us_\xi\|_{L^1(\RR)}\,\|\phi(t)\|_{L^\infty(\RR)} \le C\,\|\phi(t)\|_{L^\infty(\RR)}, \qquad t\ge0.
\end{equation}
Since the two regularity assertions of Theorem~\ref{thm:main} are exactly $\phi\in C([0,\infty);H^1(\RR))$ and $\phi_\x\in L^2(0,\infty;H^1(\RR))$, both follow from \eqref{eq:global-bounds} by the change of variables \(\x = x -\s t\).

\begin{comment}
By Lemma~\ref{lem:approx_rarefaction}, we have 
\[
\begin{gathered}
\ur(t+T_0,\cdot-X(t)+\s T_0)-u^r(\cdot/t)
\in C((0,\infty);H^1(\RR)),\\
\ur_{xx}(t+T_0,\cdot) \in L^2(0,\infty;L^2(\RR)).
\end{gathered}
\]
Together with $\phi\in C([0,\infty);H^1(\mathbb{R}))$ and \eqref{eq:global-bounds}, these properties yield both of the regularity assertions in Theorem~\ref{thm:main} by the definition of \(\phi\) and the change of variables \(\x = x -\s t\).
\end{comment}

%In particular, $\phi\in C([0,\infty);H^1(\mathbb{R}))$, which is the first regularity assertion of Theorem~\ref{thm:main}. Since $\phi_{\xi\xi}\in L^2(0,\infty;L^2(\mathbb{R}))$ by \eqref{eq:global-bounds} and $\|\ur_{xx}(t+T_0,\cdot)\|_{L^2(\mathbb{R})}\le C(t+T_0)^{-(1-1/q)}$ by Lemma~\ref{lem:approx_rarefaction}(2), we have
%\[
%u_{xx}(t,x)-\us_{xx}\bigl(x-\sigma t-X(t)\bigr)=\bigl(\phi_{\xi\xi}+\ur_{\xi\xi}\bigr)\bigl(t,\,x-\sigma t\bigr)\in L^2(0,\infty;L^2(\mathbb{R})),
%\]
%which is the second regularity assertion.

\vspace{2mm}
\noindent\textbf{Long-Time Behavior.}
Consider $g(t):=\|\phi_\xi(t)\|_{L^2(\mathbb{R})}^2$. We claim that
\begin{equation}\label{eq:g-W11}
    \int_0^\infty\bigl(|g(t)|+|g'(t)|\bigr)\,dt<\infty.
\end{equation}
The integrability of \(g\) follows from \eqref{eq:global-bounds}.
To estimate \(g'\), we use \eqref{eq:perturbed_general} and integrate by parts as in \eqref{eq:H1-energy-1}.
Thus, it holds that
\[
g'(t)
= -2\|\phi_{\xi\xi}\|_{L^2(\RR)}^2
-2\int_\mathbb{R}\phi_{\xi\xi}\Bigl[\dot{X}\bigl(\us_\xi+\ur_\xi\bigr)-\bigl(f(\phi+\tilde{u})-f(\tilde{u})\bigr)_\xi - E\Bigr]d\xi,
\]
where the terms with $\sigma$ and $\kappa$ vanish, since $\int_\mathbb{R}\phi_{\xi\xi}\phi_\xi\,d\xi=\int_\mathbb{R}\phi_{\xi\xi}\phi_{\xi\xi\xi}\,d\xi=0$.
The a priori bounds 
\[
\|\phi\|_{L^\infty(\RR)}\le\sqrt2\varepsilon_1 \qquad \text{and} \qquad
\|\us_\xi+\ur_\xi\|_{L^\infty(\RR)}+\|\us_\xi+\ur_\xi\|_{L^2(\RR)}\le C
\]
imply  
\[
|(f(\phi+\tilde{u})-f(\tilde{u}))_\xi|\le C\bigl(|\phi_\xi|+|\phi|(\us_\xi+\ur_\xi)\bigr),
\]
which, together with Young's inequality, yields 
\[
|g'(t)|
\le C\Bigl(\|\phi_\xi(t)\|_{H^1(\RR)}^2
+ |\dot{X}(t)|^2
+ \int_\mathbb{R}\phi^2(\us_\xi+\ur_\xi)\,d\xi + \|E(t)\|_{L^2(\RR)}^2\Bigr).
\]
The right-hand side is integrable on \((0,\infty)\) by \eqref{eq:global-bounds} and by the estimate of $\int_0^\infty\int_\mathbb{R}|E|^2\,d\xi\,d\tau$ in the proof of Proposition~\ref{prop:a-priori-H1}.
This proves \eqref{eq:g-W11}, and hence $\lim_{t\to\infty}\|\phi_\xi(t)\|_{L^2(\mathbb{R})}=0$. This together with the interpolation inequality $\|\phi\|_{L^\infty(\RR)}^2\le2\|\phi\|_{L^2(\RR)}\|\phi_\xi\|_{L^2(\RR)}$ and \eqref{eq:global-bounds} implies
\begin{equation}\label{eq:phi-Linfty-decay}
    \lim_{t\to\infty}\|\phi(t)\|_{L^\infty(\mathbb{R})}=0,
\end{equation}
and then \eqref{eq:Xdot-Linfty} yields $\lim_{t\to\infty}|\dot{X}(t)|=0$.
This establishes the last assertion of Theorem~\ref{thm:main}.

\vspace{2mm}
It remains to prove the sup-convergence. For $t>0$ and $x\in\mathbb{R}$, we have
\begin{multline*}
\Bigl|u(t,x)-\Bigl(\us\bigl(x-\sigma t-X(t)\bigr)+u^r\Bigl(\frac{x}{t}\Bigr)-u_m\Bigr)\Bigr|
\le
\|\phi(t)\|_{L^\infty(\RR)}\\
+\Bigl|\ur\bigl(t+T_0,\,x-X(t)+\sigma T_0\bigr)-u^r\Bigl(\frac{x-X(t)+\sigma T_0}{t+T_0}\Bigr)\Bigr|
+\Bigl|u^r\Bigl(\frac{x-X(t)+\sigma T_0}{t+T_0}\Bigr)-u^r\Bigl(\frac{x}{t}\Bigr)\Bigr|.
\end{multline*}
The first two terms tend to zero uniformly in $x$, by \eqref{eq:phi-Linfty-decay} and by Lemma~\ref{lem:approx_rarefaction}(5). 
To estimate the last term, recall that \(u^r\) is globally Lipschitz with Lipschitz constant $(6u_m)^{-1}$ and is constant outside $[w_m,w_+]$.
Fix \(0<\et<1\). Since \(X(t)/t \to 0\), we have $|X(t)|\le\eta t$ for all sufficiently large \(t\).
On the region \((w_m-1)t \le x \le (w_++1)t\),
\[
\Bigl|\frac{x-X(t)+\sigma T_0}{t+T_0}-\frac{x}{t}\Bigr|
\le \frac{|X(t)|+\sigma T_0}{t+T_0}+\frac{|x|\,T_0}{t(t+T_0)}
\le \eta+\frac{(\sigma+w_++1)\,T_0}{t}.
\]
The Lipschitz continuity of \(u^r\) therefore bounds the last term by $(6u_m)^{-1}(\eta+C t^{-1})$.
If $x>(w_++1)t$ and $(1-\eta)t\ge(w_+-\s)T_0$, then both $\frac{x}{t}$ and $\frac{x-X(t)+\sigma T_0}{t+T_0}$ exceed $w_+$, so both values of $u^r$ equal $u_+$ and the last term vanishes; if $x<(w_m-1)t$, then $\frac{x}{t}<w_m$ and $\frac{x-X(t)+\sigma T_0}{t+T_0}\le\frac{x+\eta t+\sigma T_0}{t+T_0}\le w_m$, and hence both values equal $u_m$.
Letting $t\to\infty$ and then $\eta\downarrow 0$, we conclude that
\[
\lim_{t\to\infty}\,\sup_{x\in\mathbb{R}}\,\Bigl|u(t,x)-\Bigl(\us\bigl(x-\sigma t-X(t)\bigr)+u^r\bigl(\frac{x}{t}\bigr)-u_m\Bigr)\Bigr|=0,
\]
which completes the proof of Theorem~\ref{thm:main}. \qed

\section{Proof of Proposition \ref{prop:a-priori_total}: \textit{A Priori} Estimates}\label{sec:apriori}
\subsection{Zeroth-Order Estimates}
This section is devoted to the proof of Proposition \ref{prop:a-priori_total}, which provides the uniform-in-time \textit{a priori} estimates.
We begin with the $L^2$-energy estimates and then proceed to the higher-order estimates.
Throughout this section, we assume that the Cauchy problem \eqref{eq:perturbed_general} has a solution $\phi \in C([0,T] $ $; H^1(\mathbb{R})) \cap L^2 (0,T; H^2(\mathbb{R}))$ for some $T > 0$.
In addition, we fix $\alpha = 8$ and write $T_0 := \varepsilon_R^{-8}$ (any $\alpha>7$ would do; the exponents in Propositions~\ref{prop:a-priori-L2} and \ref{prop:a-priori-H1} correspond to $\alpha=8$). The parameter $\varepsilon_R>0$ will be fixed at the end of the proof of Theorem~\ref{thm:main}, and $\delta_0$ is the constant of Lemma~\ref{lem:GS}. Throughout this section we assume $\kappa u_m^2\le\delta_0$, so that $\us$ is the monotone degenerate shock of Theorem~\ref{thm:shock_properties} and Lemma~\ref{lem:GS} applies.

\begin{proposition} \label{prop:a-priori-L2}
There exist positive constants $\delta_0, \varepsilon_1, \delta_2 > 0$ such that, for any $0<\varepsilon_R<\delta_2$ with $T_0:=\varepsilon_R^{-8}$, if the dispersion coefficient $\kappa > 0$, the shock strength $\ds=|u_m-u_-|=3u_m>0$, and the rarefaction strength $\delta_R=|u_+-u_m| > 0$ satisfy
\begin{equation} \label{eq:small-delta-constraint}
\kappa u_m^2 \le \delta_0 \quad \text{and} \quad
\delta_R \le \frac{5}{18} \delta_S \; \left(= \frac{5}{6}u_m\right),
\end{equation}
and the perturbation $\phi \in C([0,T];H^1(\mathbb{R})) \cap L^2(0,T;H^2(\mathbb{R}))$ solves \eqref{eq:perturbed_general} on $[0,T]$ with
\[
\sup_{t \in [0, T]} \| \phi (t,\cdot) \|_{H^1(\mathbb{R})} \le \varepsilon_1,
\]
then there exists a constant $C > 0$, independent of $\kappa$, $\varepsilon_R$, and $T$, such that for all $t \in [0, T]$,
\begin{equation} \label{eq:a-priori-L2}
\begin{gathered}
\|\phi(t,\cdot)\|_{L^2}^2 +\int_0^t \|\phi_\xi(\tau,\cdot) \|_{L^2}^2 \,d\tau  +\int_0^t \int_\mathbb{R} \phi^2 (u^S_\xi + u^R_\xi ) \,d\xi d\tau  \\
+ \int_0^t  |\dot{X}(\tau)|^2 \, d\tau \le  C \|\phi(0,\cdot)\|_{L^2}^2
+ C \varepsilon_R^{1/6}.
\end{gathered}
\end{equation}
\end{proposition}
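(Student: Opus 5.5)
We will prove the $L^2$ estimate by the method of $a$-contraction with shifts. The plan is to compute the time derivative of the weighted relative entropy $\int_{\mathbb{R}} w(\us)\,\phi^2/2\,d\xi$ (for the scalar equation the relative entropy of $u$ with respect to $\tilde u$ is simply $\phi^2/2$) along \eqref{eq:perturbed_general}, using the weight \eqref{eq:weight} and the shift \eqref{eq:shift}.

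Differentiating in time and integrating by parts produces the following groups of terms:
\begin{itemize}
\item the shift part $\dot X\, Y(\phi)$, where $Y$ collects $-\int w'\us_\xi\phi^2/2$, $\int w\phi(\us_\xi+\ur_\xi)$ and similar contributions;
\item the hyperbolic part $-\int w\,\phi\,(f(\phi+\tilde u)-f(\tilde u))_\xi$ together with the term from $\partial_t w(\us)=-\dot X w'\us_\xi$;
\item the diffusion part $-\int w\phi_\xi^2-\int w'\us_\xi\phi\phi_\xi$;
\item the dispersion part, which after integrating by parts three times becomes $\frac{3\kappa}{2}\int (w'\us_\xi)\phi_\xi^2 - \frac{\kappa}{2}\int (w(\us))_{\xi\xi\xi}\phi^2$;
\item the error $-\int w\phi E$.
\end{itemize}

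Expanding $f(\phi+\tilde u)-f(\tilde u)=3\tilde u^2\phi+3\tilde u\phi^2+\phi^3$ and isolating the leading shock part gives a good quadratic term of the form $\int \bigl(w\,(3(\us)^2)_\xi/2-\dots\bigr)\phi^2$, minus the $\sigma$-transport term. The rarefaction part is also good, since $\ur_\xi>0$ and $f''(\ur)>0$ as $\ur\ge u_m>0$; this is where the ratio restriction $\delta_R\le\frac{5}{18}\delta_S$ enters, ensuring that the coefficient of $\phi^2\ur_\xi$ (involving $w$ and the interaction with $\us-u_m$) is positive, in the spirit of the announced Lemma~\ref{lem:GS-N}.

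The core step is the localized Poincaré argument near the shock. We change variables to $y=(\us(\xi)+2s)/(3s)\in(0,1)$, so that $dy=\us_\xi d\xi/(3s)$, and use \eqref{eq:key-ineq} to compare $\us_\xi$ with $(\us-s)^2(\us+2s)$. This lets us bound $\int w\phi_\xi^2\,d\xi$ from below by a constant times $s^2\int_0^1 y(1-y)|\psi'|^2\,dy$-type quantities, with $\psi=\phi\circ(\us)^{-1}$; because of the degenerate end the weight is $(1-y)^2$ rather than $(1-y)$, which is still at least $y(1-y)$ up to constants. The shift \eqref{eq:shift} is chosen so that $\dot X$ equals (a constant times) the weighted mean $\int\psi w\,dy$. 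Completing the square, $-\frac{25s^2}{64}|\dot X|^2$ absorbs the mean part, and Lemma~\ref{lem:poincare} closes the estimate for the variance part. The numerical constants $\frac52 s$ and $\frac{32}{25s^2}$ in the weight and the shift are tuned precisely so that the resulting quadratic form is nonpositive.

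The dispersion terms are treated perturbatively. By Lemma~\ref{lem:weight_properties}(3) and \eqref{eq:absbdd}, $|(w(\us))_{\xi\xi\xi}|\le C s^{4}\us_\xi$ and $|w'\us_\xi|\le C s^3$, so these terms are bounded by $C\kappa s^2\bigl(\int\phi_\xi^2+s^2\int\us_\xi\phi^2\bigr)$. They are absorbed into the good terms when $\kappa s^2\le\delta_0$, with constants independent of $\kappa$.

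The remaining terms are handled as follows.
\begin{itemize}
\item The cubic terms are bounded by $\varepsilon_1$ times the good terms, using $\|\phi\|_{L^\infty}\le C\varepsilon_1$.
\item The interaction error $E_I$ is controlled pointwise by $C\us_\xi\ur_\xi$-type products, using $|f'(\tilde u)-f'(\us)|\le C|\ur-u_m|$ and $|f'(\tilde u)-f'(\ur)|\le C|\us-u_m|$. We then split space at $\xi=\sigma$-relative positions and use the tail bounds of Theorem~\ref{thm:shock_properties} and Lemma~\ref{lem:approx_rarefaction}(3),(6) with $T_0=\varepsilon_R^{-8}$. The slow algebraic decay $(1+s^2\xi)^{-1}$ of $\us-s$ on the right is the delicate point; it should give a time-integrable bound of order $\varepsilon_R^{1/6}$.
\item The rarefaction error $E_R=-\ur_{\xi\xi}+\kappa\ur_{\xi\xi\xi}$ is estimated via Lemma~\ref{lem:approx_rarefaction}(2) as $\|\phi\|_{L^2}\|E_R\|_{L^2}$ together with the bound $\int_0^\infty\|E_R\|_{L^2}\,d\tau\le C\varepsilon_R^{\text{power}}$. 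Alternatively, we can integrate $\int w\phi\ur_{\xi\xi}$ by parts to get $\phi_\xi\ur_\xi$ and use Gronwall-free absorption.
\end{itemize}
Integrating in time then yields \eqref{eq:a-priori-L2}.

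We expect the main obstacle to be the sharp Poincaré step in the presence of the degenerate end and the gluing of the weight. We must show that, with $w$ merely $C^3$ and nonlinear in $\us$, the quadratic form in $\psi$ stays negative definite uniformly in $s$. We will check this first by direct computation in the $y$-variable, using the two-sided bound \eqref{eq:key-ineq} with its constant $4\sqrt3-6$.
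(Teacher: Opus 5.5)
Your core Poincar\'e step does not work at the degenerate end. With $y=(\us+2s)/(3s)$ taken over the whole profile and $\psi(y)=\phi(\xi)$, you get $\int_{\mathbb R}\phi_\xi^2\,d\xi=\frac{1}{3s}\int_0^1\us_\xi|\psi'|^2\,dy$, and \eqref{eq:key-ineq} gives $\us_\xi\simeq 27s^3y(1-y)^2$. So the diffusion controls only $\int_0^1 y(1-y)^2|\psi'|^2\,dy$. Since $y(1-y)^2\le y(1-y)$, your claim that this weight is ``still at least $y(1-y)$ up to constants'' goes the wrong way. Equivalently, the right-hand side of Lemma~\ref{lem:poincare} becomes $\frac{1}{3s}\int_{\mathbb R}\frac{(\us+2s)(s-\us)}{\us_\xi}\phi_\xi^2\,d\xi\simeq\frac{1}{3s}\int_{\mathbb R}\frac{\phi_\xi^2}{s-\us}\,d\xi$. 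This is not bounded by $\int w\phi_\xi^2$: for $\psi'$ supported in $[1-\eta,1]$ the ratio is of order $\eta^{-1}$. A Hardy-type substitute for the weight $y(1-y)^2$ would carry a different constant, so it would not match the sharp constant $2$ to which, as you note yourself, the weight and the shift are tuned.

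The paper never applies the Poincar\'e inequality near $\us=s$. It splits at $\xi_*$ with $\us(\xi_*)=s/2$, which is exactly why $w$ is constant on $[s/2,s)$.

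\begin{itemize}
\item On $(-\infty,\xi_*]$ it uses $y=\frac{2}{5s}(\us+2s)$ and $\psi=\phi w$. The length $\frac{5s}{2}$ of $[-2s,s/2]$ is the source of $\frac{8}{25s^2}$ and hence of the shift constant $\frac{32}{25s^2}$. There $s-\us\ge s/2$, so the Dirichlet weight stays bounded.
\item On that region it uses the exact identity $\frac{\us+2s}{\us_\xi}=\frac{1}{(s-\us)^2}\bigl(1-\kappa\frac{\us_{\xi\xi}}{\us_\xi}\bigr)$ from \eqref{eq:shock-eq}, not the lossy two-sided bound. This reduces everything to $1-w\frac{s-2\us}{5s(s-\us)^2}\ge\frac16$ and $H_1+H_2>4s^4$ from Lemma~\ref{lem:weight-shock}, with the $\kappa$-terms of size $O(\kappa s^2)$.
\item On $[\xi_*,\infty)$ one has $\us\ge s/2>0$. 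So $3\int\phi^2\us w\us_\xi$ is a good term, and with constant weight it controls the leftover shift terms as in \cite[Lemma~4.4]{HWZ-MA}.
\end{itemize}

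Separately, the ratio $\delta_R\le\frac{5}{18}\delta_S$ cannot make the interaction term $J=-3\int\phi^2(\us-u_m)w\ur_\xi$ absorbable. Its coefficient $3(u_m-\us)$ approaches $9u_m$ to the left of the shock, whereas $G^R$ supplies only $3\ur\le\frac{11}{2}u_m$. The paper controls $J$ through the time decay of $\int|\us-u_m|\ur_\xi$ (Lemma~\ref{lem:wave_interaction}) together with $\|\phi\|_{L^\infty}^2\le 2\|\phi\|_{L^2}\|\phi_\xi\|_{L^2}$. It uses the ratio condition instead for the cross term $\dot X\int\phi w\ur_\xi$, which your plan does not address. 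After Young's inequality that term costs $\frac{32}{25s^2}\int w\ur_\xi\le\frac{16}{5}\delta_R+\dots$, which must stay below $3u_m$.
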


We prove Proposition \ref{prop:a-priori-L2} through a sequence of auxiliary lemmas.
More precisely, Lemmas \ref{lem:weighted-L2-energy}--\ref{lem:R(t)} establish the estimates needed to show Lemma \ref{lem:a-priori-L^2}.
Proposition \ref{prop:a-priori-L2} then follows as a direct consequence of Lemma \ref{lem:a-priori-L^2}.

\begin{lemma}\label{lem:weighted-L2-energy}
Let \(w(\us)\) and \(X(t)\) denote the weight function and shift function defined in \eqref{eq:weight} and \eqref{eq:shift}, respectively, and let $\phi$ be the solution to \eqref{eq:perturbed_general} on $[0,T]$.
Then, the following holds:
\begin{equation} \label{eq:L2-energy-identity}
\frac{1}{2}\frac{d}{dt}\int_{\mathbb{R}} \phi^{2} w \,d\xi
= \dot{X}(t)\,Y(t) - \mathcal{J}^{good}(t) + \mathcal{J}^{bad}(t),
\end{equation}
where 
\begin{align*}
Y(t)
&\coloneqq \int_{\mathbb{R}} \phi w \us_{\xi} \, d\xi
+ \int_{\mathbb{R}} \phi w \ur_{\xi} \, d\xi
- \frac{1}{2}\int_{\mathbb{R}} \phi^{2} w' \us_{\xi} \, d\xi, \\
\mathcal{J}^{good}(t)
&\coloneqq \int_{\mathbb{R}} \phi_{\xi}^{2} w \, d\xi
+ 3\int_{\mathbb{R}} \phi^{2} \ur w \ur_{\xi} \, d\xi
+ 3\int_{\mathbb{R}} \phi^{2} (\ur - u_m) w \us_{\xi} \, d\xi \\
&\qquad
- 3\int_{\mathbb{R}} \phi^{2} (\us)^{2} w' \us_{\xi} \, d\xi
- \frac{3}{2}\int_{\mathbb{R}} \phi^{2} (\ur - u_m)^2 w' \us_{\xi} \, d\xi
- \frac{3}{4}\int_{\mathbb{R}} \phi^{4} w' \us_{\xi} \, d\xi,\\
\mathcal{J}^{bad}(t)
&\coloneqq
- 3\int_{\mathbb{R}} \phi^{2} \us w \us_{\xi} \,d\xi
- 3\int_{\mathbb{R}} \phi^{2} (\us - u_m) w \ur_{\xi} \, d\xi \\
&\qquad 
- 3u_m^2 \int_{\mathbb{R}} \phi^{2} w' \us_{\xi} \,d\xi
+ 3\int_{\mathbb{R}} \phi^{2} \us (\ur - u_m) w' \us_{\xi} \, d\xi\\
&\qquad
+ \frac{1}{2}\int_{\mathbb{R}} \phi^{2} w'' (\us_{\xi})^{2} \, d\xi
- \int_{\mathbb{R}} \phi^{3} w (\us_\xi + \ur_\xi) \,d\xi
+ 2\int_{\mathbb{R}} \phi^{3} (\us + \ur - u_m) w' \us_{\xi} \, d\xi \\
&\qquad
- \frac{3}{2}\kappa \int_{\mathbb{R}} \phi_\xi^2 w' \us_\xi \, d\xi
+ \frac{\kappa}{2} \int_{\mathbb{R}} \phi^2 (w' \us_{\xi\xi\xi} + w_{\xi \xi \xi}) \, d\xi
- \int_{\mathbb{R}} \phi E w \, d\xi.
\end{align*}
\end{lemma}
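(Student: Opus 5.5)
This identity is a direct weighted $L^2$ computation. Multiply the perturbation equation \eqref{eq:perturbed_general} by $\phi\,w(\us)$, where $\us=\us(\xi-X(t))$, integrate over $\mathbb{R}$, and account for the time dependence of the weight. Since $\partial_t w(\us) = -\dot X\, w'\us_\xi$, we have
\[
\frac12\frac{d}{dt}\int\phi^2 w\,d\xi=\int\phi\phi_t w\,d\xi-\frac{\dot X}{2}\int\phi^2 w'\us_\xi\,d\xi .
\]
This boundary-free term, together with the shift term $\dot X\int\phi w(\us_\xi+\ur_\xi)$ coming from \eqref{eq:perturbed_general}, produces exactly $\dot X\,Y(t)$. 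The source term gives $-\int\phi E w$, which is placed in $\mathcal{J}^{bad}$. All integrations by parts are justified by $\phi\in L^2(0,T;H^2)$ and the boundedness of $w$, $w'$, $w''$, $w'''$ (Lemma~\ref{lem:weight_properties}); the $C^3$-regularity of $w$ is what allows integrating by parts three times with no interior boundary terms.

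The linear terms are handled one at a time. The transport term gives $\sigma\int\phi\phi_\xi w=-\frac{\sigma}{2}\int\phi^2 w'\us_\xi$, and with $\sigma=3u_m^2$ this is the term $-3u_m^2\int\phi^2w'\us_\xi$. For the viscous term,
\[
\int\phi\phi_{\xi\xi}w=-\int\phi_\xi^2w+\frac12\int\phi^2 w_{\xi\xi},\qquad w_{\xi\xi}=w''(\us_\xi)^2+w'\us_{\xi\xi}.
\]
The dispersive term $-\kappa\int\phi\phi_{\xi\xi\xi}w$ is integrated by parts repeatedly:
\[
-\kappa\int\phi\phi_{\xi\xi\xi}w=-\frac{3\kappa}{2}\int\phi_\xi^2 w_\xi+\frac{\kappa}{2}\int\phi^2w_{\xi\xi\xi},\qquad w_\xi=w'\us_\xi .
\]
The leftover piece $\frac12\int\phi^2 w'\us_{\xi\xi}$ from the viscous term is then replaced using the profile equation in the integrated form \eqref{eq:shock-eq}, $\us_{\xi\xi}=\kappa\us_{\xi\xi\xi}+\us_\xi-\ldots$, or more precisely the differentiated relation \eqref{eq:shock-shifted}, $\us_{\xi\xi}=\kappa\us_{\xi\xi\xi}+(3(\us)^2-\sigma)\us_\xi$. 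The $\kappa\us_{\xi\xi\xi}$ part produces the term $\frac{\kappa}{2}\int\phi^2 w'\us_{\xi\xi\xi}$, and the $3(\us)^2\us_\xi$ part combines with the flux terms below. The exact bookkeeping of which combination of $\sigma$ terms survives (the listed $-3u_m^2\int\phi^2w'\us_\xi$) should be checked at this point, since this substitution is where the sign/coefficient risk lies.

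The nonlinear flux term is expanded with $f(u)=u^3$:
\[
f(\phi+\tilde u)-f(\tilde u)=3\tilde u^2\phi+3\tilde u\phi^2+\phi^3,\qquad \tilde u=\us+(\ur-u_m).
\]
Integrating by parts,
\[
\int\phi\bigl(f(\phi+\tilde u)-f(\tilde u)\bigr)_\xi w=-\int\phi_\xi(\cdots)w-\int\phi(\cdots)w'\us_\xi .
\]
Each monomial is then rewritten via $\phi^k\phi_\xi=(\phi^{k+1})_\xi/(k+1)$ followed by one more integration by parts, which leaves only terms of the form $\phi^{j}\times(\text{derivatives of }\tilde u\text{ or }w)$. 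Expanding $\tilde u^2=(\us)^2+2\us(\ur-u_m)+(\ur-u_m)^2$ and $\tilde u_\xi=\us_\xi+\ur_\xi$ then gives the listed pieces: the quadratic ones $3\phi^2\ur w\ur_\xi$, $3\phi^2(\ur-u_m)w\us_\xi$, $-3\phi^2\us w\us_\xi$, $-3\phi^2(\us-u_m)w\ur_\xi$, and those with $w'$ carrying coefficients $-3(\us)^2$, $-\tfrac32(\ur-u_m)^2$, $3\us(\ur-u_m)$; the cubic terms $-\phi^3w(\us_\xi+\ur_\xi)$ and $2\phi^3\tilde u w'\us_\xi$; and the quartic term $-\tfrac34\phi^4w'\us_\xi$. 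Here the split $3\tilde u\tilde u_\xi = 3(\us+\ur-u_m)(\us_\xi+\ur_\xi)$ is regrouped so that $\ur\ur_\xi$ and $(\us-u_m)\ur_\xi$ appear, which is why $u_m$ shows up.

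Finally, all contributions are collected and sorted into $\mathcal J^{good}$ (terms that are nonnegative by $w>0$, $w'\le0$, $\us_\xi,\ur_\xi>0$, $\ur\ge u_m$) and $\mathcal J^{bad}$. The main obstacle is purely bookkeeping: ensuring that the $\sigma$-term, the $\us_{\xi\xi}$ substitution through the profile equation, and the $3(\us)^2$ flux term combine correctly. I would verify this by checking the linearization around $\phi$ with $\ur\equiv u_m$ against the known viscous identity in \cite{HWZ-MA}.
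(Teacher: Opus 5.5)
Your route is the same as the paper's: test \eqref{eq:perturbed_general} against $w\phi$, integrate by parts, rewrite $u^S_{\xi\xi}$ in $w_{\xi\xi}=w''(u^S_\xi)^2+w'u^S_{\xi\xi}$ using the differentiated profile equation, expand $\tilde u^2$ and $\tilde u\tilde u_\xi$, and sort the terms.

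However, the one coefficient you actually commit to is wrong, and it sits exactly in the bookkeeping you defer. The transport term gives $-\frac{\sigma}{2}\int\phi^2w'u^S_\xi=-\frac32u_m^2\int\phi^2w'u^S_\xi$, not $-3u_m^2\int\phi^2w'u^S_\xi$. The missing $-\frac32u_m^2$ comes from the $-\sigma u^S_\xi$ piece of
$u^S_{\xi\xi}=3\bigl((u^S)^2-u_m^2\bigr)u^S_\xi+\kappa u^S_{\xi\xi\xi}$
inside $\frac12\int\phi^2w'u^S_{\xi\xi}$. Your account of that substitution drops this piece, since you only track the $\kappa u^S_{\xi\xi\xi}$ and $3(u^S)^2u^S_\xi$ parts.

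The paper closes the accounting as follows:
\begin{itemize}
\item The flux gives the net $w'$-quadratic term $\frac32\int\phi^2\tilde u^2w'u^S_\xi$: a contribution of $-\frac32$ from the $\phi_\xi$-integration plus $3$ from the $w'u^S_\xi$-term.
\item Together with the transport term this is $\frac32\int\phi^2w'(\tilde u^2-u_m^2)u^S_\xi$.
\item The viscous term adds $\frac32\int\phi^2w'\bigl((u^S)^2-u_m^2\bigr)u^S_\xi$.
\item Expanding $\tilde u^2-u_m^2=\bigl((u^S)^2-u_m^2\bigr)+2u^S(u^R-u_m)+(u^R-u_m)^2$ then yields exactly the coefficients $3(u^S)^2$, $-3u_m^2$, $3u^S(u^R-u_m)$ and $\frac32(u^R-u_m)^2$ that appear, with their signs, in $-\mathcal{J}^{good}+\mathcal{J}^{bad}$.
\end{itemize}
With this correction your argument is complete, and no check against the purely viscous identity of \cite{HWZ-MA} is needed.
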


\noindent Every term in $\mathcal{J}^{good}(t)$ is non-negative, since $w \ge 0$, $w' \le 0$, $w'' \ge 0$, $\us_\xi > 0$, $\ur_\xi > 0$, and $\ur > u_m$.

\begin{proof}
Multiplying the perturbation equation \eqref{eq:perturbed_general} by $w\phi$ and integrating over $\mathbb{R}$ yields
\begin{equation} \label{eq: energy-1}
\begin{aligned}
&\frac{1}{2}\frac{d}{dt}\int_{\mathbb R}\phi^2 w \,d\xi 
= \dot{X}(t)\left[\, - \frac{1}{2}\int_{\mathbb R}\phi^2 w' \us_\xi\,d\xi\right] + \int_\mathbb{R} \phi \phi_t w \, d\xi \\
&= \dot{X}(t)\left[\, - \frac{1}{2}\int_{\mathbb R}\phi^2 w' \us_\xi\,d\xi\right] \\
&\qquad
+ \int_\mathbb{R} \phi w \left[\, \dot{X} (u^S_\xi + u^R_\xi) + \sigma \phi_\xi - (f(\phi+\tilde{u}) - f(\tilde{u}))_\xi + \phi_{\xi \xi} - \kappa \phi_{\xi \xi \xi} -E \right]  d\xi \\
&= \dot{X}(t)\left[\,\int_{\mathbb R}\phi w (\us_\xi + \ur_\xi)\,d\xi - \frac{1}{2}\int_{\mathbb R}\phi^2 w' \us_\xi\,d\xi\right] - \frac{\sigma}{2} \int_\mathbb{R} \phi^2 w' u^S_\xi \, d \xi\\
&\qquad
+ \int_\mathbb{R} (f(\phi+\tilde{u}) - f(\tilde{u})) \phi_\xi w \, d\xi + \int_\mathbb{R} (f(\phi+\tilde{u}) - f(\tilde{u})) \phi w' u^S_\xi \, d\xi  \\
&\qquad
- \int_{\mathbb R}\phi_\xi^2 w\,d\xi + \frac{1}{2}\int_{\mathbb R}\phi^2 w_{\xi\xi}\,d\xi
- \frac{3}{2}\kappa\int_{\mathbb R}\phi_\xi^2 w' \us_\xi\,d\xi + \frac{\kappa}{2}\int_{\mathbb R}\phi^2 w_{\xi\xi\xi}\,d\xi - \int_{\mathbb R}\phi w E\,d\xi.
\end{aligned}
\end{equation}
Since it holds that \((f(\phi+\tilde{u}) - f(\tilde{u})) = (\phi + \tilde{u})^3-\tilde{u}^3= \phi^3 + 3\tilde{u}\phi^2 + 3\tilde{u}^2 \phi\), we obtain
\begin{multline*}
\int_\mathbb{R} (f(\phi+\tilde{u}) - f(\tilde{u})) \phi_\xi w \, d\xi 
= -\frac{1}{4}\int_{\mathbb R}\phi^4 w' \us_\xi\,d\xi
- \int_{\mathbb R}\phi^3 w \tilde{u}_\xi \,d\xi \\
-\int_\mathbb{R} \phi^3 w' \tilde{u} \us_\xi \, d\xi
- 3\int_{\mathbb R}\phi^2 w \tilde u \tilde{u}_\xi d\xi
-\frac{3}{2} \int_\mathbb{R} \phi^2 w' \tilde{u}^2 \us_\xi \, d\xi
\end{multline*}
and 
\[
\int_\mathbb{R} (f(\phi+\tilde{u}) - f(\tilde{u})) \phi w' u^S_\xi \, d\xi
= \int_\mathbb{R} \phi^4 w' \us_\xi \, d \xi
+ 3 \int_\mathbb{R} \phi^3 w' \tilde{u} \us_\xi \, d \xi 
+3\int_\mathbb{R} \phi^2 w' \tilde{u}^2 \us_\xi \, d\xi.
\]
Then, substituting these relations into \eqref{eq: energy-1}, we have
\begin{equation} \label{eq: energy-2}
\begin{aligned}
&\frac{1}{2}\frac{d}{dt}\int_{\mathbb R}\phi^2 w \,d\xi 
= \dot{X}(t)\left[\,\int_{\mathbb R}\phi w (\us_\xi + \ur_\xi)\,d\xi
- \frac{1}{2}\int_{\mathbb R}\phi^2 w' \us_\xi\,d\xi\right]\\
&\qquad
+ \frac{3}{4}\int_{\mathbb R}\phi^4 w' \us_\xi\,d\xi
+ 2\int_{\mathbb R}\phi^3 w'\tilde u \us_\xi\,d\xi
- \int_{\mathbb R}\phi^3 w(\us_\xi + \ur_\xi)\,d\xi \\
&\qquad
- 3\int_{\mathbb R}\phi^2 w\tilde u\tilde u_\xi\,d\xi 
+ \frac{3}{2}\int_{\mathbb R}\phi^2 w'(\tilde u^2 - u_m^2)\us_\xi\,d\xi \\
&\qquad
- \int_{\mathbb R}\phi_\xi^2 w\,d\xi
+ \frac{1}{2}\int_{\mathbb R}\phi^2 w_{\xi\xi}\,d\xi 
- \frac{3}{2}\kappa\int_{\mathbb R}\phi_\xi^2 w' \us_\xi\,d\xi
+ \frac{\kappa}{2}\int_{\mathbb R}\phi^2 w_{\xi\xi\xi}\,d\xi
- \int_{\mathbb R}\phi w E\,d\xi.
\end{aligned}
\end{equation}
We apply the chain rule together with \eqref{eq:shock-eq} so as to find that 
\[
w_{\xi\xi} = w''(\us_\xi)^2 + 3w'((\us)^2 - u_m^2)\us_\xi + \kappa w' \us_{\xi\xi\xi}.
\]
Plugging this into \eqref{eq: energy-2}, the quadratic terms are reorganized as follows:
\begin{multline*}
\int_{\mathbb R}\phi^2\left[-3w\tilde u\tilde u_\xi + \tfrac{3}{2}w'(\tilde u^2 - u_m^2)\us_\xi + \tfrac{1}{2}w_{\xi\xi} + \tfrac{\kappa}{2}w_{\xi\xi\xi}\right]d\xi \\
= \int_{\mathbb R}\phi^2 \Big[-3w\tilde u\tilde u_\xi + \tfrac{3}{2}w'(\tilde u^2 - u_m^2)\us_\xi + \tfrac{3}{2}w'((\us)^2 - u_m^2)\us_\xi
+ \tfrac{1}{2}w''(\us_\xi)^2 + \tfrac{\kappa}{2}(w' \us_{\xi\xi\xi} + w_{\xi\xi\xi})\Big]d\xi.
\end{multline*}
This together with the identities
\begin{align*}
\tilde{u}^2 - u_m^2
&= ((\us)^2 - u_m^2) + 2\us(\ur - u_m) + (\ur - u_m)^2, \\
\tilde{u}\tilde{u}_\xi
&= \us \us_\xi + (\us - u_m)\ur_\xi + (\ur - u_m)\us_\xi + \ur \ur_\xi,
\end{align*}
and \eqref{eq: energy-2} implies the desired conclusion \eqref{eq:L2-energy-identity}.
\begin{comment}
\begin{align*}
\frac{1}{2}\frac{d}{dt}\int_{\mathbb R}\phi^2 w \,d\xi 
&= \dot{X}(t)\left[\,\int_{\mathbb R}\phi w (\us_\xi + \ur_\xi)\,d\xi - \frac{1}{2}\int_{\mathbb R}\phi^2 w' \us_\xi\,d\xi\right]\\
&\qquad \underbrace{+ \frac{3}{4}\int_{\mathbb R}\phi^4 w' \us_\xi\,d\xi}_{\text{Good}} \underbrace{+ 2\int_{\mathbb R}\phi^3 w'\tilde u \us_\xi\,d\xi - \int_{\mathbb R}\phi^3 w(\us_\xi + \ur_\xi)\,d\xi}_{\text{Bad}} \\
&\qquad \underbrace{- \int_{\mathbb R}\phi_\xi^2 w\,d\xi}_{\text{Good}} \underbrace{- \frac{3}{2}\kappa\int_{\mathbb R}\phi_\xi^2 w' \us_\xi\,d\xi  - \int_{\mathbb R}\phi w E\,d\xi}_{\text{Bad}}\\
&\qquad -3\int_{\mathbb R}\phi^2 w \Big[ \underbrace{\us \us_\xi + (\us - u_m)\ur_\xi}_{\text{Bad}} + \underbrace{(\ur - u_m)\us_\xi + \ur \ur_\xi}_{\text{Good}} \Big] d\xi \\
&\qquad +3\int_{\mathbb R}\phi^2 w'\Big[ \underbrace{(\us)^2}_{\text{Good}} \underbrace{- u_m^2 + \us(\ur - u_m)}_{\text{Bad}} + \underbrace{\frac{1}{2} (\ur - u_m)^2}_{\text{Good}} \Big] \us_\xi \, d\xi \\
&\qquad +\int_{\mathbb R}\phi^2 \Big[ \underbrace{\frac{1}{2}w''(\us_\xi)^2 + \frac{\kappa}{2}(w' \us_{\xi\xi\xi} + w_{\xi\xi\xi})}_{\text{Bad}}\Big]d\xi.
\end{align*}
\end{comment}
This completes the proof of Lemma \ref{lem:weighted-L2-energy}.
\end{proof}

\begin{comment}
\begin{equation} \label{eq: energy-3}
\begin{aligned}
    \frac{1}{2}\frac{d}{dt}\int_{\mathbb R}\phi^2 w \,d\xi 
    %&= \dot{X}(t)\left[\,\int_{\mathbb R}\phi w (\us_\xi + \ur_\xi)\,d\xi - \frac{1}{2}\int_{\mathbb R}\phi^2 w' \us_\xi\,d\xi\right]\\
    &{\color{blue}= \dot{X}(t)\left[\,\int_{\mathbb R}\phi w \us_\xi \,d\xi - \frac{1}{2}\int_{\mathbb R}\phi^2 w' \us_\xi\,d\xi\right]}\\
    &\quad \textcolor{blue}{+ \frac{3}{4}\int_{\mathbb R}\phi^4 w' \us_\xi\,d\xi} \textcolor{blue}{+ 2\int_{\mathbb R}\phi^3 w'\tilde u \us_\xi\,d\xi - \int_{\mathbb R}\phi^3 w(\us_\xi + \ur_\xi)\,d\xi }\\
    &\quad \textcolor{blue}{- \int_{\mathbb R}\phi_\xi^2 w\,d\xi}  \textcolor{blue}{- \frac{3}{2}\kappa\int_{\mathbb R}\phi_\xi^2 w' \us_\xi\,d\xi  - \int_{\mathbb R}\phi w E\,d\xi}\\
    &\quad -3\int_{\mathbb R}\phi^2 w \Big[ \textcolor{blue}{\us \us_\xi } \textcolor{blue}{+ (\us - u_m)\ur_\xi } \textcolor{blue}{+(\ur - u_m)\us_\xi} + \textcolor{blue}{\ur \ur_\xi} \Big] d\xi \\
    &\quad +3\int_{\mathbb R}\phi^2 w'\Big[ (\textcolor{blue}{(\us)^2} \textcolor{blue}{- u_m^2}) \textcolor{blue}{+ \us(\ur - u_m) }\textcolor{blue}{+ \frac{1}{2} (\ur - u_m)^2 } \Big] \us_\xi \, d\xi \\
    &\quad +\int_{\mathbb R}\phi^2 \Big[ \textcolor{blue}{\frac{1}{2}w''(\us_\xi)^2 + \frac{\kappa}{2}(w' \us_{\xi\xi\xi} + w_{\xi\xi\xi})}\Big]d\xi. \\
\end{aligned}
\end{equation}
\end{comment}

To derive the zeroth-order energy estimate, we first consider the following decomposition of the right-hand side of \eqref{eq:L2-energy-identity}.
\begin{lemma}\label{lem:L2-decomposition}
The right-hand side of \eqref{eq:L2-energy-identity} can be decomposed as follows:
\begin{equation} \label{eq:L2-decomposition}
\dot{X}(t)\,Y(t)
+ \mathcal{J}^{bad}(t)
- \mathcal{J}^{good}(t)
= - G^S(t) - G^{R}(t) - G^{SR}(t) + N(t) + J(t) \,+\, R(t),
\end{equation}
where
\begin{align*}
G^S(t)
&\coloneqq \int_{\mathbb{R}} \phi_{\xi}^{2} w \, d\xi 
+ \int_{\mathbb{R}} \phi^{2}\us_{\xi} \left( 3u_m^2 w' - 3(\us)^{2}w' + 3\us w - \frac{w''}{2} \us_{\xi} \right) d\xi \\
&\qquad
- \dot{X}(t)\int_{\mathbb{R}} \phi w \us_{\xi} \, d\xi
- \frac{3}{4}\int_{\mathbb{R}} \phi^{4} w' \us_{\xi} \, d\xi
+ \frac{3}{2}\kappa \int_{\mathbb{R}} \phi_\xi^2 w' \us_\xi \, d\xi
- \frac{\kappa}{2} \int_{\mathbb{R}} \phi^2 (w' \us_{\xi\xi\xi} + w_{\xi \xi \xi}) \, d\xi, \\
G^R(t)
&\coloneqq 3\int_{\mathbb{R}} \phi^{2} \ur w \ur_{\xi} \,d \xi, \\
G^{SR}(t)
&\coloneqq 3\int_{\mathbb{R}} \phi^{2} (\ur - u_m) w \us_{\xi}\, d\xi
- \frac{3}{2}\int_{\mathbb{R}} \phi^{2} (\ur - u_m)^2 w' \us_{\xi} \, d\xi,
\end{align*}
and 
\begin{align*}
N(t)
&\coloneqq \dot{X}(t) \int_{\mathbb{R}} \phi w \ur_{\xi} \, d\xi -\frac{1}{2}\dot{X}(t)\int_{\mathbb{R}} \phi^{2} w' \us_{\xi} \, d\xi
%&\coloneqq \dot{X}(t) \int_{\mathbb{R}} \phi w \ur_{\xi} \,d\xi -\frac{1}{2}\dot{X}(t)\int_{\mathbb{R}} \phi^{2} w' \us_{\xi} \,d\xi \\
- \int_{\mathbb{R}} \phi^{3} w (\us_\xi + \ur_\xi) \, d\xi \\
&\qquad
+ 2\int_{\mathbb{R}} \phi^{3} (\us + \ur - u_m) w' \us_{\xi} \, d\xi
+ 3\int_{\mathbb{R}} \phi^{2} \us (\ur - u_m) w' \us_{\xi} \, d\xi, \\
J(t)
&\coloneqq - 3\int_{\mathbb{R}} \phi^{2} (\us - u_m) w \ur_{\xi} \, d\xi,
\hspace{28mm} R(t)
\coloneqq -\int_{\mathbb{R}} \phi E w \, d\xi.
\end{align*}
\end{lemma}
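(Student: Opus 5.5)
This decomposition is purely algebraic: it regroups the terms of Lemma~\ref{lem:weighted-L2-energy}. My plan is to write out the left-hand side of \eqref{eq:L2-decomposition} by substituting the definitions of $Y(t)$, $\mathcal{J}^{good}(t)$ and $\mathcal{J}^{bad}(t)$. I will then sort every resulting integral into exactly one of the six groups $G^S$, $G^R$, $G^{SR}$, $N$, $J$, $R$, checking signs along the way. No analytic estimates are needed; the only work is bookkeeping.

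\emph{The $\dot X$ terms.} The product $\dot X(t)Y(t)$ splits into three pieces:
\begin{itemize}
\item $\dot X\int\phi w\us_\xi\,d\xi$, which appears in $-G^S$ with the sign reversed inside $G^S$;
\item $\dot X\int\phi w\ur_\xi\,d\xi$, which goes to $N$;
\item $-\tfrac12\dot X\int\phi^2 w'\us_\xi\,d\xi$, which also goes to $N$.
\end{itemize}

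\emph{The $\mathcal{J}^{good}$ terms.} Each term of $-\mathcal{J}^{good}$ is assigned as follows:
\begin{itemize}
\item $-\int\phi_\xi^2 w\,d\xi$ goes to $-G^S$;
\item $-3\int\phi^2\ur w\ur_\xi\,d\xi$ is exactly $-G^R$;
\item the two terms involving $(\ur-u_m)w\us_\xi$ and $(\ur-u_m)^2w'\us_\xi$ together form $-G^{SR}$;
\item $+3\int\phi^2(\us)^2w'\us_\xi\,d\xi$ and $+\tfrac34\int\phi^4w'\us_\xi\,d\xi$ go to $-G^S$, matching the terms $-3(\us)^2w'$ and $-\tfrac34\phi^4 w'\us_\xi$ in $G^S$.
\end{itemize}

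\emph{The $\mathcal{J}^{bad}$ terms.} Each term of $\mathcal{J}^{bad}$ is assigned as follows:
\begin{itemize}
\item $-3\int\phi^2\us w\us_\xi$, $-3u_m^2\int\phi^2w'\us_\xi$ and $\tfrac12\int\phi^2w''(\us_\xi)^2$ correspond to the terms $3\us w$, $3u_m^2w'$ and $-\tfrac{w''}{2}\us_\xi$ inside $G^S$, with the overall minus sign;
\item the two $\kappa$-terms likewise match the last two entries of $G^S$ with reversed sign;
\item the cubic terms and $3\int\phi^2\us(\ur-u_m)w'\us_\xi$ go to $N$;
\item $-3\int\phi^2(\us-u_m)w\ur_\xi$ is $J$;
\item $-\int\phi Ew$ is $R$.
\end{itemize}

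The main (mild) obstacle is making sure no term is double-counted or has a flipped sign. This applies especially to the $\dot X\int\phi w\us_\xi$ term, which sits inside $G^S$ with a minus sign so that the shift can later be used to extract a good term through \eqref{eq:shift}, and to the $\kappa$-terms. I will verify the result by checking that each of the $3+6+10$ integrals appears exactly once on the right-hand side with the correct sign.
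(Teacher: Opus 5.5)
Your proposal is correct and matches the paper's approach. The paper states this lemma without a separate proof because it is exactly this regrouping. Your term-by-term assignment of the $3+6+10$ integrals from $\dot X(t)Y(t)$, $-\mathcal{J}^{good}(t)$ and $\mathcal{J}^{bad}(t)$ lands each one, with the correct sign, in exactly one of $-G^S$ (nine terms), $-G^R$, $-G^{SR}$ (two terms), $N$ (five terms), $J$ and $R$.
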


\subsubsection{Estimation of $G^S(t)$}
In what follows, we derive an estimate for \(G^S(t)\), which consists of the terms localized by the shock derivative and the weight derivative.
To this end, we first apply the change of variables $\xi-X(t)\mapsto\xi$ for each fixed \(t\), so that $\us$ and $w$ are evaluated at $\xi$.
We also introduce the following notation: let \(s \coloneqq u_m\) (from which \(u_-=-2s\) follows) and set $u_* \coloneqq s/2$.

We now state the following lemma, which provides a control on \(G^S(t)\).

\begin{lemma}\label{lem:GS}
Let \(\us\) be the degenerate shock profile connecting $u_- = -2s$ and $u_m = s$ ($s>0$) with speed $\sigma = 3s^2$ and strength $\ds = 3s$.
Let \(G^S(t)\) be as in Lemma~\ref{lem:L2-decomposition} and \(X(t)\) be defined in \eqref{eq:shift}.
Then, there exists a constant \(\d_0>0\) such that for any \(\k>0\) and \(s>0\) satisfying \(\k s^2 \le \d_0\), the following holds: for all $t\in[0,T]$,
\begin{equation} \label{eq:GS-est}
\begin{aligned}
G^S(t)
&\ge
\frac{15}{64} s^2 \int_\mathbb{R}\phi_\xi^2 \, d\xi
+ \frac{1}{2} s^3 \int_{\mathbb R}\phi^2 \us_\xi\, d\xi
+ \frac{25}{64}s^2|\dot X(t)|^2+ \frac{3}{4}\int_{\mathbb{R}}\phi^{4}|w^{\prime}|u_{\xi}^{S} \, d\xi.
\end{aligned}
\end{equation}
\end{lemma}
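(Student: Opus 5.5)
The proof follows the $a$-contraction scheme of \cite{KV-JEMS21,HWZ-MA}. The $\kappa$-dependent terms are treated as perturbations that are small when $\kappa s^2\le\delta_0$, and the remaining purely viscous part is handled by the Poincar\'e inequality of Lemma~\ref{lem:poincare}.

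\emph{Step 1 (shift term).} By \eqref{eq:shift}, $\int_\mathbb{R}\phi w\us_\xi\,d\xi=-\frac{25s^2}{32}\dot X$, so
\[
-\dot X\int_\mathbb{R}\phi w\us_\xi\,d\xi=\frac{25}{32}s^2|\dot X|^2 .
\]
Half of this is kept as the $\frac{25}{64}s^2|\dot X|^2$ in \eqref{eq:GS-est}. The other half equals $\frac{32}{25s^2}\bigl(\int\phi w\us_\xi\bigr)^2$, and it is reserved to cancel the mean-value term that Lemma~\ref{lem:poincare} produces in Step 4. The quartic term $-\frac34\int\phi^4w'\us_\xi$ is nonnegative because $w'\le0$, and it is kept exactly.

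\emph{Step 2 ($\kappa$ terms).} By Lemma~\ref{lem:weight_properties}(3), $|w'|\le\frac52 s$, hence
\[
\tfrac32\kappa\int|w'|\us_\xi\phi_\xi^2\le C\kappa s\,\|\us_\xi\|_{L^\infty}\int\phi_\xi^2\le C\kappa s^4\int\phi_\xi^2=C(\kappa s^2)\,s^2\int\phi_\xi^2 ,
\]
where $\|\us_\xi\|_{L^\infty}\le 36s^3$ by \eqref{eq:left_decay_up}--\eqref{eq:right_decay_up}. For $\kappa\int\phi^2(w'\us_{\xi\xi\xi}+w_{\xi\xi\xi})$, expand $w_{\xi\xi\xi}=w'''(\us_\xi)^3+3w''\us_\xi\us_{\xi\xi}+w'\us_{\xi\xi\xi}$ and use \eqref{eq:absbdd} together with Lemma~\ref{lem:weight_properties}(3). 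Each term is then bounded by $C\kappa s^5\,\phi^2\us_\xi$, which gives
\[
C(\kappa s^2)\, s^3\int\phi^2\us_\xi .
\]
With $\delta_0$ small, both contributions are absorbed into a small fraction of the viscous good terms obtained below.

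\emph{Step 3 (sign of the localized coefficient).} Let $a(\us):=3w'(s^2-(\us)^2)+3\us w-\frac12w''\us_\xi$.
\begin{itemize}
\item On $[s/2,s)$ one has $w'=w''=0$, so $a=\frac{45}{8}s^2\us\ge\frac{45}{16}s^3$, which is good.
\item On $[-2s,0)$, $w''=0$ and a direct computation gives $a=-\frac{15}{2}s^2(s-\us)<0$, which is bad.
\item On $[0,s/2)$, $a$ is bounded using the explicit polynomial $P_6$ and $\us_\xi\le 2(\us-s)^2(\us+2s)$ from \eqref{eq:key-ineq}. I expect only a bounded negative part there, of size $Cs^3$.
\end{itemize}

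\emph{Step 4 (Poincar\'e on the bad region).} Change variables to $y=(\us+2s)/(3s)\in(0,1)$, so that $dy=\us_\xi\,d\xi/(3s)$. Then
\[
\int\phi_\xi^2 w\,d\xi=\frac{1}{3s}\int_0^1\phi_y^2\,w\,\us_\xi\,dy ,
\]
and the lower bound in \eqref{eq:key-ineq} gives $\us_\xi\ge(4\sqrt3-6)\,27s^3\,y(1-y)^2$. On the region where $\us\le s/2$, i.e.\ $y\le\frac56$, the factor $1-y$ is bounded below, so the dissipation dominates $c\,s^4\int y(1-y)\phi_y^2\,dy$ there.

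Apply Lemma~\ref{lem:poincare}, possibly rescaled to the interval $y\in(0,\frac56)$, with $\int_0^1\phi\,dy$ related to $\frac{1}{3s}\int\phi w\us_\xi$ up to the variation of $w$. This controls $\frac{15}{2}s^3\int\phi^2\us_\xi$ on the bad region by the diffusion plus the averaged term from Step 1. The constants $\frac52$, $\frac{15}{8}$ and $\frac{32}{25}$ appear to be tuned for exactly this balance, and I would verify the numerics in this step.

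Only part of the diffusion is spent in Step 4. The leftover, combined with $w\ge\frac{15}{8}s^2$, yields the $\frac{15}{64}s^2\int\phi_\xi^2$ term, and the good region $[s/2,s)$ supplies the $\frac12 s^3\int\phi^2\us_\xi$ term.

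The main obstacle is Step 4. One must match the weighted average produced by Poincar\'e with the shift functional, which involves the non-constant $w$, while the degenerate $(1-y)^2$ behaviour of $\us_\xi$ near $u_m$ has to be kept away from the region where $a<0$. The tool for this is the two-sided bound \eqref{eq:key-ineq}, whose constants are independent of $\kappa$.
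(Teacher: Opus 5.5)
Your Steps 1--2 and the first two bullets of Step 3 are correct, apart from a slip: half of $\frac{25}{32}s^2|\dot X|^2$ equals $\frac{16}{25s^2}\bigl(\int_{\mathbb R}\phi w\us_\xi\,d\xi\bigr)^2$, not $\frac{32}{25s^2}(\cdots)^2$. Step 4, however, carries the whole lemma and has a genuine gap. If you apply Lemma~\ref{lem:poincare} to $\phi$, the mean it produces is $\int\phi\,\us_\xi\,d\xi$, while the shift \eqref{eq:shift} only controls $\int\phi\,w\,\us_\xi\,d\xi$. Since $w$ ranges over $[\frac{15}{8}s^2,\frac{15}{2}s^2]$, this mismatch is of order one and cannot be handled ``up to the variation of $w$''. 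The missing idea is to apply the Poincar\'e inequality to $\psi=\phi\,w(\us)$, in the variable $y=\frac{2}{5s}(\us+2s)$ on $(-\infty,\xi_*]$ (this is your rescaled variable). Then $2\bigl(\int_0^1\psi\,dy\bigr)^2=\frac{8}{25s^2}\bigl(\int_{-\infty}^{\xi_*}\phi w\us_\xi\,d\xi\bigr)^2$ holds exactly. The full-line shift term must also be split. Write $a=\int_{-\infty}^{\xi_*}\phi w\us_\xi\,d\xi$ and $b=\int_{\xi_*}^{\infty}\phi w\us_\xi\,d\xi$; then $(a+b)^2\ge\frac12a^2-b^2$ leaves $\frac{25}{64}s^2|\dot X|^2$ plus a tail $-\frac{16}{25s^2}b^2$. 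That tail must be absorbed by the good term $3\int_{\xi_*}^\infty\phi^2\us w\us_\xi\,d\xi$, e.g.\ by Cauchy--Schwarz with weight $\us$, which yields the constant $\frac{45}{16}-\frac98\ln2$ in \eqref{eq:GS2-est}. Your plan contains neither of these two ingredients.

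The second problem is that the resulting balance is sharp, and the two-sided bound \eqref{eq:key-ineq} is far too lossy to close it. With $\psi=\phi w$, you expand $[(\phi w)_\xi]^2$ and integrate the cross term by parts; there is no boundary term at $\xi_*$ because $w'(s/2)=0$. You then use the exact identity $\us_\xi=(\us+2s)(s-\us)^2+\kappa\us_{\xi\xi}$ and its derivative. Everything reduces to two pointwise inequalities: $1-wQ\ge\frac16$ with $Q=\frac{s-2\us}{5s(s-\us)^2}$, and $H_1+H_2>4s^4$ with $H_1,H_2$ as in \eqref{eq:H1H2-def}. The latter is a degree-$12$ polynomial inequality on $[0,\frac12]$ that requires a careful case analysis. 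These substitutions leave $O(\kappa s^2)$ remainders (the constants $M_{12},M_{21},M_{22},M_{24}$), which your Step 2 omits; they are controlled by \eqref{eq:absbdd}. If you instead bound $-\frac12w''\us_\xi$ via $\us_\xi\le2(\us-s)^2(\us+2s)$, as your Step 3 proposes, you double a term that contributes about $-30s^4$ to $H_1$ at $\us=s/5$. Since $H_1+H_2\approx4.5s^4$ there, positivity is lost. Finally, the term $\frac12s^3\int_{\mathbb R}\phi^2\us_\xi$ in \eqref{eq:GS-est} is over the whole line. On $(-\infty,\xi_*]$ it must come from the strict margin in $H_1+H_2>4s^4$, which gives $\frac45s^3$ before the $\kappa$-corrections. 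It cannot come from the good region $[s/2,s)$ alone, as your accounting suggests.
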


Since the weight function is constant on \([u_*,u_m]\), we split $G^S(t)$ at the point \(\x_*\) which is defined by \(\us(\x_*)=u_*\).
In particular, $w' = w'' = w''' \equiv 0$ for $\xi \in (\xi_*, \infty)$.
Hence we consider the decomposition $G^S(t) = G_1^S(t) + G_2^S(t)$ by adding and subtracting the quadratic term $\frac{8}{25s^2} \bigl( \int_{-\infty}^{\xi_{*}} \phi w(\us) \us_\xi \,d\xi \bigr)^2$:
    \begin{align}
G_1^S(t)
&\coloneqq
\int_{-\infty}^{\xi_{*}} \phi_{\xi}^{2} w \, d\xi
+ \int_{-\infty}^{\xi_{*}} \phi^{2}\us_{\xi} \Big( 3s^2 w' - 3(\us)^{2}w' + 3\us w - \frac{w''}{2} \us_{\xi} \Big) d\xi \notag \\
&\qquad
+ \frac{8}{25s^2} \left( \int_{-\infty}^{\xi_{*}} \phi w \us_{\xi} \,d\xi \right)^{2} + \frac{3}{2}\kappa \int_{-\infty}^{\xi_{*}} \phi_\xi^2 w' \us_\xi \, d\xi
- \frac{\kappa}{2} \int_{-\infty}^{\xi_{*}} \phi^2 (w' \us_{\xi\xi\xi} + w_{\xi \xi \xi}) \, d\xi,
\label{eq:GS1}\\
G_2^S(t)
&\coloneqq
\int_{\xi_{*}}^{\infty} \phi_{\xi}^{2} w \,d\xi + 3\int_{\xi_{*}}^{\infty} \phi^{2} \us w \us_{\xi} \,d\xi 
- \dot{X}(t)\int_{\mathbb{R}} \phi w \us_{\xi} \, d\xi
- \frac{8}{25s^2} \left( \int_{-\infty}^{\xi_{*}} \phi w \us_{\xi} \, d\xi \right)^{2} \notag \\
&\qquad
- \frac{3}{4}\int_{\mathbb{R}} \phi^{4} w' \us_{\xi} \, d\xi.
\label{eq:GS2}
\end{align}

To estimate $G_1^S$ below, we introduce the following quantities determined by the weight function $w$ and the shock profile $\us$. We define
\begin{equation}\label{eq:H1H2-def}
\begin{aligned}
H_1 &\coloneqq
5s \left[ 3s^2 w' - 3(\us)^2 w' + 3\us w - \frac{1}{2} w'' (\us+2s)(s-\us)^2 \right], \\
H_2 &\coloneqq
w \left[ w'' ( s - 2 \us ) (\us +2s) + 4w - w' (4\us+3s)\right],
\end{aligned}
\end{equation}
and
\begin{equation} \label{eq:M1234-def}
\begin{aligned}
M_{11}
&\coloneqq \sup_{\xi < \xi_*} \frac{3 |w'| \us_{\xi}}{2 s^2 w},
&\hspace{-20mm}M_{12}
&\coloneqq \sup_{\xi < \xi_*} \frac{2w}{s^2} \left| \frac{s - 2\us}{5s(s-\us)^2} \right| \frac{|\us_{\xi\xi}|}{\us_\xi}, \\
M_{21}
&\coloneqq \sup_{\xi < \xi_*} \frac{|\us_{\xi\xi}|}{5 s^6} \left| w w'' \frac{s - 2\us}{(s-\us)^2} - w w' \frac{2\us}{(s-\us)^3} - \frac{5s}{2} w'' \right|, \\
M_{22}
&\coloneqq \sup_{\xi < \xi_*} \frac{|\us_{\xi\xi\xi}|}{5 s^6 \us_{\xi}} \left| w w' \frac{s - 2\us}{(s-\us)^2} \right|,
&\hspace{-20mm}M_{23}
&\coloneqq \sup_{\xi < \xi_*} \frac{|w' \us_{\xi\xi\xi} + w_{\xi\xi\xi}|}{2 s^5 \us_{\xi}}, \\
M_{24}
&\coloneqq \sup_{\xi < \xi_*} \frac{2(w')^2}{s^5} \left| \frac{s - 2\us}{5s(s-\us)^2} \right| |\us_{\xi\xi}|.
\end{aligned}
\end{equation}

These functions and constants satisfy the following properties:
\begin{lemma}\label{lem:weight-shock}
Assume that $\kappa s^2\le\tfrac{1}{36}$.
Let $\us$ be the degenerate shock profile given in Theorem~\ref{thm:shock_properties} and $w$ be the weight function defined in \eqref{eq:weight}.
Then, the following holds:
\begin{itemize}
\item[(1)] For any $\us \in [-2s, s/2]$, we have
\[
1 - w(\us)\,\frac{s - 2\us}{5s(s-\us)^2} \ge \frac{1}{6}, \qquad \text{and} \qquad H_1 + H_2 > 4s^4.
\]
\item[(2)] The constants $M_{11}, M_{12}, M_{21}, M_{22}, M_{23}, M_{24}$ defined in \eqref{eq:M1234-def} are bounded uniformly with respect to $s$ and $\kappa$.
\end{itemize}
\end{lemma}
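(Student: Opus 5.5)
The plan is to work in the scale-invariant variables \eqref{eq:scale-inv-var}: write $\us=sz$ with $z\in[-2,1/2]$ on the relevant range, and $w(\us)=s^2\hat w(z)$. Here $\hat w(z)=\tfrac52(1-z)$ on $[-2,0)$, $\hat w(z)=\tfrac52(1-z)+z^4(50-120z+80z^2)$ on $[0,1/2)$, and $\hat w\equiv 15/8$ after that. Then $w^{(k)}=s^{2-k}\hat w^{(k)}$, and every quantity in part (1) becomes $s^4$ times a polynomial/rational expression in $z$ alone. So (1) reduces to two explicit one-variable inequalities on $[-2,1/2]$.

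For part (1), first inequality: we must show $\hat w(z)\frac{1-2z}{5(1-z)^2}\le \tfrac56$.
\begin{itemize}
\item On $[-2,0]$ this equals $\frac{1-2z}{2(1-z)}$, which is at most $\tfrac56$ since $\frac{1-2z}{1-z}$ is decreasing from $5/3$ at $z=-2$.
\item On $[0,1/2]$ the factor $1-2z$ is small near $z=1/2$. Using the bound $\hat w\le \tfrac52$ from Lemma~\ref{lem:weight_properties}(2), it suffices to check $\frac{1-2z}{2(1-z)^2}\le \tfrac56$. This is elementary: at $z=0$ it equals $1/2$, and it is decreasing.
\end{itemize}
For $H_1+H_2>4s^4$, expand
\[
\frac{H_1+H_2}{s^4}=5\Big[3\hat w'(1-z^2)+3z\hat w-\tfrac12\hat w''(z+2)(1-z)^2\Big]+\hat w\Big[\hat w''(1-2z)(z+2)+4\hat w-\hat w'(4z+3)\Big].
\]
\begin{itemize}
\item On $[-2,0]$ we have $\hat w''=0$ and $\hat w'=-\tfrac52$. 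The expression is then an explicit quadratic/cubic in $z$, and I will verify its minimum exceeds $4$; at $z=-2$ it gives $-75+\dots$, which has to be checked carefully.
\item On $[0,1/2]$ it is a polynomial of degree at most $12$. I will bound it below piecewise, splitting $[0,1/2]$ into a few subintervals and using crude monotone bounds on each, or interval-arithmetic style estimates.
\end{itemize}
I expect this polynomial verification, especially near $z=-2$ where the $3z\hat w$ term is negative and large, to be the main obstacle. If a direct lower bound fails there, I would recheck using $\hat w'=-5/2$ exactly, so that the $H_2$ term $4\hat w^2$ dominates.

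For part (2), each $M_{ij}$ is a supremum over $\xi<\xi_*$, i.e. $z\in(-2,1/2)$, where $1-z\ge 1/2$, so all denominators $(s-\us)^k$ are bounded below by $(s/2)^k$.
\begin{itemize}
\item Theorem~\ref{thm:shock_properties}, via \eqref{eq:absbdd}, gives $|\us_{\xi\xi}|\le18s^2\us_\xi$ and $|\us_{\xi\xi\xi}|\le 324 s^4\us_\xi$. Also $\us_\xi\le 2(\us-s)^2(\us+2s)\le 54s^3$ by \eqref{eq:key-ineq}.
\item Combined with Lemma~\ref{lem:weight_properties}(3), $|w^{(k)}|\le C_ks^{2-k}$, every quotient becomes $s$-independent. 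For example, $M_{11}\le \frac{3\cdot\frac52 s\cdot 54s^3}{2s^2\cdot\frac{15}{8}s^2}$ and $M_{12}\le C\cdot18$, and similarly for the others.
\item For $M_{23}$, expand $w_{\xi\xi\xi}=w'''(\us_\xi)^3+3w''\us_\xi\us_{\xi\xi}+w'\us_{\xi\xi\xi}$, divide by $\us_\xi$, and use the same bounds.
\end{itemize}
None of these constants involve $\kappa$ except through \eqref{eq:absbdd}, which holds uniformly under $\kappa s^2\le 1/36$. This gives uniformity in both $s$ and $\kappa$.
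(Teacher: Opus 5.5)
Your argument for the first inequality in (1) and for part (2) is correct and follows the paper's route. You rescale $w=s^2W(z)$ (your $\hat w$), use $W\le\frac52$ on $[0,\frac12]$ together with the monotonicity of $\frac{1-2z}{2(1-z)^2}$, and for the $M_{ij}$ combine \eqref{eq:key-ineq}, \eqref{eq:absbdd}, Lemma~\ref{lem:weight_properties}(3), $w\ge\frac{15}{8}s^2$ and $1-z\ge\frac12$, expanding $w_{\xi\xi\xi}$ for $M_{23}$.

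The gap is the inequality $H_1+H_2>4s^4$. This is the substantive part of the lemma, and you only announce that you will check it. You also put the difficulty in the wrong place.

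\textbf{The piece $[-2s,0)$ is trivial.} Substituting $W=\frac52(1-z)$, $W'=-\frac52$, $W''=0$ gives $H_1=-\frac{75}{2}s^3(s-\us)$ and $H_2=\frac{175}{4}s^3(s-\us)$. Hence $H_1+H_2=\frac{25}{4}s^3(s-\us)>\frac{25}{4}s^4$. This is linear in $\us$, equals $\frac{75}{4}s^4$ at $\us=-2s$, and is smallest as $\us\to0^-$, not near $z=-2$.

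\textbf{The real difficulty is $[0,s/2]$.} There $(H_1+H_2)/s^4=\hat H(z)$ is an explicit polynomial of degree $12$ with coefficients up to order $10^6$:
\[
\hat H(z)=-512000z^{12}+614400z^{11}+\dots+9875z^4-1500z^3-\tfrac{25}{4}z+\tfrac{25}{4}.
\]
Its minimum on $[0,\frac12]$ is only about $4.2$, attained near $z\approx 0.17$. At that point the individual monomials are of size up to about $8$ with alternating signs. So $\hat H>4$ holds with a margin of roughly $5\%$ after heavy cancellation. ``Crude monotone bounds'' on a few subintervals will not certify this; you need either a rigorous fine subdivision or a structural argument.

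\textbf{What the paper does.} After expanding $\hat H$, it splits $[0,\frac12]$ into three pieces:
\begin{itemize}
\item On $[0,\frac18]$: it bounds the high-degree terms at $z=\frac18$ and uses the AM--GM estimate $3z^4+(\frac{15}{128})^4\ge\frac{15}{32}z^3$.
\item On $[\frac18,\frac14]$: it absorbs high-degree terms into nonnegative blocks $z^{k-2}(az^2-bz+c)$. The remaining sextic $P_2$ is shown to be convex, and a tangent-line estimate at $z=0.173$ gives a positive minimum, with margin only about $0.01$.
\item On $[\frac14,\frac12]$: it subtracts explicit nonnegative polynomials $S_1,\dots,S_{10}$, leaving a quadratic with negative discriminant.
\end{itemize}
Without a certified verification of this kind, the bound $H_1+H_2>4s^4$ is unproved. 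That bound produces the $\frac45 s^3\int\phi^2\us_\xi$ term in Lemma~\ref{lem:GS}.
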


\begin{proof}
The proof is given in Appendix~\ref{app:weight}.
\end{proof}

We now provide the proof of Lemma \ref{lem:GS}.
\begin{proof}[Proof of Lemma \ref{lem:GS}]
The proof is divided into two steps, estimating \(G_1^S\) and \(G_2^S\), respectively.

\step{1} 
First of all, we estimate \(G_1^S\). We claim that
\begin{equation} \label{eq:GS1-est}
G_1^S(t)
\ge \frac{1}{8} \int_{-\infty}^{\xi_{*}} \phi_{\xi}^{2} w\, d\xi
+ \frac{1}{2}s^3 \int_{-\infty}^{\xi_{*}} \phi^{2}\us_{\xi} \, d\xi.
\end{equation}
This can be verified as follows.
To apply Lemma \ref{lem:poincare}, we first introduce the change of variables
\[
y \coloneqq \frac{u^S(\xi) - u_-}{u_* - u_-} = \frac{2}{5s}(u^S(\xi) + 2s).
\]
This map is bijective from $\xi \in (-\infty, \xi_*]$ onto $y \in [0, 1]$, by the monotonicity of \(\us\) from \eqref{threshold}.\\
We define $\psi(t,y) \coloneqq \phi(t,\xi)w(u^S(\xi))$ for each fixed \(t>0\).
Then, Lemma \ref{lem:poincare} yields
\begin{equation} \label{eq:poincare-in-GS1}
2 \int_0^1 \psi^2 (y)\, dy
- 2 \left( \int_0^1 \psi (y)\, dy \right)^2
\le  \int_0^1 y(1-y)|\psi'(y)|^2 \, dy.
\end{equation}
Returning to the \(\x\)-coordinate, we can write the terms on the left-hand side of \eqref{eq:poincare-in-GS1} as 
\begin{equation} \label{eq:computation1}
2 \left( \int_0^1 \psi (y)\, dy \right)^2
= \frac{8}{25s^2} \left( \int_{-\infty}^{\xi_{*}} \phi w u^S_{\xi} \, d\xi \right)^{2}, \hspace{14mm}
2 \int_0^1 \psi^2 (y)\, dy
= \frac{4}{5s}\int_{-\infty}^{\xi_{*}} \phi^2 w^2 u^S_{\xi} \, d\xi.
\end{equation}
For the right-hand side, we note that $dy = \frac{2}{5s} u^S_\xi \, d\xi$ and $\psi'(y) = \frac{5s}{2 u^S_\xi} (\phi w)_\xi$.
Since we have
\[
y(1-y)
= \frac{2(\us + 2s)(s - 2\us)}{25s^2}, \hspace{14mm}
\frac{\us + 2s}{\us_\xi}
= \frac{1}{(s-\us)^2}\bigg(1 - \frac{\kappa \us_{\xi\xi}}{\us_\xi} \bigg),
\]
we find that
\begin{equation}\label{eq:computation}
\begin{aligned}
\int_0^1 y(1-y)|\psi'(y)|^2 \, dy 
&= \frac{1}{5s}\int_{-\infty}^{\xi_*} [(\phi w)_\xi]^2 \frac{(u^S + 2s)(s - 2u^S)}{u^S_\xi}  \, d\xi \\
&=  \int_{-\infty}^{\xi_*} [(\phi w)_\xi]^2 Q \, d\xi  - \kappa \int_{-\infty}^{\xi_*} [(\phi w)_\xi]^2 Q \frac{u^S_{\xi\xi}}{u^S_\xi} \, d\xi,
\end{aligned}
\end{equation}
where \(Q\) is given by 
\[
Q(u^S) \coloneqq \frac{s - 2u^S}{5s(s-u^S)^2}.
\]
To proceed further, we expand the first term on the right-hand side as follows: we first observe
\begin{multline*}
\int_{-\infty}^{\xi_*} [(\phi w)_\xi]^2 Q \, d\xi
= \int_{-\infty}^{\xi_*} \left[ \phi_{\xi}^{2} w^2 + 2 \phi \phi_\xi w w'  \us_\xi +  \phi^2 (w')^2  (\us_\xi)^2 \right] Q \, d\xi  \\
= \int_{-\infty}^{\xi_*} \phi_{\xi}^{2} w^2 Q \, d\xi
- \int_{-\infty}^{\xi_*} \phi^2 (\us_\xi)^2 \left( w w'' Q + w w' Q' \right) d\xi   - \int_{-\infty}^{\xi_*} \phi^2 w w' Q \us_{\xi\xi} \, d\xi. 
\end{multline*}
It follows from \eqref{eq:shock-eq} that the last term can be written as 
\begin{align*}
- \int_{-\infty}^{\xi_*} \phi^2 w w' Q \us_{\xi\xi} \, d\xi
&= - \int_{-\infty}^{\xi_*} \phi^2 w w' Q \Big[ 3((\us)^2 - s^2) \us _\xi + \kappa \us_{\xi\xi\xi} \Big] \, d\xi  \\
&= - \int_{-\infty}^{\xi_*} \phi^2 \us_\xi  \Big[ 3 ((\us)^2 - s^2) w w' Q \Big] \, d\xi - \kappa \int_{-\infty}^{\xi_*} \phi^2 w w' Q \us_{\xi\xi\xi} \, d\xi. 
\end{align*}
Combining the above observations, we have
\begin{equation} \label{eq:computation2-1}
\begin{aligned}
\int_{-\infty}^{\xi_*} [(\phi w)_\xi]^2 Q \,d\xi 
&= \int_{-\infty}^{\xi_*} \phi_{\xi}^{2} w^2 Q \,d\xi
- \kappa \int_{-\infty}^{\xi_*} \phi^2 w w' Q \us_{\xi\xi\xi} \,d\xi \\
&\qquad - \int_{-\infty}^{\xi_*} \phi^2 \us_\xi \left[ w w'' Q \us_\xi + w w' Q' \us_\xi + 3 ((\us)^2 - s^2) w w' Q \right] d\xi.
\end{aligned}
\end{equation}
Then, using \eqref{eq:shock-eq} together with $Q = \frac{s - 2\us}{5s(s-\us)^2}$ and $Q' = \frac{-2\us}{5s(s-\us)^3}$, we get
\begin{equation} \label{eq:poly-computation1}
\begin{aligned}
w w'' Q \us_\xi 
&= w w'' \frac{s - 2\us}{5s(s-\us)^2} \bigl((\us+2s)(s-\us)^2+ \kappa \us_{\xi\xi} \bigr) \\
&= w w'' \frac{( s - 2 \us )(\us+2s)}{5s} + \kappa w w'' \frac{s - 2\us}{5s(s-\us)^2} \us_{\xi\xi},
\end{aligned}
\end{equation}
and
\begin{equation} \label{eq:poly-computation2}
\begin{aligned}
&w w' \Big( Q' \us_\xi + 3((\us)^2 - s^2) Q \Big) 
= w w' Q' \bigl( (\us+2s)(s-\us)^2 + \kappa \us_{\xi\xi} \bigr) + 3((\us)^2 - s^2) w w' Q  \\
&\qquad\qquad
= w w' \left[ \frac{-2\us(\us+2s)(s-\us)^2}{5s(s-\us)^3} - \frac{3(s-2\us)(s-\us)(s+\us)}{5s(s-\us)^2} \right] + \kappa w w' Q' \us_{\xi\xi} \\
&\qquad\qquad
= -  w w' \frac{(4\us+3s)}{5s} - \kappa w w' \frac{2\us}{5s(s-\us)^3} \us_{\xi\xi}.
\end{aligned}
\end{equation}
Now, substituting \eqref{eq:poly-computation1} and \eqref{eq:poly-computation2} into \eqref{eq:computation2-1}, we have
\begin{equation} \label{eq:computation2-2}
\begin{aligned}
\int_{-\infty}^{\xi_*} [(\phi w)_\xi]^2 Q \, d\xi 
&= \frac{1}{5s} \int_{-\infty}^{\xi_*} \phi_{\xi}^{2} w^2\frac{(s - 2\us)}{(s-\us)^2}  \,d\xi  \\
&\qquad
- \frac{1}{5s} \int_{-\infty}^{\xi_{*}} \phi^{2}\us_{\xi} \left[ w w'' ( s - 2 \us ) (\us +2s) - w w' (4\us+3s)\right] \, d\xi  \\
&\qquad
- \frac{\kappa}{5s} \int_{-\infty}^{\xi_*} \phi^2 \us_\xi \us_{\xi\xi} \left[ w w'' \frac{s - 2\us}{(s-\us)^2} - w w' \frac{2\us}{(s-\us)^3} \right] d\xi  \\
&\qquad
- \frac{\kappa}{5s} \int_{-\infty}^{\xi_{*}} \phi^{2} w w' \frac{s - 2\us}{(s-\us)^2} \us_{\xi\xi\xi} \, d\xi.
\end{aligned}
\end{equation}
Gathering \eqref{eq:poincare-in-GS1}, \eqref{eq:computation1}, \eqref{eq:computation}, and \eqref{eq:computation2-2}, we obtain
\begin{equation} \label{eq:G1-lower-bound-1}
\begin{aligned}
G^S_1(t)
&\ge \int_{-\infty}^{\xi_*} \phi_{\xi}^{2} w \left[ 1 - w \frac{s - 2\us}{5s(s-\us)^2} \right] \, d\xi
+\frac{1}{5s} \int_{-\infty}^{\xi_*} \phi^2 \us_{\xi} (H_1 + H_2) \, d\xi \\
&\qquad
+ \frac{\kappa}{5s} \int_{-\infty}^{\xi_*} \phi^2 \us_\xi \us_{\xi\xi} \left[ w w'' \frac{s - 2\us}{(s-\us)^2} - w w' \frac{2\us}{(s-\us)^3} - \frac{5s}{2} w'' \right] d\xi  \\
&\qquad
+ \frac{\kappa}{5s} \int_{-\infty}^{\xi_{*}} \phi^{2} w w' \frac{s - 2\us}{(s-\us)^2} \us_{\xi\xi\xi} \, d\xi 
+ \frac{3}{2}\kappa \int_{-\infty}^{\xi_{*}} \phi_\xi^2 w' \us_\xi \, d\xi\\
&\qquad
- \frac{\kappa}{2} \int_{-\infty}^{\xi_{*}} \phi^2 (w' \us_{\xi\xi\xi} + w_{\xi \xi \xi}) \, d\xi
+ \kappa \int_{-\infty}^{\xi_*} [(\phi w)_\xi]^2 \frac{s - 2\us}{5s(s-\us)^2} \frac{\us_{\xi\xi}}{\us_\xi} \, d\xi, 
\end{aligned}
\end{equation}
where \(H_1\) and \(H_2\) are given by \eqref{eq:H1H2-def}.
Using Lemma \ref{lem:weight-shock}(1) and Young's inequality, we deduce that
\begin{align*}
G^S_{1}(t)
&\ge
\frac{1}{6} \int_{-\infty}^{\xi_{*}} \phi_{\xi}^{2} w \, d\xi
+ \frac{4}{5}s^3 \int_{-\infty}^{\xi_{*}} \phi^{2}\us_{\xi} \, d\xi\\
&\qquad
+ \frac{\kappa}{5s} \int_{-\infty}^{\xi_*} \phi^2 \us_\xi \us_{\xi\xi} \left[ w w'' \frac{s - 2\us}{(s-\us)^2} - w w' \frac{2\us}{(s-\us)^3} - \frac{5s}{2} w'' \right] d\xi  \\
&\qquad
+ \frac{\kappa}{5s} \int_{-\infty}^{\xi_{*}} \phi^{2} w w' \frac{s - 2\us}{(s-\us)^2} \us_{\xi\xi\xi} \, d\xi
+ \frac{3}{2}\kappa \int_{-\infty}^{\xi_{*}} \phi_\xi^2 w' \us_\xi \,d\xi\\
&\qquad
- \frac{\kappa}{2} \int_{-\infty}^{\xi_{*}} \phi^2 (w' \us_{\xi\xi\xi} + w_{\xi \xi \xi}) \, d\xi
- 2\kappa \int_{-\infty}^{\xi_*} w^2\phi_\xi^2 \left| \frac{s - 2\us}{5s(s-\us)^2} \right| \frac{|\us_{\xi\xi}|}{\us_\xi} \, d\xi \\
&\qquad
- 2\kappa \int_{-\infty}^{\xi_*} (w')^2\phi^2(\us_\xi)^2 \left| \frac{s - 2\us}{5s(s-\us)^2} \right| \frac{|\us_{\xi\xi}|}{\us_\xi} \, d\xi.
\end{align*}
To control the dispersive terms, namely the terms involving \(\k\), we use the constants $M_{ij}$ in \eqref{eq:M1234-def}, which are bounded uniformly in \(s\) and \(\k\) by Lemma \ref{lem:weight-shock}(2).
Then, we obtain 
\[
G_1^S(t)
\ge \left( \frac{1}{6} - M_1 \kappa s^2 \right) \int_{-\infty}^{\xi_{*}} \phi_{\xi}^{2} w \, d\xi
+ \left( \frac{4}{5} - M_2 \kappa s^2 \right) s^3 \int_{-\infty}^{\xi_{*}} \phi^{2}\us_{\xi} \, d\xi,
\]
where 
\[
M_1 \coloneqq M_{11} + M_{12}, \qquad M_2 \coloneqq M_{21} + M_{22} + M_{23} + M_{24}.
\]
Therefore, by taking $\delta_0 > 0$ sufficiently small so that
\[
\delta_0 \le \frac{1}{36}, \qquad
M_1 \delta_0 \le \frac{1}{24}, \qquad
\text{and} \qquad
M_2 \delta_0 \le \frac{3}{10},
\]
we obtain the desired lower bound \eqref{eq:GS1-est}.
Here, the restriction $\delta_0\le\tfrac{1}{36}$ guarantees the monotonicity condition \eqref{threshold} used above.

\step{2} In this step, we estimate \(G_2^S\).
We here claim that
\begin{equation} \label{eq:GS2-est}
\begin{aligned}
G_{2}^{S}(t)
&\ge \int_{\xi_{*}}^{\infty} \phi_{\xi}^{2} w\,d\xi + \Big(\frac{45}{16}-\frac{9}{8}\ln 2\Big)s^{3}\int_{\xi_{*}}^{\infty}\phi^{2}u_{\xi}^{S} \, d\xi
+ \frac{25}{64}s^{2}|\dot{X}(t)|^{2} + \frac{3}{4}\int_{\mathbb{R}}\phi^{4}|w^{\prime}|u_{\xi}^{S} \, d\xi.
\end{aligned}
\end{equation}
In fact, this estimate for \(G_2^S\) follows from the same argument as \cite[Lemma 4.4]{HWZ-MA}.
Although the shock profiles differ in the dispersive and non-dispersive cases, the terms arising in the estimate share the same structure. Since the proof relies only on standard bounds such as H\"older's and Young's inequalities, we omit the details.

\vspace{2mm}
The desired result \eqref{eq:GS-est} is an immediate consequence of \eqref{eq:GS1-est} and \eqref{eq:GS2-est}.
\end{proof}

Henceforth, $\delta_0>0$ denotes the constant determined in Lemma~\ref{lem:GS}; in particular $\delta_0\le\tfrac{1}{36}<1$.

\subsubsection{Estimation of $N(t)$}
We now estimate $N(t)$ in the shifted coordinates, where $\us$, $w$, and $\ur$ are evaluated as in \eqref{eq:shifted-profiles} (in contrast to the estimate of $G^S(t)$).

\begin{lemma}\label{lem:N(t)}
Let $N(t)$ be defined in Lemma \ref{lem:L2-decomposition}.
Let $0<\varepsilon_R<\delta_1$, where $\delta_1$ is the constant in Lemma~\ref{lem:approx_rarefaction}(6). Then there exists a constant $C>0$, depending only on $q$ and $\theta$, such that for any $\theta\in(0,1)$ and for all $t\in[0,T]$,
\begin{align*}
|N(t)|
&\le \frac{107\sqrt{2}}{2} s^2 \|\phi\|_{H^1(\RR)} \int_{\mathbb{R}} \phi^2 \us_\xi \, d\xi
+ C \left[ \frac{1}{T_0} + s^2\delta_R (\delta_R \varepsilon_R T_0)^{-\theta(1-\frac{1}{2q})} \right] \int_{\mathbb{R}} \phi^2 \us_\xi \, d\xi \\
&\qquad + \Big(\sqrt{2}\|\phi\|_{H^1(\RR)} + \frac{16}{5}\delta_R + C\delta_R (\delta_R \varepsilon_R T_0)^{-\theta(1-\frac{1}{2q})}\Big) \int_{\mathbb{R}} \phi^2 \ur_\xi\, w \, d\xi
+ \frac{25}{128}s^2 |\dot{X}(t)|^2.
\end{align*}
\end{lemma}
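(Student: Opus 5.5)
The plan is to estimate the five pieces of $N(t)$ one at a time:
\[
N_1=\dot X\int\phi w\ur_\xi,\quad N_2=-\tfrac12\dot X\int\phi^2 w'\us_\xi,\quad N_3=-\int\phi^3 w(\us_\xi+\ur_\xi),
\]
\[
N_4=2\int\phi^3(\us+\ur-u_m)w'\us_\xi,\quad N_5=3\int\phi^2\us(\ur-u_m)w'\us_\xi .
\]
Throughout we use $\|\phi\|_{L^\infty}\le\sqrt2\|\phi\|_{H^1}$, Lemma~\ref{lem:weight_properties} ($|w'|\le\tfrac52 s$, $\tfrac{15}{8}s^2\le w\le\tfrac{15}{2}s^2$), and the bound $|\us|\le 2s$.

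\textbf{Cubic terms.} For the shock-localized part of $N_3$ and for $N_4$, we bound $|\phi|^3\le\sqrt2\|\phi\|_{H^1}\phi^2$. We then need pointwise bounds on the coefficients:
\begin{itemize}
\item $w\le \tfrac{15}{2}s^2$;
\item $|\us+\ur-u_m|\le |\us|+\delta_R\le 2s+\tfrac56 s$, using $\delta_R\le\tfrac56 s$.
\end{itemize}
This yields a constant times $s^2\|\phi\|_{H^1}\int\phi^2\us_\xi$, and collecting the constants should produce $\tfrac{107\sqrt2}{2}$. The rarefaction part of $N_3$ is bounded directly by $\sqrt2\|\phi\|_{H^1}\int\phi^2 w\ur_\xi$.

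\textbf{$\dot X$ terms.} For $N_2$, we write $|N_2|\le\tfrac12|\dot X|\cdot\tfrac52 s\,\sqrt2\|\phi\|_{H^1}\int|\phi|\us_\xi$. By Cauchy--Schwarz with $\int\us_\xi=3s$, this is $\lesssim s^{3/2}\|\phi\|_{H^1}|\dot X|(\int\phi^2\us_\xi)^{1/2}$. Young's inequality then absorbs it into a small multiple of $s^2|\dot X|^2$ plus $s^2\|\phi\|_{H^1}\int\phi^2\us_\xi$ (using $\|\phi\|_{H^1}\le1$). For $N_1$, Cauchy--Schwarz gives $|N_1|\le|\dot X|(\int\phi^2 w\ur_\xi)^{1/2}(\int w\ur_\xi)^{1/2}$, and $\int w\ur_\xi\le\tfrac{15}{2}s^2\delta_R$. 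Young with weight $\tfrac{25}{128}s^2$ therefore gives $\tfrac{25}{128}s^2|\dot X|^2+\tfrac{128}{25}\cdot\tfrac{15}{2}\cdot\tfrac14\,\delta_R\int\phi^2 w\ur_\xi$, i.e.\ the coefficient $\tfrac{16}{5}\delta_R$ (the $N_2$ portion is split so that the $\dot X$ budget totals $\tfrac{25}{128}s^2$).

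\textbf{Interaction term $N_5$ and the $\ur-u_m$ pieces of $N_4$.} This is the main obstacle, since $\ur-u_m$ does not decay in time on the support of $\us_\xi$ unless we use the separation of the two waves. Note that $w'$ vanishes for $\us\ge s/2$, i.e.\ for $\xi-X(t)>\xi_*$, so the integrand is supported where the shock sits at $\xi-X(t)\le\xi_*$. We split this region in two:
\begin{itemize}
\item \emph{Region where the rarefaction argument $x=\xi-X+\sigma(t+T_0)$ satisfies $x\le w_m(t+T_0)$.} Here Lemma~\ref{lem:approx_rarefaction}(6) at time $t+T_0\ge T_0$ gives $|\ur-u_m|\le C\delta_R(\delta_R\varepsilon_R T_0)^{-\theta(1-\frac1{2q})}$. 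This produces the term $Cs^2\delta_R(\cdots)\int\phi^2\us_\xi$.
\item \emph{Region where $x>w_m(t+T_0)$.} Since $w_m=\sigma$, this means $\xi-X(t)>0$ in the shock frame. But we already have $\xi-X\le\xi_*$, so this is the bounded window $0<\xi-X\le\xi_*$. In this window we instead use $|\ur-u_m|\le\|\ur_\xi\|_{L^\infty}\cdot|x-w_m(t+T_0)|$; the most naive version is $|\ur-u_m|\le\int_{w_m(t+T_0)}^{x}\ur_\xi$, which is small via Lemma~\ref{lem:approx_rarefaction}(2) with $p=\infty$, namely $\lesssim (t+T_0)^{-1}\le T_0^{-1}$, times the window length $\xi_*\lesssim s^{-2}$. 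This gives the $C/T_0$ contribution.
\end{itemize}
The main thing to check is that the constants produced in the second region are independent of $\kappa$ and of $t$, which holds because $\xi_*$ depends only on $s$ by Theorem~\ref{thm:shock_properties}. The same splitting handles the part of $N_4$ containing $\ur-u_m$, and the $\theta$-dependent constant comes solely from Lemma~\ref{lem:approx_rarefaction}(6). Summing the three groups of estimates gives the claimed bound on $|N(t)|$.
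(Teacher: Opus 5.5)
You handle the interaction terms the way the paper does: split at $\xi-X(t)=0$, apply Lemma~\ref{lem:approx_rarefaction}(6) on the left, and use $\xi_*\|\ur_\xi\|_{L^\infty}\lesssim (s^2T_0)^{-1}$ on the window $0<\xi-X(t)\le\xi_*$. The gap is in the $\dot X$ terms. For $N_1=\dot X\int\phi w\ur_\xi$, Young with weight $\frac{25}{128}s^2$ leaves $\frac{32}{25s^2}\int\phi^2w\ur_\xi\int w\ur_\xi$. With your bound $\int w\ur_\xi\le\frac{15}{2}s^2\delta_R$, the coefficient is $\frac{32}{25}\cdot\frac{15}{2}\,\delta_R=\frac{48}{5}\delta_R$. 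Your product $\frac{128}{25}\cdot\frac{15}{2}\cdot\frac14$ equals $\frac{48}{5}$, not $\frac{16}{5}$.

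This constant matters. Lemma~\ref{lem:GS-N} needs $3u_m-\frac{16}{5}\delta_R>0$, which is exactly where the ratio $\delta_R\le\frac56u_m=\frac{5}{18}\delta_S$ comes from; $\frac{48}{5}$ would force $\delta_R<\frac{5}{16}u_m$. The missing idea is that $w$ is small where $\ur_\xi$ lives. Since $w$ is nonincreasing, $w\le w(0)=\frac52s^2$ on $\{\us\ge0\}$. On $\{\us<0\}$, i.e. $\xi-X(t)<0$, the rarefaction argument lies left of $w_m(t+T_0)$, so $\int_{\{\us<0\}}\ur_\xi\,d\xi=\ur(t+T_0,\sigma(t+T_0))-u_m\le C\delta_R(\delta_R\varepsilon_RT_0)^{-\theta(1-\frac1{2q})}$. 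Hence $\int w\ur_\xi\le\frac52s^2\delta_R+Cs^2\delta_R(\delta_R\varepsilon_RT_0)^{-\theta(1-\frac1{2q})}$. This gives $\frac{16}{5}\delta_R$ together with precisely the extra small term in the stated coefficient of $\int\phi^2\ur_\xi w$. You already use this splitting for $N_5$; it is needed here as well.

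For $N_2$, applying Young creates a second $|\dot X|^2$ term. The whole budget $\frac{25}{128}s^2|\dot X|^2$ is already consumed by $N_1$, and lowering the weight there enlarges the $\delta_R$ coefficient further. The remainder $C\eta^{-1}s\|\phi\|_{H^1}^2\int\phi^2\us_\xi$ also lacks the form $s^2\|\phi\|_{H^1}\int\phi^2\us_\xi$ with the stated constant; ``$\|\phi\|_{H^1}\le1$'' only reduces it to $s\|\phi\|_{H^1}$. Instead, insert the definition \eqref{eq:shift}: $|\dot X|\le\frac{32}{25s^2}\sqrt2\|\phi\|_{H^1}\cdot\frac{15}{2}s^2\cdot3s$. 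Then $|N_2|\le36\sqrt2\,s^2\|\phi\|_{H^1}\int\phi^2\us_\xi$ with no $\dot X$ cost.

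The purely shock cubic terms, bounded using only $|\us|\le2s$, give $\sqrt2\|\phi\|_{H^1}\bigl(\frac{15}{2}+2\cdot2\cdot\frac52\bigr)s^2=\frac{35\sqrt2}{2}s^2\|\phi\|_{H^1}$. Adding $36\sqrt2$ yields $\frac{35}{2}+36=\frac{107}{2}$. Finally, $\delta_R\le\frac56s$ is not a hypothesis of this lemma, and you do not need it once the $\ur-u_m$ part of $N_4$ is sent to the interaction estimate.
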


\begin{proof}
We split \(N(t)\) into four pieces $N(t) = \sum_{i=1}^4 N_i(t)$ as follows:
\begin{equation*}
\begin{aligned}
N_1(t) &\coloneqq -\int_{\mathbb{R}} \phi^3 (w - 2\us w') \us_\xi \, d\xi, \\
N_2(t) &\coloneqq -\frac{1}{2}\dot{X}(t)\int_{\mathbb{R}} \phi^2 w' \us_\xi \, d\xi, \\
N_3(t) &\coloneqq -\int_{\mathbb{R}} \phi^3 w \ur_\xi \,d\xi + \dot{X}(t)\int_{\mathbb{R}} \phi w \ur_\xi \, d\xi, \\
N_4(t) &\coloneqq 2\int_{\mathbb{R}} \phi^3(\ur - u_m)w' \us_{\xi} \,d\xi + 3\int_{\mathbb{R}} \phi^{2} \us (\ur - u_m) w' \us_{\xi} \, d\xi.
\end{aligned}
\end{equation*}
Then, we first note that
\begin{equation} \label{Liftybd}
\|w\|_{L^\infty(\RR)}=\frac{15}{2}s^2, \qquad
\|w'\|_{L^\infty(\RR)}=\frac{5}{2}s, \qquad
\|u^S\|_{L^\infty(\RR)}=2s.
\end{equation}
The first two terms, \(N_1\) and \(N_2\), can be estimated similarly to \cite[Lemma 4.6]{HWZ-MA}.
Thus we briefly present the argument for completeness.
First, using $w' \le 0$ and $\us_\xi > 0$ with the Sobolev embedding $\|\phi\|_{L^\infty(\RR)} \le \sqrt{2}\|\phi\|_{H^1(\RR)}$, we obtain
\begin{equation*}
|N_1(t)|
\le \|\phi\|_{L^\infty(\RR)}\bigl(\|w\|_{L^\infty(\RR)}
+ 2\|\us\|_{L^\infty(\RR)}\|w'\|_{L^\infty(\RR)}\bigr) \int_{\mathbb{R}} \phi^2 \us_\xi \, d\xi
\le \frac{35\sqrt{2}}{2} s^2 \|\phi\|_{H^1(\RR)} \int_{\mathbb{R}} \phi^2 \us_\xi \, d\xi.
\end{equation*}
Second, by substituting $\dot{X}(t) = -\frac{32}{25s^{2}}\int_{\mathbb{R}}\phi wu_{\xi}^{S} \,d\xi$, we have
\begin{equation*}
    |N_2(t)|
\le \frac{16}{25s^2} \|\phi\|_{L^\infty(\RR)} \|w\|_{L^\infty(\RR)}\|w'\|_{L^\infty(\RR)}
\int_{\mathbb{R}} \us_\xi \, d\xi
\int_{\mathbb{R}} \phi^2 \us_\xi \, d\xi
\le 36\sqrt{2} s^2 \|\phi\|_{H^1(\RR)} \int_{\mathbb{R}} \phi^2 \us_\xi \, d\xi.
\end{equation*}
For $N_3$, using Young's inequality, we find that
\begin{equation*}
\begin{aligned}
|N_3(t)|
&\le  \|\phi\|_{L^\infty(\RR)} \int_\mathbb{R} \phi^2 w u^R_\xi \, d\xi
+\frac{25}{128}s^2|\dot{X}(t)|^2
+ \frac{32}{25 s^2} \left( \int_\mathbb{R} \phi w u^R_\xi \, d \xi \right)^2\\ 
&\le  \|\phi\|_{L^\infty(\RR)} \int_\mathbb{R} \phi^2 w u^R_\xi \, d\xi
+\frac{25}{128}s^2|\dot{X}(t)|^2
+ \frac{32}{25 s^2} \int_\mathbb{R} \phi^2 w u^R_\xi \, d \xi \int_\mathbb{R} w u^R_\xi \, d \xi.
\end{aligned}
\end{equation*}
On $\{\us\ge0\}$, since $w$ is nonincreasing in $\us$, we have $w\le w(0)=\tfrac{5}{2}s^2$, while on $\{\us<0\}$ we use $\|w\|_{L^\infty(\RR)}=\tfrac{15}{2}s^2$. Splitting $\int_\mathbb{R} w\,\ur_\xi\,d\xi$ over these two regions as 
\begin{equation}\label{eq:N3-overlap}
\begin{aligned}
\int_\mathbb{R} w\,\ur_\xi\,d\xi
&\le \frac{5}{2}s^2\int_{\{\us\ge0\}}\ur_\xi\,d\xi
+\frac{15}{2}s^2\int_{\{\us<0\}}\ur_\xi\,d\xi\\
&\le \frac{5}{2}s^2 \delta_R
+\frac{15}{2}s^2\bigl(\ur(t+T_0,\sigma(t+T_0))-u_m\bigr).
%&\le \frac{5}{2}s^2\,\delta_R +C s^2\delta_R(\delta_R\varepsilon_R T_0)^{-\theta(1-\frac{1}{2q})},
\end{aligned}
\end{equation}
For the last term, evaluating the rarefaction at $x=\sigma(t+T_0)$, Lemma~\ref{lem:approx_rarefaction}(6) with $\sigma=w_m$ gives
\begin{equation}\label{eq:l2-est}
\begin{aligned}
  \ur(t+T_0,\sigma(t+T_0))-u_m
&\le C\delta_R\bigl(\delta_R\varepsilon_R(t+T_0)\bigr)^{-\theta(1-\frac{1}{2q})}\bigl(1+\varepsilon_R|\sigma-w_m|(t+T_0)\bigr)^{-q(1-\theta)} \\
&\le C\delta_R(\delta_R\varepsilon_R T_0)^{-\theta(1-\frac{1}{2q})}.  
\end{aligned}
\end{equation}
Combining \eqref{eq:N3-overlap} and \eqref{eq:l2-est} with $\|\phi\|_{L^\infty(\RR)}\le\sqrt2\|\phi\|_{H^1(\RR)}$,
\begin{equation}\label{eq:N3-final}
|N_3(t)|\le\Bigl(\sqrt2\|\phi\|_{H^1(\RR)}+\frac{16}{5}\delta_R+C\delta_R(\delta_R\varepsilon_R T_0)^{-\theta(1-\frac{1}{2q})}\Bigr)\int_\mathbb{R}\phi^2 w\,\ur_\xi\,d\xi
+\frac{25}{128}s^2|\dot X(t)|^2.
\end{equation}
It remains to estimate \(N_4(t)\).
To this end, we first notice from \eqref{eq:weight} that $w'$ does not vanish only when $\{ \xi : \xi - X(t) \le \xi_* \}$.
Since \(\ur\) is strictly increasing, the following supremum is attained at the boundary $\xi - X(t) = \xi_*$:
\[
\sup_{\xi - X(t) \le \xi_*} \big|\ur(t+T_0, \xi - X(t) + \sigma(t+T_0)) - u_m\big|.
\]
Then, by the mean value theorem, we obtain
\begin{equation}\label{eq:overlap_split}
\begin{aligned}
&|\ur(t+T_0, \xi_*+\sigma(t+T_0)) - u_m| \\
&\le |\ur(t+T_0, \xi_*+\sigma(t+T_0)) - \ur(t+T_0, \sigma(t+T_0))| + |\ur(t+T_0, \sigma(t+T_0)) - u_m| \\
&\le \xi_* \|\ur_{\xi}(t+T_0, \cdot)\|_{L^\infty(\RR)} + l_2(t)
\eqqcolon l_1(t)+l_2(t).
\end{aligned}
\end{equation}
For $l_1(t)$, we first bound $\xi_*$. Since $\us(\xi_*)=s/2$, \eqref{eq:right_decay_u} at $\xi_*$ gives
\[
\frac{s}{2}=s-\us(\xi_*)\le\frac{s}{1+2(4\sqrt{3}-6)s^2\xi_*},
\qquad\text{hence}\qquad
\xi_*\le\frac{1}{2(4\sqrt{3}-6)s^2}.
\]
Therefore, by Lemma~\ref{lem:approx_rarefaction}(2), we obtain
\begin{equation} \label{eq:l1-est}
l_1(t)\le\xi_*\,\|\ur_\xi(t+T_0,\cdot)\|_{L^\infty}\le\xi_*\,\frac{C}{t+T_0}\le\frac{C}{s^2 T_0},
\end{equation}
while $l_2(t)$ is bounded in \eqref{eq:l2-est}.

Substituting \eqref{eq:l1-est} and \eqref{eq:l2-est} into \eqref{eq:overlap_split} and using \eqref{Liftybd} with $\|\phi\|_{L^\infty(\RR)} \le \sqrt{2}\|\phi\|_{H^1(\RR)}$, we get
\begin{align*} 
|N_4(t)|
&\le C \left[ \frac{1}{s^2 T_0} + \delta_R (\delta_R \varepsilon_R T_0)^{-\theta(1-\frac{1}{2q})} \right]
\Big(\frac{5}{2}s\Big)
\bigl(2\sqrt{2}\|\phi\|_{H^1(\RR)} + 6s\bigr) \int_{\mathbb{R}} \phi^2 \us_\xi \, d\xi \\
&\le C \left[ \frac{1}{T_0} + s^2\delta_R (\delta_R \varepsilon_R T_0)^{-\theta(1-\frac{1}{2q})} \right]
\int_{\mathbb{R}} \phi^2 \us_\xi \, d\xi.
\end{align*}

Finally, combining the estimates for $N_i(t)$ ($i=1,2,3,4$) yields the desired bound for $N(t)$.
\end{proof}

\begin{lemma} \label{lem:GS-N}
Let $\delta_0$ and $\delta_1$ be the constants given in Lemma~\ref{lem:GS} and Lemma~\ref{lem:approx_rarefaction}(6), respectively. There exist positive constants $\delta_2\le\min\{\delta_1,1\}$ (depending only on $q$, $u_m$, and $\delta_R$) and $\varepsilon_1=\varepsilon_1(\delta_S)$ such that if the dispersion coefficient $\kappa>0$ and the wave strengths $\ds$ and $\delta_R$ satisfy
\begin{equation*}
\kappa u_m^2 \le \delta_0,\qquad \delta_R \le \frac{5}{18} \delta_S = \frac{5}{6}u_m,\qquad 0 < \varepsilon_R < \delta_2,
\end{equation*}
and if the perturbation $\phi$ solving \eqref{eq:perturbed_general} satisfies $\sup_{t \in [0, T]} \| \phi (t,\cdot) \|_{H^1(\mathbb{R})} \le \varepsilon_1$, then for all $t \in [0, T]$,
\begin{multline*}
\frac{1}{2} \int_\mathbb{R} (\phi^2 w)(t,\xi) \, d\xi +\frac{15}{64}u_m^2 \int_0^t \int_\mathbb{R} \phi_\xi^2 \,d\xi \,d\tau +\frac{1}{4}u_m^3 \int_0^t \int_\mathbb{R} \phi^2 (u^S_\xi + u^R_\xi ) \,d\xi \,d\tau\\
\quad +\frac{25}{128}u_m^2 \int_0^t  |\dot{X}(\tau)|^2 \, d\tau \le \frac{1}{2} \int_\mathbb{R} (\phi^2 w)(0,\xi) \, d\xi + \int_0^t  J(\tau)\, d\tau +\int_0^t R(\tau)\, d\tau.
\end{multline*}
\end{lemma}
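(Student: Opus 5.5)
The plan is to start from the energy identity of Lemma~\ref{lem:weighted-L2-energy} in the decomposed form of Lemma~\ref{lem:L2-decomposition}:
\[
\frac12\frac{d}{dt}\int_{\mathbb R}\phi^2 w\,d\xi=-G^S-G^R-G^{SR}+N+J+R .
\]
I would bound $-G^S$ from above using Lemma~\ref{lem:GS}. I would drop $-G^{SR}\le 0$, which is legitimate since $\ur>u_m$, $w>0$, $w'\le0$ and $\us_\xi>0$. I would bound $N$ by Lemma~\ref{lem:N(t)}. The terms $J$ and $R$ are kept on the right-hand side. Integrating in time over $[0,t]$ then gives the stated inequality, provided every bad term from $N$ is absorbed into the good terms. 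The good terms available are
\[
\frac{15}{64}s^2\|\phi_\xi\|^2,\qquad \frac12 s^3\int\phi^2\us_\xi,\qquad \frac{25}{64}s^2|\dot X|^2,\qquad G^R=3\int\phi^2\ur w\ur_\xi .
\]

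First, the $|\dot X|^2$ term of $N$ equals $\frac{25}{128}s^2|\dot X|^2$, so it consumes half of the $\frac{25}{64}s^2|\dot X|^2$ from $G^S$. This leaves exactly the coefficient $\frac{25}{128}u_m^2$ claimed in the statement.

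Second, for the terms of $N$ multiplied by $\int\phi^2\us_\xi$, I would choose $\varepsilon_1$ so that $\frac{107\sqrt2}{2}s^2\varepsilon_1\le\frac18 s^3$. I would then choose $\delta_2$ so that, for $\varepsilon_R<\delta_2$ and $T_0=\varepsilon_R^{-8}$, the bracket satisfies
\[
C\bigl[T_0^{-1}+s^2\delta_R(\delta_R\varepsilon_RT_0)^{-\theta(1-\frac1{2q})}\bigr]\le \frac18 s^3 .
\]
This works because $\varepsilon_RT_0=\varepsilon_R^{-7}\to\infty$, with $\theta$ fixed, say $\theta=1/2$. Together these leave $\frac14 s^3\int\phi^2\us_\xi$.

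The main obstacle is the rarefaction part, and this is where $\delta_R\le\frac56 u_m$ enters. Since $\ur\ge u_m=s$ and $w\ge\frac{15}{8}s^2$, I would write
\[
G^R\ge 3s\int\phi^2 w\ur_\xi .
\]
The competing term from $N$ is
\[
\Bigl(\sqrt2\varepsilon_1+\tfrac{16}{5}\delta_R+C\delta_R(\delta_R\varepsilon_RT_0)^{-\theta(1-\frac1{2q})}\Bigr)\int\phi^2 w\ur_\xi .
\]
With $\delta_R\le\frac56 s$ we get $\frac{16}{5}\delta_R\le\frac83 s$, so the net coefficient is at least
\[
\Bigl(3s-\tfrac83 s-\sqrt2\varepsilon_1-C\delta_R(\cdots)\Bigr)\ \ge\ \tfrac{s}{3}-\text{(small)} .
\]
This is the positive coefficient referred to in \eqref{eq:GSN-rare}. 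After shrinking $\varepsilon_1$ and $\delta_2$ further, it is at least $\frac{s}{6}$. Then, using $w\ge\frac{15}{8}s^2$, the remaining rarefaction term is at least
\[
\frac{s}{6}\cdot\frac{15}{8}s^2\int\phi^2\ur_\xi\ \ge\ \frac14 s^3\int\phi^2\ur_\xi .
\]
The delicate point is the bookkeeping of these explicit constants: the sharp constant $\frac{16}{5}$ in Lemma~\ref{lem:N(t)} and the lower bound $\ur\ge s$ must leave a strictly positive margin. If the margin turned out insufficient, I would instead use $\ur w\ge s\,w$ only on the region $\{\us\ge0\}$, where the $N_3$ bound uses $\frac52 s^2$ rather than $w$.

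Finally, I would integrate the resulting differential inequality over $[0,t]$. Since $\frac{15}{64}s^2$ and $\frac14 s^3$ do not depend on $\kappa$, and $\delta_2\le\min\{\delta_1,1\}$ depends only on $q$, $u_m$, $\delta_R$ through the constants of Lemma~\ref{lem:approx_rarefaction}(6), this yields the claim with $\varepsilon_1=\varepsilon_1(\delta_S)$.
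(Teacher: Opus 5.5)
Your proposal is correct and follows the paper's proof essentially step by step. You integrate the decomposed energy identity, drop $G^{SR}\ge0$, and use Lemma~\ref{lem:GS} together with $G^R\ge 3u_m\int\phi^2 w\,\ur_\xi$. You then absorb the $|\dot X|^2$, shock-localized and rarefaction-localized parts of Lemma~\ref{lem:N(t)} using $\sqrt2\,\varepsilon_1\le u_m/428$, a choice of $\delta_2\le\min\{\delta_1,1\}$ making the $\varepsilon_R$-dependent coefficients small, and $\tfrac{16}{5}\delta_R\le\tfrac83 u_m$. The only cosmetic difference is that you settle for a rarefaction margin of $u_m/6$ where the paper records $\tfrac{2}{15}u_m$; either gives at least $\tfrac14 u_m^3\int\phi^2\ur_\xi$ after using $w\ge\tfrac{15}{8}u_m^2$.
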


\begin{proof}
Integrating \eqref{eq:L2-energy-identity} with \eqref{eq:L2-decomposition} over $[0,t]$, we have
\begin{equation}\label{eq:GSN-integrated}
\frac{1}{2}\int_\mathbb{R}(\phi^2 w)(t,\xi)\,d\xi+\int_0^t\!\bigl(G^S+G^R+G^{SR}\bigr)d\tau
=\frac{1}{2}\int_\mathbb{R}(\phi^2 w)(0,\xi)\,d\xi+\int_0^t\!\bigl(N+J+R\bigr)d\tau.
\end{equation}
Since $\ur>u_m$, $w>0$, $w'\le0$, and $\us_\xi>0$, both integrals in $G^{SR}$ are nonnegative, so $G^{SR}\ge0$. Moreover, by Lemma~\ref{lem:GS} under $\kappa u_m^2\le\delta_0$ and by $\ur\ge u_m$, we have
\begin{equation}\label{eq:GSN-lower}
G^S\ge\frac{15}{64}u_m^2\!\int_\mathbb{R}\phi_\xi^2\,d\xi+\frac{1}{2} u_m^3\!\int_\mathbb{R}\phi^2\us_\xi\,d\xi+\frac{25}{64}u_m^2|\dot X|^2,
\qquad
G^R\ge 3u_m\!\int_\mathbb{R}\phi^2 w\,\ur_\xi\,d\xi.
\end{equation}
It remains to absorb $N$ into these lower bounds. Since $0<\varepsilon_R<\delta_2\le\delta_1$, where $\delta_2$ is fixed in \eqref{eq:delta2} below, Lemma~\ref{lem:N(t)} applies; using the a priori bound $\|\phi\|_{H^1(\RR)}\le\varepsilon_1$ and $T_0=\varepsilon_R^{-8}$, we have
\begin{align*}
N &\le \underbrace{\frac{107}{2}\sqrt2\,u_m^2\varepsilon_1\!\int_\mathbb{R}\phi^2\us_\xi\,d\xi}_{=:N^S_1}
+\underbrace{C\Bigl[\frac{1}{T_0}+u_m^2\delta_R(\delta_R\varepsilon_R T_0)^{-\theta(1-\frac{1}{2q})}\Bigr]\!\int_\mathbb{R}\phi^2\us_\xi\,d\xi}_{=:N^S_2} \\
&\quad +\underbrace{\Bigl(\sqrt2\varepsilon_1+\frac{16}{5}\delta_R+C\delta_R(\delta_R\varepsilon_R T_0)^{-\theta(1-\frac{1}{2q})}\Bigr)\!\int_\mathbb{R}\phi^2 w\,\ur_\xi\,d\xi}_{=:N^R}
+\frac{25}{128}u_m^2|\dot X|^2.
\end{align*}
The shift term $\tfrac{25}{128}u_m^2|\dot X|^2$ is absorbed by the $\tfrac{25}{64}u_m^2|\dot X|^2$ in $G^S$. We fix $\varepsilon_1$ so that $\sqrt2\,\varepsilon_1\le(428)^{-1}u_m$.
Then, we obtain
\begin{equation}\label{eq:NS1-bound}
N^S_1\le\frac18 u_m^3\int_\mathbb{R}\phi^2\us_\xi\,d\xi.
\end{equation}
By $T_0=\varepsilon_R^{-8}$, the two coefficients of $N^S_2$ are
\begin{equation}\label{eq:GSN-shock-coeff}
\frac{C}{T_0}=C\varepsilon_R^{8},
\qquad
Cu_m^2\delta_R(\delta_R\varepsilon_R T_0)^{-\theta(1-\frac{1}{2q})}=Cu_m^2\delta_R^{\,1-\theta(1-\frac{1}{2q})}\varepsilon_R^{\,7\theta(1-\frac{1}{2q})}.
\end{equation}
If we set
\begin{equation}\label{eq:delta2}
\delta_2\coloneqq\min\left\{\delta_1,\ \bigl(16C u_m^{-3}\bigr)^{-1/8},\ \Bigl(16C u_m^{-1}\delta_R^{\,1-\theta(1-\frac{1}{2q})}\Bigr)^{-\frac{1}{7\theta(1-\frac{1}{2q})}}\right\},
\end{equation}
then each coefficient in \eqref{eq:GSN-shock-coeff} is at most $\tfrac{1}{16}u_m^3$ for $\varepsilon_R<\delta_2$, so that
\begin{equation}\label{eq:NS2-bound}
N^S_2\le\frac18 u_m^3\int_\mathbb{R}\phi^2\us_\xi\,d\xi.
\end{equation}
Combining \eqref{eq:NS1-bound} and \eqref{eq:NS2-bound}, we have
\begin{equation*}
N^S_1+N^S_2\le\frac14 u_m^3\int_\mathbb{R}\phi^2\us_\xi\,d\xi,
\end{equation*}
which is absorbed by the $\tfrac12 u_m^3\int_\mathbb{R}\phi^2\us_\xi\,d\xi$ in $G^S$, leaving $\tfrac14 u_m^3\int_\mathbb{R}\phi^2\us_\xi\,d\xi$.

For the rarefaction contribution, subtracting $N^R$ from the lower bound for $G^R$ in \eqref{eq:GSN-lower}, we have
\begin{equation}\label{eq:GSN-rare}
G^R-N^R\ge\Bigl(3u_m-\frac{16}{5}\delta_R-\sqrt2\varepsilon_1-C\delta_R(\delta_R\varepsilon_R T_0)^{-\theta(1-\frac{1}{2q})}\Bigr)\!\int_\mathbb{R}\phi^2 w\,\ur_\xi\,d\xi.
\end{equation}
Since $\delta_R\le\tfrac{5}{18}\delta_S=\tfrac56 u_m$, we have $\tfrac{16}{5}\delta_R\le\tfrac{8}{3}u_m$. Combined with $\sqrt2\varepsilon_1\le(428)^{-1}u_m$ and the estimate \eqref{eq:delta2}, the coefficient in \eqref{eq:GSN-rare} is bounded below by
\begin{equation}\label{eq:GSN-rare-coeff}
3u_m-\frac{8}{3}u_m-(428)^{-1}u_m-\frac{1}{16}u_m\ge\frac{2}{15}u_m.
\end{equation}
Moreover, since $w\ge\tfrac{15}{8}u_m^2$ on $\ur_\xi>0$, we obtain $\int_\mathbb{R}\phi^2 w\,\ur_\xi\,d\xi\ge\tfrac{15}{8}u_m^2\int_\mathbb{R}\phi^2\ur_\xi\,d\xi$, which yields
\[
G^R-N^R\ge\frac{2}{15}u_m\cdot\frac{15}{8}u_m^2\int_\mathbb{R}\phi^2\ur_\xi\,d\xi=\frac14 u_m^3\int_\mathbb{R}\phi^2\ur_\xi\,d\xi.
\]
Substituting these bounds into \eqref{eq:GSN-integrated}, we obtain the desired estimate.
\end{proof}

\subsubsection{Estimation of $J(t)$ and $R(t)$}
The following lemma provides a set of inequalities which will be useful for the estimation of \(J(t)\) and \(R(t)\).
\begin{lemma}\label{lem:wave_interaction}
Let $0<\varepsilon_R<\delta_1$, with $\delta_1$ as in Lemma~\ref{lem:approx_rarefaction}(6). Then, for any $\theta\in(0,1)$, there exists a constant $C>0$, depending only on $q$ and $\theta$, such that for all $t\ge0$ the following holds:
    \begin{align}
        &\int_{-\infty}^{0} |u^S - u_m| u^R_\xi \,d\xi \le C \delta_S \delta_R (\delta_R \varepsilon_R (t+T_0) )^{-\theta(1-\frac{1}{2q})}, \label{eq:int1_c} \\
        &\int_{0}^{\infty} |u^S - u_m| u^R_\xi \,d\xi \le C \delta_S^{2\theta-1} \delta_R^\theta (t+T_0)^{-1+\theta}, \label{eq:int2_c} \\
        &\int_{-\infty}^{0} |u^R - u_m| u^S_\xi \,d\xi \le C \delta_S \delta_R (\delta_R \varepsilon_R (t+T_0))^{-\theta(1-\frac{1}{2q})}, \label{eq:int3_c} \\
        &\int_{0}^{(w_+-w_m)(t+T_0)} |u^R - u^r| u^S_\xi \,d\xi \le C \delta_S \delta_R (\delta_R \varepsilon_R (t+T_0) )^{-(1-\frac{1}{2q})}, \label{eq:int4_c} \\
        &\int_{0}^{(w_+-w_m)(t+T_0)} |u^r - u_m| u^S_\xi \,d\xi \le C \delta_S^{-1} (t+T_0)^{-1} \ln\bigl(1 + C\delta_S^2 \delta_R (t+T_0)\bigr), \label{eq:int5_c} \\
       &\int_{(w_+-w_m)(t+T_0)}^{\infty} |u^R - u_m| u^S_\xi \,d\xi \le C \delta_S \delta_R \bigl(1 + C \delta_S^2 \delta_R (t+T_0)\bigr)^{-1}, \label{eq:int6_c}
    \end{align}
    where $\us=\us(\xi), \ur=\ur(t+T_0, \xi+\sigma(t+T_0)),$ and $u^r=u^r\left(\frac{\xi+\sigma(t+T_0)}{t+T_0}\right).$
\end{lemma}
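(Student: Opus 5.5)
Each of the six inequalities is proved by splitting the line at the shock centre $\xi=0$ (where $\us=0$) and at the edges of the rarefaction fan. On each piece we bound one factor in $L^\infty$ and integrate the other, using either its total variation or its decay. The variable is $\xi$ with $\us=\us(\xi)$. The rarefaction is evaluated at $x=\xi+\sigma(t+T_0)$, so $\xi=0$ corresponds to the left fan edge $x=w_m(t+T_0)$, because $\sigma=w_m$. The right edge is at $\xi=(w_+-w_m)(t+T_0)$. Throughout we use $|\us-u_m|\le \delta_S$, $|\ur-u_m|\le\delta_R$, $\int\ur_\xi\,d\xi=\delta_R$ and $\int\us_\xi\,d\xi=\delta_S$, together with the decay estimates of Theorem~\ref{thm:shock_properties} and Lemma~\ref{lem:approx_rarefaction}.

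For \eqref{eq:int1_c} and \eqref{eq:int3_c} we work on $\xi<0$, which lies to the left of the fan. There Lemma~\ref{lem:approx_rarefaction}(6) with $x\le w_m(t+T_0)$ gives
\[
|\ur-u_m|\le C\delta_R(\delta_R\varepsilon_R(t+T_0))^{-\theta(1-\frac1{2q})}.
\]
For \eqref{eq:int3_c} we multiply this bound by $\int_{-\infty}^0\us_\xi\,d\xi\le\delta_S$. For \eqref{eq:int1_c} we bound $|\us-u_m|\le\delta_S$ and note that
\[
\int_{-\infty}^0\ur_\xi\,d\xi=\ur(\xi=0)-u_m,
\]
which is again controlled by Lemma~\ref{lem:approx_rarefaction}(6).

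For \eqref{eq:int2_c} we work on $\xi>0$. There \eqref{eq:right_decay_u} gives $|\us-u_m|\le s/(1+cs^2\xi)$, and Lemma~\ref{lem:approx_rarefaction}(2) gives $\ur_\xi\le C\min\{\delta_R\varepsilon_R,(t+T_0)^{-1}\}$. We interpolate, writing $\ur_\xi=\ur_\xi^{\theta}\ur_\xi^{1-\theta}$. The factor $\ur_\xi^{1-\theta}$ is bounded by $(t+T_0)^{-(1-\theta)}$, and the factor $\ur_\xi^\theta$ is paired with $|\us-u_m|$ through H\"older's inequality with exponents $1/\theta$ and $1/(1-\theta)$. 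This leaves $(\int\ur_\xi)^\theta\le\delta_R^\theta$ times
\[
\Bigl(\int_0^\infty |\us-u_m|^{1/(1-\theta)}\,d\xi\Bigr)^{1-\theta}\lesssim s^{2\theta-1}\quad(\text{for }\theta>0).
\]
Since $s\sim\delta_S$, this yields the factor $\delta_S^{2\theta-1}$. Convergence of the last integral requires $1/(1-\theta)>1$, which holds, and the powers of $s$ come out as stated.

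For \eqref{eq:int4_c} we use Lemma~\ref{lem:approx_rarefaction}(7) inside the fan and multiply by $\int\us_\xi\le\delta_S$. The stated $\delta_R$ factor then requires some bookkeeping of $\delta_R^{1/(2q)}$ powers, which we absorb into constants depending on the end states.

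For \eqref{eq:int5_c}, on the fan we have $u^r-u_m=\sqrt{x/(3T)}-\sqrt{w_m/3}\lesssim \xi/(u_m T)$ with $T=t+T_0$. Combining this with \eqref{eq:right_decay_up}, $\us_\xi\lesssim s^3(1+cs^2\xi)^{-2}$, the integral is bounded by $T^{-1}\int_0^{\tilde wT}\xi s^2(1+cs^2\xi)^{-2}\,d\xi$. This gives $s^{-2}T^{-1}\ln(1+Cs^2\tilde wT)$, and since $\tilde w\sim s\delta_R$ we obtain $\delta_S^{-1}$ and the logarithm of $\delta_S^2\delta_R T$.

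For \eqref{eq:int6_c} we bound $|\ur-u_m|\le\delta_R$ and compute
\[
\int_{\tilde wT}^\infty\us_\xi\,d\xi=s-\us(\tilde wT)\le s/(1+cs^2\tilde wT).
\]
The main obstacle is getting the exact powers of $\delta_S,\delta_R$ in \eqref{eq:int2_c} and \eqref{eq:int5_c}. To handle this, I would systematically use the scaling $\zeta=s^2\xi$ and $\tilde w\simeq s\delta_R$.
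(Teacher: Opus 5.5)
Your proposal is correct and follows the paper's proof essentially step by step. It uses the same splitting at $\xi=0$ and at $\xi=(w_+-w_m)(t+T_0)$, and the same tool for each piece: Lemma~\ref{lem:approx_rarefaction}(6) for \eqref{eq:int1_c} and \eqref{eq:int3_c}; H\"older with exponents $1/\theta$, $1/(1-\theta)$ for \eqref{eq:int2_c}, where your $L^1$--$L^\infty$ interpolation of $u^R_\xi$ just rederives the $L^{1/\theta}$ bound the paper quotes from Lemma~\ref{lem:approx_rarefaction}(2); Lemma~\ref{lem:approx_rarefaction}(7) for \eqref{eq:int4_c}; the linear bound on $u^r-u_m$ against the algebraic decay of $u^S_\xi$ for \eqref{eq:int5_c}; and the tail of $u^S$ for \eqref{eq:int6_c}.

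On the bookkeeping you flag, in \eqref{eq:int4_c} nothing needs absorbing. The identity $\delta_R(\delta_R\varepsilon_R(t+T_0))^{-(1-\frac{1}{2q})}=\delta_R^{\frac{1}{2q}}\varepsilon_R^{-1+\frac{1}{2q}}(t+T_0)^{-1+\frac{1}{2q}}$ shows the right-hand side is exactly what Lemma~\ref{lem:approx_rarefaction}(7) gives. In \eqref{eq:int5_c}, however, your own computation produces $\delta_S^{-2}(t+T_0)^{-1}\ln(1+C\delta_S^{3}\delta_R(t+T_0))$, not $\delta_S^{-1}$ and $\delta_S^2$. Similarly, \eqref{eq:int6_c} comes out with $\delta_S^3$ in place of $\delta_S^2$. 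So the stated powers hold only with $C$ depending on $u_m$. The paper does the same implicitly, hiding $1/u_m$ in $|u^r-u_m|\le C|\xi|/(t+T_0)$ and $u_m+u_+$ in $w_+-w_m\le C\delta_R$.
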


\begin{proof}[Proof of Lemma \ref{lem:wave_interaction}]
$\bullet$ Proof of \eqref{eq:int1_c}: By Theorem~\ref{thm:shock_properties} and Lemma~\ref{lem:approx_rarefaction}(2), we obtain
\begin{align*}
\int_{-\infty}^{0} |u^S - u_m| u^R_\xi \,d\xi &\le C \delta_S \int_{-\infty}^{0} u^R_\xi(t+T_0, \xi+\sigma(t+T_0)) \,d\xi \\
&= C \delta_S |u^R(t+T_0, \sigma(t+T_0)) - u_m|
\le C \delta_S \delta_R (\delta_R \varepsilon_R (t+T_0) )^{-\theta(1-\frac{1}{2q})}.
\end{align*}

\noindent$\bullet$ Proof of \eqref{eq:int2_c}: Applying Hölder's inequality for any $\theta \in (0,1)$, we have
\begin{equation*}
\begin{aligned}
&\int_{0}^{\infty} |u^S(\xi) - u_m| u^R_\xi(t+T_0,\xi+\sigma(t+T_0))  \,d\xi \\
&\qquad\le \|\ur_{\xi}(t+T_0, \cdot)\|_{L^{1/\theta}}
\left( \int_{0}^{\infty} |u^S(\xi) - u_m|^{\frac{1}{1-\theta}} \,d\xi \right)^{1-\theta}  \\
&\qquad\le C\delta_S \left( C \delta_R^\theta (t+T_0)^{-1+\theta} \right)
\left( \int_{0}^{\infty} (1+C\delta_S^2 |\xi|)^{-\frac{1}{1-\theta}} \,d\xi \right)^{1-\theta}  
\le C \delta_S^{2\theta-1} \delta_R^\theta (t+T_0)^{-1+\theta}.
\end{aligned}
\end{equation*}

\noindent$\bullet$ Proof of \eqref{eq:int3_c}: For $\xi \le 0$, since $\ur$ is strictly increasing, Lemma \ref{lem:approx_rarefaction}(6) with $\sigma=w_m$ implies 
\[
\sup_{\xi \le 0} |\ur(t+T_0, \xi + \sigma(t+T_0)) - u_m|
= |\ur(t+T_0, \sigma (t+T_0)) - u_m| 
\le  C \delta_R (\delta_R \varepsilon_R (t+T_0) )^{-\theta(1-\frac{1}{2q})}.
\]
Therefore, we obtain
\begin{align*}
\int_{-\infty}^{0} |u^R - u_m| u^S_\xi \,d\xi 
&\le \sup_{\xi \le 0} |u^R(t+T_0, \xi + \sigma(t+T_0)) - u_m| \int_{-\infty}^{0} u^S_\xi \,d\xi \\
&\le C \delta_S \delta_R  (\delta_R \varepsilon_R (t+T_0))^{-\theta(1-\frac{1}{2q})}.
\end{align*}

\noindent$\bullet$ Proof of \eqref{eq:int4_c}: It follows from Lemma \ref{lem:approx_rarefaction}(7) that
\begin{align*}
\int_{0}^{(w_+-w_m)(t+T_0)} |u^R - u^r| u^S_\xi \,d\xi &\le \left\| u^R(t+T_0, \cdot) - u^r\left( \frac{\cdot}{t+T_0} \right) \right\|_{L^\infty} \int_{0}^{\infty} u^S_\xi \,d\xi \\
&\le C \delta_S  \delta_R^{\frac{1}{2q}} \varepsilon_R^{-1+\frac{1}{2q}} (t+T_0)^{-1+\frac{1}{2q}} .
\end{align*}

\noindent$\bullet$ Proof of \eqref{eq:int5_c}: Inside the centered rarefaction fan where $\sigma = w_m$, the inviscid centered rarefaction wave satisfies
\[
\left| u^r\left(\frac{\xi+\sigma (t+T_0)}{t+T_0}\right) - u_m \right| \le \frac{C|\xi|}{t+T_0}.
\]
Then, using Theorem~\ref{thm:shock_properties}, we obtain
\begin{align*}
\int_{0}^{(w_+-w_m)(t+T_0)} |u^r - u_m| u^S_\xi \,d\xi &\le \int_0^{(w_+-w_m)(t+T_0)} \frac{C\xi}{t+T_0} \frac{C\delta_S^3}{(1+C\delta_S^2 \xi)^2} \, d\xi \\
&\le C \delta_S^{-1} (t+T_0)^{-1} \ln\bigl(1+C\delta_S^2 \delta_R (t+T_0) \bigr).
\end{align*}

\noindent$\bullet$ Proof of \eqref{eq:int6_c}: Using $|u^R - u_m| \le \delta_R$ and Theorem~\ref{thm:shock_properties}, we get 
\[
\int_{(w_+-w_m)(t+T_0)}^{\infty} |u^R - u_m| u^S_\xi \, d\xi
\le \delta_R \int_{(w_+-w_m)(t+T_0)}^{\infty} \us_\xi \,d\xi 
\le C \delta_S \delta_R \bigl(1+C\delta_S^2 \delta_R (t+T_0)\bigr)^{-1}.
\]
%\begin{align*}
%\int_{(w_+-w_m)(t+T_0)}^{\infty} |u^R - u_m| u^S_\xi \, d\xi &\le \delta_R \int_{(w_+-w_m)(t+T_0)}^{\infty} \us_\xi \,d\xi \\
%&\le \frac{C \delta_S \delta_R}{1+C\delta_S^2 \bigl((w_+-w_m)(t+T_0)\bigr)} \\
%&\le C \delta_S \delta_R \bigl(1+C\delta_S^2 \delta_R (t+T_0)\bigr)^{-1}.
%\end{align*}
This completes the proof of Lemma \ref{lem:wave_interaction}.
\end{proof}

\begin{lemma}\label{lem:J(t)}
Recall $J(t)=-3\int_{\mathbb{R}}\phi^2(\us-u_m)w\ur_\xi\,d\xi$. Let $0<\varepsilon_R<\delta_1$, with $\delta_1$ as in Lemma~\ref{lem:approx_rarefaction}(6).
Then there exists a constant $C>0$, depending only on $q$, such that for all $t\in[0,T]$,
\begin{equation} \label{eq:J}
\int_0^t |J(\tau)| \,d\tau \le \frac{15}{256} u_m^2 \int_0^t \|\phi_\xi\|_{L^2}^2 \,d\tau + C u_m^2 \max\{ \delta_S^2 \varepsilon_R^{-8/5}, \delta_S^{-6/5} \} \delta_R^{2/5} T_0^{-3/5} \sup_{0 \le \tau \le t} \|\phi(\tau)\|_{L^2}^2.
\end{equation}
\end{lemma}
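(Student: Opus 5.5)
The plan is to bound $|J|$ pointwise in time by $\|\phi\|_{L^\infty}^2$ times an interaction integral, and then to convert $\|\phi\|_{L^\infty}^2$ into $\|\phi\|_{L^2}\|\phi_\xi\|_{L^2}$. We work in the shifted coordinate $\xi-X(t)\mapsto\xi$, which leaves $\int\phi^2\,d\xi$ and $\int\phi_\xi^2\,d\xi$ unchanged; then $\us$ is evaluated at $\xi$ and $\ur$ at $\xi+\sigma(t+T_0)$, exactly as in Lemma~\ref{lem:wave_interaction}. Using $\|w\|_{L^\infty}=\tfrac{15}{2}u_m^2$ and the interpolation $\|\phi\|_{L^\infty}^2\le 2\|\phi\|_{L^2}\|\phi_\xi\|_{L^2}$, we get
\[
|J(t)|\le C u_m^2\,\|\phi\|_{L^2}\|\phi_\xi\|_{L^2}\,A(t),\qquad A(t):=\int_{\mathbb R}|\us-u_m|\,\ur_\xi\,d\xi .
\]
By \eqref{eq:int1_c} and \eqref{eq:int2_c} of Lemma~\ref{lem:wave_interaction}, $A(t)$ is bounded by
\[
C\delta_S\delta_R\bigl(\delta_R\varepsilon_R(t+T_0)\bigr)^{-\theta(1-\frac1{2q})}+C\delta_S^{2\theta-1}\delta_R^\theta(t+T_0)^{-1+\theta}.
\]

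Next we apply Young's inequality in the form
\[
C u_m^2\|\phi\|_{L^2}\|\phi_\xi\|_{L^2}A\le \tfrac{15}{256}u_m^2\|\phi_\xi\|_{L^2}^2+C u_m^2 A^2\|\phi\|_{L^2}^2 .
\]
Integrating in time reduces \eqref{eq:J} to showing
\[
\int_0^\infty A(\tau)^2\,d\tau\le C\max\{\delta_S^2\varepsilon_R^{-8/5},\delta_S^{-6/5}\}\,\delta_R^{2/5}T_0^{-3/5}.
\]
The remaining factor is then bounded by $\sup_\tau\|\phi(\tau)\|_{L^2}^2$.

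The main step is choosing $\theta$ so that $A^2$ is integrable with decay exactly $T_0^{-3/5}$. For the second term, $\theta=1/5$ gives $A_2^2\sim\delta_S^{-6/5}\delta_R^{2/5}(t+T_0)^{-8/5}$, whose time integral is $C\delta_S^{-6/5}\delta_R^{2/5}T_0^{-3/5}$; this matches the $\delta_S^{-6/5}$ entry. For the first term, we need $2\theta(1-\frac1{2q})=\tfrac85$, i.e.\ $\theta=\frac{4}{5(1-1/(2q))}$, which lies in $(0,1)$ since $q\ge10$. This gives
\[
A_1^2\sim\delta_S^2\delta_R^{2-8/5}\varepsilon_R^{-8/5}(t+T_0)^{-8/5},
\]
and integrating yields $C\delta_S^2\delta_R^{2/5}\varepsilon_R^{-8/5}T_0^{-3/5}$. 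Summing the two contributions gives the stated maximum. The constant depends only on $q$, because both values of $\theta$ are fixed by $q$ alone.

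The only delicate points are bookkeeping ones. First, we must confirm that Lemma~\ref{lem:wave_interaction} is applied with the shock evaluated at the unshifted $\xi$ after the translation. Second, we must check that the splitting at $\xi=0$ covers all of $\mathbb R$, which it does since \eqref{eq:int1_c} and \eqref{eq:int2_c} together handle $(-\infty,0)$ and $(0,\infty)$.
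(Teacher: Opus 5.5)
Your proposal is correct and follows the paper's proof essentially step for step. You use the same bound $|J|\le 45u_m^2\|\phi\|_{L^2}\|\phi_\xi\|_{L^2}\int_{\mathbb R}|\us-u_m|\ur_\xi\,d\xi$ and the same Young splitting with coefficient $\tfrac{15}{256}u_m^2$, then apply \eqref{eq:int1_c}--\eqref{eq:int2_c} with the same exponents $\theta_1(1-\tfrac{1}{2q})=\tfrac45$ and $\theta_2=\tfrac15$, and finish with $\int_0^\infty(\tau+T_0)^{-8/5}\,d\tau\le CT_0^{-3/5}$.
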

\begin{proof}
Applying the Sobolev embedding $\|\phi\|_{L^\infty}^2 \le 2\|\phi\|_{L^2}\|\phi_\xi\|_{L^2}$ and Young's inequality, we estimate the integrand of $J(t)$:
\begin{equation} \label{eq:J(t)-est}
\begin{aligned}
|J(t)| &\le 3 \|w\|_{L^\infty} \|\phi\|_{L^\infty}^2 \int_{\mathbb{R}} |u^S - u_m| u^R_\xi \,d\xi
\le 45 u_m^2 \|\phi\|_{L^2} \|\phi_\xi\|_{L^2} \int_{\mathbb{R}} |u^S - u_m| u^R_\xi \,d\xi \\
&\le \frac{15}{256} u_m^2 \|\phi_\xi\|_{L^2}^2 + C u_m^2 \|\phi\|_{L^2}^2 \left( \int_{\mathbb{R}} |u^S - u_m| u^R_\xi \,d\xi \right)^2.
\end{aligned}
\end{equation}
We apply the wave interaction estimates \eqref{eq:int1_c} and \eqref{eq:int2_c} from Lemma \ref{lem:wave_interaction}:
\begin{equation*}
\begin{aligned}
\int_{\mathbb{R}} |u^S - u_m| u^R_\xi \,d\xi 
&\le C \delta_S \delta_R \bigl(\delta_R \varepsilon_R (\tau+T_0)\bigr)^{-\theta_1(1-\frac{1}{2q})} + C \delta_S^{2\theta_2-1} \delta_R^{\theta_2} (\tau+T_0)^{-1+\theta_2},
\end{aligned}
\end{equation*}
where the constants $\theta_1, \theta_2 \in (0, 1)$ are chosen such that $\theta_1(1-\frac{1}{2q}) = \frac{4}{5}$ and $\theta_2 = 1/5$. Thus, we have
\begin{equation} \label{eq:J-interaction-est}
\begin{aligned}
\int_0^t \left( \int_{\mathbb{R}} |u^S - u_m| u^R_\xi \,d\xi \right)^2 \, d \tau
&\le C \max\{ \delta_S^2 \varepsilon_R^{-8/5}, \delta_S^{-6/5} \} \delta_R^{2/5} \int_0^\infty (\tau+T_0)^{-8/5} \, d \tau \\
&\le C \max\{ \delta_S^2 \varepsilon_R^{-8/5}, \delta_S^{-6/5} \} \delta_R^{2/5} T_0^{-3/5}.
\end{aligned}
\end{equation}

Substituting \eqref{eq:J-interaction-est} into \eqref{eq:J(t)-est} yields \eqref{eq:J}. This completes the proof of Lemma \ref{lem:J(t)}.
\end{proof}

\begin{lemma}\label{lem:R(t)}
Recall $R(t) = -\int_{\mathbb{R}} \phi E w \,d\xi$.
Let $0<\varepsilon_R<\delta_1$, with $\delta_1$ as in Lemma~\ref{lem:approx_rarefaction}(6).
Then there exists a constant $C>0$, depending only on $q$, such that for all $t \in [0,T]$, the following holds:
\begin{equation} \label{eq:R}
\begin{aligned}
\int_0^t |R(\tau)| \,d\tau &\le \frac{15}{256} u_m^2 \int_0^t \|\phi_\xi\|_{L^2}^2 \,d\tau + C u_m^2 \sup_{0 \le \tau \le t} \|\phi(\tau)\|_{L^2}^{2/3} \Bigg[ \delta_S^{-4/5} \delta_R^{4/15} T_0^{-1/15} \\
&\qquad + \left( \delta_S^{4/3} \varepsilon_R^{-7/6} + \delta_S^{-1} \right) \delta_R^{1/6} T_0^{-1/6} + \delta_S^{-4/3} T_0^{-1/3}\\
&\qquad + \left( \delta_R^{4/3} + \delta_R^{1/3} \right) \varepsilon_R^{\frac{q-4}{3(q-1)}}
+ \kappa^{4/3}\left( \delta_R^{4/3} + \delta_R^{1/3} \right) \varepsilon_R^{\frac{2q-16}{3(q-2)}}\Bigg].
\end{aligned}
\end{equation}
\end{lemma}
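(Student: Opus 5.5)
The plan is to bound $|R|\le \|w\|_{L^\infty}\int|\phi||E|\,d\xi\le \tfrac{15}{2}u_m^2\int|\phi|(|E_I|+|E_R|)\,d\xi$ and treat the two error pieces separately. For each piece we aim for a bound of the form $\|\phi\|_{L^\infty}\|E\|_{L^1}$ and then interpolate. The Gagliardo--Nirenberg inequality $\|\phi\|_{L^\infty}\le \sqrt2\|\phi\|_{L^2}^{1/2}\|\phi_\xi\|_{L^2}^{1/2}$ gives
\[
\int|\phi||E|\,d\xi\le C\|\phi\|_{L^2}^{1/2}\|\phi_\xi\|_{L^2}^{1/2}\|E\|_{L^1}.
\]
Applying Young's inequality with exponents $(4,4/3)$ yields
\[
u_m^2\int|\phi||E|\,d\xi\le \tfrac{15}{512}u_m^2\|\phi_\xi\|_{L^2}^2+Cu_m^2\|\phi\|_{L^2}^{2/3}\|E\|_{L^1}^{4/3}.
\]
Integrating in time, it remains to show that $\int_0^\infty\|E_I\|_{L^1}^{4/3}\,d\tau$ and $\int_0^\infty\|E_R\|_{L^1}^{4/3}\,d\tau$ are bounded by the bracket in \eqref{eq:R}. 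This explains the exponents $2/3$ and $4/3$ in the statement.

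\textbf{Interaction error $E_I$.} Since $f'(\tilde u)-f'(u^S)=3(\tilde u-u^S)(\tilde u+u^S)$ with $\tilde u-u^S=u^R-u_m$, and similarly for the other term, we get $|E_I|\le C u_m(|u^R-u_m|u^S_\xi+|u^S-u_m|u^R_\xi)$, using boundedness of all states by $Cu_m$. After shifting $\xi-X(t)\mapsto\xi$, Lemma \ref{lem:wave_interaction} applies directly. The $u^R_\xi$ part is controlled by \eqref{eq:int1_c}--\eqref{eq:int2_c}. The $u^S_\xi$ part is split into $\xi<0$, the fan region, and the right region. On $\xi<0$ we use \eqref{eq:int3_c}. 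On the fan region we write $|u^R-u_m|\le|u^R-u^r|+|u^r-u_m|$ and use \eqref{eq:int4_c}--\eqref{eq:int5_c}. On the right region we use \eqref{eq:int6_c}. For each term we raise to the power $4/3$ and choose $\theta$ so that the time exponent exceeds $3/4$, which makes it integrable against $(\tau+T_0)$. This produces the terms $T_0^{-1/15}$, $T_0^{-1/6}$ and $T_0^{-1/3}$. For example, in \eqref{eq:int2_c} we pick $\theta$ so that $(4/3)(1-\theta)-1$ gives $1/15$. The logarithm in \eqref{eq:int5_c} is absorbed into a slightly smaller power. The main work is bookkeeping the $\delta_S,\delta_R,\varepsilon_R$ powers.

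\textbf{Rarefaction error $E_R$.} We have $E_R=-u^R_{\xi\xi}+\kappa u^R_{\xi\xi\xi}$. Lemma \ref{lem:approx_rarefaction}(2) with $p=1$ gives $\|\partial_x^ju^R\|_{L^1}\le C\min\{\delta_R\varepsilon_R^{j-1},(\delta_R+\delta_R^{(j-1)/q})(t+T_0)^{-1+(j-1)/q}\}$. To get a small power of $\varepsilon_R$ while keeping time integrability, we interpolate $\min\{a,b\}\le a^{1-\beta}b^\beta$, choosing $\beta$ so that $\tfrac43\beta(1-\tfrac{j-1}{q})>1$. Integrating then gives $(t+T_0)$-integrals bounded by a positive power of $T_0$ times a power of $\varepsilon_R$. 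Using $T_0=\varepsilon_R^{-8}$ we collect the result into $\varepsilon_R^{\frac{q-4}{3(q-1)}}$ for $j=2$ and $\kappa^{4/3}\varepsilon_R^{\frac{2q-16}{3(q-2)}}$ for $j=3$. These are positive since $q\ge10$. The $\delta_R$ prefactors $\delta_R^{4/3}+\delta_R^{1/3}$ come from $(\delta_R+\delta_R^{1/q})^{4/3}$ and similar factors, bounded crudely.

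\textbf{Expected obstacle.} The main difficulty is choosing the interpolation exponents in the $E_R$ estimate. The exponent must be large enough for time integrability, yet small enough that after substituting $T_0=\varepsilon_R^{-8}$ the net power of $\varepsilon_R$ stays positive. I would first fix $\beta$ just above the integrability threshold and check the sign, adjusting to match the stated exponents. Finally, the two Young's-inequality pieces $\tfrac{15}{512}$ sum to the $\tfrac{15}{256}u_m^2\int\|\phi_\xi\|^2$ claimed.
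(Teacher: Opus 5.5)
Your argument follows the paper's proof in its main line. Both use $|R|\le\|w\|_{L^\infty}\|\phi\|_{L^\infty}\|E\|_{L^1}$, then Gagliardo--Nirenberg and Young with exponents $(4,4/3)$ to reduce everything to $\int_0^t\|E\|_{L^1}^{4/3}\,d\tau$. Both bound the interaction error by \eqref{eq:int1_c}--\eqref{eq:int6_c}, with $\theta$ chosen to make the $4/3$-powers time integrable. The paper takes $\theta_2=\frac15$ in \eqref{eq:int2_c} and $\theta(1-\frac{1}{2q})=\frac78$ in \eqref{eq:int1_c} and \eqref{eq:int3_c}. One bookkeeping step is missing: the \eqref{eq:int4_c} contribution does not by itself give any of the listed terms. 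The paper absorbs it into the \eqref{eq:int3_c} term using $\delta_R\varepsilon_R(\tau+T_0)\ge\delta_R\varepsilon_R T_0\ge c>0$.

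The real difference is in $E^R$. The paper splits the time integral at the crossover times $t_*$, $t^*$, where the two branches of the minimum coincide, and integrates over all $s\in(0,\infty)$. This gives exactly $C(\delta_R\varepsilon_R)^{4/3}t_*$ and $C(\delta_R\varepsilon_R^2)^{4/3}t^*$. From these come the stated exponents and the clean prefactor $\delta_R^{4/3}+\delta_R^{1/3}$, without using $T_0$ at all.

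Your interpolation $\min\{a,b\}\le a^{1-\beta}b^{\beta}$, integrated over $s\ge T_0$, also works, but you have the sign of the $T_0$ factor wrong. Set $\eta:=\frac43\beta\bigl(1-\frac{j-1}{q}\bigr)-1>0$. The integral produces $T_0^{-\eta}/\eta$, a negative power of $T_0$, so after substituting $T_0=\varepsilon_R^{-8}$ the net exponent for $j=2$ is
$$\frac{q-4}{3(q-1)}+\Bigl(8-\frac{q}{q-1}\Bigr)\eta .$$
For $j=3$ it is
$$\frac{2q-16}{3(q-2)}+\Bigl(8-\frac{2q}{q-2}\Bigr)\eta .$$
Both are increasing in $\beta$ and equal the stated exponents exactly at the integrability threshold. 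So there is no upper constraint on $\beta$, and the ``expected obstacle'' you describe does not occur.

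What your route costs is the $\delta_R$-dependence. For $\delta_R<1$ your prefactor is $\delta_R^{1/3-\eta}$ rather than $\delta_R^{1/3}$. Reproducing the statement literally, with a constant depending only on $q$, therefore needs the crossover split. For the later use in the proof of Proposition~\ref{prop:a-priori-L2}, where constants may depend on $\delta_S$ and $\delta_R$, your bound is equally good and even gives a better power of $\varepsilon_R$.
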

\begin{proof}
Using the Sobolev embedding $\|\phi\|_{L^\infty}^2 \le 2\|\phi\|_{L^2}\|\phi_\xi\|_{L^2}$ and Young's inequality, we obtain
\begin{equation} \label{eq:R(t)-est}
\begin{aligned}
|R(t)| \le \|w\|_{L^\infty} \|\phi\|_{L^\infty} \int_{\mathbb{R}} |E| \,d\xi 
&\le C u_m^2 \|\phi\|_{L^2}^{1/2} \|\phi_\xi\|_{L^2}^{1/2} \int_{\mathbb{R}} |E| \,d\xi \\
&\le \frac{15}{256} u_m^2 \|\phi_\xi\|_{L^2}^2 + C u_m^2 \|\phi\|_{L^2}^{2/3} \left( \int_{\mathbb{R}} |E| \,d\xi \right)^{4/3}.
\end{aligned}
\end{equation}
Here the $L^1$-norm of the error term $E$ is bounded by
\begin{align*}
\int_{\mathbb{R}} |E| \,d\xi &\le C \int_{\mathbb{R}} |u^S - u_m| u^R_\xi \,d\xi + C \int_{\mathbb{R}} |u^R - u_m| u^S_\xi \,d\xi + \int_{\mathbb{R}} |u^R_{\xi\xi}| \,d\xi + \kappa \int_{\mathbb{R}} |u^R_{\xi\xi\xi}| \,d\xi \\
&=:E_{11}^I+E_{12}^I+E_{11}^R+E_{12}^R.
\end{align*}
We estimate the four terms on the right-hand side in turn. Throughout we write $\tau' := \tau + T_0$, and by translation invariance we absorb the shift $X(\tau)$ via $\xi - X(\tau) \mapsto \xi$.

\vspace{2mm}
\noindent \textbf{(i) $E_{11}^I$:}
By the interaction estimates \eqref{eq:int1_c} and \eqref{eq:int2_c} of Lemma~\ref{lem:wave_interaction}, we have, for every $\tau\in[0,t]$,
\begin{equation*}
\int_{\mathbb{R}} |u^S - u_m| u^R_\xi \,d\xi 
\le C \delta_S \delta_R \bigl(\delta_R \varepsilon_R (\tau+T_0)\bigr)^{-\theta_1(1-\frac{1}{2q})} + C \delta_S^{2\theta_2-1} \delta_R^{\theta_2} (\tau+T_0)^{-1+\theta_2},
\end{equation*}
where now the exponents $\theta_1,\theta_2\in(0,1)$ are chosen such that $\theta_1(1-\tfrac{1}{2q})=\tfrac{7}{8}$ and $\theta_2=\tfrac15$. Taking the $4/3$ power and integrating in time, we obtain
\begin{equation} \label{eq:E11I-est}
\int_0^t \left( \int_{\mathbb{R}} |u^S - u_m| u^R_\xi \,d\xi \right)^{4/3} d \tau \le C \delta_S^{4/3} \delta_R^{1/6} \varepsilon_R^{-7/6} T_0^{-1/6} + C \delta_S^{-4/5} \delta_R^{4/15} T_0^{-1/15}.
\end{equation}

\vspace{2mm}
\noindent \textbf{(ii) $E_{12}^I$:}
Combining \eqref{eq:int3_c}--\eqref{eq:int6_c} of Lemma~\ref{lem:wave_interaction}, we have, for every $\tau\in[0,t]$,
\begin{equation*}
\begin{aligned}
\int_{\mathbb{R}} |u^R - u_m| u^S_\xi \,d\xi 
&\le  \underbrace{C \delta_S \delta_R \bigl(\delta_R \varepsilon_R (\tau+T_0)\bigr)^{-\theta(1-\frac{1}{2q})}}_{\eqref{eq:int3_c}} + \underbrace{C \delta_S \delta_R \bigl(\delta_R \varepsilon_R (\tau+T_0)\bigr)^{-(1-\frac{1}{2q})}}_{\eqref{eq:int4_c}} \\
&\qquad + \underbrace{C \delta_S^{-1} (\tau+T_0)^{-1} \ln\bigl(1 + C\delta_S^2 \delta_R (\tau+T_0)\bigr)}_{\eqref{eq:int5_c}} + \underbrace{C \delta_S \delta_R \bigl(1 + C \delta_S^2 \delta_R (\tau+T_0)\bigr)^{-1}}_{\eqref{eq:int6_c}}, 
\end{aligned}
\end{equation*}
where $\theta\in(0,1)$ is chosen so that $\theta(1-\tfrac{1}{2q})=\tfrac{7}{8}$.
Here the contribution of \eqref{eq:int4_c} is absorbed into that of \eqref{eq:int3_c}, since $\tfrac43(1-\tfrac{1}{2q})\ge\tfrac76$ and $\delta_R\varepsilon_R(\tau+T_0)\ge\delta_R\varepsilon_R T_0\ge c>0$ by $0<\varepsilon_R<\delta_1$.
Estimating the logarithm by $(\ln(1+Ax))^{4/3}\le C(Ax)^{1/6}$ and integrating in time, we obtain
\begin{equation} \label{eq:E12I-est}
\begin{aligned}
\int_0^t \left( \int_{\mathbb{R}} |u^R - u_m| u^S_\xi \,d\xi \right)^{4/3} \, d \tau 
&\le C \int_0^t \Big[ \delta_S^{4/3} \delta_R^{4/3} \bigl(\delta_R \varepsilon_R (\tau+T_0)\bigr)^{-7/6} \\
&\qquad\qquad + \delta_S^{-4/3} (\tau+T_0)^{-4/3} \bigl( \delta_S^2 \delta_R (\tau+T_0) \bigr)^{1/6} \\
&\qquad\qquad + \delta_S^{4/3} \delta_R^{4/3} \bigl(1 + C \delta_S^2 \delta_R (\tau+T_0)\bigr)^{-4/3} \Big] d\tau \\
&\le C \left( \delta_S^{4/3} \varepsilon_R^{-7/6} + \delta_S^{-1} \right) \delta_R^{1/6} T_0^{-1/6} + C \delta_S^{-4/3} T_0^{-1/3}.
\end{aligned}
\end{equation}

\vspace{2mm}
\noindent \textbf{(iii) $E_{11}^R$ and $E_{12}^R$:}
Let $t_*$ and $t^*$ be given by
\[
t_* := \left( \frac{\delta_R + \delta_R^{1/q}}{\delta_R \varepsilon_R} \right)^{\frac{q}{q-1}} \quad \text{and} \quad 
t^* := \left( \frac{\delta_R + \delta_R^{2/q}}{\delta_R \varepsilon_R^2} \right)^{\frac{q}{q-2}}.
\]
By Lemma \ref{lem:approx_rarefaction}, we have
\begin{equation*} 
\|u^R_{\xi\xi}(\tau+T_0,\cdot)\|_{L^1}  \le C 
\begin{cases} 
\delta_R \varepsilon_R, &  \tau+T_0  \le t_*, \\
(\delta_R + \delta_R^{1/q})(\tau+T_0)^{-(1-1/q)} , &  \tau+T_0 > t_*,
\end{cases}
\end{equation*}
and
\begin{equation*}
\|u^R_{\xi\xi\xi}(\tau+T_0,\cdot)\|_{L^1} \le C 
\begin{cases} 
\delta_R \varepsilon_R^2, & \tau+T_0 \le t^*, \\
(\delta_R + \delta_R^{2/q})(\tau+T_0)^{-(1-2/q)} , & \tau+T_0 > t^*.
\end{cases}
\end{equation*}
Using the change of variable $s := \tau + T_0$ and \(q\ge 10\) for the time-integrability, we obtain
\begin{equation} \label{eq:E11R-est}
\begin{aligned}
\int_0^t \|u^R_{\xi\xi}\|_{L^1}^{4/3} \,d\tau
\le \int_0^\infty \|u^R_{\xi\xi}\|_{L^1}^{4/3} \,ds 
&\le \int_0^{t_*} (\delta_R \varepsilon_R)^{4/3} \,ds + \int_{t_*}^\infty \left( (\delta_R + \delta_R^{1/q}) s^{-(1-1/q)} \right)^{4/3} ds \\
&\le C (\delta_R \varepsilon_R)^{4/3} t_* \le C \left( \delta_R^{4/3} + \delta_R^{1/3} \right) \varepsilon_R^{\frac{q-4}{3(q-1)}},
\end{aligned}
\end{equation}
and, similarly, we have
\begin{equation} \label{eq:E12R-est}
\begin{aligned}
\int_0^t \|u^R_{\xi\xi\xi}\|_{L^1}^{4/3} \,d\tau
\le \int_0^\infty \|u^R_{\xi\xi\xi}\|_{L^1}^{4/3} \,ds 
&\le \int_0^{t^*} (\delta_R \varepsilon_R^2)^{4/3} \,ds + \int_{t^*}^\infty \left( (\delta_R + \delta_R^{2/q}) s^{-(1-2/q)} \right)^{4/3} ds \\
&\le C (\delta_R \varepsilon_R^2)^{4/3} t^* \le C \left( \delta_R^{4/3} + \delta_R^{1/3} \right) \varepsilon_R^{\frac{2q-16}{3(q-2)}}.
\end{aligned}
\end{equation}

Substituting \eqref{eq:E11I-est}--\eqref{eq:E12R-est} into \eqref{eq:R(t)-est} yields \eqref{eq:R}.
This completes the proof of Lemma \ref{lem:R(t)}.
\end{proof}

\subsubsection{Conclusion}
Gathering the above estimates, we now prove Proposition \ref{prop:a-priori-L2}.
\begin{lemma} \label{lem:a-priori-L^2}    
Let $\delta_0$ be given in Lemma~\ref{lem:GS}, and let $\delta_2$ and $\varepsilon_1=\varepsilon_1(\delta_S)$ be given in Lemma~\ref{lem:GS-N}. If the dispersion coefficient $\kappa > 0$, the shock strength $\ds=|u_m-u_-|=3u_m>0$, the rarefaction strength $\delta_R=|u_+-u_m| > 0$, and the smoothing scale $\varepsilon_R$ satisfy
\begin{equation*}
\kappa u_m^2 \le \delta_0, \qquad
\delta_R \le \frac{5}{18} \delta_S \; \left(= \frac{5}{6}u_m\right), \qquad
0 < \varepsilon_R < \delta_2,
\end{equation*}
and the perturbation $\phi \in C([0,T];H^1(\mathbb{R})) \cap L^2(0,T;H^2(\mathbb{R}))$ solves \eqref{eq:perturbed_general} on $[0,T]$ with
\[
\sup_{t \in [0, T]} \| \phi (t,\cdot) \|_{H^1(\mathbb{R})} \le \varepsilon_1,
\]
then there exists a constant $C > 0$, independent of $\kappa$, $\varepsilon_R$, and $T$, such that for all $t \in [0, T]$,
\begin{equation} \label{eq:a-priori-L2-est}
\begin{aligned}
&\frac{1}{2} \int_\mathbb{R} (\phi^2 w)(t,\xi) \, d \xi +\frac{15}{128}u_m^2 \int_0^t \int_\mathbb{R} \phi_\xi^2 \,d\xi d\tau \\
&\qquad +\frac{1}{4}u_m^3 \int_0^t \int_\mathbb{R} \phi^2 (u^S_\xi + u^R_\xi ) \,d\xi d\tau +\frac{25}{128}u_m^2 \int_0^t  |\dot{X}(\tau)|^2 \, d\tau \\
&\le \frac{1}{2} \int_\mathbb{R} (\phi^2 w)(0,\xi) \, d \xi
+ C \varepsilon_R^{1/6} \left( \sup_{0 \le \tau \le t} \|\phi(\tau)\|_{L^2}^2 + 1 \right).
\end{aligned}
\end{equation}
\end{lemma}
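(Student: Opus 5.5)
The plan is to start from the integrated inequality of Lemma~\ref{lem:GS-N}, which under the present hypotheses ($\kappa u_m^2\le\delta_0$, $\delta_R\le\frac{5}{18}\delta_S$, $0<\varepsilon_R<\delta_2$, and the a priori bound $\|\phi\|_{H^1}\le\varepsilon_1$) already controls the left-hand side of \eqref{eq:a-priori-L2-est}. The only difference is that it carries $\frac{15}{64}u_m^2$ in front of the $\phi_\xi$-dissipation, with $\int_0^t J\,d\tau+\int_0^t R\,d\tau$ on the right. What remains is to bound these two source terms by Lemmas~\ref{lem:J(t)} and \ref{lem:R(t)}. Each of those lemmas costs $\frac{15}{256}u_m^2\int_0^t\|\phi_\xi\|_{L^2}^2\,d\tau$, so together they absorb $\frac{15}{128}u_m^2$ of the dissipation and leave exactly the coefficient $\frac{15}{128}u_m^2$ stated in \eqref{eq:a-priori-L2-est}. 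The remaining pieces are pure $\varepsilon_R$-smallness remainders multiplying powers of $\sup_\tau\|\phi\|_{L^2}$.

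The second step is to convert these remainders to the form $C\varepsilon_R^{1/6}(\sup\|\phi\|^2_{L^2}+1)$, using $T_0=\varepsilon_R^{-8}$ and $\varepsilon_R<1$.

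For $J$, Lemma~\ref{lem:J(t)} gives a remainder $\max\{\delta_S^2\varepsilon_R^{-8/5},\delta_S^{-6/5}\}\delta_R^{2/5}T_0^{-3/5}$. Its worst case is $\varepsilon_R^{-8/5+24/5}=\varepsilon_R^{16/5}\le\varepsilon_R^{1/6}$.

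For $R$, Young's inequality gives $\|\phi\|_{L^2}^{2/3}\le\|\phi\|_{L^2}^2+1$. I then check each exponent in \eqref{eq:R}:
\begin{itemize}
\item $T_0^{-1/15}=\varepsilon_R^{8/15}$;
\item $\varepsilon_R^{-7/6}T_0^{-1/6}=\varepsilon_R^{1/6}$, which is the borderline term and the one that dictates the final rate $1/6$;
\item $T_0^{-1/6}$ and $T_0^{-1/3}$ are smaller still;
\item $\varepsilon_R^{\frac{q-4}{3(q-1)}}$ has exponent at least $6/27=2/9>1/6$ for $q\ge10$;
\item $\varepsilon_R^{\frac{2q-16}{3(q-2)}}$ has exponent at least $4/24=1/6$ for $q\ge10$, since the map $q\mapsto\frac{2q-16}{3(q-2)}$ is increasing.
\end{itemize}
All these bounds hold because $\varepsilon_R<1$.

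The main obstacle is the $\kappa^{4/3}$ factor in front of the last term of \eqref{eq:R}, because the constant must be independent of $\kappa$. I would handle it by observing that $\kappa\le\delta_0/u_m^2$, so $\kappa^{4/3}u_m^2\le\delta_0^{4/3}u_m^{-2/3}$. This is bounded in terms of the wave strengths only. The prefactors $\delta_S^{\pm}$, $\delta_R^{\pm}$ and $u_m^2$ likewise enter $C$, which is allowed since $C$ need only be independent of $\kappa$, $\varepsilon_R$ and $T$.

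Summing gives \eqref{eq:a-priori-L2-est}. Proposition~\ref{prop:a-priori-L2} then follows from two further observations. First, $\frac{15}{8}u_m^2\le w\le\frac{15}{2}u_m^2$ makes the weighted and unweighted $L^2$ norms equivalent. Second, the $\sup$-term on the right can be absorbed once $C\varepsilon_R^{1/6}$ is small, which is arranged by shrinking $\delta_2$ if needed.
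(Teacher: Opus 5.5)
Your proposal is correct and matches the paper's proof: you combine Lemma~\ref{lem:GS-N} with Lemmas~\ref{lem:J(t)} and \ref{lem:R(t)}, which apply since $\delta_2\le\min\{\delta_1,1\}$ and each cost $\frac{15}{256}u_m^2\int_0^t\|\phi_\xi\|_{L^2}^2\,d\tau$. You then reduce every remainder to $C\varepsilon_R^{1/6}(\sup_\tau\|\phi\|_{L^2}^2+1)$ using $\kappa\le\delta_0u_m^{-2}$, $T_0=\varepsilon_R^{-8}$, $\varepsilon_R<1$, $q\ge10$ and $\|\phi\|_{L^2}^{2/3}\le\|\phi\|_{L^2}^2+1$, exactly as the paper does. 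Only your remark on the subsequent Proposition~\ref{prop:a-priori-L2} differs: you absorb the $\sup$-term for small $\varepsilon_R$, whereas the paper simply bounds it by the a priori bound $\varepsilon_1$; either works.
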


\begin{proof}
Since $0<\varepsilon_R<\delta_2\le\delta_1$, Lemmas~\ref{lem:GS-N}, \ref{lem:J(t)}, and \ref{lem:R(t)} apply. Combining them, we have
\begin{equation} \label{eq:L2-est-comb}
\begin{aligned}
&\frac{1}{2} \int_\mathbb{R} (\phi^2 w)(t,\xi) \, d \xi +\frac{15}{128}u_m^2 \int_0^t \int_\mathbb{R} \phi_\xi^2(\tau,\xi) \,d\xi d\tau \\
&\qquad +\frac{1}{4}u_m^3 \int_0^t \int_\mathbb{R} \bigl[ \phi^2 (u^S_\xi + u^R_\xi ) \bigr] (\tau,\xi) \,d\xi d\tau +\frac{25}{128}u_m^2 \int_0^t  |\dot{X}(\tau)|^2 \, d\tau \\
&\le \frac{1}{2} \int_\mathbb{R} (\phi^2 w)(0,\xi) \, d \xi + C\varepsilon_R^{-8/5}T_0^{-3/5} \sup_{0 \le \tau \le t} \|\phi(\tau)\|_{L^2}^2 \\
&\qquad + C \sup_{0 \le \tau \le t} \|\phi(\tau)\|_{L^2}^{2/3} \Bigl[ T_0^{-1/15} + \varepsilon_R^{-7/6}T_0^{-1/6} + T_0^{-1/3} + \varepsilon_R^{\frac{q-4}{3(q-1)}} + \varepsilon_R^{\frac{2q-16}{3(q-2)}} \Bigr],
\end{aligned}
\end{equation}
where we used $\varepsilon_R<1$ and $\kappa\le\delta_0u_m^{-2}$. Taking $T_0=\varepsilon_R^{-8}$, we have
\[
\varepsilon_R^{-8/5}T_0^{-3/5}=\varepsilon_R^{16/5}, \qquad \varepsilon_R^{-7/6}T_0^{-1/6}=\varepsilon_R^{1/6},
\]
and, for $q\ge10$,
\[
\frac{q-4}{3(q-1)}\ge\frac29, \qquad \frac{2q-16}{3(q-2)}\ge\frac16.
\]
Hence, as $\varepsilon_R<1$ and $\|\phi\|_{L^2}^{2/3}\le\|\phi\|_{L^2}^2+1$, the right-hand side of \eqref{eq:L2-est-comb} is bounded by
\[
\frac{1}{2} \int_\mathbb{R} (\phi^2 w)(0,\xi) \, d \xi + C\varepsilon_R^{1/6}\Bigl(\sup_{0 \le \tau \le t} \|\phi(\tau)\|_{L^2}^2 + 1\Bigr),
\]
which yields the desired result \eqref{eq:a-priori-L2-est}.
\end{proof}

\begin{proof}[Proof of Proposition~\ref{prop:a-priori-L2}]
Let $\delta_0$, $\varepsilon_1$, and $\delta_2$ be the constants of Lemma~\ref{lem:a-priori-L^2}. By the a priori assumption $\sup_{0\le\tau\le t}\|\phi(\tau)\|_{L^2}\le\varepsilon_1$, the right-hand side of \eqref{eq:a-priori-L2-est} is bounded by
\[
\frac12\int_\mathbb{R}(\phi^2 w)(0,\xi)\,d\xi + C\varepsilon_R^{1/6}.
\]
Since $\tfrac{15}{8}u_m^2 \le w \le \tfrac{15}{2}u_m^2$ by Lemma~\ref{lem:weight_properties}(2), we have
\[
\frac12\int_\mathbb{R}(\phi^2 w)(t,\xi)\,d\xi \ge \frac{15}{16}u_m^2\|\phi(t)\|_{L^2}^2,
\qquad
\frac12\int_\mathbb{R}(\phi^2 w)(0,\xi)\,d\xi \le \frac{15}{4}u_m^2\|\phi(0)\|_{L^2}^2.
\]
Dividing \eqref{eq:a-priori-L2-est} by $\tfrac{15}{16}u_m^2$ gives the desired result \eqref{eq:a-priori-L2}.
\end{proof}

\subsection{Higher-Order Estimates}
\begin{proposition}\label{prop:a-priori-H1}
There exist constants $\delta_0, \varepsilon_1, \delta_2 > 0$ such that if the dispersion coefficient $\kappa > 0$, the shock strength $\ds=|u_m-u_-|=3u_m>0$, the rarefaction strength $\delta_R=|u_+-u_m| > 0$, and the smoothing scale $\varepsilon_R$ satisfy
\begin{equation*}
\kappa u_m^2 \le \delta_0, \qquad \delta_R \le \frac{5}{18} \delta_S \; \left(= \frac{5}{6}u_m\right), \qquad 0 < \varepsilon_R < \delta_2,
\end{equation*}
and the perturbation $\phi \in C([0,T];H^1(\mathbb{R})) \cap L^2(0,T;H^2(\mathbb{R}))$ solves \eqref{eq:perturbed_general} on $[0,T]$ with
\[\sup_{t \in [0, T]} \| \phi (t) \|_{H^1(\mathbb{R})} \le \varepsilon_1,\]
then there exists a constant $C > 0$, independent of $\kappa$, $\varepsilon_R$, and $T$, such that for all $t \in [0, T]$,
\begin{multline} \label{eq:a-priori-H1}
\|\phi_\xi(t)\|_{L^2(\RR)}^2 +\int_0^t \|\phi_{\xi\xi}(\tau) \|_{L^2(\RR)}^2 \,d\tau
\le \| \phi_\xi(0)\|_{L^2(\RR)}^2 + C \int_0^t \|\phi_\xi(\tau)\|_{L^2(\RR)}^2 \, d \tau\\
+ C  \int_0^t\int_\mathbb{R}  \phi^2 \bigl( \us_\xi + \ur_\xi\bigr) \, d \xi \, d \tau +C\int_0^t |\dot{X}(\tau)|^2 \, d \tau +C \varepsilon_R^{4/3}.
\end{multline}
\end{proposition}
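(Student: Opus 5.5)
We will derive the estimate from a first-order energy identity, obtained by multiplying the perturbation equation \eqref{eq:perturbed_general} by $-\phi_{\xi\xi}$ and integrating over $\mathbb{R}$. Since $\phi\in L^2(0,T;H^2)$, the identity can first be justified for mollified data and then passed to the limit. The transport term $-\sigma\phi_\xi$ and the dispersive term $\kappa\phi_{\xi\xi\xi}$ drop out, because $\int\phi_{\xi\xi}\phi_\xi\,d\xi=\int\phi_{\xi\xi}\phi_{\xi\xi\xi}\,d\xi=0$. This is the crucial reason the constant will be independent of $\kappa$, and it is the identity the paper already invokes in the long-time argument. We thus obtain
\[
\frac12\frac{d}{dt}\|\phi_\xi\|_{L^2}^2+\|\phi_{\xi\xi}\|_{L^2}^2
=-\int_\mathbb{R}\phi_{\xi\xi}\Bigl[\dot X(\us_\xi+\ur_\xi)-\bigl(f(\phi+\tilde u)-f(\tilde u)\bigr)_\xi-E\Bigr]d\xi .
\]

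Next we bound the three terms on the right in turn, each via Young's inequality with a factor $\frac18\|\phi_{\xi\xi}\|_{L^2}^2$.

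\textbf{The shift term.} It is at most $\frac18\|\phi_{\xi\xi}\|^2+C|\dot X|^2\|\us_\xi+\ur_\xi\|_{L^2}^2$. The $L^2$ norms of $\us_\xi$ and $\ur_\xi$ are bounded by Theorem~\ref{thm:shock_properties} and Lemma~\ref{lem:approx_rarefaction}(2).

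\textbf{The nonlinear term.} We expand
\[
\bigl(f(\phi+\tilde u)-f(\tilde u)\bigr)_\xi=3(\phi+\tilde u)^2\phi_\xi+3\bigl((\phi+\tilde u)^2-\tilde u^2\bigr)\tilde u_\xi .
\]
With $\|\phi\|_{L^\infty}\le\sqrt2\varepsilon_1$ and $\tilde u$ bounded, the first piece is at most $C|\phi_\xi|$, giving $\frac18\|\phi_{\xi\xi}\|^2+C\|\phi_\xi\|^2$. The second piece is at most $C|\phi|(\us_\xi+\ur_\xi)$. Its square is then $\le C\phi^2(\us_\xi+\ur_\xi)$, using $\|\us_\xi+\ur_\xi\|_{L^\infty}\le C$. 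This produces the weighted $\phi^2$ term on the right-hand side of \eqref{eq:a-priori-H1}.

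\textbf{The error term.} It is at most $\frac18\|\phi_{\xi\xi}\|^2+C\|E\|_{L^2}^2$. The remaining task, and the only genuinely new computation, is to show
\[
\int_0^\infty\|E\|_{L^2}^2\,d\tau\le C\varepsilon_R^{4/3}.
\]

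The bound on $\int_0^\infty\|E\|_{L^2}^2\,d\tau$ is the main obstacle, because the interaction part $E_I$ is only weakly time-decaying.

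\textbf{Rarefaction part $E_R$.} By Lemma~\ref{lem:approx_rarefaction}(2) with $p=2$,
\[
\|\ur_{\xi\xi}\|_{L^2}^2\le C\min\bigl\{\varepsilon_R^3,\,(\tau+T_0)^{-2(1-1/q)}\bigr\},
\]
and similarly for $\kappa\ur_{\xi\xi\xi}$. Here we use $\kappa\le\delta_0u_m^{-2}$, so $\kappa$ is bounded by a constant independent of $\kappa$. Integrating from $T_0=\varepsilon_R^{-8}$ gives decay like $T_0^{-1+2/q}$, which for $q\ge10$ is much smaller than $\varepsilon_R^{4/3}$.

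\textbf{Interaction part $E_I$.} We have
\[
|E_I|\le C\bigl(|\us-u_m|\ur_\xi+|\ur-u_m|\us_\xi\bigr).
\]
We redo the interaction bounds of Lemma~\ref{lem:wave_interaction} in $L^2$ rather than $L^1$, inserting an extra $L^\infty$ factor: $\|\ur_\xi\|_{L^\infty}\le C(\tau+T_0)^{-1}$ and $\|\us_\xi\|_{L^\infty}\le Cs^3$. The contributions then behave as follows.

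1. On $\xi\le0$, Lemma~\ref{lem:approx_rarefaction}(6) supplies the factor $(\delta_R\varepsilon_R(\tau+T_0))^{-\theta(1-\frac1{2q})}$.
2. On $\xi>0$, the H\"older argument of \eqref{eq:int2_c}, with the $\ur_\xi$ factor in $L^\infty$ and $|\us-u_m|\in L^2(0,\infty)$, gives $C(\tau+T_0)^{-2}$.
3. The fan region uses \eqref{eq:int5_c}--\eqref{eq:int6_c}. The potentially dangerous piece is $|u^r-u_m|^2(\us_\xi)^2$; there the estimate $|u^r-u_m|\le C\xi/(\tau+T_0)$ against $(1+\xi)^{-4}$ decay is integrable in $\xi$, giving $(\tau+T_0)^{-2}$.

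After choosing $\theta$ so that all time integrals converge, the total is bounded by a power $\varepsilon_R^{a}T_0^{-b}$. With $T_0=\varepsilon_R^{-8}$ this becomes $\le C\varepsilon_R^{4/3}$. This exponent bookkeeping is the step I expect to be fiddly.

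Finally, we integrate the differential inequality in time and absorb $\frac38\|\phi_{\xi\xi}\|^2$ into the left-hand side. This yields \eqref{eq:a-priori-H1}, with constants depending only on $u_m$, $\delta_R$ and $q$.
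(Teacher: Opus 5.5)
Your proposal is correct and follows the paper's proof essentially step for step. The multiplier $-\phi_{\xi\xi}$ removes the $\sigma$ and $\kappa$ terms; Young's inequality then handles the shift, flux and error terms, with the same splitting of the flux derivative as in the paper; and $\int_0^t\|E\|_{L^2}^2\,d\tau$ is bounded region by region via Lemmas~\ref{lem:approx_rarefaction} and \ref{lem:wave_interaction}.

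The differences are minor, but a few details need attention:
\begin{enumerate}
\item Integrating the $u^R_{\xi\xi}$ and $\kappa u^R_{\xi\xi\xi}$ contributions from $T_0$ gives you at worst $T_0^{-1+4/q}$. This is sharper than the paper's split at $t_{**}$, which is where its $\varepsilon_R^{4/3}$ comes from.
\item In the $\xi\le0$ term $|u^R-u_m|^2(u^S_\xi)^2$, you must choose $\theta(1-\frac{1}{2q})\ge\frac23$ (the paper uses $\frac78$), not merely $>\frac12$ for time integrability. The net power there is $\varepsilon_R^{14\theta(1-\frac{1}{2q})-8}$, which is negative for $\theta(1-\frac{1}{2q})$ just above $\frac12$.
\item The fan piece $|u^R-u^r|^2(u^S_\xi)^2$ also needs Lemma~\ref{lem:approx_rarefaction}(7), in addition to the bounds \eqref{eq:int5_c}--\eqref{eq:int6_c} you cite.
\end{enumerate}
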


\begin{proof}
Multiplying the perturbation equation \eqref{eq:perturbed_general} by $-\phi_{\xi\xi}$ and integrating over $\mathbb{R}$, since
\[
    \int_{\mathbb{R}} \bigl( \sigma \phi_\xi - \kappa \phi_{\xi\xi\xi} \bigr) \phi_{\xi\xi} \,d\xi 
    = \frac{1}{2} \int_{\mathbb{R}}  \bigl( \sigma \phi_\xi^2 - \kappa \phi_{\xi\xi}^2 \bigr)_\xi \,d\xi = 0,
\]
integrating the remaining terms by parts yields
\begin{equation} \label{eq:H1-energy-1}
\begin{aligned}
    \frac{1}{2}\frac{d}{dt} \|\phi_{\xi}\|_{L^2}^2 + \|\phi_{\xi \xi}\|_{L^2}^2
    &= \int_\mathbb{R} \left( f(\phi+\tilde{u}) - f(\tilde{u}) \right)_\xi \phi_{\xi \xi} \, d\xi \\
    &\qquad - \dot{X}(t) \int_{\mathbb{R}} \phi_{\xi \xi} (\us_\xi + \ur_\xi ) \,d\xi + \int_\mathbb{R} \phi_{\xi \xi} E \, d\xi.
\end{aligned}
\end{equation}
Applying Young's inequality to the right-hand side of \eqref{eq:H1-energy-1} and using Theorem~\ref{thm:shock_properties} and Lemma~\ref{lem:approx_rarefaction}, we obtain
\begin{equation} \label{eq:H1-energy-2}
\begin{aligned}
    \frac{1}{2}\frac{d}{dt} \|\phi_{\xi}\|_{L^2}^2 + \frac{1}{2}\|\phi_{\xi \xi}\|_{L^2}^2 
    &\le 2 \int_\mathbb{R} |(f(\phi+\tilde{u}) - f(\tilde{u}))_\xi|^2 \, d\xi  + 2 \int_\mathbb{R} |E|^2 \, d\xi \\
    &\qquad + C |\dot{X}(t)|^2 (\|\us_\xi\|_{L^2}^2 + \|\ur_\xi\|_{L^2}^2)\\
    &\le 2 \int_\mathbb{R} |(f(\phi+\tilde{u}) - f(\tilde{u}))_\xi|^2 \, d\xi  + 2 \int_\mathbb{R} |E|^2 \, d\xi + C |\dot{X}(t)|^2.
\end{aligned}
\end{equation}

\noindent \textbf{(i) Flux:} For the flux term on the right-hand side of \eqref{eq:H1-energy-2}, since $f(u)=u^3$, the a priori bound $\|\phi\|_{L^\infty}\le \sqrt2\,\varepsilon_1$ and the boundedness of $\tilde u$ yield
\begin{equation} \label{eq:flux-x-L2-bound}
    \begin{aligned}
    \int_\mathbb{R} |(f(\phi+\tilde{u}) - f(\tilde{u}))_\xi|^2 \, d\xi
    &\le C \int_\mathbb{R} (\phi+\tilde{u})^4 \phi_\xi^2 \, d\xi
       + C \int_\mathbb{R} \phi^2 (\phi+2\tilde{u})^2 \,\tilde{u}_\xi^2 \, d\xi \\
    &\le C \|\phi_\xi\|_{L^2}^2
       + C (\|\us_\xi\|_{L^\infty}+\|\ur_\xi\|_{L^\infty}) \int_\mathbb{R} \phi^2 \bigl(  \us_\xi +  \ur_\xi \bigr) \, d\xi \\
    &\le C \|\phi_\xi\|_{L^2}^2
       + C (\delta_S^3 + \delta_R \varepsilon_R) \int_\mathbb{R}  \phi^2 \bigl( \us_\xi + \ur_\xi\bigr) \, d \xi .
    \end{aligned}
\end{equation}

\noindent \textbf{(ii) $\|E\|_{L^2}^2$:} Next, we expand the square of the error term $E$ in $L^2$:
\begin{equation} \label{eq:E-L2-bound}
    \begin{aligned}
        \int_{\mathbb{R}} |E|^2 \,d\xi &\le C \int_{\mathbb{R}} |\us - u_m|^2 (\ur_\xi)^2 \,d\xi
        + C \int_{\mathbb{R}} |\ur - u_m|^2 (\us_\xi)^2 \,d\xi \\
        &\qquad + C \int_{\mathbb{R}} |\ur_{\xi\xi}|^2 \,d\xi + C \kappa^2 \int_{\mathbb{R}} |\ur_{\xi\xi\xi}|^2 \,d\xi
        =:E_{21}^I+E_{22}^I+E_{21}^R+E_{22}^R.
    \end{aligned}
\end{equation}
We estimate the four terms on the right-hand side of \eqref{eq:E-L2-bound} in turn. Throughout we write $\tau' := \tau + T_0$, and by translation invariance we absorb the shift $X(\tau)$ via $\xi - X(\tau) \mapsto \xi$.

\vspace{2mm}
\noindent \textbf{(ii-a) $E_{21}^I$:}
Using the interaction estimates \eqref{eq:int1_c}--\eqref{eq:int2_c} of Lemma \ref{lem:wave_interaction}, we have
\begin{equation*}
\begin{aligned}
\int_{\mathbb{R}} |\us - u_m|^2 (\ur_\xi)^2 \,d\xi
&\le C \|\us-u_m\|_{L^\infty} \|\ur_\xi\|_{L^\infty} \int_{\mathbb{R}} |\us - u_m|\, \ur_\xi \,d\xi \\
&\le C \frac{\delta_S}{\tau'} \int_{\mathbb{R}} |\us - u_m|\, \ur_\xi \,d\xi 
\le C \frac{\delta_S^2 \delta_R(\delta_R \varepsilon_R )^{-\theta_1(1-\frac{1}{2q})}}{(\tau')^{1+\theta_1(1-\frac{1}{2q})}}
+ C \frac{\delta_S^{2\theta_2} \delta_R^{\theta_2}}{(\tau')^{2-\theta_2}},
\end{aligned}
\end{equation*}
where we choose $\theta_1, \theta_2 \in (0,1)$ such that $\theta_1(1-\frac{1}{2q}) = \tfrac12$ and $\theta_2=\tfrac12$. Integrating in time yields
\begin{equation} \label{eq:E21I-est}
\int_0^t \int_{\mathbb{R}} |\us - u_m|^2 (\ur_\xi)^2 \,d\xi\, d\tau
\le C \max\{ \delta_S^2 \varepsilon_R^{-1/2}, \delta_S \} \delta_R^{1/2} T_0^{-1/2}.
\end{equation}

\noindent \textbf{(ii-b) $E_{22}^I$:} We decompose the spatial domain into three regions: $(-\infty,0]$, $[0, (w_+-w_m)\tau']$, and $((w_+-w_m)\tau', +\infty)$. We estimate the integral on each region as follows:

\vspace{2mm}
For $\xi \le 0$, by applying Lemma \ref{lem:approx_rarefaction}(6) (with $\sigma=w_m$ and $\theta(1-\frac{1}{2q})=\frac78$) and Theorem \ref{thm:shock_properties}, we get
\begin{equation*}
\int_{-\infty}^0 |\ur-u_m|^2 (\us_\xi)^2\,d\xi 
\le \sup_{\xi \le 0} |\ur-u_m|^2 \int_{-\infty}^0 (\us_\xi)^2\,d\xi 
\le C\delta_S^4 \delta_R^2 (\delta_R\varepsilon_R\tau')^{-7/4}.
\end{equation*}
Integrating over time yields
\begin{equation} \label{eq:E22I-est-1}
    \int_0^t\int_{-\infty}^0 |\ur-u_m|^2 (\us_\xi)^2\,d\xi\, d\tau \le C\delta_S^4 \delta_R^{1/4}\varepsilon_R^{-7/4}T_0^{-3/4}.
\end{equation}

On $0 \le \xi \le (w_+-w_m)\tau'$, we use the inequality $|\ur-u_m|^2 \le 2|\ur-u^r|^2 + 2|u^r-u_m|^2$.\\
For the first term, Lemma \ref{lem:approx_rarefaction}(7) implies
\begin{equation*}
    \int_0^t\int_0^{(w_+-w_m)\tau'} |\ur-u^r|^2(\us_\xi)^2\,d\xi\, d\tau 
    \le C \int_0^t \|\ur-u^r\|_{L^\infty}^2 \|\us_\xi\|_{L^2}^2 \,d\tau 
    \le C\delta_S^4 \delta_R^{1/q}\varepsilon_R^{-2+1/q}T_0^{-1+1/q}.
\end{equation*}
For the second term, using $|u^r-u_m|\le C|\xi|/\tau'$ and $\us_\xi \le C\delta_S^3(1+C\delta_S^2\xi)^{-2}$, we have
\begin{equation*}
    \int_0^t\int_0^{(w_+-w_m)\tau'} |u^r-u_m|^2(\us_\xi)^2\,d\xi\,d\tau 
    \le \int_0^t \frac{C}{(\tau')^2}\int_0^\infty \frac{\delta_S^6\,\xi^2}{(1+C\delta_S^2\xi)^4}\,d\xi\, d\tau \le CT_0^{-1}.
\end{equation*}
Combining these two estimates gives
\begin{equation} \label{eq:E22I-est-2}
    \int_0^t\int_0^{(w_+-w_m)\tau'} |\ur-u_m|^2(\us_\xi)^2\,d\xi\, d\tau \le C\delta_S^4\delta_R^{1/q}\varepsilon_R^{-2+1/q}T_0^{-1+1/q} + CT_0^{-1}.
\end{equation}

On $\xi > (w_+-w_m)\tau'$, using the uniform bound $|\ur-u_m|\le\delta_R$ and $w_+-w_m \ge C\delta_S\delta_R$, we have 
\[
\int_{(w_+-w_m)\tau'}^\infty (\us_\xi)^2\,d\xi
\le C\delta_S^6 \int_{C\delta_S\delta_R\tau'}^\infty (1+C\delta_S^2\xi)^{-4}\,d\xi 
\le C\delta_S^4(1+C\delta_S^3\delta_R\tau')^{-3}.
\]
Integrating with respect to time implies
\begin{equation} \label{eq:E22I-est-3}
\begin{aligned}
    \int_0^t\int_{(w_+-w_m)\tau'}^\infty |\ur-u_m|^2(\us_\xi)^2\,d\xi\, d\tau 
    &\le C\delta_S^4\delta_R^2 \int_0^\infty \bigl(1+C\delta_S^3\delta_R \tau'\bigr)^{-3}\,d\tau \\
    &\le C\delta_S\delta_R\bigl(1+C\delta_S^3\delta_R T_0\bigr)^{-2}
    \le C\delta_R^{2/3}T_0^{-1/3},
\end{aligned}
\end{equation}
where we used $(1+X)^{-2}\le X^{-1/3}$ with $X=C\delta_S^3\delta_R T_0$.

Therefore, combining the estimates \eqref{eq:E22I-est-1}, \eqref{eq:E22I-est-2}, and \eqref{eq:E22I-est-3}, we have
\begin{equation} \label{eq:E22I-est}
\begin{aligned}
&\int_0^t \int_\mathbb{R} |\ur-u_m|^2 (\us_\xi)^2 \, d\xi d \tau\\
&\qquad
\le C\delta_S^4 \delta_R^{1/4}\varepsilon_R^{-7/4}T_0^{-3/4}
+ C\delta_S^4\delta_R^{1/q}\varepsilon_R^{-2+1/q}T_0^{-1+1/q} 
+ CT_0^{-1} +C \delta_R^{2/3}T_0^{-1/3}.
\end{aligned}
\end{equation}

\noindent \textbf{(ii-c) $E_{21}^R$ and $E_{22}^R$:} By Lemma \ref{lem:approx_rarefaction}(2) with $p=2$, the $L^2$-norms of the higher-order derivatives of the approximate rarefaction wave satisfy
\begin{equation*}
    \|\ur_{\xi\xi}(\tau+T_0, \cdot)\|_{L^2}^2 \le C \min \left\{ \delta_R^2 \varepsilon_R^3, \, \frac{\delta_R + \delta_R^{2/q}}{(\tau+T_0)^{2-2/q}} \right\}
\end{equation*}
and
\begin{equation*}
    \|\ur_{\xi\xi\xi}(\tau+T_0, \cdot)\|_{L^2}^2 \le C \min \left\{ \delta_R^2 \varepsilon_R^5, \, \frac{\delta_R + \delta_R^{4/q}}{(\tau+T_0)^{2-4/q}} \right\}.
\end{equation*}
By changing variable to $s := \tau + T_0$ and splitting the integrals at $t_{**}$ and $t^{**}$, which are given by
\[
    t_{**} := \left( \frac{\delta_R^{1/2} + \delta_R^{1/q}}{\delta_R \varepsilon_R^{3/2}} \right)^{\frac{q}{q-1}}, \qquad
    t^{**} := \left( \frac{\delta_R^{1/2} + \delta_R^{2/q}}{\delta_R \varepsilon_R^{5/2}} \right)^{\frac{q}{q-2}},
\]
and noting that the assumption $q \ge 10$ ensures time integrability, we directly obtain
\begin{equation} \label{eq:E21R-est}
\begin{aligned}
\int_0^t \|\ur_{\xi\xi}\|_{L^2}^2 \,d\tau &\le \int_0^\infty \|\ur_{\xi\xi}\|_{L^2}^2 \,ds 
\le \int_0^{t_{**}} \delta_R^2 \varepsilon_R^3 \,ds + \int_{t_{**}}^\infty \frac{\delta_R + \delta_R^{2/q}}{s^{2-2/q}} \,ds \\
&\le C \delta_R^2 \varepsilon_R^3 t_{**} \le C \delta_R^{\frac{q-2}{q-1}} \bigl(\delta_R^{1/2} + \delta_R^{1/q}\bigr)^{\frac{q}{q-1}} \varepsilon_R^{\frac{3q-6}{2(q-1)}}
\le C \left( \delta_R^{\frac{3q-4}{2(q-1)}} + \delta_R \right) \varepsilon_R^{4/3}
\end{aligned}
\end{equation}
and, similarly,
\begin{equation} \label{eq:E22R-est}
\begin{aligned}
\int_0^t \|\ur_{\xi\xi\xi}\|_{L^2}^2 \,d\tau &\le \int_0^\infty \|\ur_{\xi\xi\xi}\|_{L^2}^2 \,ds 
\le \int_0^{t^{**}} \delta_R^2 \varepsilon_R^5 \,ds + \int_{t^{**}}^\infty \frac{\delta_R + \delta_R^{4/q}}{s^{2-4/q}} \,ds \\
&\le C \delta_R^2 \varepsilon_R^5 t^{**} \le C \delta_R^{\frac{q-4}{q-2}} \bigl(\delta_R^{1/2} + \delta_R^{2/q}\bigr)^{\frac{q}{q-2}} \varepsilon_R^{\frac{5q-20}{2(q-2)}}
\le C \left( \delta_R^{\frac{3q-8}{2(q-2)}} + \delta_R \right) \varepsilon_R^{15/8}.
\end{aligned}
\end{equation}
Since $0<\varepsilon_R<\delta_2\le\delta_1$, Lemma~\ref{lem:wave_interaction} applies. Integrating \eqref{eq:H1-energy-2} over $[0,t]$ and applying \eqref{eq:flux-x-L2-bound}, \eqref{eq:E22I-est}, \eqref{eq:E21R-est}, and \eqref{eq:E22R-est}, we have
\begin{equation} \label{eq:H1-est-comb}
\begin{aligned}
    &\frac{1}{2} \| \phi_\xi(t)\|_{L^2}^2 + \frac{1}{2}\int_0^t \| \phi_{\xi\xi}(\tau)\|_{L^2}^2 \, d \tau \\
    &\le \frac{1}{2} \| \phi_\xi(0)\|_{L^2}^2 + C \int_0^t \|\phi_\xi(\tau)\|_{L^2}^2 \, d \tau + C \int_0^t\int_\mathbb{R}  \phi^2 \bigl( \us_\xi + \ur_\xi\bigr) \, d \xi \, d \tau +C\int_0^t |\dot{X}(\tau)|^2 \, d \tau \\
    &\quad + C \Bigl[ \varepsilon_R^{-1/2}T_0^{-1/2} + \varepsilon_R^{-7/4}T_0^{-3/4} + \varepsilon_R^{-2+\frac1q}T_0^{-1+\frac1q} + T_0^{-1} + T_0^{-1/3} + \varepsilon_R^{4/3} + \varepsilon_R^{15/8} \Bigr],
\end{aligned}
\end{equation}
where we used $\varepsilon_R<1$ and $\kappa\le\delta_0u_m^{-2}$. Taking $T_0=\varepsilon_R^{-8}$, since $\varepsilon_R<1$ and $q\ge10$, the right-hand side of \eqref{eq:H1-est-comb} is bounded by
\[
    \frac{1}{2} \| \phi_\xi(0)\|_{L^2}^2 + C \int_0^t \|\phi_\xi(\tau)\|_{L^2}^2 \, d \tau + C \int_0^t\int_\mathbb{R}  \phi^2 \bigl( \us_\xi + \ur_\xi\bigr) \, d \xi \, d \tau +C\int_0^t |\dot{X}(\tau)|^2 \, d \tau + C\varepsilon_R^{4/3},
\]
and multiplying by $2$ gives \eqref{eq:a-priori-H1}.
This completes the proof.
\end{proof}

%%%%%%%%%%%%%%%%%%%%%%%%%%%%%%%%%%%%%%%%%%%%%%%%%%%%%%%%%%%%%%%%%%%%%%%%%%%%%%%%%%%%%%%%%%%%%%%%%%%%%%%%%%%%
%%%%%%%%%%%%%%%%%%%%%%%%%%%%%%%%%%%%%%%%%%%%%%%%%%%%%%%%%%%%%%%%%%%%%%%%%%%%%%%%%%%%%%%%%%%%%%%%%%%%%%%%%%%%
%%%%%%%%%%%%%%%%%%%%%%%%%%%%%%%%%%%%%%%%%%%%%%%%%%%%%%%%%%%%%%%%%%%%%%%%%%%%%%%%%%%%%%%%%%%%%%%%%%%%%%%%%%%%
%%%%%%%%%%%%%%%%%%%%%%%%%%%%%%%%%%%%%%%%%%%%%%%%%%%%%%%%%%%%%%%%%%%%%%%%%%%%%%%%%%%%%%%%%%%%%%%%%%%%%%%%%%%%
%%%%%%%%%%%%%%%%%%%%%%%%%%%%%%%%%%%%%%%%%%%%%%%%%%%%%%%%%%%%%%%%%%%%%%%%%%%%%%%%%%%%%%%%%%%%%%%%%%%%%%%%%%%%
%%%%%%%%%%%%%%%%%%%%%%%%%%%%%%%%%%%%%%%%%%%%%%%%%%%%%%%%%%%%%%%%%%%%%%%%%%%%%%%%%%%%%%%%%%%%%%%%%%%%%%%%%%%%
%%%%%%%%%%%%%%%%%%%%%%%%%%%%%%%%%%%%%%%%%%%%%%%%%%%%%%%%%%%%%%%%%%%%%%%%%%%%%%%%%%%%%%%%%%%%%%%%%%%%%%%%%%%%
%%%%%%%%%%%%%%%%%%%%%%%%%%%%%%%%%%%%%%%%%%%%%%%%%%%%%%%%%%%%%%%%%%%%%%%%%%%%%%%%%%%%%%%%%%%%%%%%%%%%%%%%%%%%
\begin{appendix}
\section{Proofs of Lemmas \ref{lem:weight_properties} and \ref{lem:weight-shock}} \label{app:weight}
\begin{proof}[Proof of Lemma~\ref{lem:weight_properties}]
\noindent \textit{(1):}
To show that \(w \in C^3([-2s, s))\), we need to verify the continuity of \(w\) and its derivatives up to the third order at \(\us=0\) and \(\us=s/2\).

\vspace{2mm}
\(\bullet\) At $\us = 0$: Let $w_L(\us) = \frac{5}{2}s^2 - \frac{5}{2}s\us$.
Then, it simply follows that
\[
(w_L, w_L', w_L'', w_L''')|_{\us=0} = \Big(\frac{5}{2}s^2, -\frac{5}{2}s, 0, 0\Big).
\]
Given that $P_6(\us) = w_L(\us) + (\us)^4 P_2(\us)$, the term involving $(\us)^4$ and its first three derivatives vanish at $\us=0$. Thus, the values of $P_6$ and its first three derivatives are the same as those of $w_L$, which ensures $C^3$ continuity at $\us = 0$.

\vspace{2mm}
\(\bullet\) At $\us = s/2$: A direct computation of $P_6(\us)$ and its derivatives yields that
\begin{align*}
&P_6(s/2) = s^2 \left( \frac{5}{2} - \frac{5}{4} + \frac{50}{16} - \frac{120}{32} + \frac{80}{64} \right) = \frac{15}{8}s^2, 
&&P_6'(s/2) = s \left( -\frac{5}{2} + \frac{200}{8} - \frac{600}{16} + \frac{480}{32} \right) = 0, \\
&P_6''(s/2) = \left( \frac{600}{4} - \frac{2400}{8} + \frac{2400}{16} \right) = 0, 
&&P_6'''(s/2) = s^{-1} \left( \frac{1200}{2} - \frac{7200}{4} + \frac{9600}{8} \right) = 0.
\end{align*}
Since \(P_6(s/2)=15s^2/8\) and its derivatives vanish at $\us = s/2$, we conclude $w \in C^3([-2s, s))$.

\medskip
\noindent \textit{(2), (3):}
First of all, we introduce a scale-invariant variable $z = \us/s \in [-2, 1)$.
The weight function is written as $w(\us) = s^2 W(z)$, where $W(z)$ is a piecewise polynomial function (independent of $s$):
\begin{equation*}
W(z) =
\begin{cases}
\frac{5}{2}(1 - z), & z \in [-2, 0), \\
\frac{5}{2}(1 - z) + 50z^4 - 120z^5 + 80z^6, & z \in [0, 1/2), \\
\frac{15}{8}, & z \in [1/2, 1).
\end{cases}
\end{equation*}
By the chain rule, $w^{(k)}(\us) = s^{2-k} W^{(k)}(z)$ for $k \in \{0, 1, 2, 3\}$.
Moreover, since $W(z) \in C^3([-2, 1])$, its derivatives are uniformly bounded, and thus, it is enough to compute the following:
\[
C_k = \sup_{z \in [-2, 1)} |W^{(k)}(z)|.
\]

\noindent For $k=0$:
We have $W'(z) = -5/2$ on $[-2, 0)$.
Moreover, on $[0, 1/2)$, $W''(z) = 600z^2(1-2z)^2 \ge 0$, so $W'(z)$ is increasing from \(-5/2\) to \(0\).
Hence, $W'(z) \le 0$ for all $z \in [-2, 1/2)$, and therefore \(W\) is monotonically decreasing.
Since $W''\equiv0$ on $[-2,0)\cup[1/2,1)$, this proves (2), with $\tfrac{15}{8}=W(1/2)\le W\le W(-2)=\tfrac{15}{2}$.
Consequently, the absolute maximum is attained at \(z=-2\) and 
\[
C_0 = W(-2) = \frac{15}{2}.
\]

\noindent For $k=1$:
As shown above, $W'(z) \le 0$ on $[0, 1/2)$, and \(W'(z)\) increases monotonically to $0$ on this interval.
It then follows that $|W'(z)|$ decreases monotonically from $5/2$ to $0$.
Therefore, we obtain
\[
C_1 = |W'(-2)| = \frac{5}{2}.
\]

\noindent For $k=2$: On $[0, 1/2)$, we have $W''(z) = 600z^2(1-2z)^2$.
The critical points of \(W''\) are determined by $W'''(z) = 1200z(1-2z)(1-4z) = 0$.
The maximum of \(W''\) on this interval is attained at $z = 1/4$: 
\[
C_2 = W''\Big(\frac{1}{4}\Big) = \frac{75}{8}.
\]

\noindent For $k=3$: On $[0, 1/2)$, $W'''(z) = 1200z(1-2z)(1-4z)$.
Since $W^{(4)}(z) = 1200(1-12z+24z^2)$, the local extreme value is attained at $z = \frac{3 \pm \sqrt{3}}{12}$.
Thus, evaluating the absolute value gives the maximum:
\[
C_3 = \bigg| W'''\Big(\frac{3 \pm \sqrt{3}}{12}\Big) \bigg| = \frac{100\sqrt{3}}{3}.
\]

In conclusion, it holds that $|w^{(k)}(\us)| \le C_k s^{2-k}$ for $\us \in [-2s, s)$, where
\[
C_0=\frac{15}{2}, \qquad
C_1=\frac{5}{2}, \qquad
C_2=\frac{75}{8}, \qquad
C_3=\frac{100\sqrt{3}}{3}.
\]
This completes the proof of Lemma \ref{lem:weight_properties}.
\end{proof}

\begin{proof}[Proof of Lemma~\ref{lem:weight-shock}]
\textit{(1):} 
We prove the inequalities by dividing the domain into two subintervals.
We first show that 
\begin{equation} \label{3:eq1}
1 - w(\us) \frac{s - 2\us}{5s(s-\us)^2} \ge \frac{1}{6}.
\end{equation}

\noindent For $\us \in [-2s, 0)$: Since $w(\us) = \frac{5}{2}s(s-\us)$, we have 
\[
1 - \frac{5}{2}s(s-\us) \frac{s - 2\us}{5s(s-\us)^2} = \frac{s}{2(s-\us)}.
\]
Then, using $s < s - \us \le 3s$, we find that
\[
\frac{s}{2(s-\us)} \ge \frac{1}{6}.
\]

\noindent For $\us \in [0, s/2)$: Let $z = \us/s \in [0, 1/2]$.
Since $W'(z) \le 0$ on this interval, $W(z) \le W(0) = \frac{5}{2}$.
This implies
\begin{equation*}
1 - W(z) \frac{1 - 2z}{5(1-z)^2} \ge 1 - \frac{5}{2} \frac{1 - 2z}{5(1-z)^2} = 1 - \frac{1 - 2z}{2(1-z)^2}.
\end{equation*}
%The function $g(z) =\frac{1 - 2z}{2(1-z)^2}$ satisfies $g'(z) = -\frac{z}{(1-z)^3} \le 0$ for $z \in [0, 1/2]$, we have \(g(z)\le g(0)=1/2\).
Define $g(z) = \frac{1 - 2z}{2(1-z)^2}$. Since $g'(z) = -\frac{z}{(1-z)^3} \le 0$ for $z \in [0, 1/2]$, we have $\max_{z} g(z) = g(0) = 1/2$.
Thus, we obtain
\begin{equation*}
1 - W(z) \frac{1 - 2z}{5(1-z)^2} \ge 1 - g(0) = \frac{1}{2} > \frac{1}{6}.
\end{equation*}
This establishes \eqref{3:eq1}.

\vspace{2mm}
The remaining part, which is technically more challenging, is to prove the following: 
\[
H_1+H_2 > 4s^4.
\]

\noindent For $\us \in [-2s, 0)$: Plugging $w = \frac{5}{2}s(s-\us)$, $w' = -\frac{5}{2}s$ and $w'' = 0$ into \eqref{eq:H1H2-def}, we obtain
\begin{align*}
H_1 &= 5s \left[ -\frac{15}{2}s^3 + \frac{15}{2}s(\us)^2 + \frac{15}{2}s\us(s-\us) \right] = -\frac{75}{2}s^4 + \frac{75}{2}s^3\us, \\
H_2 &= \frac{5}{2}s(s-\us) \left[ 10s(s-\us) + \frac{5}{2}s(4\us+3s) \right] = \frac{175}{4}s^4 - \frac{175}{4}s^3\us.
\end{align*}
The simple summation, together with $s - \us > s$, yields 
\[
H_1 + H_2 = \frac{25}{4}s^3(s - \us) > \frac{25}{4}s^4 > 4s^4.
\]

\noindent For $\us \in [0, s/2)$: We again use the scale-invariant variable $z = \us/s \in [0, 1/2]$.
Then, applying the scaling relations $w = s^2 W(z)$, $w' = s W'(z)$ and $w'' = W''(z)$, we factor out $s^4$ from \eqref{eq:H1H2-def} to have 
\[
H_1 + H_2 = s^4 \hat{H}(z),
\]
where $\hat{H}(z) := \hat{H}_1(z) + \hat{H}_2(z)$ with
\begin{align*}
    \hat{H}_1(z) &:= 5 \left[ 3(1-z^2)W'(z) + 3zW(z) - \frac{1}{2}(z+2)(1-z)^2 W''(z) \right], \\
    \hat{H}_2(z) &:= W(z) \left[ W''(z)(1-2z)(z+2) + 4W(z) - W'(z)(4z+3) \right].
\end{align*}

We now expand \(\hat{H}(z)\) as follows: we note that
\begin{align*}
W(z)&=\frac{5}{2}(1-z)+50z^4-120z^5+80z^6,\\
W'(z)&=-\frac{5}{2}+200z^3-600z^4+480z^5,\\
W''(z)&=600z^2-2400z^3+2400z^4.
\end{align*}
Firstly, we gather all the terms involving \(W''\):
\begin{align*}
&W''(z) \Big[-\frac{5}{2}(z+2)(1-z)^2 + W(z)(1-2z)(z+2)\Big]\\
%&=W''(z) (z+2) \Big[-\frac{5}{2}(1-z)^2 + W(z)(1-2z)\Big]\\
%&=W''(z) (z+2) \Big[-\frac{5}{2}(1-z)^2 + \frac{5}{2}(1-z)(1-2z)
%+(50z^4-120z^5+80z^6)(1-2z)\Big]\\
%&=W''(z) (z+2) \Big[-\frac{5}{2}z(1-z)
%+50z^4-120z^5+80z^6-100z^5+240z^6-160z^7\Big]\\
&=(600z^2-2400z^3+2400z^4) (z+2) \Big[-\frac{5}{2}z+\frac{5}{2}z^2
+50z^4-220z^5+320z^6-160z^7\Big]\\
%&=600z^3(2-7z+4z^2+4z^3) \Big[-\frac{5}{2}+\frac{5}{2}z
%+50z^3-220z^4+320z^5-160z^6\Big]\\
&=1500z^3(2-7z+4z^2+4z^3) \big[-1+z+20z^3-88z^4+128z^5-64z^6\big]\\
%&=1500z^3\Big[-2+9z-11z^2+40z^3-312z^4+952z^5-1296z^6+608z^7+256z^8-256z^9\Big]\\
&=1500\big[-2z^3+9z^4-11z^5+40z^6-312z^7+952z^8-1296z^9+608z^{10}+256z^{11}-256z^{12}\big].
\end{align*}
\begin{comment}
\begin{align*}
&W''(z) \Big[-\frac{5}{2}(z+2)(1-z)^2 + W(z)(1-2z)(z+2)\Big]\\
&=W''(z) (z+2) \Big[-\frac{5}{2}(1-z)^2 + W(z)(1-2z)\Big]\\
&=W''(z) (z+2) \Big[-\frac{5}{2}(1-z)^2 + \frac{5}{2}(1-z)(1-2z)
+(50z^4-120z^5+80z^6)(1-2z)\Big]\\
&=W''(z) (z+2) \Big[-\frac{5}{2}z(1-z)
+50z^4-120z^5+80z^6-100z^5+240z^6-160z^7\Big]\\
&=(600z^2-2400z^3+2400z^4) (z+2) \Big[-\frac{5}{2}z+\frac{5}{2}z^2
+50z^4-220z^5+320z^6-160z^7\Big]\\
&=600z^3(2-7z+4z^2+4z^3) \Big[-\frac{5}{2}+\frac{5}{2}z
+50z^3-220z^4+320z^5-160z^6\Big]\\
&=1500z^3(2-7z+4z^2+4z^3) \Big[-1+z
+20z^3-88z^4+128z^5-64z^6\Big]\\
&=1500z^3\Big[-2+9z-11z^2+40z^3-312z^4+952z^5-1296z^6+608z^7+256z^8-256z^9\Big]\\
&=1500\Big[-2z^3+9z^4-11z^5+40z^6-312z^7+952z^8-1296z^9+608z^{10}+256z^{11}-256z^{12}\Big]
\end{align*}
\end{comment}
Secondly, we consider all the terms with \(W'\):
\begin{align*}
&W'(z)\Big[15(1-z^2)-W(z)(4z+3)\Big]\\
%&=W'(z)\Big[15(1-z^2)-\frac{5}{2}(1-z)(4z+3)
%-(50z^4-120z^5+80z^6)(4z+3)\Big]\\
%&=W'(z)\Big[(1-z)\Big(15(1+z)-\frac{5}{2}(4z+3)\Big)
%-(50z^4-120z^5+80z^6)(4z+3)\Big]\\
%&=W'(z)\Big[(1-z)\Big(15+15z-10z-\frac{15}{2}\Big)
%-(50z^4-120z^5+80z^6)(4z+3)\Big]\\
%&=W'(z)\Big[(1-z)\Big(\frac{15}{2}+5z\Big)
%-(50z^4-120z^5+80z^6)(4z+3)\Big]\\
&=\Big(-\frac{5}{2}+200z^3-600z^4+480z^5\Big)
\Big[\frac{15}{2}-\frac{5}{2}z-5z^2
-150z^4+160z^5+240z^6-320z^7\Big]\\
&=-\frac{75}{4}
+\frac{25}{4}z
+\frac{25}{2}z^2
+1500z^3
-4625z^4
+3700z^5
+1200z^6
-31600z^7
+122000z^8\\
&\qquad
-120000z^9
-131200z^{10}
+307200z^{11}
-153600z^{12}.
\end{align*}
Lastly, we collect all the terms involving \(W\):
\begin{align*}
&15zW(z)+4(W(z))^2
=W(z)\big(15z+4W(z)\big)\\
%&=\Big[\frac{5}{2}-\frac{5}{2}z+50z^4-120z^5+80z^6\Big]
%\Big[10+5z+200z^4-480z^5+320z^6\Big]\\
&=25
-\frac{25}{2}z
-\frac{25}{2}z^2
+1000z^4
-2650z^5
+2200z^6
-400z^7
+10000z^8
-48000z^9\\
&\qquad
+89600z^{10}
-76800z^{11}
+25600z^{12}.
\end{align*}
Then, summing up, we obtain
\begin{align*}
\hat{H}(z)&=
-512000z^{12}
+614400z^{11}
+870400z^{10}
-2112000z^9
+1560000z^8
-500000z^7\\
&\qquad
+63400z^6
-15450z^5
+9875z^4
-1500z^3
-\frac{25}{4}z
+\frac{25}{4}.
\end{align*}
Thus, the desired inequality boils down to the following: for all \(z\in[0,1/2]\),
\begin{align*}
H(z) &\coloneqq
-512000z^{12}
+614400z^{11}
+870400z^{10}
-2112000z^9
+1560000z^8\\
&\qquad -500000z^7
+63400z^6
-15450z^5
+9875z^4
-1500z^3
-\frac{25}{4}z
+\frac{9}{4} >0.
\end{align*}
To this end,  we divide the interval $[0, 1/2]$ into three subintervals: $[0, 1/8]$, $[1/8, 1/4]$, and $[1/4, 1/2]$.

\vspace{2mm}
\noindent \textbf{Case 1:} $z \in [0, 1/8]$.
By dropping the positive terms of degree \(k\ge6\) and evaluating the remaining negative terms of degree \(k\ge5\) at the right endpoint \(z=1/8\), we obtain
\begin{align*}
&-512000z^{12} - 2112000z^9 - 500000z^7 - 15450z^5 \\
&\qquad\ge -512000(1/8)^{12} - 2112000(1/8)^9 - 500000(1/8)^7 - 15450(1/8)^5
%\\
%&= -\frac{125}{16777216} - \frac{4125}{262144} - \frac{15625}{65536} - \frac{7725}{16384} =- \frac{12174525}{16777216}
> - \frac{3}{4}.
\end{align*}
Then, the following holds:
\begin{equation} \label{eq:P1(z)}
    H(z) > 9875z^4 - 1500z^3 - \frac{25}{4}z + \frac{3}{2} \coloneqq P_1(z).
\end{equation}
Thanks to the weighted AM--GM inequality $3 a^4 + b^4 \ge 4 a^3 b$, $\forall a,b \ge 0$, we find that 
\[
3z^4 + \Big(\frac{15}{128}\Big)^4 \ge \frac{15}{32}z^3,
\]
which implies 
\[
-1500z^3 \ge -9600z^4 - \frac{1265625}{2097152}.
\]
Using this together with \eqref{eq:P1(z)} and dropping the remaining positive term $275z^4$, we deduce
\[
P_1(z) \ge - \frac{25}{4}z + \frac{1880103}{2097152} \ge -\frac{25}{32} + \frac{1880103}{2097152} >0,
\qquad \forall z\in[0,1/8].
\]
%Since $z \le 1/8$, we have $-\frac{25}{4}z \ge -\frac{25}{32} = -\frac{1638400}{2097152}.$ Therefore,
%\[P_1(z) \ge -\frac{1638400}{2097152} + \frac{1880103}{2097152} = \frac{241703}{2097152} > 0,\]
%which implies
Thus, we conclude that
\[
H(z) > 0, \qquad \forall z \in [0, 1/8].
\]

\vspace{2mm}
\noindent \textbf{Case 2:} $z \in [1/8, 1/4]$. 
First of all, since $z \le 1/4$, the two highest-order terms satisfy
\[
614400z^{11} - 512000z^{12}
= 102400z^{11}(6 - 5z)
\ge 102400z^{11}\Big(6 - \frac{5}{4}\Big)
= 486400z^{11} > 0,
\]
allowing us to discard them.
Then, we sequentially group adjacent powers into the following form:
\[
z^{k-2}(az^2 - bz + c) \ge 0.
\]
%where $c$ is chosen to bound the quadratic part $az^2 - bz$ from below on $[1/8, 1/4]$.
In particular, we consider 
\[
z^8(870400z^2 - 2112000z + 473600) \ge 0,
\]
and
\[
z^6(1086400z^2 - 500000z + 57530) \ge 0.
\]
It is straightforward to verify that these are positive on the interval.
\begin{comment}
For powers 10, 9, and 8, we have 
\[ 870400z^{10} - 2112000z^9 + c z^8 = z^8(870400z^2 - 2112000z + c). \]
The axis of symmetry for the quadratic $870400z^2 - 2112000z$ is $z = \frac{2112000}{2(870400)} \approx 1.213$, which is strictly greater than $1/4$. Thus, it is strictly decreasing on $[1/8, 1/4]$, and its minimum is attained at the right endpoint $z=1/4$:
\[ 870400\left(\frac{1}{4}\right)^2 - 2112000\left(\frac{1}{4}\right) = 54400 - 528000 = -473600. \]
Choose $c=473600$ so that
\[ z^8(870400z^2 - 2112000z + 473600) \ge 0. \]
Then the remainder for the $z^8$ coefficient is given by
\[ 1560000 - 473600 = 1086400. \]

For powers 8, 7, and 6, we have 
\[ 1086400z^8 - 500000z^7 + c z^6 = z^6(1086400z^2 - 500000z + c). \]
The axis of symmetry for the quadratic $1086400z^2 - 500000z$ is $z = \frac{500000}{2(1086400)} = \frac{625}{2716} \approx 0.230$, which lies inside the interval $[1/8, 1/4]$. Its global minimum on this interval is thus attained at its vertex:
\[ 1086400\left(\frac{625}{2716}\right)^2 - 500000\left(\frac{625}{2716}\right) = -\frac{500000^2}{4(1086400)} = -\frac{39062500}{679} \approx -57529.45. \]
Choose $c=57530$ so that
\[ z^6(1086400z^2 - 500000z + 57530) \ge 0. \]
Then the remainder for the $z^6$ coefficient is given by
\[ 63400 - 57530 = 5870. \]
\end{comment}
Dropping these non-negative groups yields the following lower bound of the function \(H\):
\[
H(z) \ge 5870z^6 - 15450z^5 + 9875z^4 - 1500z^3 - \frac{25}{4}z + \frac{9}{4} \coloneqq P_2(z).
\]
We now show that \(P_2(z)\) is convex.
To this end, we first compute its first and second derivatives:
\begin{align*}
P_2'(z) &= 35220z^5 - 77250z^4 + 39500z^3 - 4500z^2 - \frac{25}{4}, \\
P_2''(z) &= z (\underbrace{176100z^3 - 309000z^2 + 118500z - 9000}_{\eqqcolon g(z)}).
\end{align*}
Then, the derivative of \(g(z)\) is 
\[
g'(z) = 528300z^2 - 618000z + 118500,
\]
whose roots are given by 
\[
z = \frac{2060 - \sqrt{1461220}}{3522} \approx 0.242, \qquad 
z = \frac{2060 + \sqrt{1461220}}{3522} \approx 0.928.
\]
On the interval $[1/8, 1/4]$, the derivative $g'(z)$ changes sign only at $z \approx 0.242$, where \(g(z)\) attains a local maximum.
Therefore, it suffices to evaluate \(g\) at the endpoints:
\begin{align*}
g(1/8) &= 176100(1/8)^3 - 309000(1/8)^2 + 118500(1/8) - 9000 %\\
%&= \frac{44025}{128} - \frac{618000}{128} + \frac{1896000}{128} - \frac{1152000}{128} = \frac{170025}{128} 
\approx 1328.32 > 0, \\
g(1/4) &= 176100(1/4)^3 - 309000(1/4)^2 + 118500(1/4) - 9000 %\\
%&= \frac{44025}{16} - \frac{309000}{16} + \frac{474000}{16} - \frac{144000}{16} = \frac{65025}{16}
\approx 4064.06 > 0.
\end{align*}
\begin{comment}
\begin{align*}
g(1/8) &= 176100(1/8)^3 - 309000(1/8)^2 + 118500(1/8) - 9000 \\
&= \frac{44025}{128} - \frac{618000}{128} + \frac{1896000}{128} - \frac{1152000}{128} = \frac{170025}{128} \approx 1328.32 > 0, \\[6pt]
g(1/4) &= 176100(1/4)^3 - 309000(1/4)^2 + 118500(1/4) - 9000 \\
&= \frac{44025}{16} - \frac{309000}{16} + \frac{474000}{16} - \frac{144000}{16} = \frac{65025}{16}\approx 4064.06 > 0.
\end{align*}
\end{comment}
Since both are positive and the only critical point in the interval is a local maximum, $g(z)$ is strictly positive on $[1/8, 1/4]$. Consequently, we obtain $P_2''(z) > 0$, confirming that $P_2(z)$ is strictly convex.

Next, we evaluate $P_2'(z)$ at $z = 0.173,0.174$ to locate the interval containing the unique minimizer:
\begin{align*}
P_2'(0.173) &= 35220(0.173)^5 - 77250(0.173)^4 + 39500(0.173)^3 - 4500(0.173)^2 - \frac{25}{4} \\
%&= 5.45782827951546 - 69.19630441725 + 204.5198215 - 134.6805 - 6.25 \\
&= -0.14915463773454 < 0, \\
P_2'(0.174) &= 35220(0.174)^5 - 77250(0.174)^4 + 39500(0.174)^3 - 4500(0.174)^2 - \frac{25}{4} \\
%&= 5.61740314465728 - 70.810144596 + 208.086948 - 136.242 - 6.25 \\
&= 0.40220654865728 > 0.
\end{align*}
Combined with the strict convexity of $P_2(z)$, this sign change guarantees that the unique minimizer $z_0$ lies strictly between \(0.173\) and \(0.174\).
Moreover, by the convexity of $P_2(z)$, its graph lies strictly above its tangent line at $z = 0.173$.
Thus, the minimum $P_2(z_0)$ satisfies the following lower bound:
\[
P_2(z_0) \ge P_2(0.173) + P_2'(0.173)(z_0 - 0.173).
\]
Since $z_0 < 0.174$ and $P_2'(0.173) < 0$, we find that 
\begin{align*}
P_2(z_0) 
&> P_2(0.173) + P_2'(0.173)(0.174 - 0.173)
= P_2(0.173) + 0.001 \times P_2'(0.173).\\
&=0.01083202909751243 + 0.001 \times (-0.14915463773454)
= 0.01068287445977789 >0.
\end{align*}
\begin{comment}
Evaluating $P_2(0.173)$ by expanding its terms exactly gives
\begin{equation} \label{eq:P2(0.173)}
    \begin{aligned}
P_2(0.173) &= 5870(0.173)^6 - 15450(0.173)^5 + 9875(0.173)^4 - 1500(0.173)^3 - \frac{25}{4}(0.173) + \frac{9}{4} \\
&= 0.15736738205936243 - 2.39419213283685 + 8.845482279875 - 7.7665755 - 1.08125 + 2.25 \\
&= 0.01083202909751243.
    \end{aligned}
\end{equation}
Substituting \eqref{eq:P2(0.173)} into the lower bound~\eqref{eq:P2(z0)-lower}, we obtain
\begin{align*}
P_2(z_0) &> 0.01083202909751243 + 0.001 \times (-0.14915463773454) \\
&= 0.01083202909751243 - 0.00014915463773454 \\
&= 0.01068287445977789 > 0.
\end{align*}
\end{comment}
Therefore, the minimum of $P_2(z)$ on $[1/8, 1/4]$ is strictly positive, which concludes that 
\[H(z) \ge P_2(z) > 0, \qquad \forall z \in [1/8, 1/4].\]

\noindent \textbf{Case 3:} $z \in [1/4, 1/2]$.
On this interval we subtract nonnegative polynomials from $H(z)$ so that the remainder is a quadratic with negative discriminant.
To this end, we successively construct auxiliary polynomials that cancel the highest-degree terms.
Their non-negativity is evident from the way they are chosen.

We consider the following polynomials:
\begin{align*}
&S_1(z) = -512000 z^{10} \Big(z-\frac{1}{4}\Big)\Big(z-\frac{1}{2}\Big) \ge 0, 
&&S_2(z) = 230400 z^9 \Big(z-\frac{1}{2}\Big)^2 \ge 0, \\
&S_3(z) = 1164800 z^6 \Big(z-\frac{1}{4}\Big)^2 \Big(z-\frac{1}{2}\Big)^2 \ge 0,
&&S_4(z) = -422400 z^6 \Big(z-\frac{1}{2}\Big)^3 \ge 0, \\
&S_5(z) = -20000 z^4 \Big(z-\frac{1}{4}\Big) \Big(z-\frac{1}{2}\Big)^3 \ge 0, 
&&S_6(z) = 200z^4\Big(z-\frac{1}{2}\Big)^2 \Big(z-\frac{1}{4}\Big) \ge 0, \\
&S_7(z) = 15150z^3 \Big(z-\frac{1}{4}\Big) \Big(z-\frac{1}{2}\Big)^2 \ge 0, 
&&S_8(z) = -\frac{5725}{2} \Big(z-\frac{1}{4}\Big)^2 \Big(z-\frac{1}{2}\Big)^3 \ge 0, \\
&S_9(z) = -\frac{5575}{2} \Big(z-\frac{1}{4}\Big)^3 \Big(z-\frac{1}{2}\Big) \ge 0, 
&&S_{10}(z) = \frac{13925}{32} \Big(z-\frac{1}{4}\Big)\Big(z-\frac{1}{2}\Big)^2 \ge 0.
\end{align*}
Then, we consider \(H(z) = Q(z)+R(z)\), where \(Q(z)= \sum_{i=1}^{10} S_i(z) \ge 0\) and \(R(z)\) is a quadratic polynomial given by 
\[
R(z)
=H(z) - Q(z)
= \frac{1}{512} (211100 z^2 - 110400 z + 14777).
\]
Since the discriminant of the quadratic numerator is negative and its leading coefficient is positive, \(R(z)\) is strictly positive.
Therefore, $H(z) = Q(z) + R(z) > 0$ for any $z \in [1/4, 1/2]$, as desired.

\vspace{2mm}
\noindent \textit{(2):}
Recall $\zeta = s^2 \xi$, $z(\zeta)=\us(\xi)/s $ and $s^2 W(z)=w(\us)$.
Then, using $w^{(k)}(\us) = s^{2-k} W^{(k)}(z)$, we substitute these relations into \eqref{eq:M1234-def}, except for the derivatives \(\us_\x\), \(\us_{\x\x}\), and \(\us_{\x\x\x}\), which are kept unchanged so that Theorem \ref{thm:shock_properties} can be applied directly.
This yields
\begin{align*}
M_{11} &= \sup \frac{3|W'|}{2W} \cdot \frac{\us_\x}{s^3}, \\
M_{12} &= \sup W \left| \frac{2(1-2z)}{5(1-z)^2} \right| \cdot \frac{|\us_{\x\x}|}{s^2 \us_\x}, \\
M_{21} &= \sup \left| W W'' \frac{1-2z}{5(1-z)^2} - W W' \frac{2z}{5(1-z)^3} - \frac{1}{2} W'' \right|
\cdot \frac{|\us_{\x\x}|}{s^5}, \\
M_{22} &= \sup \left| W W' \frac{1-2z}{5(1-z)^2} \right|
\cdot \frac{|\us_{\x\x\x}|}{s^4\us_\x}, \\
M_{23} &= \sup \frac{1}{2}\bigg| 2W' \cdot \frac{\us_{\x\x\x}}{s^4 \us_\x}
+  W''' \cdot \frac{(\us_\x)^2}{s^6}
+ 3W'' \cdot \frac{\us_{\x\x}}{s^5} \bigg|, \\
M_{24} &= \sup (W')^2 \left| \frac{2(1-2z)}{5(1-z)^2} \right| \cdot \frac{|\us_{\x\x}|}{s^5},
\end{align*}
where all suprema are taken over \(z\in[-2,1/2]\) and \(\x \in (-\infty,\x_*]\).
Since $W(z)$ and its derivatives are continuous, they are bounded on the compact interval $[-2, 1/2]$.
%{\color{blue}The rational factors and $1/W$ are also bounded there, since $1-z\ge\frac12$ and $W\ge W(1/2)=\frac{15}{8}$ by \textit{(2)}.}
All factors, except for the powers of \(s\) and the derivatives of \(\us\), are bounded, since $1-z\ge\frac12$ and $W\ge W(1/2)=\frac{15}{8}$ by Lemma \ref{lem:weight_properties}(2).
It therefore remains to bound the factors involving the shock profile, namely
\[
\frac{\us_\x}{s^3}, \qquad
\frac{|\us_{\x\x}|}{s^2 \us_\x}, \qquad
\frac{|\us_{\x\x}|}{s^5}, \qquad
\frac{\us_{\x\x\x}}{s^4 \us_\x}, \qquad
\frac{(\us_\x)^2}{s^6},
\]
uniformly in $s$, and hence uniformly in $\kappa$.
The first factor is bounded above by the upper bound in \eqref{eq:key-ineq}, which gives $\us_\x\le 2(\us-s)^2(\us+2s)\le Cs^3$.
The factors $|\us_{\x\x}|/(s^2\us_\x)$ and $\us_{\x\x\x}/(s^4\us_\x)$ are bounded because of the absolute estimates \eqref{eq:absbdd}.
The remaining two factors can be written as
\begin{align*}
&\frac{|\us_{\x\x}|}{s^5}=\frac{|\us_{\x\x}|}{s^2\us_\x}\cdot\frac{\us_\x}{s^3}
&&\frac{(\us_\x)^2}{s^6}=\Big(\frac{\us_\x}{s^3}\Big)^2
\end{align*}
and so they are bounded.
%Consequently, $M_{11}, \ldots, M_{24}$ are bounded by positive constants independent of $s$ and $\kappa$.
Thus, $M_{11}, \ldots, M_{24}$ are uniformly bounded with respect to \(s\) and \(\k\).
\end{proof}
\end{appendix}

\bibliographystyle{plain}
\bibliography{reference}

\end{document}